\DeclareMathAlphabet{\mathdutchcal}{U}{dutchcal}{m}{n}
\SetMathAlphabet{\mathdutchcal}{bold}{U}{dutchcal}{b}{n}
\DeclareMathAlphabet{\mathdutchbcal}{U}{dutchcal}{b}{n}
\numberwithin{equation}{section}
\theoremstyle{plain}
\newtheorem{theorem}{Theorem}
\newtheorem{lemma}{Lemma}
\newtheorem{proposition}{Proposition}
\newtheorem{corollary}{Corollary}
\theoremstyle{definition}
\newtheorem{definition}{Definition}
\theoremstyle{remark}
\newtheorem{remark}{Remark}
\newtheorem{example}{Example}
\newcommand{\comm}[1]{}
\definecolor{DarkGreen}{rgb}{0,0.5,0.1} 
\newcommand\soutD{\bgroup\markoverwith
{\textcolor{DarkGreen}{\rule[.5ex]{2pt}{1pt}}}\ULon}
\newcommand*{\rom}[1]{\expandafter\@slowromancap\romannumeral #1@}
\begin{document}
 
\title[The most symmetric cubic surface ]{The most symmetric smooth cubic surface}
\author{Anastasia V.~Vikulova}
\address{{\sloppy
\parbox{0.9\textwidth}{
Steklov Mathematical Institute of Russian
Academy of Sciences,
8 Gubkin str., Moscow, 119991, Russia
}\bigskip}}
\email{vikulovaav@gmail.com}
\date{}
\thanks{This work was supported by the Russian Science Foundation under grant no. 23-11-00033, https://rscf.ru/en/project/23-11-00033/}
\maketitle

\begin{abstract}
In this paper we give a classification of the largest automorphism groups of a smooth cubic surface over arbitrary fields.  Moreover, we prove that for a given field a smooth cubic surface with the largest automorphism group is unique up to isomorphism.
\end{abstract}

\section{Introduction}
\label{s1}

The protagonist of this paper is a~smooth cubic surface over an arbitrary field. Recall two famous 
cubic surfaces which have their own names. The cubic surface given by the equation
$$
x^3+y^3+z^3+t^3=0
$$
in $\mathbb{P}^3$ is called the \textit{Fermat cubic surface}, and the cubic surface given by the equations
$$
x+y+z+t+w=xyz+xyt+xyw+xzt+xzw+xtw+yzt+yzw+ytw+ztw=0
$$
in $\mathbb{P}^4$ is called the \textit{Clebsch cubic surface}\/.
The Fermat cubic surface is an example of a~cubic surface with the simplest equations on which all $27$ lines over $\mathbb{C}$ can easily be found. The Clebsch cubic contains $27$ real lines, and Klein presented a~model of it at the Chicago World's Fair in 1893. 
It is well known that the Fermat surface over an algebraically closed field of characteristic different from $3$ is the smooth cubic surface with the largest automorphism group (see, for instance,~\cite{7}, Theorem~9.4.1, and~\cite{8}, Theorem~1.1).
The automorphism group of the Clebsch cubic surface is isomorphic to $\mathfrak{S}_5$ over any field of characteristic different from $2$ and $5$ (see, for example,~\cite{8}, Theorem~6.1). Over a~field of characteristic different from $3$ the Clebsch cubic surface can be given by more simple equations
$$
x+y+z+t+w=x^3+y^3+z^3+t^3+w^3=0.
$$
Note that over a~field of characteristic~$5$ the Clebsch surface is singular at the point~\mbox{$[1:1:1:1:1] \in \mathbb{P}^4$.} The other two cubic surfaces that we need are given by
\begin{equation}
\label{eq1.1}
t^3+tz^2-xy^2+x^2z=0,
\end{equation}
and
\begin{equation}
\label{eq1.2}
 x^2t+y^2z+z^2y+t^2x=0.
\end{equation}

\goodbreak

While smooth cubic surfaces over algebraically closed fields have thoroughly been investigated from all points of view, the question of the automorphism groups of smooth cubic surfaces over arbitrary fields has virtually not been considered in the literature so far. The only exceptions are~\cite{16} and~\cite{18}.

To start with, we recall the results about the largest automorphism groups of smooth cubic surfaces over an algebraically closed field, which were established in~\cite{8} for a~positive characteristic and in~\cite{11} for characteristic zero.

\begin{theorem}[{see~\cite{8}, Theorem~1.1, and~\cite{11}, Theorem~5.3}]
\label{t1}
Let $S$ be a~smooth cubic surface over an algebraically closed field $\mathbf{F}$. Then the following hold.

$(\mathrm{i})'$ If\/ $\mathbf{F}$ is of characteristic~$2$, then $|{\operatorname{Aut}(S)}| \leqslant 25\, 920$. Equality holds if and only if~$\operatorname{Aut}(S)\simeq \mathrm{PSU}_4(\mathbb{F}_2)$ and $S$ is isomorphic to the Fermat cubic surface.

$(\mathrm{i})''$ If\/ $\mathbf{F}$ is of characteristic~$2$ and $|{\operatorname{Aut}(S)}| < 25\, 920$, then
$$
|{\operatorname{Aut}(S)}| \leqslant 192.
$$
Equality holds if and only if
$$
\operatorname{Aut}(S)\simeq (\mathbb{Z}/2\mathbb{Z})^3 \rtimes \mathfrak{S}_4.
$$

$(\mathrm{ii})'$ If\/ $\mathbf{F}$ is of characteristic~$3$, then $|{\operatorname{Aut}(S)}| \leqslant 216$. Equality holds if and only if~$\operatorname{Aut}(S) \simeq \mathcal{H}_3(\mathbb{F}_3) \rtimes \mathbb{Z}/8\mathbb{Z}$ and $S$ is isomorphic to a~cubic surface~\eqref{eq1.1}.

$(\mathrm{ii})''$ If\/ $\mathbf{F}$ is of characteristic~$3$ and $|{\operatorname{Aut}(S)}| < 216$, then $|{\operatorname{Aut}(S)}| \leqslant 120$. Equality holds if and only if~$\operatorname{Aut}(S) \simeq \mathfrak{S}_5$ and $S$ is isomorphic to the Clebsch cubic surface.

$(\mathrm{iii})'$ If\/ $\mathbf{F}$ is of characteristic~$5$, then $|{\operatorname{Aut}(S)}| \leqslant 648$. Equality holds if and only if $\operatorname{Aut}(S) \simeq (\mathbb{Z}/3\mathbb{Z})^3 \rtimes \mathfrak{S}_4$ and $S$ is isomorphic to the Fermat cubic surface.

$(\mathrm{iii})''$ If\/ $\mathbf{F}$ is of characteristic~$5$ and $|{\operatorname{Aut}(S)}| < 648$, then $|{\operatorname{Aut}(S)}| \leqslant 108$. Equality holds if and only if
$$
\operatorname{Aut}(S) \simeq \mathcal{H}_3(\mathbb{F}_3) \rtimes \mathbb{Z}/4\mathbb{Z}.
$$
Moreover, if $|{\operatorname{Aut}(S)}| < 108$, then $|{\operatorname{Aut}(S)}| \leqslant 54$. The equality holds if and only if
$$
\operatorname{Aut}(S) \simeq \mathcal{H}_3(\mathbb{F}_3) \rtimes \mathbb{Z}/2\mathbb{Z}.
$$

$(\mathrm{iv})'$ If\/ $\mathbf{F}$ is of characteristic different from $2$, $3$ and $5$, then $|{\operatorname{Aut}(S)}| \leqslant 648$. Equality holds if and only if
$$
\operatorname{Aut}(S) \simeq (\mathbb{Z}/3\mathbb{Z})^3 \rtimes \mathfrak{S}_4
$$
and $S$ is isomorphic to the Fermat cubic surface.

$(\mathrm{iv})''$ If\/ $\mathbf{F}$ is of characteristic different from $2$, $3$ and $5$ and $|{\operatorname{Aut}(S)}| < 648$, then~\mbox{$|{\operatorname{Aut}(S)}| \leqslant 120$.} Equality holds if and only if $\operatorname{Aut}(S) \simeq \mathfrak{S}_5$ and~$S$ is isomorphic to the Clebsch cubic surface.
\end{theorem}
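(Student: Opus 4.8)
The plan is to translate everything into the action of $\operatorname{Aut}(S)$ on the configuration of the $27$ lines. Since a smooth cubic surface is a del Pezzo surface of degree $3$, its Picard lattice $\operatorname{Pic}(S)\cong \mathbb{Z}^{1,6}$ carries the intersection form together with the distinguished class $K_S$, and every automorphism preserves both. First I would record the resulting homomorphism
\[
\rho\colon \operatorname{Aut}(S)\longrightarrow \operatorname{O}(\operatorname{Pic}(S))_{K_S}\cong W(E_6),
\]
where $W(E_6)$ is the Weyl group of $E_6$, of order $51\,840$, realised as the full symmetry group of the $27$ exceptional classes $v$ with $v^2=v\cdot K_S=-1$. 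Controlling the two ends of this map — the image, which must be a subgroup of $W(E_6)$ that actually lifts to automorphisms, and the kernel $G_0=\ker\rho$ of automorphisms fixing every line — is the backbone of the argument.

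The kernel is precisely where the characteristic intervenes. I would show that in characteristic $0$, and more generally whenever the relevant small primes are invertible, $G_0$ is trivial: an automorphism acting trivially on $\operatorname{Pic}(S)$ fixes four lines in general position and hence is the identity. The essential new feature is that in characteristic $3$ the group $G_0$ can be a nontrivial $3$-group; for the surface~\eqref{eq1.1} it is the Heisenberg group $\mathcal{H}_3(\mathbb{F}_3)$, acting trivially on the lines but faithfully on $S$, and this is exactly what produces the anomalously large order $216=27\cdot 8$ in part~$(\mathrm{ii})'$. Whenever $G_0$ is nontrivial the bound on $|\operatorname{Aut}(S)|$ should be read as the product $|G_0|\cdot|\rho(\operatorname{Aut}(S))|$.

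For the image I would reduce each characteristic to exhibiting a normal form of the defining cubic for each candidate subgroup and checking liftability. In characteristic $0$ and $\neq 2,3,5$ the Fermat surface realises the monomial group $(\mathbb{Z}/3\mathbb{Z})^3\rtimes \mathfrak{S}_4$ of order $648$ (diagonal cube roots of unity modulo scalars, permuted by the coordinate $\mathfrak{S}_4$) and the Clebsch surface realises $\mathfrak{S}_5$ of order $120$; one verifies these are the two largest liftable subgroups. In characteristic $5$ the Clebsch equation acquires the singular point $[1:1:1:1:1]$ and so drops out of the list, which is why its runner-up is instead the recurring Hessian-type group $\mathcal{H}_3(\mathbb{F}_3)\rtimes\mathbb{Z}/n\mathbb{Z}$, whose cyclic factor ($n=8$ in characteristic $3$, and $n=4$ or $2$ in characteristic $5$) depends on the roots of unity supplied by the field. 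The sole genuine jump occurs in characteristic $2$: here the Fermat equation can be rewritten as the Hermitian form $\sum x_i^{1+2}$ for $\mathbb{F}_4/\mathbb{F}_2$, whose projective unitary group gives $\operatorname{Aut}(S)\simeq\mathrm{PSU}_4(\mathbb{F}_2)$ of order $25\,920$, with $\rho$ injective onto the index-$2$ rotation subgroup of $W(E_6)$; the runner-up $192=2^3\cdot 24$ is then realised by a separate diagonal surface.

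The hard part will be the \emph{gap} statements — that as soon as $|\operatorname{Aut}(S)|$ is not maximal it drops all the way to the stated second (and third) value, with nothing in between — together with the uniqueness of the extremal surface. Both demand a complete enumeration, up to conjugacy, of the subgroups of $W(E_6)$ (and of the exceptional overgroups arising in characteristics $2$ and $3$) that can occur as $\rho(\operatorname{Aut}(S))$, followed by a case-by-case test, through explicit cubic normal forms, of which are liftable and which force a strictly larger symmetry. Uniqueness should then follow by showing that each extremal group already rigidifies the moduli — it pins down the six blown-up points, or equivalently the cubic form, up to projective equivalence — so that the maximising surface is determined up to isomorphism.
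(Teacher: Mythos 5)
First, note that the paper does not prove Theorem~\ref{t1} at all: it is imported verbatim from Dolgachev--Duncan (\cite{8}, Theorem~1.1) and Hosoh (\cite{11}, Theorem~5.3), so there is no internal proof to compare your outline against. Judged on its own terms, your outline contains one genuine error. It concerns the kernel $G_0=\ker\rho$: you claim that in characteristic~$3$ the subgroup $\mathcal{H}_3(\mathbb{F}_3)\subset\operatorname{Aut}(S)$ for the surface~\eqref{eq1.1} acts trivially on the $27$ lines, and that the bound $216$ should be read as $|G_0|\cdot|\rho(\operatorname{Aut}(S))|=27\cdot 8$. This is false: the representation of $\operatorname{Aut}(S)$ on $\mathrm{Pic}(S)$ is faithful for a smooth cubic surface in \emph{every} characteristic (this is Theorem~\ref{t5} together with Remark~\ref{r3}; the usual argument --- an automorphism acting trivially on $\mathrm{Pic}$ descends to an automorphism of $\mathbb{P}^2$ fixing six points in general position, and $\mathrm{PGL}_3$ acts simply transitively on quadruples of points in general position --- uses nothing about the characteristic). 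Concretely, Lemma~\ref{l33} places the order-$3$ elements of $\mathcal{H}_3(\mathbb{F}_3)\rtimes\mathbb{Z}/8\mathbb{Z}$ in the nontrivial conjugacy classes $A_2^2$ and $A_2^3$ of $W(\mathrm{E}_6)$, so the whole group of order $216$ embeds into $W(\mathrm{E}_6)$. What is special about characteristic~$3$ is not a kernel but the existence of additional automorphisms (the unipotent-type elements~\eqref{eq5.1}) with no analogue in characteristic zero, together with the degeneration of the Fermat surface.

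Beyond that, the parts of the argument that actually carry the content of the theorem --- the conjugacy-enumeration of liftable subgroups of $W(\mathrm{E}_6)$, the verification of the gaps between $648$, $216$, $192$, $120$, $108$ and $54$, and the rigidity of the extremal surfaces --- are exactly the parts you label as ``the hard part'' and leave unexecuted, so the proposal is a roadmap for reconstructing \cite{8} rather than a proof. If the aim is only to justify the paper's use of the statement, the correct move is simply to cite \cite{8} and \cite{11}, as the paper does.
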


Let us return to examples of cubic surfaces. The interesting feature of~\eqref{eq1.2} is that, up to isomorphism, it is the unique smooth cubic surface over~$\mathbb{F}_{2^{2k+1}}$ for any $k \in \mathbb{Z}_{\geqslant 0}$ with automorphism group of maximum order (see~\cite{18}, Theorem~1.4). This automorphism group is isomorphic to $\mathfrak{S}_6$. Moreover, by Corollary~1.7 in~\cite{18}, over~$\mathbb{F}_{4^{k}}$ for any $k \in \mathbb{Z}_{\geqslant 0}$ the surface~\eqref{eq1.2} is isomorphic to the Fermat cubic surface. As a~result, over a~finite field of characteristic~$2$ it is the smooth cubic surface with the largest automorphism group. To sum up, we have the following result.

\begin{theorem}[{\cite{18}, Theorem~1.4}]
\label{t2}
Let $S$ be a~smooth cubic surface over a~finite field~$\mathbf{F}$ of characteristic~$2$.

{\rm (i)}
If\/ $\mathbf{F}=\mathbb{F}_{4^k}$, then $|{\operatorname{Aut}(S)}| \leqslant 25\, 920$. Moreover,
$$
|{\operatorname{Aut}(S)}| = 25\, 920
$$
if and only if
$\operatorname{Aut}(S) \simeq \mathrm{PSU}_4(\mathbb{F}_2)$
and $S$ is isomorphic to the Fermat cubic surface.

{\rm (ii)}
If\/ $\mathbf{F}=\mathbb{F}_{2^{2k+1}}$, then $|{\operatorname{Aut}(S)}| \leqslant 720$. Moreover,
$$
|{\operatorname{Aut}(S)}| = 720
$$
if and only if $\operatorname{Aut}(S) \simeq \mathfrak{S}_6$ and $S$ is isomorphic to cubic surface~\eqref{eq1.2}.
\end{theorem}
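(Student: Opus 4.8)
The plan is to deduce both statements from the geometric classification of Theorem~\ref{t1}(i) together with Galois descent, exploiting the arithmetic fact that $\mathbb{F}_{2^n}$ contains the primitive cube roots of unity if and only if $n$ is even, i.e. if and only if $\mathbb{F}_{2^n}=\mathbb{F}_{4^k}$. For any smooth cubic surface $S$ over $\mathbf{F}$, base change to an algebraic closure $\bar{\mathbf{F}}$ gives an injection $\operatorname{Aut}(S)\hookrightarrow\operatorname{Aut}(S_{\bar{\mathbf{F}}})$, so part $(\mathrm{i})'$ of Theorem~\ref{t1} already yields $|\operatorname{Aut}(S)|\leqslant 25\,920$ and shows that the only way to exceed $192$ is for $S_{\bar{\mathbf{F}}}$ to be the geometric Fermat surface, with $\operatorname{Aut}(S_{\bar{\mathbf{F}}})\simeq\mathrm{PSU}_4(\mathbb{F}_2)$. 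I will use throughout that the action on the $27$ lines identifies $\operatorname{Aut}(S_{\bar{\mathbf{F}}})$ faithfully with the simple index-two subgroup $W(E_6)^+\simeq\mathrm{PSU}_4(\mathbb{F}_2)$ of the Weyl group $W(E_6)$, and that by Galois descent $\operatorname{Aut}(S)$ is exactly the subgroup of $\operatorname{Aut}(S_{\bar{\mathbf{F}}})$ whose action on $\operatorname{Pic}(S_{\bar{\mathbf{F}}})$ commutes with the Frobenius element $\phi\in W(E_6)$; that is, $\operatorname{Aut}(S)\simeq C_{W(E_6)^+}(\phi)$.

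For part (i) I would first exhibit the extremal surface. Over $\mathbb{F}_4$ one has $x^3=x\cdot x^2=x\bar x$, where $\bar{\phantom{x}}$ denotes the nontrivial element of $\operatorname{Gal}(\mathbb{F}_4/\mathbb{F}_2)$, so the Fermat equation is a nondegenerate Hermitian form and the Fermat surface is the Hermitian surface; since every automorphism of a smooth cubic surface is induced by $\mathrm{PGL}_4$, its automorphism group over $\mathbb{F}_{4^k}$ is the full projective unitary group $\mathrm{PGU}_4(\mathbb{F}_2)=\mathrm{PSU}_4(\mathbb{F}_2)$ of order $25\,920$, and equality is attained. For uniqueness, suppose $|\operatorname{Aut}(S)|=25\,920$. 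Then every geometric automorphism is rational, so $\phi$ centralises all of $W(E_6)^+$; since $W(E_6)^+$ is simple, nonabelian and $W(E_6)$ has trivial centre, its centraliser in $W(E_6)$ is trivial, whence $\phi=1$, the Frobenius acts trivially on $\operatorname{Pic}(S_{\bar{\mathbf{F}}})$, and $S$ is the split form, i.e. the Fermat surface itself.

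For part (ii) the extra input is that over $\mathbb{F}_{2^{2k+1}}$ no form of the Fermat surface can have all automorphisms rational: the diagonal automorphisms require a primitive cube root $\omega$, and Frobenius sends $\omega\mapsto\omega^{2^{2k+1}}=\omega^{2}=\bar\omega$, so $\phi\neq1$. Because a non-Fermat geometric model forces $|\operatorname{Aut}(S)|\leqslant192<720$ by part $(\mathrm{i})''$ of Theorem~\ref{t1}, it suffices to treat forms of the Fermat surface, each of which over a finite field determines a Frobenius conjugacy class $[\phi]$ in $W(E_6)$ with $\operatorname{Aut}(S)\simeq C_{W(E_6)^+}(\phi)$. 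I would determine the classes $[\phi]$ realisable over $\mathbb{F}_{2^{2k+1}}$ and compute the orders of the corresponding centralisers, showing that the maximum is $720$, is attained by a $\phi$ with $C_{W(E_6)^+}(\phi)\simeq\mathfrak{S}_6$, and that this value is uniquely attained. Finally I would match the extremal form with surface~\eqref{eq1.2}: since \eqref{eq1.2} becomes isomorphic to the Fermat surface over $\mathbb{F}_{4^k}$ (Corollary~1.7 of~\cite{18}), it is a form of the Fermat surface, and its Frobenius class over $\mathbb{F}_{2^{2k+1}}$ realises $C_{W(E_6)^+}(\phi)\simeq\mathfrak{S}_6$.

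The main obstacle is the group-theoretic heart of part (ii): controlling all Frobenius classes that occur over an odd power of $\mathbb{F}_2$ and proving that $720$ is simultaneously an upper bound for, and uniquely attained among, the corresponding centralisers in $\mathrm{PSU}_4(\mathbb{F}_2)$. This demands the arithmetic restriction on the admissible $[\phi]$ (in particular ruling out $\phi=1$ and any $\phi$ whose centraliser has order strictly between $720$ and $25\,920$), together with the explicit identification of the extremal class with the concrete surface~\eqref{eq1.2}. The subsidiary points to verify are the faithfulness of the action on $\operatorname{Pic}$ for these highly symmetric surfaces and the compatibility of the descent isomorphism $\operatorname{Aut}(S)\simeq C_{W(E_6)^+}(\phi)$ with the classification of forms.
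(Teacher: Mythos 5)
First, note that the paper does not prove this statement at all: Theorem~\ref{t2} is imported verbatim from~\cite{18}, Theorem~1.4, and is used as an input elsewhere (the paper's own route to the characteristic-$2$ case of Theorem~\ref{t3} goes through Lemmas~\ref{l47} and~\ref{l49}, Corollary~\ref{c6} and the uniqueness of the cubic with a $\mathbb{Z}/5\mathbb{Z}$-action, rather than through an enumeration of Frobenius classes). So your attempt can only be judged on its own merits. Your framework is the correct and standard one: faithfulness of the action on $\operatorname{Pic}$, the identification $\operatorname{Aut}(S)\simeq C_{\operatorname{Aut}(S_{\overline{\mathbf{F}}})}(\phi)$ over a finite field, and the classification of forms by $H^1(\hat{\mathbb{Z}},\operatorname{Aut}(S_{0,\overline{\mathbf{F}}}))$. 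Part~(i) essentially goes through, with one imprecision: from $\phi=1$ you conclude ``$S$ is the split form, i.e.\ the Fermat surface itself''. Being split (all $27$ lines rational) does not by itself single out a surface -- split cubic surfaces form a positive-dimensional family. What you actually need is that $S$ is a priori a twisted form of the Fermat surface, that the twisted Frobenius acts on $\operatorname{Pic}$ by $g\phi_0$ with $\phi_0=1$ (all automorphisms of the Fermat surface being defined over $\mathbb{F}_4$), and that faithfulness of the $\operatorname{Pic}$ representation then forces the cocycle $g$ to be trivial. That is a fixable but real logical step.

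The genuine gap is in part~(ii), and you have flagged it yourself: the entire group-theoretic core -- determining which Frobenius classes occur over $\mathbb{F}_{2^{2k+1}}$, bounding the corresponding centralizers by $720$, showing the bound is attained only for the class with $C_{W(\mathrm{E}_6)^+}(\phi)\simeq\mathfrak{S}_6$, and matching that class with the surface~\eqref{eq1.2} -- is announced as a plan rather than carried out. Two specific points make this nontrivial. First, your argument that $\phi\neq 1$ (via $\omega\mapsto\omega^2$) is not enough: you must show that the Frobenius of the Fermat surface over $\mathbb{F}_{2^{2k+1}}$ lies \emph{outside} $W(\mathrm{E}_6)^+$, for otherwise Lang's theorem would produce a split twist over $\mathbb{F}_{2^{2k+1}}$ with automorphism group of order $25\,920$, contradicting the claim. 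This parity statement (the Frobenius lies in the class $A_1$, a reflection, whence $C_{W(\mathrm{E}_6)}(\phi)\simeq\mathbb{Z}/2\mathbb{Z}\times\mathfrak{S}_6$ and $C_{W(\mathrm{E}_6)^+}(\phi)\simeq\mathfrak{S}_6$ of order $720$) must be verified on the configuration of lines. Second, the upper bound requires checking \emph{all} conjugacy classes in the odd coset $W(\mathrm{E}_6)\setminus W(\mathrm{E}_6)^+$ -- not only involutions -- and confirming that every centralizer other than that of $A_1$ has order at most $1440$ in $W(\mathrm{E}_6)$, together with a uniqueness argument identifying the extremal form with~\eqref{eq1.2}. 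Until these computations are supplied, part~(ii) is a programme, not a proof.
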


The goal of this paper is to find the largest automorphism group of a~smooth cubic surface over an arbitrary field. So we are going to prove the following theorem.

\begin{theorem}
\label{t3}
Let $S$ be a~smooth cubic surface over a~field $\mathbf{F}$.

Assume that the characteristic of $\mathbf{F}$ is equal to $2$.

$(\mathrm{i})_2$ If\/ $\mathbf{F}$ contains a~nontrivial cube root of unity, then $|{\operatorname{Aut}(S)}| \leqslant 25\, 920$. Moreover, $|{\operatorname{Aut}(S)}| = 25\, 920$ if and only if
$$
\operatorname{Aut}(S) \simeq \mathrm{PSU}_4(\mathbb{F}_2)
$$
and $S$ is isomorphic to the Fermat cubic surface.

$(\mathrm{ii})_2$
If\/ $\mathbf{F}$ does not contain a~nontrivial cube root of unity, then
$|{\operatorname{Aut}(S)}| \leqslant 720$. Moreover, $|{\operatorname{Aut}(S)}| = 720$ if and only if
$$
\operatorname{Aut}(S) \simeq \mathfrak{S}_6
$$
and $S$ is isomorphic to the surface~\eqref{eq1.2}.

Assume that the characteristic of $\mathbf{F}$ is equal to $3$.

$(\mathrm{i})_3$  If $\mathbf{F}$ contains a~nontrivial fourth root of unity, then $|{\operatorname{Aut}(S)}| \leqslant 216$. Moreover,~\mbox{$|{\operatorname{Aut}(S)}| = 216$} if and only if
$$
\operatorname{Aut}(S) \simeq \mathcal{H}_3(\mathbb{F}_3) \rtimes \mathbb{Z}/8\mathbb{Z}
$$
and $S$ is isomorphic to the smooth surface~\eqref{eq1.1}.

$(\mathrm{ii})_3$ If\/ $\mathbf{F}$ contains no nontrivial fourth roots of unity, then $|{\operatorname{Aut}(S)}| \leqslant 120$. Moreover, $|{\operatorname{Aut}(S)}| = 120$ if and only if
$$
\operatorname{Aut}(S) \simeq \mathfrak{S}_5
$$
and $S$ is isomorphic to the Clebsch cubic surface.

Assume that the characteristic of $\mathbf{F}$ is equal to $5$.

$(\mathrm{i})_5$ If\/ $\mathbf{F}$ contains a~nontrivial cube root of unity, then $|{\operatorname{Aut}(S)}| \leqslant 648$. Moreover,~\mbox{$|{\operatorname{Aut}(S)}| = 648$} if and only if
$$
\operatorname{Aut}(S) \simeq (\mathbb{Z}/3\mathbb{Z})^3 \rtimes \mathfrak{S}_4
$$
and $S$ is isomorphic to the Fermat cubic surface.

$(\mathrm{ii})_5$  If $\mathbf{F}$ contains no nontrivial cube roots of unity, then $|{\operatorname{Aut}(S)}| \leqslant 72$. Moreover,~\mbox{$|{\operatorname{Aut}(S)}| = 72$} if and only if
$$
\operatorname{Aut}(S) \simeq (\mathbb{Z}/3\mathbb{Z})^2 \rtimes \mathfrak{D}_4
$$
and $S$ is isomorphic to the surface~\eqref{eq1.2}.

\goodbreak

Assume that the characteristic of $\mathbf{F}$ is different from $2$, $3$ and~$5$.

$(\mathrm{i})_{\neq 2,3,5}$ If\/ $\mathbf{F}$ contains a~nontrivial cube root of unity, then $|{\operatorname{Aut}(S)}| \leqslant 648$. Moreover,~$|{\operatorname{Aut}(S)}| = 648$ if and only if
$$
\operatorname{Aut}(S) \simeq (\mathbb{Z}/3\mathbb{Z})^3 \rtimes \mathfrak{S}_4
$$
and $S$ is isomorphic to the Fermat cubic surface.

$(\mathrm{ii})_{\neq 2,3,5}$ If\/ $\mathbf{F}$ contains no nontrivial cube roots of unity, then $|{\operatorname{Aut}(S)}| \leqslant 120$. Moreover, $|{\operatorname{Aut}(S)}| = 120$ if and only if
$$
\operatorname{Aut}(S) \simeq \mathfrak{S}_5
$$
and $S$ is isomorphic to the Clebsch cubic surface.
\end{theorem}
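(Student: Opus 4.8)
The plan is to deduce the theorem from the geometric classification of Theorem~\ref{t1}, supplemented in characteristic~$2$ by Theorem~\ref{t2}, via Galois descent. Put $\bar S=S_{\bar{\mathbf{F}}}$ and $\Gamma=\operatorname{Gal}(\bar{\mathbf{F}}/\mathbf{F})$. Base change identifies $\operatorname{Aut}(S)$ with the subgroup of those $\phi\in\operatorname{Aut}(\bar S)$ that are defined over $\mathbf{F}$, equivalently that are fixed by the ($S$-dependent) action of $\Gamma$ on $\operatorname{Aut}(\bar S)$; in particular $\operatorname{Aut}(S)\hookrightarrow\operatorname{Aut}(\bar S)$, so $|\operatorname{Aut}(S)|$ divides $|\operatorname{Aut}(\bar S)|$ and the bounds of Theorem~\ref{t1} apply verbatim as upper bounds for $|\operatorname{Aut}(S)|$. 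The content of the proof is to sharpen these to the stated second value whenever the indicated root of unity is absent, and to show that the extremal bound is realized by a single $\mathbf{F}$-isomorphism class.

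First I would eliminate every geometric type except the unique ``champion'' of each characteristic, namely the Fermat surface for characteristic $\ne 3$ and the surface~\eqref{eq1.1} for characteristic~$3$. By Theorem~\ref{t1} (and Theorem~\ref{t2}) every other smooth cubic surface already satisfies $|\operatorname{Aut}(\bar S)|\le$ the claimed second value, whence $|\operatorname{Aut}(S)|\le$ that value with no arithmetic input. It therefore remains to analyse the $\mathbf{F}$-forms $S$ with $\bar S$ isomorphic to the champion and to decide precisely when its large geometric automorphism group descends to~$\mathbf{F}$.

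The decisive mechanism is the action of $\Gamma$ on the ``diagonal'' part of $\operatorname{Aut}(\bar S)$. For the Fermat surface this diagonal part is the group $T\simeq(\mathbb{Z}/3\mathbb{Z})^3$ of coordinate scalings by cube roots of unity (sitting inside the larger group $\mathrm{PSU}_4(\mathbb{F}_2)$ in characteristic~$2$), whereas for~\eqref{eq1.1} the cyclic factor $\mathbb{Z}/8\mathbb{Z}$ is generated by a matrix whose entries require a primitive fourth root of unity. On the standard model the nontrivial element of $\operatorname{Gal}(\mathbf{F}(\omega)/\mathbf{F})$ sends $\omega\mapsto\omega^{-1}$ and hence acts on $T$ by inversion; as $T$ is a $3$-group, inversion fixes only the identity, so when $\omega\notin\mathbf{F}$ the whole diagonal part fails to descend and the champion group drops below the second value. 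Conversely, when the relevant root of unity is present I would write the champion in its standard model over the prime field (or over $\mathbf{F}(\omega)$) and check directly that every geometric automorphism is represented by a matrix with entries in $\mathbf{F}$, so that the full geometric group descends, the upper bound is attained, and the extremal surface is exactly the asserted one.

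The main obstacle is that $S$ need not be the standard champion but only a twist of it, so the $\Gamma$-action on the diagonal part is inversion composed with a permutation and may acquire nontrivial fixed points. For the Fermat surface I would control this through the exact sequence $1\to T\to\operatorname{Aut}(\bar S)\to\mathfrak{S}_4\to 1$: any $\sigma\in\Gamma$ with $\sigma(\omega)=\omega^{-1}$ acts on $T$ by $t\mapsto w(t)^{-1}$ for some $w\in\mathfrak{S}_4$, and since $\operatorname{Aut}(S)$ lies in the $\sigma$-invariants this gives $|\operatorname{Aut}(S)|\le|T^{w}|\cdot|C_{\mathfrak{S}_4}(w)|$ with $T^{w}=\{t\in T:w(t)=t^{-1}\}$; running through the conjugacy classes of $w$ bounds this product by $9\cdot 8=72$, attained only for $w$ of cycle type $(2,2)$, where $C_{\mathfrak{S}_4}(w)\simeq\mathfrak{D}_4$. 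In characteristic $\ne 2,3,5$ this $72$ is strictly below $120$, so the value $120$ is realized only by the Clebsch surface; in characteristic~$5$, where the Clebsch surface is singular, the $(2,2)$-twist attains the second value $72$, realized by surface~\eqref{eq1.2} with group $(\mathbb{Z}/3\mathbb{Z})^2\rtimes\mathfrak{D}_4$; and in characteristic~$2$ the same fixed-point bookkeeping inside $\mathrm{PSU}_4(\mathbb{F}_2)$ yields $720=|\mathfrak{S}_6|$, again via~\eqref{eq1.2}. In characteristic~$3$ the corresponding involution sends the order-eight generator $g$ to $g^{3}$ (Frobenius on the eighth roots of unity in $\mathbb{F}_9$), which is an outer automorphism of $\mathcal{H}_3(\mathbb{F}_3)\rtimes\mathbb{Z}/8\mathbb{Z}$; hence when $\sqrt{-1}\notin\mathbf{F}$ every twist of~\eqref{eq1.1} has a proper invariant subgroup, of order dividing $216$ and therefore at most $108<120$, so the Clebsch surface is once more the unique surface reaching $120$. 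Finally, for uniqueness I would observe that attaining the second value forces the entire relevant invariant subgroup to descend, which rigidifies the defining cocycle; descending an explicit geometric isomorphism onto the standard model then shows that the extremal surface is unique up to $\mathbf{F}$-isomorphism.
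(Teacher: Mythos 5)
Your reduction to ``forms of the champion'' plus the fixed-point computation on the diagonal torus $T\simeq(\mathbb{Z}/3\mathbb{Z})^3$ is a genuinely nice piece: the bound $|\operatorname{Aut}(S)|\leqslant|T^{w}|\cdot|C_{\mathfrak{S}_4}(w)|\leqslant 72$ for twisted forms of the Fermat surface when $\omega\notin\mathbf{F}$ is correct and is sharper than what the paper extracts from its conjugacy-class argument (Corollary~\ref{c3}, Lemmas~\ref{l17}, \ref{l45}, \ref{l46}, \ref{l51}). However, the proposal has three genuine gaps. First, the opening claim that ``every other smooth cubic surface already satisfies $|\operatorname{Aut}(\bar S)|\le$ the claimed second value'' is false in characteristic~$5$: by Theorem~\ref{t1},\,$(\mathrm{iii})''$, there is a geometric type with $\operatorname{Aut}(\bar S)\simeq\mathcal{H}_3(\mathbb{F}_3)\rtimes\mathbb{Z}/4\mathbb{Z}$ of order $108>72$, and your torus mechanism does not apply to it. The paper disposes of it separately (Lemma~\ref{l52}) by observing that this group contains elements in the class $A_2^3$, which cannot be defined over a field without $\omega$ (Corollary~\ref{c2}), forcing $|\operatorname{Aut}(S)|\leqslant 54$. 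Second, in characteristic~$2$ the phrase ``the same fixed-point bookkeeping inside $\mathrm{PSU}_4(\mathbb{F}_2)$'' is not an argument: $T$ is not normal there, so the semidirect-product computation does not transfer, and ruling out the maximal subgroup $(\mathbb{Z}/2\mathbb{Z})^4\rtimes\mathfrak{A}_5$ of order $960$ (and the order-$648$ subgroups not containing $T$) requires real input. The paper uses the maximal-subgroup list (Lemma~\ref{l9}), the triviality of the Galois image for such large groups (Lemma~\ref{l11}), and then the action of an order-$5$ element on an invariant line combined with Beauville's criterion for $\mathbb{Z}/5\mathbb{Z}\subset\mathrm{PGL}_2(\mathbf{F})$ (Lemmas~\ref{l10}, \ref{l12}, \ref{l13}) to get the bound $720$.

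Third, uniqueness of the extremal surface is asserted (``rigidifies the defining cocycle'') but never proved, and this is where most of the paper's work lives: for the Clebsch surface one must show that any $S$ with $\mathfrak{S}_5\subseteq\operatorname{Aut}(S)$, or even just $\mathbb{Z}/5\mathbb{Z}$ with small Galois image, blows down equivariantly to the split del~Pezzo surface of degree~$5$ and reconstructs uniquely (Lemmas~\ref{l19}--\ref{l23}, Corollary~\ref{c5}); for the Fermat surface one must show the coefficients of the diagonal form $x^3+ay^3+bz^3+ct^3$ are cubes in $\mathbf{F}$, which uses the rationality of the $27$ lines (Lemma~\ref{l44}); for~\eqref{eq1.1} one needs the pencil-of-planes computation showing $\alpha\in\mathbf{F}^8$ (Lemmas~\ref{l30}--\ref{l32}, \ref{l35}); and for~\eqref{eq1.2} in characteristic~$5$ an explicit invariant-theory computation (Lemmas~\ref{l53}, \ref{l54}). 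None of these steps is supplied by ``descending an explicit geometric isomorphism.'' A smaller but real issue is the characteristic-$3$ case: your claim that the Galois action sends an order-$8$ automorphism $g$ of an arbitrary twist to $g^3$ presupposes that the eigenvalues of $g$ generate $\mathbf{F}(\zeta_8)$ and vary Galois-equivariantly; establishing this is exactly the content of the paper's cuspidal-cubic argument (Lemmas~\ref{l37}--\ref{l39}), so it cannot be taken for granted.
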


Note that in~\cite{16} a~classification of automorphism groups of smooth cubic surfaces over all fields of characteristic zero was presented. One can deduce Theorem~\ref{t3} for fields of characteristic zero from these results (see~\cite{16}, Theorem~1.1).

Theorem~\ref{t3} generalizes Theorem~\ref{t2}. In particular, from Theorem~\ref{t3} we immediately deduce Corollary~\ref{c1}, which, in combination with Theorem~\ref{t2}, completes the classification of the largest automorphism groups of smooth cubic surfaces over finite fields.

\begin{corollary}
\label{c1}
Let $S$ be a~smooth cubic surface over a~finite field $\mathbf{F}$ of odd characteristic.

Assume that the characteristic of $\mathbf{F}$ is equal to $3$.

$(\mathrm{i})_3$ If\/ $\mathbf{F}=\mathbb{F}_{9^k}$, then $|{\operatorname{Aut}(S)}| \leqslant 216$. Moreover,$|{\operatorname{Aut}(S)}| = 216$ if and only if
$$
\operatorname{Aut}(S) \simeq \mathcal{H}_3(\mathbb{F}_3) \rtimes \mathbb{Z}/8\mathbb{Z}
$$
 and $S$ is isomorphic to the smooth cubic surface~\eqref{eq1.1}.

$(\mathrm{ii})_3$
If\/ $\mathbf{F}=\mathbb{F}_{3^{2k+1}}$, then $|{\operatorname{Aut}(S)}| \leqslant 120$. Moreover, $|{\operatorname{Aut}(S)}| = 120$ if and only if 
$$
\operatorname{Aut}(S) \simeq \mathfrak{S}_5
$$
 and $S$ is isomorphic to the Clebsch cubic surface.

Assume that the characteristic of $\mathbf{F}$ is equal to~$5$.

$(\mathrm{i})_5$ If\/ $\mathbf{F}=\mathbb{F}_{25^k}$, then $|{\operatorname{Aut}(S)}| \leqslant 648$. Moreover, $|{\operatorname{Aut}(S)}| = 648$ if and only if 
$$
\operatorname{Aut}(S) \simeq (\mathbb{Z}/3\mathbb{Z})^3 \rtimes \mathfrak{S}_4
$$
 and $S$ is isomorphic to the Fermat cubic surface.

$(\mathrm{ii})_5$ If\/ $\mathbf{F}=\mathbb{F}_{5^{2k+1}}$, then $|{\operatorname{Aut}(S)}| \leqslant 72$. Moreover, $|{\operatorname{Aut}(S)}| = 72$ if and only if 
$$
\operatorname{Aut}(S) \simeq (\mathbb{Z}/3\mathbb{Z})^2 \rtimes \mathfrak{D}_4
$$
 and $S$ is isomorphic to the surface~\eqref{eq1.2}.

\goodbreak

Assume that the characteristic of $\mathbf{F}$ is at least~$7$.

$(\mathrm{i})_{\geqslant 7}$ If the cardinality of $\mathbf{F}$ is congruent to $1$ modulo $3$, then $|{\operatorname{Aut}(S)}| \leqslant 648$. Moreover, $|{\operatorname{Aut}(S)}| = 648$ if and only if
$$
\operatorname{Aut}(S) \simeq (\mathbb{Z}/3\mathbb{Z})^3 \rtimes \mathfrak{S}_4
$$
and $S$ is isomorphic to the Fermat cubic surface.

\goodbreak

$(\mathrm{ii})_{\geqslant 7}$ If the cardinality of $\mathbf{F}$ is congruent to~$2$ modulo~$3$, then $|{\operatorname{Aut}(S)}| \leqslant 120$. Moreover, $|{\operatorname{Aut}(S)}| = 120$ if and only if
$$
\operatorname{Aut}(S) \simeq \mathfrak{S}_5
$$
and $S$ is isomorphic to the Clebsch cubic surface.
\end{corollary}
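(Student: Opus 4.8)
The plan is to deduce Corollary~\ref{c1} directly from Theorem~\ref{t3}, since the corollary is nothing but the specialization of Theorem~\ref{t3} to finite fields of odd characteristic. In Theorem~\ref{t3} the dichotomy in each characteristic is governed by whether $\mathbf{F}$ contains a nontrivial cube root of unity (in characteristics $2$, $5$, and those different from $2,3,5$) or a nontrivial fourth root of unity (in characteristic $3$). The only work to be done is to translate these root-of-unity conditions into the explicit cardinality conditions appearing in the corollary. For this I would use the standard fact that the multiplicative group $\mathbf{F}^{\times}$ of a finite field $\mathbf{F}=\mathbb{F}_q$ is cyclic of order $q-1$, so that $\mathbb{F}_q$ contains a primitive $m$-th root of unity if and only if $m \mid q-1$.

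First I would treat characteristic $3$. Writing $q=3^n$, a nontrivial fourth root of unity exists if and only if $4 \mid 3^n-1$. Since $3 \equiv -1 \pmod 4$, we have $3^n \equiv (-1)^n \pmod 4$, so this holds exactly when $n$ is even, that is, when $\mathbf{F}=\mathbb{F}_{9^k}$. Hence case $(\mathrm{i})_3$ of the corollary coincides with case $(\mathrm{i})_3$ of Theorem~\ref{t3}, and the complementary field $\mathbb{F}_{3^{2k+1}}$ (odd $n$) falls under case $(\mathrm{ii})_3$. The bounds $216$ and $120$, the groups $\mathcal{H}_3(\mathbb{F}_3)\rtimes\mathbb{Z}/8\mathbb{Z}$ and $\mathfrak{S}_5$, and the surfaces~\eqref{eq1.1} and the Clebsch cubic then carry over verbatim.

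Next I would handle characteristic $5$: writing $q=5^n$, a nontrivial cube root of unity exists if and only if $3 \mid 5^n-1$, and since $5 \equiv -1 \pmod 3$ this holds precisely when $n$ is even, i.e.\ when $\mathbf{F}=\mathbb{F}_{25^k}$, yielding case $(\mathrm{i})_5$; odd $n$ gives $\mathbb{F}_{5^{2k+1}}$ and case $(\mathrm{ii})_5$. Finally, for characteristic at least $7$ the relevant condition $3 \mid q-1$ is literally $q \equiv 1 \pmod 3$, which is the hypothesis of $(\mathrm{i})_{\geqslant 7}$, while $q \equiv 2 \pmod 3$ gives $(\mathrm{ii})_{\geqslant 7}$. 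Invoking Theorem~\ref{t3} in each case then produces the stated bounds, groups, and surfaces. There is essentially no hard step here; the only thing to watch is that each congruence is matched against the correct clause of Theorem~\ref{t3}, and in particular that the two root-absent cases are not conflated, since characteristic $5$ produces $(\mathbb{Z}/3\mathbb{Z})^2 \rtimes \mathfrak{D}_4$ of order $72$, whereas characteristic at least $7$ produces $\mathfrak{S}_5$ of order $120$.
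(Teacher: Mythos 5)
Your proposal is correct and matches the paper's approach: the paper gives no separate proof, stating only that Corollary~\ref{c1} is immediately deduced from Theorem~\ref{t3}, and your argument supplies exactly the intended translation of the root-of-unity hypotheses into cardinality conditions via the cyclicity of $\mathbb{F}_q^{\times}$. All the congruence computations ($4 \mid 3^n-1$ iff $n$ even, $3 \mid 5^n-1$ iff $n$ even, $3 \mid q-1$ iff $q \equiv 1 \pmod 3$) are right, and each case is matched to the correct clause of Theorem~\ref{t3}.
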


In the following proposition we study isomorphism relations among the Fermat cubic surface, the Clebsch cubic surface and the surfaces~\eqref{eq1.1} and~\eqref{eq1.2} over fields of arbitrary characteristic.

\begin{proposition}
\label{p1}
Let $\mathbf{F}$ be a~field. Then the following hold.

{\rm (i)} The Fermat cubic surface is singular (in fact, nonreduced) if and only if\/ $\mathbf{F}$~is of characteristic~$3$. The Clebsch cubic surface is singular if and only if\/ $\mathbf{F}$ has characteristic~$5$. The cubic surface~\eqref{eq1.1} is singular if and only if\/ $\mathbf{F}$ has characteristic~$2$. The cubic surface~\eqref{eq1.2} is singular if and only if\/ $\mathbf{F}$ has characteristic~$3$.

{\rm (ii)} The surface~\eqref{eq1.1} is isomorphic neither to the Fermat cubic surface, nor to the Clebsch cubic surface, nor to the surface~\eqref{eq1.2}.

{\rm (iii)} If\/ $\mathbf{F}$ is of characteristic~$2$, then the surface~\eqref{eq1.2} and the Clebsch cubic surface are isomorphic.

{\rm (iv)} If\/ $\mathbf{F}$ is of characteristic different from~$3$, then the surface~\eqref{eq1.2} is isomorphic to the Fermat cubic surface if and only if\/ $\mathbf{F}$ contains a~nontrivial cube root of unity.

{\rm (v)} If\/ $\mathbf{F}$ is of characteristic different from~$2$, then the Clebsch cubic surface is isomorphic neither to the Fermat cubic surface, nor to the surface~\eqref{eq1.2}.
\end{proposition}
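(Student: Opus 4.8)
The plan is to establish part~(i) by the Jacobian criterion and then to reduce the remaining parts to it, combined with a comparison of automorphism-group orders over $\bar{\mathbf{F}}$ and a few explicit coordinate changes. For part~(i) I would write down the four partial derivatives of each cubic form and locate their common zeros on the surface. The point is that in each exceptional characteristic the form degenerates visibly: in characteristic~$3$ one has $x^3+y^3+z^3+t^3=(x+y+z+t)^3$, so the Fermat surface is a triple plane and hence nonreduced; for the Clebsch surface the point $[1:1:1:1:1]$ lies on both defining equations exactly when $5=0$, and one checks it is then singular; for~\eqref{eq1.1} the gradient is $(-y^2+2xz,\,-2xy,\,2tz+x^2,\,3t^2+z^2)$, which vanishes at $[0:0:1:1]\in S$ precisely in characteristic~$2$; and for~\eqref{eq1.2} the gradient factors in characteristic~$3$ as $\partial_x=t(t-x)$, $\partial_y=z(z-y)$, $\partial_z=y(y-z)$, $\partial_t=x(x-t)$, which forces the singular point $[1:-1:-1:1]$. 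In every other characteristic a short elimination shows that the ideal generated by the form and its partials is irrelevant, so the surface is smooth.

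For the non-isomorphism statements in parts~(ii) and~(v) I would use that an $\mathbf{F}$-isomorphism induces a $\bar{\mathbf{F}}$-isomorphism, so it suffices to separate the surfaces over $\bar{\mathbf{F}}$ by two isomorphism invariants: smoothness (read off from part~(i)) and the order of $\operatorname{Aut}(S_{\bar{\mathbf{F}}})$. In characteristic~$2$ the surface~\eqref{eq1.1} is singular while the other three are smooth. In characteristic~$3$ the Fermat surface is nonreduced and~\eqref{eq1.2} is singular, while~\eqref{eq1.1} and the Clebsch surface are smooth with automorphism groups of orders $216$ and $120$ by Theorem~\ref{t1}. In the remaining characteristics all four are smooth, the Clebsch surface has automorphism group of order $120$, the Fermat surface (and hence, by part~(iv), the surface~\eqref{eq1.2} over $\bar{\mathbf{F}}$, where a cube root of unity is always available) has order $648$, while~\eqref{eq1.1} has order $108$. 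Since the relevant orders are pairwise distinct, no two of the surfaces in question are isomorphic over $\bar{\mathbf{F}}$, and once part~(iv) is known part~(v) follows immediately by the same tabulation.

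For the positive statements I would exhibit explicit linear substitutions. In characteristic~$2$ (part~(iii)) one eliminates $w$ from the Clebsch equations and produces an $\mathbf{F}$-linear change of variables, defined already over the prime field and so requiring no root of unity, that carries the resulting cubic into~\eqref{eq1.2}. For the ``if'' direction of part~(iv), a primitive cube root of unity $\omega\in\mathbf{F}$ lets one diagonalize~\eqref{eq1.2} into Fermat form by an explicit substitution with coefficients in $\mathbf{F}(\omega)=\mathbf{F}$.

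The main obstacle is the ``only if'' direction of part~(iv): in characteristic different from~$3$ with no nontrivial cube root of unity in $\mathbf{F}$, the surface~\eqref{eq1.2} must be shown non-isomorphic to the Fermat surface over $\mathbf{F}$, even though it is isomorphic over $\bar{\mathbf{F}}$. Thus~\eqref{eq1.2} is a nontrivial $\mathbf{F}$-form of the Fermat surface, and I would detect this through the Galois action on the $27$ lines, using the number of $\mathbf{F}$-rational lines as an isomorphism invariant. For the Fermat surface these lines are $\{x+\zeta y=0,\ z+\eta t=0\}$ (and the analogues for the other two partitions of the coordinates), with $\zeta^3=\eta^3=1$; when $\omega\notin\mathbf{F}$ the nontrivial element of $\operatorname{Gal}(\mathbf{F}(\omega)/\mathbf{F})$ sends $(\zeta,\eta)\mapsto(\zeta^{2},\eta^{2})$ and so fixes exactly the three lines with $\zeta=\eta=1$, giving the Fermat surface precisely three $\mathbf{F}$-rational lines. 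I would then carry out the corresponding count for~\eqref{eq1.2} and show it differs (equivalently, that the conjugacy class of the Galois image in $W(E_6)$ is different), which forces the two surfaces to be non-isomorphic over $\mathbf{F}$. This line count for~\eqref{eq1.2} is the one genuinely arithmetic step, and it is where the bulk of the work lies.
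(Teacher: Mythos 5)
Your plan follows the paper's own proof quite closely for parts (i), (ii), (iv) and (v): part (i) is the Jacobian check (your singular points $[1:1:1:1:1]$, $[0:0:1:1]$, $[1:-1:-1:1]$ and the identity $x^3+y^3+z^3+t^3=(x+y+z+t)^3$ in characteristic $3$ are exactly right); parts (ii) and (v) are done in the paper by the same two invariants, smoothness plus $|\operatorname{Aut}(S_{\mathbf{F}^{\mathrm{sep}}})|$; the ``if'' half of (iv) is the same explicit collineation with $\omega$; and the ``only if'' half of (iv) is precisely the paper's Lemma~\ref{l55}, which counts $\mathbf{F}$-rational lines and finds exactly three on the Fermat surface. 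One reassurance: the step you single out as ``where the bulk of the work lies'' is in fact trivial --- for the surface~\eqref{eq1.2} one simply exhibits four or more lines cut out by pairs of coordinate (or prime-field) hyperplanes, e.g.\ $\{x=z=0\}$, $\{y=z=0\}$, $\{x=t=0\}$, $\{x=z+t=0\}$, so the counts $3$ versus $\geqslant 4$ differ at once. One slip: the automorphism group of~\eqref{eq1.1} over a separably closed field of characteristic $\neq 2,3$ is cyclic of order $8$ (Lemma~\ref{l57}, from~\cite{7}, Theorem~9.5.8), not of order $108$; the group of order $108$ belongs to a different surface in characteristic~$5$. Since $8$ is still distinct from $120$ and $648$, your separation argument survives unchanged.

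The one genuine divergence, and the only real gap, is part (iii). You propose to eliminate $w$ and then exhibit an $\mathbb{F}_2$-linear substitution carrying the Clebsch cubic to~\eqref{eq1.2}, but you never produce it, and nothing in your plan explains why one should exist: over $\overline{\mathbb{F}}_2$ both surfaces are the Fermat surface, so the issue is precisely whether they are the \emph{same} twist over $\mathbb{F}_2$, which is the statement being proved. As written this step is circular. The paper instead proves a uniqueness statement (Lemma~\ref{l23}): any smooth cubic surface with a regular $\mathbb{Z}/5\mathbb{Z}$-action whose Galois image in $W(\mathrm{E}_6)$ lies in a $\mathbb{Z}/2\mathbb{Z}$ is the Clebsch surface, shown by $\mathbb{Z}/5\mathbb{Z}$-equivariantly blowing down two invariant skew lines to the degree-$5$ del Pezzo surface with all lines rational; combined with the Galois-image computations for~\eqref{eq1.2} from~\cite{18} (Lemma~\ref{l25}) this yields (iii) with no coordinate search. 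Your route is salvageable --- an isomorphism of cubic surfaces is induced by a projectivity, $\mathrm{PGL}_4(\mathbb{F}_2)$ is finite, and a suitable element does exist --- but to make it a proof you must actually display the substitution or supply a structural reason, such as the paper's, for its existence.
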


For the reader's convenience we rephrase the contents of Theorem~\ref{t3}, together with the additional information from Proposition~\ref{p1}, in Table~\ref{tab1}.

\begin{table}[H]
\begin{center}
\setlength\extrarowheight{5pt}
\begin{tabular}{|c|c|c|c|}
\hline
 & \text{Largest} & \text{Order} & \text{Cubic surface with} \\
\text{Field} & \text{automorphism} & \text{of $G$} & \text{ automorphism group $G$} \\
 & \text{group $G$} & &  \\
\hline
$char(\mathbf{F})=2, \, \omega \in \mathbf{F}$ & $\mathrm{PSU}_4(\mathbb{F}_2)$ & $25\,920$ & \text{Fermat cubic surface} $=$~\eqref{eq1.2}$=$ \\[5pt]
& & & $=$ \text{Clebsch cubic surface} \\[5pt]
\hline
$char(\mathbf{F})=2, \, \omega \notin \mathbf{F}$ & $\mathfrak{S}_6$ & $720$ & \text{Clebsch cubic surface} $=$~\eqref{eq1.2} \\[5pt]
\hline
$char(\mathbf{F})=3, \, \mathbf{i} \in \mathbf{F} $ & $\mathcal{H}_3(\mathbb{F}_3) \rtimes \mathbb{Z}/8\mathbb{Z}$ & $216$ & \eqref{eq1.1}\\[5pt]
\hline
$char(\mathbf{F})=3, \, \mathbf{i} \notin \mathbf{F} $ & $\mathfrak{S}_5$ & $120$ & \text{Clebsch cubic surface} \\[5pt]
\hline
$char(\mathbf{F})=5, \, \omega \in \mathbf{F} $ & $ (\mathbb{Z}/3\mathbb{Z})^3 \rtimes \mathfrak{S}_4$ & $648$ & \text{Fermat cubic surface} $=$~\eqref{eq1.2} \\[5pt]
\hline
$char(\mathbf{F})=5, \, \omega \notin \mathbf{F} $ & $ (\mathbb{Z}/3\mathbb{Z})^2 \rtimes \mathfrak{D}_4$ & $72$ & \eqref{eq1.2}\\[5pt]
\hline
$char(\mathbf{F}) \neq 2,3,5, \, \omega \in \mathbf{F} $ & $ (\mathbb{Z}/3\mathbb{Z})^3 \rtimes \mathfrak{S}_4$ & $648$ & \text{Fermat cubic surface} $=$~\eqref{eq1.2} \\[5pt]
\hline
$char(\mathbf{F}) \neq 2,3,5, \, \omega \notin \mathbf{F} $ & $\mathfrak{S}_5$ & $120$ & \text{Clebsch cubic surface}  \\[5pt]

\hline
\end{tabular}
\vspace*{3mm}
\caption{Table of the largest automorphism groups.}
\label{tab1}
\end{center}
\end{table}

Here $\omega$ is a~nontrivial cube root of unity and $\mathbf{i}$ is a~primitive fourth root of unity.

We describe the structure of the paper. In \S\,\ref{s2} we prove some
auxiliary lemmas and recall some facts about the Weyl group
$W(\mathrm{E}_6)$, its subgroups and elements. In~\S\,\ref{s3} we recall some
basic facts about smooth cubic surfaces and discuss their automorphisms. In
\S\,\ref{s4} we study smooth cubic surfaces with a~regular action
of~$\mathfrak{S}_5$ and prove that such cubic surfaces are isomorphic to the
Clebsch cubic surface. In \S\,\ref{s5} we study smooth cubic surfaces and
their automorphisms over fields of characteristic~$3$. In \S\,\ref{s6} we
study smooth cubic surfaces with a~regular action
of~$(\mathbb{Z}/3\mathbb{Z})^3 \rtimes \mathfrak{S}_4$ and prove that such
cubic surfaces are isomorphic to the Fermat cubic surface. In~\S\,\ref{s7} we
study smooth cubic surfaces and their automorphism groups over fields that do
not contain a~nontrivial cube root of unity. In \S\,\ref{s8} we prove
Theorem~\ref{t3} and Proposition~\ref{p1}.

\subsection*{Notation}
Let $X$ be a~variety defined over a~field $\mathbf{F}$. If $\mathbf{F} \subset \mathbf{L}$ is an extension of~$\mathbf{F}$, then we denote by $X_{\mathbf{L}}$ the variety
$$
X_{\mathbf{L}}=X \times_{\mathrm{Spec}(\mathbf{F})}\mathrm{Spec}(\mathbf{L})
$$
over $\mathbf{L}$. We denote the $\mathbf{F}$-rational points
on~$X$ by $X(\mathbf{F})$, the algebraic closure of~$\mathbf{F}$ by $\overline{\mathbf{F}}$ and the separable closure
of $\mathbf{F}$ by $\mathbf{F}^{\mathrm{sep}}$. We let $\mathfrak{S}_n$ denote the symmetric group of
degree~$n$, $\mathfrak{A}_n$ denote the alternating group of degree~$n$
and $\mathfrak{D}_n$ denote the dihedral group of order~$2n$. We denote the Heisenberg group of upper
triangular~$3\times 3$ matrices over the field~$\mathbb{F}_3$ with diagonal
entries equal to~$1$ by
$\mathcal{H}_3(\mathbb{F}_3)$.

We denote a~nontrivial cube root of unity by $\omega$,
 a~primitive fourth root of unity by $\mathbf{i}$, and a~primitive $n$th root of unity by $\zeta_n$.

\subsection*{Acknowledgment}
The author would like to extend her sincere gratitude to C.\,A.~Shramov for proposing the problem, his constant support and for teaching the author how to write readable mathematical papers. Also the author would like to thank A.\,S.~Trepalin for the deep reading of the paper and useful advice. Finally the~author would like to extend her deepest gratitude to the referees for reading~the manuscript carefully and giving extremely important and detailed comments, which helped her to generalize the result of the rough draft.

\section{Preliminaries}
\label{s2}

In this section we prove some auxiliary lemmas and recall some facts concerning the Weyl group $W(\mathrm{E}_6)$. Let us start with a~general observation on del~Pezzo surfaces of small degree; almost everywhere below we use it without explicit reference.

\begin{theorem}[{cf.~\cite{13}, Theorem 4.2, and~\cite{15}, Lemma~7.1}]
\label{t4}
Let $X$ be a~del~Pezzo surface of degree~$d \leqslant 5$ over a~field~$\mathbf{F}$. Then the following holds.
\begin{enumerate}
\item[{\rm (i)}] $\operatorname{Aut}(X_{\overline{\mathbf{F}}}) \simeq \operatorname{Aut}(X_{\mathbf{F}^{\mathrm{sep}}})$.

\item[{\rm (ii)}] The lines on $X_{\overline{\mathbf{F}}}$ are defined over $\mathbf{F}^{\mathrm{sep}}$.

\item[{\rm (iii)}] $\mathrm{Pic}(X_{\overline{\mathbf{F}}}) \simeq \mathrm{Pic}(X_{\mathbf{F}^{\mathrm{sep}}})$.
\end{enumerate}
\end{theorem}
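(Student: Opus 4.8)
The plan is to reduce all three statements to a single principle: a finite-type $\mathbf{F}$-scheme that is étale over $\mathbf{F}$ (equivalently, whose geometric fibre is reduced) is a finite disjoint union of spectra of finite separable extensions of $\mathbf{F}$, so all of its geometric points are already rational over $\mathbf{F}^{\mathrm{sep}}$. In characteristic $0$ there is nothing to prove, since $\mathbf{F}^{\mathrm{sep}} = \overline{\mathbf{F}}$; the whole content lies in the purely inseparable extension $\overline{\mathbf{F}}/\mathbf{F}^{\mathrm{sep}}$ in positive characteristic, and the task is to exclude genuinely inseparable (infinitesimal) behaviour. The degree hypothesis $d \leqslant 5$ enters exactly through the vanishing of global vector fields $H^0(X_{\overline{\mathbf{F}}}, T_{X_{\overline{\mathbf{F}}}}) = 0$, which fails for $d \geqslant 6$.

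For (ii) I would work with the Hilbert scheme $\mathrm{Hilb}(X/\mathbf{F})$ relative to the anticanonical polarization and isolate the open-and-closed subscheme $Z$ parametrizing the lines, i.e.\ the $(-1)$-curves, which have a fixed Hilbert polynomial. Each geometric point $[L]$ of $Z$ is a smooth rational curve with normal bundle $N_{L/X} \cong \mathcal{O}_{\mathbb{P}^1}(-1)$, so its Zariski tangent space $H^0(L, N_{L/X}) = H^0(\mathbb{P}^1, \mathcal{O}(-1))$ vanishes and $[L]$ is an isolated reduced point. As there are finitely many lines, $Z$ is finite over $\mathbf{F}$, and geometric reducedness forces $Z$ to be étale; hence every line is defined over $\mathbf{F}^{\mathrm{sep}}$.

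For (iii) I would combine (ii) with the classical fact that for a del~Pezzo surface of degree $\leqslant 7$ the classes of the lines generate $\mathrm{Pic}(X_{\overline{\mathbf{F}}})$. The restriction map $\mathrm{Pic}(X_{\mathbf{F}^{\mathrm{sep}}}) \to \mathrm{Pic}(X_{\overline{\mathbf{F}}})$ is injective (a line bundle becoming trivial after a purely inseparable base change is already trivial), and by (ii) its image contains all of these generating classes, so it is surjective, hence an isomorphism. Alternatively one invokes that $\mathrm{Pic}_{X/\mathbf{F}}$ is étale, since $H^1(X,\mathcal{O}_X)=0$ makes $\mathrm{Pic}^0$ trivial, and applies the same principle directly.

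For (i), which I expect to be the main obstacle, I would pass to the automorphism group scheme $\underline{\operatorname{Aut}}(X)$, which is representable and of finite type over $\mathbf{F}$ and satisfies $\underline{\operatorname{Aut}}(X)(\mathbf{L}) = \operatorname{Aut}(X_{\mathbf{L}})$ by base-change compatibility. Its Lie algebra is $H^0(X_{\overline{\mathbf{F}}}, T_{X_{\overline{\mathbf{F}}}})$, which vanishes for $d \leqslant 5$; together with the classical finiteness of $\operatorname{Aut}(X_{\overline{\mathbf{F}}})$ this shows $\underline{\operatorname{Aut}}(X)$ is finite with trivial Lie algebra, hence étale. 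Therefore $\operatorname{Aut}(X_{\overline{\mathbf{F}}}) = \underline{\operatorname{Aut}}(X)(\overline{\mathbf{F}}) = \underline{\operatorname{Aut}}(X)(\mathbf{F}^{\mathrm{sep}}) = \operatorname{Aut}(X_{\mathbf{F}^{\mathrm{sep}}})$. The delicate point, and the reason the degree bound is indispensable, is precisely the vanishing of global vector fields: it is what excludes infinitesimal automorphisms and guarantees that no automorphism lives only over a purely inseparable extension. In characteristic $p$ one must still argue that a connected finite group scheme with trivial Lie algebra is trivial, so that $\underline{\operatorname{Aut}}(X)^0$ is trivial and $\underline{\operatorname{Aut}}(X)$ is genuinely étale rather than merely zero-dimensional.
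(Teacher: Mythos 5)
Your proposal follows essentially the same route as the paper: both arguments show that the automorphism scheme and the Hilbert scheme of lines are finite with vanishing Zariski tangent spaces (via $H^0(X_{\overline{\mathbf{F}}},T_{X_{\overline{\mathbf{F}}}})=0$ and $H^0(l,\mathcal{N}_{l/X})=0$ for a $(-1)$-curve), hence \'etale, so all their points are defined over $\mathbf{F}^{\mathrm{sep}}$. The only substantive item the paper supplies that you merely assert is the proof that $H^0(X_{\overline{\mathbf{F}}},T_{X_{\overline{\mathbf{F}}}})=0$ for $d\leqslant 5$ (there it is deduced from $c_2(T_{\mathbb{P}^2})=3$ and the fact that $X_{\overline{\mathbf{F}}}$ is a blowup of $9-d>3$ points), so you should include that computation; otherwise the argument is complete.
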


\begin{proof}
The first two assertions can be proved as follow. We denote by $\mathcal{S}$ either~$\operatorname{Aut}(X_{\overline{\mathbf{F}}})$, or the Hilbert scheme~$\mathcal{H}ilb_{l,X_{\overline{\mathbf{F}}}}$ of lines on $X_{\overline{\mathbf{F}}}$. The scheme $\mathcal{S}$ is zero-\allowbreak dimensional. First of all, we show that $\mathcal{S}$ is a~smooth scheme. Then applying Corollary~2.2.13 from~\cite{3} we see that the set of points defined over $\mathbf{F}^{\mathrm{sep}}$ on~$\mathcal{S}$ is dense. Since $\mathcal{S}$ is finite, all points are defined over $\mathbf{F}^{\mathrm{sep}}$.

Let us prove the smoothness of $\mathcal{S}$. It is sufficient to prove that the Zariski tangent space $T_p\mathcal{S}$ to $\mathcal{S}$ at any point $p \in \mathcal{S}$ satisfies the equality
\begin{equation}
\label{eq2.1}
\dim T_p \mathcal{S}=0.
\end{equation}

Assume that $\mathcal{S}=\operatorname{Aut}(X_{\overline{\mathbf{F}}})$. It is well known that
$$
T_p \operatorname{Aut}(X_{\overline{\mathbf{F}}}) \simeq H^0(X_{\overline{\mathbf{F}}}, TX_{\overline{\mathbf{F}}}),
$$
where $TX_{\overline{\mathbf{F}}}$ is the tangent bundle of $X_{\overline{\mathbf{F}}}$. Since $X_{\overline{\mathbf{F}}}$ is a~del~Pezzo surface of degree~$d$ not exceeding $5$ over an algebraically closed field, it is a~blowup of $9-d>3$ points. The second Chern class of $T{\mathbb{P}^2}$ is equal to~$3$. Therefore, there are~no~global~sections~of the~tangent bundle of $\mathbb{P}^2$ vanishing at $9-d$ points. Since~the~global~sections of~$TX_{\overline{\mathbf{F}}}$ correspond to global sections on $T\mathbb{P}^2$ vanishing at $9-d$ points, we get that $ H^0(X_{\overline{\mathbf{F}}}, TX_{\overline{\mathbf{F}}})=0$. Therefore, we obtain equality~\eqref{eq2.1} and, as a~result, the smoothness of the scheme $\operatorname{Aut}(X_{\overline{\mathbf{F}}})$.

Assume that $\mathcal{S}=\mathcal{H}ilb_{l,X_{\overline{\mathbf{F}}}}$. It is well known that
$$
T_l \mathcal{H}ilb_{l,X_{\overline{\mathbf{F}}}} \simeq H^0(l, \mathcal{N}_{l/X_{\overline{\mathbf{F}}}}),
$$
where $\mathcal{N}_{l/X_{\overline{\mathbf{F}}}}$ is the normal bundle to the line $l$ in $X_{\overline{\mathbf{F}}}$. The degree of the bundle~$\mathcal{N}_{l/X_{\overline{\mathbf{F}}}}$ is equal to~$-1$. Therefore, $H^0(X_{\overline{\mathbf{F}}}, TX_{\overline{\mathbf{F}}})=0$. So we obtain~\eqref{eq2.1} and, as a~result, the smoothness of the scheme $\mathcal{H}ilb_{l,X_{\overline{\mathbf{F}}}}$.

Assertion~(iii) follows from~(ii).

The theorem is proved.
\end{proof}

\begin{remark}
\label{r1}
From Theorem~\ref{t4},\,(i), we immediately get that Theorem~\ref{t1} holds under the assumption that the field $\mathbf{F}$ is separably closed instead of algebraically closed.
\end{remark}

Below we study conjugate elements in
$\mathrm{PGL}_n(\overline{\mathbf{F}})$ over~$\mathbf{F}$
and~$\mathbf{F}^{\mathrm{sep}}$. First of all, we need the following
auxiliary lemma about conjugate matrices over $\mathbf{F}$ and over an
extension $\mathbf{F} \subset \mathbf{L}$.

\begin{lemma}
\label{l1}
Let $\mathbf{F} \subset \mathbf{L}$ be a~field extension. Let us consider two matrices $A$ and~$B$  in~$\mathrm{Mat}_{n \times n}(\mathbf{F})$. Assume that they are conjugate in $\mathrm{Mat}_{n \times n}(\mathbf{L})$. Then they are conjugate in $\mathrm{Mat}_{n \times n}(\mathbf{F})$.
\end{lemma}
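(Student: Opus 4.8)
The plan is to reduce the statement to the theory of the rational canonical form, which works uniformly over every field and thereby sidesteps any difficulty arising when $\mathbf{F}$ is finite. Recall that for any field $\mathbf{K}$ two matrices in $\mathrm{Mat}_{n \times n}(\mathbf{K})$ are conjugate if and only if the characteristic matrices $xI - A$ and $xI - B$ have the same Smith normal form over the polynomial ring $\mathbf{K}[x]$; equivalently, if and only if they share the same system of invariant factors. This is the standard consequence of the structure theorem for finitely generated modules over the principal ideal domain $\mathbf{K}[x]$, where one views $\mathbf{K}^n$ as a $\mathbf{K}[x]$-module with $x$ acting by $A$ (respectively by $B$).

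First I would record that the invariant factors of $xI - A$ are intrinsic to the coefficient field, in the following sense. They are determined by the determinantal divisors $d_k(x)$, the monic generator of the ideal generated by all $k \times k$ minors of $xI - A$, for $1 \leqslant k \leqslant n$. Since $A$ has entries in $\mathbf{F}$, each such minor lies in $\mathbf{F}[x]$, and the greatest common divisor of a finite family of polynomials in $\mathbf{F}[x]$ is computed by the Euclidean algorithm entirely within $\mathbf{F}[x]$; it therefore does not change upon passing to the larger ring $\mathbf{L}[x]$. Consequently the determinantal divisors $d_k(x)$, and hence the invariant factors $d_k(x)/d_{k-1}(x)$, of $xI - A$ agree whether computed over $\mathbf{F}$ or over $\mathbf{L}$, and likewise for $B$.

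The conclusion then follows by a short chain of equivalences. By hypothesis $A$ and $B$ are conjugate over $\mathbf{L}$, so $xI - A$ and $xI - B$ have equal invariant factors over $\mathbf{L}[x]$. By the previous paragraph these invariant factors coincide with the ones computed over $\mathbf{F}[x]$, so $xI - A$ and $xI - B$ already have equal invariant factors over $\mathbf{F}[x]$. Applying the characterization of conjugacy in the reverse direction over the field $\mathbf{F}$, we conclude that $A$ and $B$ are conjugate in $\mathrm{Mat}_{n \times n}(\mathbf{F})$.

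I expect no serious obstacle, since the argument is entirely formal once the rational canonical form is invoked; the only point requiring care is the insistence on the invariant-factor formulation rather than a naive dimension count. One might be tempted to argue instead that the $\mathbf{F}$-variety of matrices $X$ with $XA = BX$ is cut out by linear equations over $\mathbf{F}$ and carries an $\mathbf{L}$-point with $\det X \neq 0$, and then to produce an $\mathbf{F}$-point with nonvanishing determinant; but this shortcut fails when $\mathbf{F}$ is finite, because a nonzero polynomial over a finite field can vanish at every $\mathbf{F}$-point. The invariant-factor approach avoids this issue and treats all fields on an equal footing.
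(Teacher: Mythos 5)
Your argument is correct. It rests on the same underlying structure theorem for finitely generated modules over the PID $\mathbf{K}[x]$ as the paper's proof, but you execute the reduction differently. The paper works with the elementary divisor (primary) decomposition: it writes $A$ and $B$ as multiplication by $t$ on direct sums $\bigoplus \mathbf{F}[t]/f_i^{u_i}(t)$, tensors these up to $\mathbf{L}$, and matches the resulting prime-power factors against the common decomposition over $\mathbf{L}$ to conclude that the two $\mathbf{F}$-decompositions already agree; this requires tracking how each irreducible $f_i$ over $\mathbf{F}$ splits over $\mathbf{L}$. You instead use the invariant-factor formulation via the Smith normal form of $xI-A$, and your key observation --- that the determinantal divisors are gcds of minors lying in $\mathbf{F}[x]$, and that gcds in $\mathbf{F}[x]$ are unchanged upon passage to $\mathbf{L}[x]$ --- makes the field-extension step completely transparent and avoids any discussion of factorization of irreducibles over $\mathbf{L}$ (including the mildly delicate inseparable case). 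Your closing remark about why the naive linear-algebra/Zariski-density shortcut fails over finite fields is also apt and explains why both you and the paper reach for module theory. In short: same theorem invoked, different and arguably cleaner bookkeeping.
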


\begin{proof}
Consider the matrices $A$ and $B$ as linear operators on vector spaces~$\mathbf{F}^n$ and~$\mathbf{L}^n$. As $A$ and $B$ are conjugate over $\mathbf{L}$, by Theorem~15.1 in~\cite{10} these operators are conjugate to multiplication by $t$ in the direct sum of residue modules
\begin{equation}
\label{eq2.2}
\frac{\mathbf{L}[t]}{p_1^{m_1}(t)} \oplus \dots \oplus \frac{\mathbf{L}[t]}{p_k^{m_k}(t)},
\end{equation}
where the $p_i(t)$ for $i=1, \dots, k$ are monic irreducible polynomials over
$\mathbf{L}$. Note that it is not required that all the $p_i(t)$ be different. By Corollary 15.4 in~\cite{10} the characteristic polynomial of both $A$ and $B$ is equal to
$$
p_1^{m_1}(t) \dotsb p_k^{m_k}(t).
$$

By Theorem 15.1 in~\cite{10} the operator $A$ over $\mathbf{F}$ is conjugate to multiplication by~$t$ in the direct sum of residue modules
\begin{equation}
\label{eq2.3}
\frac{\mathbf{F}[t]}{f_1^{u_1}(t)} \oplus \dots \oplus \frac{\mathbf{F}[t]}{f_s^{u_s}(t)}
\end{equation}
and the operator $B$ over $\mathbf{F}$ is conjugate to multiplication by $t$ in the direct sum of residue modules
\begin{equation}
\label{eq2.4}
\frac{\mathbf{F}[t]}{g_1^{v_1}(t)} \oplus \dots \oplus \frac{\mathbf{F}[t]}{g_r^{v_r}(t)},
\end{equation}
where $f_i(t)$ and $g_j(t)$ for $i=1, \dots, s$ and $j=1, \dots, r$ are monic irreducible polynomials over $\mathbf{F}$. Since
$$
p_1^{m_1}(t) \dotsb p_k^{m_k}(t)=f_1^{u_1}(t) \dotsb f_s^{u_s}(t) = g_1^{v_1}(t) \dotsb g_r^{v_r}(t),
$$
we get that the sets of different irreducible polynomials among $f_1(t), \dots, f_s(t)$ and among $g_1(t), \dots, g_r(t)$ coincide.
After tensoring both~\eqref{eq2.3} and~\eqref{eq2.4} by $\mathbf{L}$ we obtain~the direct sum~\eqref{eq2.2}. This means that for any irreducible polynomial over~$\mathbf{F}$ the~sets of powers
in which it appears in~\eqref{eq2.3} and~\eqref{eq2.4} coincide. This means that the sums~\eqref{eq2.3} and~\eqref{eq2.4} are isomorphic, so the matrices $A$ and $B$ are conjugate in~$\mathrm{Mat}_{n \times n}(\mathbf{F})$.

The lemma is proved.
\end{proof}

\begin{remark}
\label{r2}
Recall what conjugacy in the projective linear group means. Let $\mathbf{F}$ be a~field. Consider the natural homomorphism
$$
\mathrm{GL}_n(\mathbf{F}) \stackrel{\rho}{\twoheadrightarrow} \mathrm{PGL}_n(\mathbf{F}).
$$
Let $a$ and $b$ be two elements of $\mathrm{PGL}_n(\mathbf{F})$, and let $A \in \rho^{-1}(a)$ and $B \in \rho^{-1}(b)$. Two elements $a,b \in \mathrm{PGL}_n(\mathbf{F})$ are conjugate if there are $\lambda \in \mathbf{F}^*$ and $C \in \mathrm{GL}_n(\mathbf{F})$ such that
$$
 CAC^{-1}=\lambda B.
$$
\end{remark}

\begin{lemma}
\label{l2}
Let $\mathbf{F}$ be a~field of characteristic~$p \neq 3$. Let $\mathbf{F} \subset \mathbf{L}$ be a~field extension. Let $a$ and $b$ be two elements of order~$3$ in $\mathrm{PGL}_4(\mathbf{F})$ such that they are conjugate in~$\mathrm{PGL}_4(\mathbf{L})$. Then they are conjugate in $\mathrm{PGL}_4(\mathbf{F})$.
\end{lemma}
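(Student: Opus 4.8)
The plan is to reduce the projective statement to the linear one already handled by Lemma~\ref{l1}, the only real difficulty being to control the scalar factor coming from Remark~\ref{r2}. I would fix lifts $A,B\in\mathrm{GL}_4(\mathbf{F})$ of $a$ and $b$. Since $a,b$ have order~$3$ in $\mathrm{PGL}_4(\mathbf{F})$, the matrices $A,B$ are non-scalar and satisfy $A^3=\mu_A I$, $B^3=\mu_B I$ for some $\mu_A,\mu_B\in\mathbf{F}^*$. By Remark~\ref{r2} the hypothesis provides $\lambda\in\mathbf{L}^*$ and $D\in\mathrm{GL}_4(\mathbf{L})$ with $DAD^{-1}=\lambda B$; cubing this identity gives $\lambda^3=\mu_A/\mu_B\in\mathbf{F}^*$, so (as $\mathrm{char}\,\mathbf{F}\neq 3$) $\lambda$ is a root of the separable polynomial $t^3-\mu_A/\mu_B$ and hence lies in $\mathbf{F}^{\mathrm{sep}}$.

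The goal then becomes to prove that $\lambda\in\mathbf{F}$: once this is known, $A$ and $\lambda B$ are matrices over $\mathbf{F}$ that are conjugate over $\mathbf{L}$, so Lemma~\ref{l1} makes them conjugate over $\mathbf{F}$, and by Remark~\ref{r2} this says exactly that $a$ and $b$ are conjugate in $\mathrm{PGL}_4(\mathbf{F})$. Because $\mathrm{char}\,\mathbf{F}\neq 3$, the matrices $A$ and $B$ are diagonalizable over $\overline{\mathbf{F}}$, with eigenvalues lying in the three classes $\{\beta,\beta\omega,\beta\omega^2\}$, where $\beta^3=\mu_B$. The key point I would establish is a \emph{uniqueness} statement: $\lambda$ is the only scalar $c\in\overline{\mathbf{F}}^*$ for which $cB$ and $A$ have the same eigenvalue multiset. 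Indeed, if $cB$ and $c'B$ both match $A$, then $c'/c$ preserves the eigenvalue multiset of $B$; such a scalar must send some eigenvalue $\beta\omega^{j}$ to another eigenvalue of $B$, forcing $c'/c\in\{1,\omega,\omega^2\}$, and it must fix the multiplicity vector $(b_0,b_1,b_2)$ under the corresponding cyclic shift. Since $b_0+b_1+b_2=4$ is not divisible by $3$, this vector is not invariant under any nontrivial cyclic shift, so $c'/c=1$.

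With uniqueness in hand, Galois descent finishes the argument. The eigenvalue multisets of $A$ and $B$ are stable under $\mathrm{Gal}(\mathbf{F}^{\mathrm{sep}}/\mathbf{F})$, since the characteristic polynomials lie in $\mathbf{F}[t]$ and their roots are separable. The relation $DAD^{-1}=\lambda B$ gives $\lambda\cdot\mathrm{eig}(B)=\mathrm{eig}(A)$ as multisets in $\mathbf{F}^{\mathrm{sep}}$. Applying any $\sigma\in\mathrm{Gal}(\mathbf{F}^{\mathrm{sep}}/\mathbf{F})$, and using that $\sigma$ permutes $\mathrm{eig}(A)$ and $\mathrm{eig}(B)$ separately, shows that $\sigma(\lambda)$ also matches $A$; by uniqueness $\sigma(\lambda)=\lambda$, whence $\lambda\in\mathbf{F}$, as required.

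I expect the main obstacle to be the uniqueness statement in the second paragraph: this is precisely where the hypothesis ``order exactly~$3$'' and the numerical fact $4\not\equiv 0\pmod 3$ are used, and one has to check that the stabilizer is trivial uniformly in the characteristic (in particular for $p=2$). Everything else is bookkeeping with Lemma~\ref{l1} and Remark~\ref{r2}. As a concrete shortcut that avoids explicit Galois theory, one may note that $\mathrm{tr}(A)=\lambda\,\mathrm{tr}(B)$, so $\lambda=\mathrm{tr}(A)/\mathrm{tr}(B)\in\mathbf{F}$ whenever $\mathrm{tr}(B)\neq0$; the uniqueness/descent argument is then only needed to cover the residual degenerate cases (such as multiplicity vector $(2,2,0)$ in characteristic~$2$) where the trace vanishes.
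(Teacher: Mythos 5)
Your proposal is correct, and the overall skeleton (lift to $\mathrm{GL}_4$, show the scalar $\lambda$ from Remark~\ref{r2} lies in $\mathbf{F}$, then invoke Lemma~\ref{l1} for $A$ and $\lambda B$) is the same as the paper's. But the crucial step --- proving $\lambda\in\mathbf{F}$ --- is handled by a genuinely different and much heavier mechanism. The paper simply takes the determinant of $CAC^{-1}=\lambda B$ to get $\lambda^4=\det(A)/\det(B)\in\mathbf{F}^*$, combines this with $\lambda^3=s/t\in\mathbf{F}^*$ obtained by cubing, and concludes $\lambda=\lambda^4/\lambda^3\in\mathbf{F}^*$ in one line; no diagonalization, no Galois theory, and no case analysis. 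Your route instead establishes a uniqueness statement for the matching scalar via the eigenvalue multisets (using that the multiplicity vector summing to $4$ admits no nontrivial invariance under the cyclic shift by $\omega$) and then descends by $\mathrm{Gal}(\mathbf{F}^{\mathrm{sep}}/\mathbf{F})$. This is valid --- the eigenvalues are roots of the separable polynomials $t^3-\mu_A$ and $t^3-\mu_B$, so the descent goes through, modulo the routine care needed because $\mathbf{L}$ is an arbitrary extension and one should work inside the separable closure of $\mathbf{F}$ in $\overline{\mathbf{L}}$ --- and it isolates the same arithmetic input ($\gcd(3,4)=1$) that the paper exploits through the exponent identity $\lambda=\lambda^{4}\cdot\lambda^{-3}$. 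Your trace shortcut $\lambda=\mathrm{tr}(A)/\mathrm{tr}(B)$ is the right instinct but fails when the trace vanishes; replacing the trace by the determinant, which never vanishes, is exactly the paper's fix and would have let you delete your second and third paragraphs entirely.
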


\begin{proof}
Consider the natural homomorphism
$$
\mathrm{GL}_4(\mathbf{F}) \stackrel{\rho}{\twoheadrightarrow} \mathrm{PGL}_4(\mathbf{F}).
$$
Let $A \in \rho^{-1}(a)$ and $B \in \rho^{-1}(b)$. As $a, b \in \mathrm{PGL}_4(\mathbf{F})$ are conjugate
in $\mathrm{PGL}_4(\mathbf{L})$, there are $\lambda \in \mathbf{L}^*$ and~$C \in \mathrm{GL}_4(\mathbf{L})$ such that
\begin{equation}
\label{eq2.5}
CAC^{-1}=\lambda B.
\end{equation}

First of all, let us prove that $\lambda \in \mathbf{F}^*$. As $a$ and $b$ are of order~$3$, we have $A^3=sE$ and $B^3=tE$, where $s,t \in \mathbf{F}^*$ and $E$ is the identity matrix. Therefore, cubing~\eqref{eq2.5} we obtain
$$
sE=A^3=\lambda^3 B^3=\lambda^3 tE.
$$

This means that
\begin{equation}
\label{eq2.6}
\lambda^3=\frac{s}{t} \in \mathbf{F}^*.
\end{equation}
Taking the determinant of~\eqref{eq2.5} we obtain the equality
$$
 \lambda^4 \det(B)=\det(A).
$$
This means that
\begin{equation}
\label{eq2.7}
\lambda^4=\frac{\det(A)}{\det(B)} \in \mathbf{F}^*.
\end{equation}

Dividing~\eqref{eq2.7} by~\eqref{eq2.6} we see that
$$
\lambda=\frac{t\det(A)}{s\det(B)} \in \mathbf{F}^*.
$$

Fix some $\lambda$ satisfying~\eqref{eq2.5}. Applying Lemma~\ref{l1} to the matrices $A$ and~$\lambda B$ we get that there
exists $C \in \mathrm{GL}_4(\mathbf{F})$ such that~\eqref{eq2.5} holds. This means that the elements~$a$ and~$b$ are conjugate in $\mathrm{PGL}_4(\mathbf{F})$.

Lemma~\ref{l2} is proved.
\end{proof}

Recall the results from~\cite{9}, Table 9, or~\cite{17}, Theorem~1, about the conjugacy classes of elements of
 the Weyl group~$W(\mathrm{E}_6)$.

\begin{lemma}
\label{l3}
The conjugacy classes of elements of the Weyl group~$W(\mathrm{E}_6)$ of orders~$2$,~$3$,~$5$,~$8$,~$9$ and~$10$, their
cardinalities, the orders of their centralizers, the sets of eigenvalues of the representation on the root
lattice $\mathrm{E}_6$ are as presented in Table~\ref{tab2}.
\end{lemma}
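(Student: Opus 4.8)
The statement is a compilation of tabulated data, so the plan is not to discover anything new but to assemble the four columns of Table~\ref{tab2} from the classification of conjugacy classes in Weyl groups and to cross-check the numerics. First I would invoke Carter's classification, which is the content of~\cite{9}, Table~9: every element $w \in W(\mathrm{E}_6)$ is conjugate to a product of two involutions and is labelled by an admissible (Carter) diagram, and the conjugacy classes are in bijection with these labels. From that table I read off, for each class, its cardinality and the characteristic polynomial of $w$ acting on the root lattice $\mathrm{E}_6 \otimes \mathbb{Q}$; the order of the centralizer is then $|W(\mathrm{E}_6)|/(\text{class size}) = 51840/(\text{class size})$, which I would record as an independent consistency check against the tabulated centralizer orders.

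Next I would isolate the classes of order $2$, $3$, $5$, $8$, $9$ and~$10$ and convert the characteristic polynomials into eigenvalue multisets. Since $w$ has finite order and acts on a rational lattice, its characteristic polynomial is a product of cyclotomic polynomials $\Phi_d(t)$ with $d \mid \operatorname{ord}(w)$, and $\operatorname{ord}(w)$ is the least common multiple of the $d$ that occur; reading off the roots of each factor gives the eigenvalue column. For instance, the regular element of order~$9$ has characteristic polynomial $\Phi_9(t)$, so its eigenvalues are the six primitive ninth roots of unity; an element of order~$8$, arising as a Coxeter element of a $\mathrm{D}_5$ subsystem, has characteristic polynomial $\Phi_8(t)\Phi_2(t)\Phi_1(t)$ and eigenvalues the four primitive eighth roots of unity together with $-1$ and~$1$; an element of order~$10$ of Carter type $\mathrm{A}_4+\mathrm{A}_1$ has characteristic polynomial $\Phi_5(t)\Phi_2(t)\Phi_1(t)$; and the classes of orders $2$, $3$ and~$5$ are handled identically, their polynomials being products of $\Phi_1$, $\Phi_2$, $\Phi_3$ and $\Phi_5$.

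Finally I would confirm completeness, namely that for each of the listed orders no class has been overlooked. For this I would compare against the degrees $2, 5, 6, 8, 9, 12$ of $W(\mathrm{E}_6)$ and Springer's theory of regular elements (so that orders $8$ and~$9$ indeed occur, being regular numbers), and against the bookkeeping that $W(\mathrm{E}_6)$ has $25$ conjugacy classes with $\sum (\text{class size}) = 51840$. The main obstacle here is not conceptual but organizational: correctly translating each Carter label into an eigenvalue multiset and keeping the cardinalities, centralizer orders and cyclotomic factorizations mutually consistent. As a safeguard I would corroborate every row against the second source~\cite{17}, Theorem~1, and against the description of $W(\mathrm{E}_6)$ as the degree-two extension of the simple group $\mathrm{PSU}_4(\mathbb{F}_2)$ of order $25\,920$, whose character table gives the trace of the six-dimensional reflection representation on each class and hence, together with the element order, pins down the eigenvalues.
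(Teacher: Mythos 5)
Your proposal is correct and matches the paper's treatment: the paper offers no independent proof of this lemma, simply reading the class labels, cardinalities, centralizer orders and eigenvalue data off Carter's table for $W(\mathrm{E}_6)$ and Swinnerton-Dyer's Theorem~1, exactly as you do (your extra consistency checks --- $|W(\mathrm{E}_6)|=51\,840$ divided by class size, cyclotomic factorizations, completeness via the $25$ classes --- all verify). The only caveat worth recording is that admissible Carter diagrams do not label conjugacy classes bijectively in every Weyl group, though they do for $\mathrm{E}_6$, so your appeal to the table itself is what actually carries the argument.
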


\begin{table}[H]
\begin{center}
\setlength\extrarowheight{5pt}
\begin{tabular}{|c|c|c|c|c|}
\hline
\text{Order of} & \text{Conjugacy class} & \text{Cardinality} & \text{Order of} & \text{Eigenvalues}\\
\text{element} &  &  &  \text{centralizer} & \\
 \hline
$2$ & $A_1$ & $36$  & $1440$ & $-1,1,1,1,1,1$  \\[5pt]
\hline
$2$ & $A_1^2$ & $270$ & $192$ & $-1,-1,1,1,1,1$ \\[5pt]
\hline
$2$ & $A_1^3$ & $540$ & $96$ & $-1,-1,-1,1,1,1$  \\[5pt]
 \hline
$2$ & $A_1^4$ & $45$ & $1152$ & $-1,-1,-1,-1,1,1$  \\[5pt]
\hline
$3$ & $A_2$ & $240$  & $216$ & $\omega,\omega^2,1,1,1,1$  \\[5pt]
\hline
$3$ & $A_2^2$ & $480$ & $108$ & $\omega,\omega,\omega^2,\omega^2,1,1$ \\[5pt]
\hline
$3$ & $A_2^3$ & $80$ & $648$ & $\omega,\omega,\omega,\omega^2,\omega^2,\omega^2$  \\[5pt]
\hline
$5$ & $A_4$ & $5184$ & $10$ & $\zeta_5,\zeta_5^2,\zeta_5^3,\zeta_5^4,1,1$  \\[5pt]
\hline
$8$ & $D_5$ & $6480$ & $8$ & $\zeta_8, \zeta_8^3, \zeta_8^5, \zeta_8^7, -1,1$  \\[5pt]
\hline
$9$ & $E_6(a_1)$ & $5760$ & $9$ & $\zeta_9, \zeta_9^2, \zeta_9^4, \zeta_9^5, \zeta_9^7, \zeta_9^8$  \\[5pt]
\hline
$10$ & $A_4 \times A_1$ & $5184$ & $10$ & $\zeta_5,\zeta_5^2,\zeta_5^3,\zeta_5^4,-1,1$  \\[5pt]
\hline
\end{tabular}
\vspace*{3mm}
\caption{Table of conjugacy classes of elements of order $2$, $3$, $5$, $8$, $9$ and~$10$ in $W(\mathrm{E}_6)$.}
\label{tab2}
\end{center}
\end{table}

There are seven conjugacy classes of elements of order~$6$ in $W(\mathrm{E}_6)$, however, we need only one
conjugacy class of elements of order~$6$, which we denote by $E_6(a_2)$. 

\begin{lemma}[{\cite{17}, Theorem~1}]
\label{l4}
Let $g$ be an element of $W(\mathrm{E}_6)$ of order~$6$ lying in the conjugacy class $E_6(a_2)$. Then its eigenvalues of the representation on the root lattice $\mathrm{E}_6$ are
$$
-\omega,\ -\omega,\ -\omega^2,\ -\omega^2,\ \omega,\ \omega^2,
$$
where $\omega \in \mathbb{C}$ is a~nontrivial cube root of unity.
\end{lemma}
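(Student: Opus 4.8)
The plan is to pin down the six eigenvalues by combining three ingredients: that they are sixth roots of unity, that the characteristic polynomial of $g$ on the root lattice $\mathrm{E}_6$ has rational (indeed integer) coefficients, and the conjugacy classes of the lower powers $g^2$ and $g^3$. Since the reflection representation of $W(\mathrm{E}_6)$ is defined over $\mathbb{Q}$, the characteristic polynomial of $g$ is a product of cyclotomic polynomials; as $g$ has order $6$, only $\Phi_1,\Phi_2,\Phi_3,\Phi_6$ can occur, so I may write it as $\Phi_1^a\Phi_2^b\Phi_3^c\Phi_6^d$ with $a+b+2c+2d=6$. The exponents $a,b,c,d$ record the multiplicities of the eigenvalues $1$, $-1$, the pair $\omega,\omega^2$, and the pair $-\omega,-\omega^2$ respectively, where I use that $\Phi_3(t)=t^2+t+1$ has roots $\omega,\omega^2$ while $\Phi_6(t)=t^2-t+1$ has roots $-\omega,-\omega^2$.

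First I would compute the eigenvalue multisets of $g^2$ and $g^3$ in terms of $a,b,c,d$ by squaring and cubing the eigenvalues of $g$. Squaring gives eigenvalue $1$ with multiplicity $a+b$ and each of $\omega,\omega^2$ with multiplicity $c+d$, while cubing gives eigenvalue $1$ with multiplicity $a+2c$ and $-1$ with multiplicity $b+2d$. Next I would invoke the power maps from \cite{17}, Theorem~1, which place the order-$3$ element $g^2$ in the class $A_2^3$ and the order-$2$ element $g^3$ in the class $A_1^4$. Reading off their eigenvalues from Table~\ref{tab2} yields $a+b=0$ and $c+d=3$ from $g^2$, and $a+2c=2$, $b+2d=4$ from $g^3$. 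Nonnegativity forces $a=b=0$, and then $c=1$, $d=2$, consistently with $c+d=3$.

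Hence the characteristic polynomial is $\Phi_3\Phi_6^2$, and the eigenvalues are $\omega,\omega^2$ together with $-\omega,-\omega^2$ each repeated twice, which is exactly the claimed multiset. The hard part is the identification of the two power classes $g^2\in A_2^3$ and $g^3\in A_1^4$: this is the step that distinguishes $E_6(a_2)$ from the other six order-$6$ classes, and it is where the cited classification \cite{17} is genuinely used (alternatively one could read off the characteristic polynomial directly). It is worth emphasizing that the rationality of the reflection representation—equivalently, the requirement that $\omega$ and $\omega^2$, and likewise $-\omega$ and $-\omega^2$, occur with equal multiplicity—is precisely what makes the passage from the eigenvalue data of $g^2$ and $g^3$ back to that of $g$ unambiguous, since a priori the separate multisets of the square and the cube need not determine the multiset of $g$ itself.
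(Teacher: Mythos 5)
Your argument is arithmetically sound: writing the characteristic polynomial as $\Phi_1^a\Phi_2^b\Phi_3^c\Phi_6^d$, the constraints $a+b=0$, $c+d=3$ from $g^2$ and $a+2c=2$, $b+2d=4$ from $g^3$ do force $(a,b,c,d)=(0,0,1,2)$, i.e.\ the characteristic polynomial $\Phi_3\Phi_6^2$ and exactly the claimed eigenvalue multiset. But your route differs from the paper's, which offers no proof at all: Lemma~\ref{l4} is a bare citation of the table in \cite{17}, Theorem~1, which lists the characteristic polynomial of each conjugacy class of $W(\mathrm{E}_6)$ on the root lattice, so the paper simply reads off the entry for $E_6(a_2)$. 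The one point you should watch is a near-circularity: the statements $g^2\in A_2^3$ and $g^3\in A_1^4$ that you feed into the computation are precisely Lemma~\ref{l5},\,(i) of the paper, and there they are \emph{deduced from} Lemma~\ref{l4}, so they cannot be borrowed from the paper itself. You correctly source them externally instead, but the same external table \cite{17} already contains the characteristic polynomial of $E_6(a_2)$ directly, so the detour through the power maps proves nothing that is not already in the citation, and the power maps themselves would in any case have to be extracted from an independent reference (e.g.\ Carter \cite{4} or the ATLAS) rather than recomputed from the eigenvalues of $g$. What your write-up does add, and is worth keeping, is the observation that rationality of the reflection representation forces $\omega$ and $\omega^2$ (respectively $-\omega$ and $-\omega^2$) to occur with equal multiplicity; this is exactly why the eigenvalue data of $g^2$ and $g^3$ suffices to reconstruct that of $g$.
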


\begin{lemma}
\label{l5}
Let $g \in W(\mathrm{E}_6)$.
\begin{enumerate}
\item[{\rm (i)}] If $g$ lies in the conjugacy class $E_6(a_2)$, then $g^2$ lies in the conjugacy class~$A_2^3$ and $g^3$ lies in the conjugacy class $A_1^4$.

\item[{\rm (ii)}] If $g$ is of order~$8$, then $g^4$ lies in the conjugacy class $A_1^4$.

\item[{\rm (iii)}] If $g$ is of order~$10$, then $g^5$ lies in the conjugacy class $A_1$.
\end{enumerate}
\end{lemma}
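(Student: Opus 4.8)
The plan is to exploit the fact that every finite-order element of $W(\mathrm{E}_6)$ acts diagonalizably on the complexified root lattice $\mathrm{E}_6 \otimes \mathbb{C}$, its minimal polynomial dividing $t^n-1$ with $n$ the order; consequently the eigenvalues of a power $g^k$ are precisely the $k$th powers of the eigenvalues of $g$ listed in Table~\ref{tab2}. For each of the three assertions I would first record the eigenvalues of $g$, then raise them to the appropriate power, and finally match the resulting multiset against Table~\ref{tab2} to read off the conjugacy class of the power.

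For part (i) I start from the eigenvalues $-\omega, -\omega, -\omega^2, -\omega^2, \omega, \omega^2$ of $g \in E_6(a_2)$ furnished by Lemma~\ref{l4}. Squaring each entry and reducing via $\omega^3 = 1$ produces the multiset $\omega, \omega, \omega, \omega^2, \omega^2, \omega^2$, while cubing each entry produces $-1,-1,-1,-1,1,1$; these coincide with the eigenvalue rows of $A_2^3$ and $A_1^4$. Parts (ii) and (iii) are handled in the same spirit: the fourth powers of $\zeta_8, \zeta_8^3, \zeta_8^5, \zeta_8^7, -1, 1$ send the four primitive eighth roots to $-1$ and both remaining entries to $1$, giving $-1,-1,-1,-1,1,1$; and the fifth powers of $\zeta_5, \zeta_5^2, \zeta_5^3, \zeta_5^4, -1, 1$ send the four primitive fifth roots to $1$ while leaving a single $-1$, giving $-1,1,1,1,1,1$.

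The step that needs genuine care is the final identification, since eigenvalues need not determine conjugacy classes in a general Weyl group. Here I would invoke the assertion of Lemma~\ref{l3} that Table~\ref{tab2} lists \emph{all} conjugacy classes of each displayed order, together with the elementary observation that among the order-$2$ classes $A_1, A_1^2, A_1^3, A_1^4$ the number of eigenvalues equal to $-1$ (respectively $1,2,3,4$) is a complete invariant, while among the order-$3$ classes $A_2, A_2^2, A_2^3$ the number of eigenvalues equal to $\omega$ (respectively $1,2,3$) distinguishes them. Since $g^2$ has order $3$ whereas $g^3$, $g^4$, $g^5$ all have order $2$, the computed multisets pin the classes down uniquely. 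The same exhaustiveness also shows that $W(\mathrm{E}_6)$ has a single conjugacy class of order $8$ and a single one of order $10$, namely $D_5$ and $A_4 \times A_1$, so that in parts (ii) and (iii) the eigenvalues of $g$ itself are unambiguously those of Table~\ref{tab2}. Beyond this matching argument the lemma reduces to routine bookkeeping with sixth, eighth, and tenth roots of unity.
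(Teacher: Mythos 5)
Your proposal is correct and follows essentially the same route as the paper: compute the eigenvalue multiset of the relevant power of $g$ from Lemmas~\ref{l3} and~\ref{l4} and match it against Table~\ref{tab2}. The only difference is that you make explicit the (correct) observation that the eigenvalue multiset separates the listed classes of a given order and that the order-$8$ and order-$10$ classes are unique, a point the paper uses implicitly.
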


\begin{proof}
This follows directly from Lemmas~\ref{l3} and~\ref{l4}. For the reader's convenience let us prove~(i). By
Lemma~\ref{l4} the set of eigenvalues of $g$ on the root lattice~$\mathrm{E}_6$ is
$$
-\omega,\ -\omega,\ -\omega^2,\ -\omega^2,\ \omega,\ \omega^2.
$$
Therefore, the set of eigenvalues of $g^2$ is
$$
\omega,\ \omega,\ \omega,\ \omega^2,\ \omega^2,\ \omega^2.
$$
By Lemma~\ref{l3} this is the set of eigenvalues of an element of the conjugacy class~$A_2^3$.
Similarly, the set of eigenvalues of $g^3$ is
$$
-1,\ -1,\ -1,\ -1,\ 1,\ 1,
$$
which is the set of eigenvalues of an element of the conjugacy class~$A_1^4$.

The proof of assertions~(ii) and~(iii) is quite analogous.

Lemma~\ref{l5} is proved.
\end{proof}

\begin{lemma}
\label{l6}
 The centralizer of an element of order~$5$ in $W(\mathrm{E}_6)$ is isomorphic to~$\mathbb{Z}/10\mathbb{Z}$.
\end{lemma}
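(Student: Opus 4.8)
The plan is to combine the numerical data of Table~\ref{tab2} with the elementary classification of groups of order~$10$. First I would record, from Lemma~\ref{l3}, that every element of order~$5$ in $W(\mathrm{E}_6)$ lies in the single conjugacy class $A_4$, and that the order of its centralizer is exactly~$10$. Since centralizers of conjugate elements are isomorphic, it suffices to determine the isomorphism type of $C = C_{W(\mathrm{E}_6)}(g)$ for one such element $g$ of order~$5$.

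There are exactly two groups of order~$10$ up to isomorphism, namely the cyclic group $\mathbb{Z}/10\mathbb{Z}$ and the dihedral group $\mathfrak{D}_5$, so the whole problem reduces to excluding the dihedral possibility. The key observation is that $g$ is automatically central in its own centralizer: by definition every element of $C$ commutes with $g$, so $g \in Z(C)$, and $g \neq e$. Hence $Z(C)$ is nontrivial. As the center of $\mathfrak{D}_5$ is trivial, this rules out $C \simeq \mathfrak{D}_5$ and forces $C \simeq \mathbb{Z}/10\mathbb{Z}$.

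As a consistency check, and an alternative that avoids invoking the classification in the form above, I would produce an explicit element of order~$10$ inside $C$. Lemma~\ref{l3} shows that the class $A_4 \times A_1$ of order-$10$ elements is nonempty; for $h$ in this class the eigenvalue list $\zeta_5,\zeta_5^2,\zeta_5^3,\zeta_5^4,-1,1$ shows that $h^2$ has eigenvalues $\zeta_5,\zeta_5^2,\zeta_5^3,\zeta_5^4,1,1$, that is, $h^2$ is an order-$5$ element of class $A_4$. Since $h$ commutes with $h^2$, the centralizer of the order-$5$ element $h^2$ contains $h$, an element of order~$10$; a group of order~$10$ containing an element of order~$10$ is necessarily cyclic. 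Conjugating $h^2$ to $g$ then transports this cyclicity to $C$.

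I do not expect a genuine obstacle here: once the centralizer order~$10$ is taken from Table~\ref{tab2}, the argument is purely group-theoretic. The only point requiring minor care is the reduction to a single representative $g$, which is legitimate because all order-$5$ elements of $W(\mathrm{E}_6)$ are conjugate and conjugation induces an isomorphism between the corresponding centralizers.
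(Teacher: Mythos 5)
Your proposal is correct and follows essentially the same route as the paper: read off the centralizer order $10$ from Table~\ref{tab2}, note that the only two groups of order~$10$ are $\mathbb{Z}/10\mathbb{Z}$ and $\mathfrak{D}_5$, and exclude the dihedral case because $g$ is a nontrivial central element of its own centralizer. The extra consistency check via an order-$10$ element in the class $A_4\times A_1$ is a nice independent confirmation but not needed.
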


\begin{proof}
By Lemma~\ref{l3} the centralizer of an element $g$ of order~$5$ in~$W(\mathrm{E}_6)$ has order~$10$. This means that the centralizer of $g$ in the Weyl group is either isomorphic to $\mathbb{Z}/10\mathbb{Z}$, or to the dihedral group of order~$10$, since these are
the only two groups of order~$10$ up to isomorphism. However, the second case is impossible because $g$ lies in its centralizer.

The lemma is proved.
\end{proof}

\begin{lemma}
\label{l7}
The centralizer of any subgroup in $W(\mathrm{E}_6)$ isomorphic to $\mathfrak{S}_5$ lies in a~subgroup isomorphic to $\mathbb{Z}/2\mathbb{Z}$.
\end{lemma}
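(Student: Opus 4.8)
The plan is to leverage the structure of the centralizer of an order-$5$ element, which is completely pinned down by Lemma~\ref{l6}, so that no classification of the $\mathfrak{S}_5$-subgroups of $W(\mathrm{E}_6)$ up to conjugacy is needed. Write $C = C_{W(\mathrm{E}_6)}(H)$ for the centralizer of a subgroup $H \simeq \mathfrak{S}_5$. Since $5$ divides $|\mathfrak{S}_5| = 120$, I first fix an element $c \in H$ of order~$5$. Any element of $W(\mathrm{E}_6)$ centralizing all of $H$ in particular centralizes $c$, so $C \subseteq C_{W(\mathrm{E}_6)}(c)$. By Lemma~\ref{l6} the latter group is cyclic of order~$10$, whence $C$ is cyclic with $|C|$ dividing~$10$, that is $|C| \in \{1,2,5,10\}$.

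The crux is then to rule out the case $5 \mid |C|$, i.e.\ to rule out an element of order~$5$ in $C$. Inside the cyclic group $C_{W(\mathrm{E}_6)}(c) \simeq \mathbb{Z}/10\mathbb{Z}$ there is a unique subgroup of order~$5$; since $c$ itself has order~$5$ and lies in $C_{W(\mathrm{E}_6)}(c)$, this subgroup is exactly $\langle c \rangle$. Hence every element of order~$5$ in $C_{W(\mathrm{E}_6)}(c)$, and a fortiori every element of order~$5$ in $C$, belongs to $\langle c \rangle \subseteq H$. So if some $g \in C$ had order~$5$, then $g \in H \cap C$; but $H \cap C$ consists precisely of the elements of $H$ commuting with all of $H$, so $H \cap C = Z(H) = Z(\mathfrak{S}_5) = \{1\}$, contradicting $\operatorname{ord}(g) = 5$.

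It follows that $|C|$ divides $10$ and is prime to~$5$, so $|C| \in \{1,2\}$, and in either case $C$ is contained in a subgroup isomorphic to $\mathbb{Z}/2\mathbb{Z}$: when $|C| = 2$ the group $C$ is itself such a subgroup, and when $|C| = 1$ the trivial group sits inside any order-$2$ subgroup, which exists since $W(\mathrm{E}_6)$ contains reflections. Granting Lemma~\ref{l6}, essentially no further work is required; the only point demanding a little care is the elementary observation that an order-$5$ element of the centralizer of $c$ must be a power of $c$ and therefore lands in $Z(H)$, which is the step where I expect the argument to be most easily mishandled.
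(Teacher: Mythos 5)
Your argument is correct and is essentially the paper's own proof, just written out in more detail: both reduce to the centralizer of an order-$5$ element $\tau\in\mathfrak{S}_5$, invoke Lemma~\ref{l6} to identify it with $\mathbb{Z}/10\mathbb{Z}$, and then exclude the order-$5$ part by observing that it is $\langle\tau\rangle$ itself, which meets the centralizer of $\mathfrak{S}_5$ trivially since $\tau\notin Z(\mathfrak{S}_5)$. The extra care you take in spelling out why an order-$5$ element of the centralizer must be a power of $\tau$ is exactly the point the paper compresses into its last sentence.
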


\begin{proof}
The centralizer $\mathrm{C}_{W(\mathrm{E}_6)}(\mathfrak{S}_5)$ of the subgroup $\mathfrak{S}_5$ in $W(\mathrm{E}_6)$ lies in the centralizer $\mathrm{C}_{W(\mathrm{E}_6)}(\tau)$ of an element $\tau$ of order~$5$ of $\mathfrak{S}_5$. By Lemma~\ref{l6} we have the isomorphism $\mathrm{C}_{W(\mathrm{E}_6)}(\tau) \simeq \mathbb{Z}/10\mathbb{Z}$. Since
$\tau \in \mathrm{C}_{W(\mathrm{E}_6)}(\tau)$ and $\tau$ does not lie in the~centre of
$\mathfrak{S}_5$, we obtain the required result.

The lemma is proved.
\end{proof}

\goodbreak

From~\cite{17}, Theorem~1, we immediately deduce the following useful lemma.

\begin{lemma}
\label{l8}
There are no elements of order~$24$ in the Weyl group $W(\mathrm{E}_6)$.
\end{lemma}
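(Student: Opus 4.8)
The plan is to argue by contradiction, exploiting the $6$-dimensional representation of $W(\mathrm{E}_6)$ on the root lattice $\mathrm{E}_6$ together with the eigenvalue data collected in Lemma~\ref{l3}. Suppose $g \in W(\mathrm{E}_6)$ has order~$24$. I would first analyze its characteristic polynomial on $\mathrm{E}_6 \otimes \mathbb{C}$: it has degree~$6$ and integer coefficients, so it is a product of cyclotomic polynomials $\Phi_d$ with $d \mid 24$, and the order of $g$ forces the least common multiple of the indices $d$ occurring to equal~$24$. Since $24 = 2^3 \cdot 3$, some eigenvalue must have order divisible by~$8$ and some eigenvalue must have order divisible by~$3$. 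An eigenvalue of order~$24$ would contribute the factor $\Phi_{24}$ of degree $\varphi(24)=8 > 6$, which is impossible; hence the factor of~$8$ must come from an eigenvalue of order~$8$, so that $\Phi_8$ (of degree~$4$) divides the characteristic polynomial. This leaves only two of the six dimensions, so the factor of~$3$ cannot come from $\Phi_{12}$ (degree~$4$) and must come from $\Phi_3$ or $\Phi_6$, both of degree~$2$.

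This pins down the eigenvalues of $g$ to be either $\zeta_8,\zeta_8^3,\zeta_8^5,\zeta_8^7,\omega,\omega^2$ or $\zeta_8,\zeta_8^3,\zeta_8^5,\zeta_8^7,-\omega,-\omega^2$. I would then compute the determinant. In either case the product of the eigenvalues equals $1$, since $\zeta_8^{1+3+5+7}=\zeta_8^{16}=1$ and the two remaining primitive roots multiply to $\omega^3=1$. As the product of the eigenvalues is the determinant of the action of $g$ on the root lattice, i.e.\ the value of the sign homomorphism $W(\mathrm{E}_6) \to \{\pm 1\}$, we obtain $\det(g)=1$ and hence $\det(g^3)=\det(g)^3=1$. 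On the other hand $g^3$ has order~$8$, so by Lemma~\ref{l3} it lies in the class $D_5$, the unique conjugacy class of elements of order~$8$; its eigenvalues $\zeta_8,\zeta_8^3,\zeta_8^5,\zeta_8^7,-1,1$ have product $-1$, so $\det(g^3)=-1$. This contradiction shows that no element of order~$24$ exists.

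The only genuinely substantive step is the dimension count in the first paragraph, which cuts the possible eigenvalue patterns down to the two listed above; everything afterward is just bookkeeping with the sign character, and the uniqueness of the order-$8$ class is read straight off Lemma~\ref{l3}. I expect this to be the cleanest self-contained route, although one could equally well bypass it by appealing directly to Carter's classification in~\cite{17}: the element orders occurring in $W(\mathrm{E}_6)$ are $1,2,3,4,5,6,8,9,10,12$, with $12$ the maximum, so $24$ simply never appears—this is presumably the ``immediate'' deduction referred to in the statement.
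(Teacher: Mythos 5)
Your argument is correct, but it is genuinely different from the paper's. The paper proves Lemma~\ref{l8} in one line, by reading the list of element orders directly off Theorem~1 of~\cite{17} (Swinnerton-Dyer's enumeration of the $25$ conjugacy classes of $W(\mathrm{E}_6)$); this is exactly the shortcut you mention in your closing sentence. Your route instead extracts the statement from the partial eigenvalue data already recorded in Lemma~\ref{l3}: the cyclotomic degree count $\varphi(24)=8>6$ and $\varphi(12)=4>2$ correctly forces the characteristic polynomial of a hypothetical order-$24$ element to be $\Phi_8\Phi_3$ or $\Phi_8\Phi_6$, hence $\det(g)=1$, while the unique order-$8$ class $D_5$ has eigenvalue product $-1$, contradicting $\det(g^3)=\det(g)^3$. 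Two small points worth making explicit if you write this up: you are using that the reflection representation of $W(\mathrm{E}_6)$ on the root lattice is faithful (so that the order of $g$ equals the lcm of the orders of its eigenvalues), and you are using that $D_5$ is the \emph{only} conjugacy class of order~$8$, which is asserted by Lemma~\ref{l3} but ultimately comes from the same classification in~\cite{17}. So your proof is not independent of the classification, but it does have the virtue of needing only the single order-$8$ row of Table~\ref{tab2} plus elementary cyclotomic arithmetic, rather than the full list of element orders; the paper's citation is shorter, yours is more self-contained relative to what the paper has already stated.
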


Now we recall the classification of maximal proper subgroups of $\mathrm{PSU}_4(\mathbb{F}_2)$.

\begin{lemma}[{see~\cite{6}, p.\,26}]
\label{l9}
The maximal proper subgroups of $\mathrm{PSU}_4(\mathbb{F}_2)$ are as given in Table~\ref{tab3}.

\vskip0.2cm

\begin{table}[!h]
\vskip-4mm
\renewcommand{\arraystretch}{1.2}
\centering\begin{tabular}[c]{| >{\small}c| >{\small}c|}
\hline
\text{Subgroup} & \text{Order}\\
\hline
$(\mathbb{Z}/2\mathbb{Z})^4 \rtimes \mathfrak{A}_5$ & $960$ \\
\hline
$\mathfrak{S}_6$ & $720$ \\
\hline
$(\mathcal{H}_3(\mathbb{F}_3) \rtimes \mathbb{Z}/2\mathbb{Z})_{\bullet}\mathfrak{A}_4$ & $648$ \\
\hline
$(\mathbb{Z}/3\mathbb{Z})^3 \rtimes \mathfrak{S}_4$ & $648$ \\
\hline
$( \mathbb{Z}/2\mathbb{Z}^{\bullet}(\mathfrak{A}_4 \times \mathfrak{A}_4))_{\bullet}\mathbb{Z}/2\mathbb{Z}$ & $576$ \\
\hline
\end{tabular}\par
\vskip3mm
\caption{The maximal subgroups of $\mathrm{PSU}_4(\mathbb{F}_2)$}
\label{tab3}
\end{table}

Here we denote by $G_{\bullet} H$ a~group with normal subgroup isomorphic to $G$ and quotient
isomorphic to $H$, and we denote by $G^{\bullet} H$ the group $G_{\bullet} H$ that is not a~split
extension. All subgroups in Table~\ref{tab3} are unique up to conjugation.
\end{lemma}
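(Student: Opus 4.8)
The list of maximal subgroups of $\mathrm{PSU}_4(\mathbb{F}_2)$ in Table~\ref{tab3} is the one recorded in the ATLAS of finite groups, so at the level needed here the proof is the citation to~\cite{6}; let me nonetheless describe how one reconstructs and verifies it. First I would record the arithmetic: $\mathrm{PSU}_4(\mathbb{F}_2)$ is a finite simple group of order $25\,920 = 2^6\cdot 3^4\cdot 5$, and since $\gcd(4,2+1)=1$ one has $\mathrm{SU}_4(\mathbb{F}_2)=\mathrm{PSU}_4(\mathbb{F}_2)$, so there is no distinction between the linear group and its projective quotient to keep track of. The five listed classes have orders $960$, $720$, $648$, $648$, $576$, hence indices $27$, $36$, $40$, $40$, $45$. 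The numbers $27$, $36$, $45$ are exactly the numbers of lines, double-sixes and tritangent planes on a smooth cubic surface; this is the first sign that the cleanest description of these subgroups goes through the isomorphism of $\mathrm{PSU}_4(\mathbb{F}_2)$ with the subgroup $W(\mathrm{E}_6)^{+}$ of even elements of the Weyl group.

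Accordingly, I would realise $\mathrm{PSU}_4(\mathbb{F}_2)\simeq W(\mathrm{E}_6)^{+}$ acting on the $27$ lines of a cubic surface and identify four of the five classes as stabilisers of classical combinatorial data. The stabiliser of a single line has index $27$ and order $960$, giving $(\mathbb{Z}/2\mathbb{Z})^4 \rtimes \mathfrak{A}_5$; the stabiliser of a double-six has index $36$ and order $720$, giving $\mathfrak{S}_6$; and the stabiliser of a tritangent plane has index $45$ and order $576$, giving $(\mathbb{Z}/2\mathbb{Z}^{\bullet}(\mathfrak{A}_4 \times \mathfrak{A}_4))_{\bullet}\mathbb{Z}/2\mathbb{Z}$. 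The two subgroups of index $40$ and order $648 = 2^3 \cdot 3^4$ are the normalisers of the two conjugacy classes of subgroups of order $27$: the normaliser of an elementary abelian $(\mathbb{Z}/3\mathbb{Z})^3$ is $(\mathbb{Z}/3\mathbb{Z})^3 \rtimes \mathfrak{S}_4$, while the normaliser of the extraspecial Heisenberg group $\mathcal{H}_3(\mathbb{F}_3)$ is $(\mathcal{H}_3(\mathbb{F}_3) \rtimes \mathbb{Z}/2\mathbb{Z})_{\bullet}\mathfrak{A}_4$. For each candidate one verifies the claimed order and, using simplicity of $\mathrm{PSU}_4(\mathbb{F}_2)$, that it is self-normalising with trivial core, which also yields uniqueness up to conjugacy.

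What remains is completeness: to show that no maximal subgroup has been omitted. The systematic tool is Aschbacher's theorem for finite classical groups, which partitions the maximal subgroups of $\mathrm{SU}_4(\mathbb{F}_2)$ into the geometric families $\mathcal{C}_1,\dots,\mathcal{C}_8$ together with the class $\mathcal{S}$ of almost simple groups acting absolutely irreducibly; one then enumerates the candidates in dimension $4$ over $\mathbb{F}_4$ (reducible stabilisers, imprimitive decompositions, field and subfield subgroups, classical subgroups, and the irreducible almost simple cases), discards those that are non-maximal, and matches the survivors with the five entries of Table~\ref{tab3}. Since the five orders are pairwise non-dividing except for the two equal ones, no containment among distinct classes can occur, so the only genuine issue is that the enumeration is exhaustive.

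The main obstacle is precisely this completeness step, together with the check that the two classes of order $648$ are nonconjugate rather than one accidental coincidence: ruling out an overlooked maximal subgroup and ruling out fusion collapses is where the real work lies. Because $\mathrm{PSU}_4(\mathbb{F}_2)$ is small, this can be settled by a direct inspection of its conjugacy classes, subgroup lattice and character table, which is exactly the content tabulated in~\cite{6}; for the present paper it is therefore entirely legitimate to quote that reference rather than reproduce the case analysis.
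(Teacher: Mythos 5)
Your proposal is correct and takes the same route as the paper: Lemma~\ref{l9} carries no proof in the text at all, being justified purely by the citation to the ATLAS~\cite{6}, which is exactly what you identify as the operative step. Your supplementary sketch (realising $\mathrm{PSU}_4(\mathbb{F}_2)$ as the even part of $W(\mathrm{E}_6)$, matching the indices $27$, $36$, $40$, $40$, $45$ with lines, double-sixes, $3$-local normalisers and tritangent planes, and deferring completeness to the ATLAS computation) is accurate and goes beyond what the paper records, but it does not change the fact that both you and the author ultimately rest the lemma on the reference.
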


Recall a~well-known result due to Beauville.

\begin{proposition}[{\cite{2}, Proposition~1.1}]
\label{p2}
Let $\mathbf{F}$ be a~field of characteristic $p \geqslant 0$, and let $n \in \mathbb{N}$. If $p>0$,
 assume that $\gcd(n,p)=1$. Then the group $\mathrm{PGL}_2(\mathbf{F})$ contains~$\mathbb{Z}/n\mathbb{Z}$ if and only if $\zeta_n+\zeta_n^{-1} \in \mathbf{F}$.
\end{proposition}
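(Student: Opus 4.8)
The plan is to reduce everything to the eigenvalues of a lifted matrix. An element of $\mathrm{PGL}_2(\mathbf{F})$ of order $n$ is the class of some $A \in \mathrm{GL}_2(\mathbf{F})$, and since we may rescale $A$ by any element of $\mathbf{F}^*$, the only invariant I can control is the ratio $\eta := \mu_1/\mu_2$ of the two eigenvalues of $A$ over $\overline{\mathbf{F}}$. The whole argument will hinge on the identity relating this ratio to $\operatorname{tr}(A)$ and $\det(A)$.

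For the forward direction, I would take a generator, lift it to $A$ with $A^n = \lambda E$ scalar and no smaller power scalar. Because $\gcd(n,p)=1$ the polynomial $t^n-\lambda$ is separable, so $A$ is diagonalizable over $\overline{\mathbf{F}}$ with eigenvalues $\mu_1,\mu_2$; a power $A^k$ is scalar if and only if $(\mu_1/\mu_2)^k=1$, so $\eta=\mu_1/\mu_2$ is a primitive $n$th root of unity. Then
$$
\frac{\operatorname{tr}(A)^2}{\det(A)}=\frac{(\mu_1+\mu_2)^2}{\mu_1\mu_2}=\eta+\eta^{-1}+2
$$
shows $\eta+\eta^{-1}\in\mathbf{F}$. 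To reach the fixed $\zeta_n$, I would write $\zeta_n=\eta^a$ for a suitable $a$ coprime to $n$ and use the recurrence $\eta^{k+1}+\eta^{-(k+1)}=(\eta+\eta^{-1})(\eta^{k}+\eta^{-k})-(\eta^{k-1}+\eta^{-(k-1)})$ to conclude inductively that $\zeta_n+\zeta_n^{-1}=\eta^a+\eta^{-a}\in\mathbf{F}(\eta+\eta^{-1})=\mathbf{F}$.

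For the converse, suppose $\zeta_n+\zeta_n^{-1}\in\mathbf{F}$. The cases $n=1$ (trivial) and $n=2$ (where $\gcd(2,p)=1$ forces $\operatorname{char}\mathbf{F}\neq 2$, and $\operatorname{diag}(-1,1)$ works) I would dispose of by hand. For $n\geq 3$ I would set $c:=\zeta_n+\zeta_n^{-1}+2\in\mathbf{F}$ and note $c\neq 0$, since $c=0$ forces $(\zeta_n+1)^2=0$, i.e.\ $\zeta_n=-1$, impossible for a primitive $n$th root with $n\geq 3$. Then the matrix $A=\begin{pmatrix} c & -c \\ 1 & 0\end{pmatrix}\in\mathrm{GL}_2(\mathbf{F})$ has $\det A=c\neq 0$ and $\operatorname{tr}(A)^2/\det(A)=c^2/c=c$, so its eigenvalue ratio $\eta$ satisfies $\eta+\eta^{-1}=c-2=\zeta_n+\zeta_n^{-1}$, whence $\eta\in\{\zeta_n,\zeta_n^{-1}\}$ is a primitive $n$th root of unity; as $n\geq 3$ the two eigenvalues are distinct, $A$ is diagonalizable, and its class in $\mathrm{PGL}_2(\mathbf{F})$ has order exactly $n$.

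The hard part, as I see it, is the bookkeeping of orders rather than any deep input: one must justify that the order of the $\mathrm{PGL}_2$-element equals the order of the eigenvalue ratio, which is exactly where diagonalizability (and hence the hypothesis $\gcd(n,p)=1$) is needed, and one must show the stated condition is insensitive to the choice of primitive root, which the Chebyshev-type recurrence settles without invoking Galois theory. The genuinely delicate construction is in the converse: engineering $A$ so that the invariant $\operatorname{tr}(A)^2/\det(A)$ realizes the prescribed value $\zeta_n+\zeta_n^{-1}+2$ using only that this quantity lies in $\mathbf{F}$ and not that $\zeta_n$ itself does; the choice $\det A=\operatorname{tr}A=c$ is what makes this go through cleanly.
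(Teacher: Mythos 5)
Your proof is correct. Note that the paper does not actually prove this statement: it is quoted verbatim from Beauville's \emph{Finite subgroups of $\mathrm{PGL}_2(K)$}, Proposition~1.1, so there is no in-paper argument to compare against. Your argument is a complete, self-contained proof in the same spirit as Beauville's: the key invariant is the eigenvalue ratio $\eta=\mu_1/\mu_2$ of a lift, detected by $\operatorname{tr}(A)^2/\det(A)=\eta+\eta^{-1}+2\in\mathbf{F}$, with $\gcd(n,p)=1$ guaranteeing diagonalizability (so that ``$A^k$ scalar'' is equivalent to ``$\eta^k=1$'') and the Chebyshev recurrence transferring the conclusion from one primitive root to another. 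The one genuinely delicate point, which you handle well, is the converse: a naive companion matrix with eigenvalues $\zeta_n,\zeta_n^{-1}$ has eigenvalue ratio $\zeta_n^2$ and hence only yields order $n/2$ in $\mathrm{PGL}_2$ when $n$ is even; your choice $\operatorname{tr}(A)=\det(A)=c=\zeta_n+\zeta_n^{-1}+2$ makes the \emph{ratio} itself equal to $\zeta_n^{\pm1}$ and sidesteps this parity issue uniformly, with the check $c\neq 0$ for $n\geqslant 3$ correctly reduced to $\zeta_n\neq-1$.
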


From this proposition we immediately obtain the following result.

\begin{lemma}
\label{l10}
Let $\mathbf{F}$ be a~field of characteristic~$2$ not containing nontrivial cube roots of unity. Then the group $\mathrm{PGL}_2(\mathbf{F})$ does not contain $\mathbb{Z}/5\mathbb{Z}$.
\end{lemma}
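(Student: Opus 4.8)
The plan is to deduce this directly from Beauville's Proposition~\ref{p2}. Since $\mathbf{F}$ has characteristic $2$ and $\gcd(5,2)=1$, the proposition applies with $n=5$ and shows that $\mathrm{PGL}_2(\mathbf{F})$ contains $\mathbb{Z}/5\mathbb{Z}$ if and only if $\zeta_5+\zeta_5^{-1}\in\mathbf{F}$. Thus the lemma reduces to proving that, under the assumption that $\mathbf{F}$ contains no nontrivial cube root of unity, the element $\zeta_5+\zeta_5^{-1}$ does not lie in $\mathbf{F}$.

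The key observation will be that in characteristic $2$ the element $\alpha=\zeta_5+\zeta_5^{-1}$ is itself a nontrivial cube root of unity. Indeed, squaring and using $\zeta_5+\zeta_5^2+\zeta_5^3+\zeta_5^4=-1$ gives
$$
\alpha^2+\alpha=\bigl(\zeta_5^2+\zeta_5^{-2}\bigr)+\bigl(\zeta_5+\zeta_5^{-1}\bigr)+2=1,
$$
so that $\alpha$ is a root of $x^2+x-1$. In characteristic $2$ we have $-1=1$, so this relation becomes $\alpha^2+\alpha+1=0$. The polynomial $x^2+x+1$ is precisely the minimal polynomial of a primitive cube root of unity, and therefore $\alpha$ is a nontrivial cube root of unity.

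It then follows that $\zeta_5+\zeta_5^{-1}\in\mathbf{F}$ exactly when $\mathbf{F}$ contains a nontrivial cube root of unity. By hypothesis it does not, so $\zeta_5+\zeta_5^{-1}\notin\mathbf{F}$, and Proposition~\ref{p2} yields that $\mathrm{PGL}_2(\mathbf{F})$ does not contain $\mathbb{Z}/5\mathbb{Z}$. The only nonroutine point is recognizing that in characteristic $2$ the minimal polynomial $x^2+x-1$ of $\zeta_5+\zeta_5^{-1}$ collapses to $x^2+x+1$, the polynomial defining the primitive cube roots of unity; once this identification is made, the argument is immediate, which is why the paper records the lemma as an immediate consequence of Beauville's result.
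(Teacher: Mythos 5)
Your proposal is correct and follows essentially the same route as the paper: both apply Beauville's Proposition~\ref{p2} with $n=5$ and then observe that in characteristic $2$ the element $\zeta_5+\zeta_5^{-1}$ is a nontrivial cube root of unity (the paper verifies $(\zeta_5+\zeta_5^4)^3=1$ directly, while you derive the minimal polynomial $x^2+x+1$, which additionally makes the nontriviality explicit).
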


\begin{proof}
Assume the converse. Then $\zeta_5+\zeta_5^4 \in \mathbf{F}$ by Proposition~\ref{p2}. However, as $\mathbf{F}$
 is a~field of characteristic~$2$, we have
$$
(\zeta_5+\zeta_5^4)^3=1,
$$
which contradicts the assumption that $\mathbf{F}$ does not contain nontrivial cube roots of unity.

The lemma is proved.
\end{proof}

Now we recall one useful lemma from~\cite{18}.

\begin{lemma}[{\cite{18}, Lemma~5.15}]
\label{l11}
Let $S$ be a~smooth cubic surface over a~field~$\mathbf{F}$ such that its automorphism group is
either isomorphic to $\mathrm{PSU}_4(\mathbb{F}_2)$, or to the subgroup~\mbox{$(\mathbb{Z}/2\mathbb{Z})^4 \rtimes \mathfrak{A}_5$} in the group $\mathrm{PSU}_4(\mathbb{F}_2)$.
Then the image $\Gamma$ of the absolute Galois group $\mathrm{Gal}(\mathbf{F}^{\mathrm{sep}}/\mathbf{F})$ in $W(\mathrm{E}_6)$ is trivial.
\end{lemma}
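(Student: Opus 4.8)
The plan is to push everything into the Weyl group $W(\mathrm{E}_6)$ and then argue purely group-theoretically, using Lemmas~\ref{l3},~\ref{l5} and~\ref{l6}. Recall the standard picture attached to a smooth cubic surface: $\operatorname{Aut}(S_{\mathbf{F}^{\mathrm{sep}}})$ acts faithfully on $\operatorname{Pic}(S_{\mathbf{F}^{\mathrm{sep}}})$ preserving the intersection form and the canonical class, giving an embedding $\operatorname{Aut}(S_{\mathbf{F}^{\mathrm{sep}}}) \hookrightarrow W(\mathrm{E}_6)$, while the absolute Galois group acts on the same lattice with image $\Gamma \subseteq W(\mathrm{E}_6)$. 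The $\mathbf{F}$-rational automorphisms are precisely the ones commuting with the Galois action, so the image of $\operatorname{Aut}(S)$ in $W(\mathrm{E}_6)$ centralizes $\Gamma$. Denoting this image by $H$ (a subgroup isomorphic to $\mathrm{PSU}_4(\mathbb{F}_2)$ or to $(\mathbb{Z}/2\mathbb{Z})^4 \rtimes \mathfrak{A}_5$ by hypothesis), we obtain the inclusion $\Gamma \subseteq \mathrm{C}_{W(\mathrm{E}_6)}(H)$, and the whole lemma reduces to showing this centralizer is trivial. I would treat both possibilities for $H$ simultaneously, invoking only that $|H| \in \{960,\ 25\,920\}$ and that $H$ contains a subgroup isomorphic to $\mathfrak{A}_5$ (for $\mathrm{PSU}_4(\mathbb{F}_2)$ this follows from Lemma~\ref{l9}, since $(\mathbb{Z}/2\mathbb{Z})^4 \rtimes \mathfrak{A}_5$ is one of its subgroups; in the other case it is immediate).

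The next step is to confine $\Gamma$ to a cyclic group. Fix an element $\sigma \in H$ of order~$5$ lying in a subgroup $\mathfrak{A}_5 \subseteq H$. Since $\Gamma$ centralizes $H$, in particular $\Gamma \subseteq \mathrm{C}_{W(\mathrm{E}_6)}(\sigma)$, which is cyclic of order~$10$ by Lemma~\ref{l6}. Hence $\Gamma$ is cyclic and $|\Gamma|$ divides~$10$. I would then exclude order-$5$ elements: the unique subgroup of order~$5$ in $\mathrm{C}_{W(\mathrm{E}_6)}(\sigma)$ is $\langle \sigma \rangle$, so any order-$5$ element of $\Gamma$ lies in $\langle \sigma \rangle \subseteq \mathfrak{A}_5$ and centralizes $\mathfrak{A}_5$, forcing it into the centre of $\mathfrak{A}_5$, which is trivial — a contradiction. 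Therefore $\Gamma$ contains no element of order~$5$, and consequently none of order~$10$ either, so $|\Gamma|$ divides~$2$.

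It remains to rule out $|\Gamma| = 2$. In that case $\Gamma = \langle \gamma \rangle$, where $\gamma$ is the unique involution of $\mathrm{C}_{W(\mathrm{E}_6)}(\sigma) \simeq \mathbb{Z}/10\mathbb{Z}$; writing this group as $\langle \delta \rangle$ with $\delta$ of order~$10$, we have $\gamma = \delta^5$. By Lemma~\ref{l5}\,(iii) the element $\gamma$ lies in the conjugacy class $A_1$, whose centralizer in $W(\mathrm{E}_6)$ has order~$1440$ by Lemma~\ref{l3} (Table~\ref{tab2}). Since $\gamma$ centralizes $H$, we get $H \subseteq \mathrm{C}_{W(\mathrm{E}_6)}(\gamma)$, so $|H|$ divides~$1440$. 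But $1440 = 2^5 \cdot 3^2 \cdot 5$ is divisible only by $2^5$, whereas both $960 = 2^6 \cdot 3 \cdot 5$ and $25\,920 = 2^6 \cdot 3^4 \cdot 5$ are divisible by $2^6$; thus $|H| \nmid 1440$ in either case, a contradiction. Hence $|\Gamma| = 1$, as required.

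The conceptual crux, which I would nail down first, is the reduction $\Gamma \subseteq \mathrm{C}_{W(\mathrm{E}_6)}(H)$ together with the numerical obstruction: the centralizer of an $A_1$-involution has order~$1440$, which is incompatible with $|H|$ purely at the level of the $2$-adic valuation. Once this divisibility clash is spotted, the rest is the routine localization via Lemmas~\ref{l6} and~\ref{l5}. The one point demanding care is the uniform treatment of the two cases: I must ensure that the order-$5$ element $\sigma$ genuinely sits inside a centreless subgroup $\mathfrak{A}_5 \subseteq H$ for \emph{both} choices of $H$, so that order-$5$ (and hence order-$10$) elements of $\Gamma$ can be excluded at one stroke.
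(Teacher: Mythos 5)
Your proof is correct. Note that the paper itself offers no proof of this lemma to compare against: it is imported verbatim from~\cite{18}, Lemma~5.15. What you have done is reconstruct it entirely from the toolkit assembled in \S\,\ref{s2}--\S\,\ref{s3}, and every step checks out. The reduction $\Gamma \subseteq \mathrm{C}_{W(\mathrm{E}_6)}(H)$ is exactly the observation the paper makes after introducing the homomorphism $\boldsymbol{g}$; the existence of $\mathfrak{A}_5 \subseteq H$ in both cases follows from Lemma~\ref{l9} (respectively from the splitness of $(\mathbb{Z}/2\mathbb{Z})^4 \rtimes \mathfrak{A}_5$); the confinement of $\Gamma$ to $\mathrm{C}_{W(\mathrm{E}_6)}(\sigma) \simeq \mathbb{Z}/10\mathbb{Z}$ is Lemma~\ref{l6}, and your exclusion of order-$5$ elements via the triviality of $Z(\mathfrak{A}_5)$ mirrors the argument the paper uses to prove Lemma~\ref{l7}. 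The final step is the only genuinely new ingredient: identifying the would-be involution as the fifth power of an order-$10$ element, placing it in the class $A_1$ by Lemma~\ref{l5}\,(iii), and then killing it by the divisibility clash $|H| \nmid 1440$ (which for $|H|=960$ really does come down to the $2$-adic valuation, since $2^6 \mid 960$ but $2^6 \nmid 1440$). This is a clean, self-contained derivation; the only mild caveat is that Lemma~\ref{l5}\,(iii) identifies the conjugacy class of $\delta^5$ from its eigenvalues alone, but that is how the paper itself states and uses that lemma, so you are entitled to it.
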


\section{Cubic surfaces}
\label{s3}

In this section we introduce the notation related to smooth cubic surfaces and recall the description of their geometry. More details on smooth cubic surfaces can be found in~\cite{7}, Ch.~9, or~\cite{14}, Ch.~4.

Let $S \subset \mathbb{P}^3$ be a~smooth cubic surface over a~field $\mathbf{F}$. Recall that by Theorem~\ref{t4} the lines on $S_{\overline{\mathbf{F}}}$ are defined over $\mathbf{F}^{\mathrm{sep}}$, and we have $\mathrm{Pic}(S_{\overline{\mathbf{F}}}) \simeq \mathrm{Pic}(S_{\mathbf{F}^{\mathrm{sep}}})$. So $S_{\mathbf{F}^{\mathrm{sep}}}$ is a~blowup of six points
$$
P_1, \ P_2, \ P_3, \ P_4, \ P_5, \ P_6
$$
in general position on $\mathbb{P}^2_{\mathbf{F}^{\mathrm{sep}}}$. Recall the definition.

\begin{definition}
\label{d1}
We say that points $P_1,\dots, P_i \in \mathbb{P}^2$ for $i \leqslant 6$ are \textit{in general position} if no three points in this set lie on a~line and no six points in this set lie on a~conic.
\end{definition}

From the above we see that there exists a~birational morphism
$$
\pi \colon S_{\mathbf{F}^{\mathrm{sep}}} \to \mathbb{P}^2_{\mathbf{F}^{\mathrm{sep}}},
$$
which blows down the preimages of the points $P_1,\dots, P_6$. We denote by
$$
E_1, \ E_2, \ E_3, \ E_4, \ E_5, \ E_6
$$
the preimages of $P_1$, $P_2$, $P_3$, $P_4$,~$P_5$ and $P_6$, respectively, under the morphism $\pi$. The exceptional curves are lines on $S_{\mathbf{F}^{\mathrm{sep}}} \subset \mathbb{P}^3_{\mathbf{F}^{\mathrm{sep}}}$. The self-intersection index of any line on a~smooth cubic surface is equal to~$-1$. We denote by $Q_i$, $i \in \{1, \dots, 6\}$, the proper transform under the morphism~$\pi$ of the smooth conic passing through all points~$P_1, \dots, P_6$ except for~$P_i$. For $i,j \in \{1, \dots, 6\}$ such that $i<j$ we denote by $L_{ij}$ the proper transform under~$\pi$ of the line passing through~$P_i$ and $P_j$. The proper transforms~$Q_i$ and~$L_{ij}$, for~$i,j \in \{1, \dots, 6\}$ such that $i<j$, and $E_1, \dots, E_6$ give a~set of $27$ lines on~$S_{\mathbf{F}^{\mathrm{sep}}}$. It is well known that there are exactly~$27$ lines on a~smooth cubic surface over a~separably closed field. Their intersections are:
\begin{gather*}
E_i^2=Q_i^2=L_{ij}^2=-1 \quad \text{for } i \neq j;
\\
E_i \cdot E_j=Q_i \cdot Q_j=0 \quad \text{for } i \neq j;
\\
E_i \cdot Q_j=L_{ij} \cdot E_i=L_{ij} \cdot Q_i=1 \quad \text{for } i \neq j;
\\
L_{ij} \cdot L_{kl}=1, \quad \text{where } i,j,k,l \in \{1,\dots,6\} \text{ are pairwise distinct};
\\
L_{ij} \cdot L_{jk}=0, \quad \text{where } i,j,k \in \{1,\dots,6\}\text{ are pairwise distinct}.
\end{gather*}

Consider the set
$$
\Delta=\{\alpha \in \mathrm{Pic}(S_{\mathbf{F}^{\mathrm{sep}}}) \mid K_{S_{\mathbf{F}^{\mathrm{sep}}}}\cdot \alpha=0 \text{ and } \alpha^2=-2 \}.
$$
This is the root system $\mathrm{E}_6$. The group $W(\mathrm{E}_6)$ acts on the Picard group~$\mathrm{Pic}(S_{\mathbf{F}^{\mathrm{sep}}})$ by orthogonal transformations with respect to the intersection form. This action restricts to the configuration of lines on $S_{\mathbf{F}^{\mathrm{sep}}}$. It fixes $K_{S_{\mathbf{F}^{\mathrm{sep}}}}$ and preserves the cone of effective divisors.

Recall the following important theorem.

\begin{theorem}[{see, for instance,~\cite{7}, Corollary~8.2.40}]
\label{t5}
The automorphism group of a~smooth cubic surface $S$ is isomorphic to a~subgroup in~$W(\mathrm{E}_6)$.
\end{theorem}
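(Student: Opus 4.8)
The plan is to produce an injective homomorphism from $\operatorname{Aut}(S)$ to $W(\mathrm{E}_6)$ via the action on the Picard lattice. First I would reduce to the separable closure: base change gives an injection $\operatorname{Aut}(S) \hookrightarrow \operatorname{Aut}(S_{\mathbf{F}^{\mathrm{sep}}})$ (an $\mathbf{F}$-automorphism that becomes the identity over $\mathbf{F}^{\mathrm{sep}}$ is already the identity), so it suffices to embed $\operatorname{Aut}(S_{\mathbf{F}^{\mathrm{sep}}})$ into $W(\mathrm{E}_6)$. By Theorem~\ref{t4} this group is finite and we may work with honest automorphisms of the blowup $\pi \colon S_{\mathbf{F}^{\mathrm{sep}}} \to \mathbb{P}^2_{\mathbf{F}^{\mathrm{sep}}}$ of the six points $P_1,\dots,P_6$ in general position.

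Next I would construct the homomorphism. Any $g \in \operatorname{Aut}(S_{\mathbf{F}^{\mathrm{sep}}})$ acts by pullback $g^*$ on $\mathrm{Pic}(S_{\mathbf{F}^{\mathrm{sep}}})$; this is an isometry for the intersection form, it fixes the canonical class $K_{S_{\mathbf{F}^{\mathrm{sep}}}}$, and it carries effective classes to effective classes. In particular $g^*$ preserves the root system $\Delta = \mathrm{E}_6$ (the orthogonal complement of $K_{S_{\mathbf{F}^{\mathrm{sep}}}}$) and permutes the $27$ lines. By the description recalled just before the theorem, $W(\mathrm{E}_6)$ is exactly the group of orthogonal transformations of $\mathrm{Pic}(S_{\mathbf{F}^{\mathrm{sep}}})$ that fix $K_{S_{\mathbf{F}^{\mathrm{sep}}}}$ and preserve the cone of effective divisors, so $g^* \in W(\mathrm{E}_6)$. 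Since pullback is contravariant, the assignment $g \mapsto (g^{-1})^*$ is a genuine group homomorphism $\operatorname{Aut}(S_{\mathbf{F}^{\mathrm{sep}}}) \to W(\mathrm{E}_6)$.

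For injectivity, suppose $g^*$ acts trivially on $\mathrm{Pic}(S_{\mathbf{F}^{\mathrm{sep}}})$. Then $g$ fixes the classes of $E_1,\dots,E_6$, and because each $E_i$ is the unique effective divisor in its class (a line with self-intersection $-1$), $g$ preserves each exceptional curve of $\pi$. Hence $g$ descends to an automorphism $\bar g \in \operatorname{Aut}(\mathbb{P}^2_{\mathbf{F}^{\mathrm{sep}}}) = \mathrm{PGL}_3(\mathbf{F}^{\mathrm{sep}})$ fixing each of $P_1,\dots,P_6$. Since the $P_i$ are in general position, no three are collinear, so any four of them form a projective frame; as $\mathrm{PGL}_3$ acts simply transitively on projective frames of $\mathbb{P}^2$, we get $\bar g = \mathrm{id}$ and therefore $g = \mathrm{id}$. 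This shows the homomorphism is injective, and composing with the base-change injection yields $\operatorname{Aut}(S) \hookrightarrow W(\mathrm{E}_6)$.

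I expect the main obstacle to be the assertion that the image actually lands in $W(\mathrm{E}_6)$ and not merely in the full isometry group of the $\mathrm{E}_6$ lattice, which is strictly larger (it contains $-\mathrm{id}$, equivalently the diagram automorphism). This is precisely where preservation of the effective cone is indispensable: an isometry fixing $K_{S_{\mathbf{F}^{\mathrm{sep}}}}$ but sending effective classes to anti-effective ones cannot come from an automorphism. The injectivity argument is the second technical point, and it rests squarely on the general-position hypothesis for the six blown-up points, which guarantees rigidity of the induced automorphism of $\mathbb{P}^2$.
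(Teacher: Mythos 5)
Your proof is correct and is precisely the standard argument that the paper does not reproduce but delegates to the citation of~\cite{7}, Corollary~8.2.40 (cf.\ Remark~\ref{r3}): act on $\mathrm{Pic}(S_{\mathbf{F}^{\mathrm{sep}}})$, observe the image lies in $W(\mathrm{E}_6)$, and deduce faithfulness from the fact that an automorphism fixing every exceptional class descends to an element of $\mathrm{PGL}_3(\mathbf{F}^{\mathrm{sep}})$ fixing six points in general position. The one small refinement worth noting is that, by the characterization the paper quotes from~\cite{14}, Theorem~23.9, the isometries of $\mathrm{Pic}(S_{\mathbf{F}^{\mathrm{sep}}})$ fixing $K_{S_{\mathbf{F}^{\mathrm{sep}}}}$ already form exactly $W(\mathrm{E}_6)$ (the involution $-\mathrm{id}$ of the $\mathrm{E}_6$ sublattice does not extend to an isometry of the full Picard lattice fixing the canonical class, as it would have to permute the $27$ line classes), so the preservation of the effective cone, while true and harmless, is not strictly needed for the image to land in $W(\mathrm{E}_6)$.
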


\begin{remark}
\label{r3}
Although Theorem~\ref{t5} in~\cite{7} was proved over fields of characteristic zero, exactly the same proof also works in the case of positive characteristic.
\end{remark}

 Let $S$ be a~smooth cubic surface over a~field $\mathbf{F}$. We denote by
$\operatorname{Aut}(\mathrm{Pic}(S_{\mathbf{F}^{\mathrm{sep}}}))$ the group of automorphisms of $\mathrm{Pic}(S_{\mathbf{F}^{\mathrm{sep}}})$ which
preserve the canonical class of~$S_{\mathbf{F}^{\mathrm{sep}}}$ and the
intersection form. The absolute Galois
group $\mathrm{Gal}(\mathbf{F}^{\mathrm{sep}}/\mathbf{F})$ acts on~$\mathrm{Pic}(S_{\mathbf{F}^{\mathrm{sep}}})$ and preserves the canonical
class of~$S$ and the intersection form. It is well known
(see~\cite{14}, Theorem~23.9) that
$\operatorname{Aut}(\mathrm{Pic}(S_{\mathbf{F}^{\mathrm{sep}}}))$ is isomorphic to
the Weyl group~$W(\mathrm{E}_6)$. Thus, by Theorem~23.8 in~\cite{14} we have
the natural homomorphism
$$
\boldsymbol{g}\colon \mathrm{Gal}(\mathbf{F}^{\mathrm{sep}}/\mathbf{F}) \to W(\mathrm{E}_6).
$$

Let $\Gamma$ be the image of the absolute Galois group~$\mathrm{Gal}(\mathbf{F}^{\mathrm{sep}}/\mathbf{F})$ in $W(\mathrm{E}_6)$ under the homomorphism $\boldsymbol{g}$. We remind the reader that $\Gamma$ lies in the centralizer of $\operatorname{Aut}(S)$, since any $\phi \in \operatorname{Aut}(S)$ is defined over~$\mathbf{F}$.

\begin{remark}
\label{r4}
Throughout the paper, by the image of the absolute Galois group in the Weyl group $W(\mathrm{E}_6)$ we mean the image of $\mathrm{Gal}(\mathbf{F}^{\mathrm{sep}}/\mathbf{F})$ under the homomorphism~$\boldsymbol{g}$.
\end{remark}

Recall the definition of the graph of lines on a~projective variety. This is the graph whose vertices correspond to lines on a~projective variety and such that there is an edge between two vertices if and only if their corresponding lines intersect. In what follows we need the following definition.

\begin{definition}
\label{d2}
Let $S$ be a~smooth cubic surface over a~field $\mathbf{F}$. We say that five lines $l_1, \dots, l_5$ on $S$ are \textit{in star position} if their graph is a~star (see Figure~\ref{fig1}).
\end{definition}

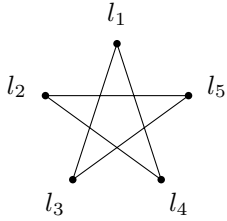
\begin{figure}[H]
\begin{center}

\begin{tikzpicture}[dot/.style={circle,fill,inner sep=1pt}]
\def\r{1}  
\def\n{4}  
\def\Vlabel{{"$l_1$","$l_2$","$l_3$","$l_4$","$l_5$"}}
\foreach \i in {0,...,\n}
\path ({90+\i*360/(\n+1)}:\r) coordinate (V\i) node[dot]{}
+({90+\i*360/(\n+1)}:.4) node{\pgfmathparse{\Vlabel[\i]}\pgfmathresult}
;
\draw (V2)--(V0)--(V3)--(V1)--(V4)--cycle;
\end{tikzpicture}
\end{center}
\caption{Lines in star position}
\label{fig1}
\end{figure}

We go back to conjugacy classes of elements in $W(\mathrm{E}_6)$. Now we regard elements
of $W(\mathrm{E}_6)$ as transformations of the Picard group of a~smooth cubic surface over an algebraically closed field which preserve the intersection form.

\begin{lemma}
\label{l12}
Let $S$ be a~smooth cubic surface over a~separably closed field. Consider an element $g \in W(\mathrm{E}_6)$ acting on the configuration of lines on $S$. The following assertions hold.

{\rm (i)} Assume that $g$ is of order~$5$. Then there are two lines on~$S$ invariant under the action of $g$. Under the action of $g$ there are three invariant quintuples of skew lines on~$S$, and the lines in each quintuple are transitively permuted by $g$. Moreover, under the action of $g$ there are two invariant quintuples of lines in star position on~$S$, and in each quintuple the lines are transitively permuted by~$g$.

{\rm (ii)} Assume that $g$ is of order~$10$. Then there is a~unique pair of skew lines on~$S$ which is invariant under the action of $g$, and the lines in this pair are interchanged by $g$. There is a~unique quintuple of skew lines on $S$ which is invariant under the action of $g$, and the lines in this quintuple are transitively permuted by~$g$. Under the action of $g$ there are two invariant quintuples of lines in star position on~$S$, and the lines in each quintuple are transitively permuted by $g$. Moreover, the ten remaining lines on~$S$ form a~$g$-invariant decuple of lines on $S$, and the lines in this decuple are transitively permuted by $g$.
\end{lemma}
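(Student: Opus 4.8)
The plan is to exploit that, by Table~\ref{tab2}, all elements of order~$5$ in $W(\mathrm{E}_6)$ form a single conjugacy class ($A_2$'s dual $A_4$), and all elements of order~$10$ form a single conjugacy class ($A_4\times A_1$). Since conjugation by $W(\mathrm{E}_6)$ permutes the $27$ lines while preserving the intersection form, it preserves the relations ``skew'' and ``in star position'' together with the whole orbit structure of a cyclic subgroup. Hence both assertions are invariant under replacing $g$ by a conjugate, and it suffices to verify them for one convenient representative of each class. Throughout I will use the basis of $\mathrm{Pic}(S)$ given by $H$, the pullback under $\pi$ of a line in $\mathbb{P}^2$, together with $E_1,\dots,E_6$, so that $L_{ij}=H-E_i-E_j$ and $Q_i=2H-\sum_{j\neq i}E_j$.

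For (i) I would take $g$ to be the $5$-cycle $(1\,2\,3\,4\,5)$ inside the subgroup $\mathfrak{S}_6\subset W(\mathrm{E}_6)$ permuting $E_1,\dots,E_6$; an eigenvalue count on $K_S^\perp$ gives $\zeta_5,\zeta_5^2,\zeta_5^3,\zeta_5^4,1,1$, confirming $g$ lies in the class $A_4$. This $g$ fixes the index $6$ and permutes $\{1,\dots,5\}$ cyclically, so using the intersection table one reads off the orbit decomposition of the $27$ lines: the two fixed lines $E_6$ and $Q_6$; the three quintuples $\{E_1,\dots,E_5\}$, $\{Q_1,\dots,Q_5\}$ and $\{L_{16},\dots,L_{56}\}$, each consisting of mutually skew lines (the first two because $E_i\cdot E_j=Q_i\cdot Q_j=0$, the third because the $L_{i6}$ pairwise share the index~$6$); and the two quintuples $\{L_{12},L_{23},L_{34},L_{45},L_{15}\}$ and $\{L_{13},L_{24},L_{35},L_{14},L_{25}\}$. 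For the last two I would check, from the rule that $L_{ij}$ meets $L_{kl}$ exactly when $\{i,j\}\cap\{k,l\}=\varnothing$, that each line meets precisely two others and that the incidence graph is a single $5$-cycle, i.e.\ the quintuple is in star position as in Figure~\ref{fig1}. Since the orbit sizes are $1,1,5,5,5,5,5$, every $g$-invariant line is a fixed point and every $g$-invariant quintuple is a single orbit, which pins down the counts in (i).

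For (ii) I would realize $g$ as $c\,\iota$, where $c=(1\,2\,3\,4\,5)$ is as above and $\iota$ is the reflection $s_\alpha$ in the root $\alpha=2H-E_1-\cdots-E_6$ (so $\alpha^2=-2$ and $s_\alpha(x)=x+(x\cdot\alpha)\alpha$). A short computation shows that $\iota$ swaps $E_i\leftrightarrow Q_i$ for every $i$ and fixes every $L_{ij}$; since $c(\alpha)=\alpha$ the elements $c$ and $\iota$ commute, so $g$ has order~$10$ with $g^2=c^2$ of type $A_4$ and $g^5=\iota$ of type $A_1$, whence $g$ lies in $A_4\times A_1$ (equivalently, its eigenvalues are $\zeta_5,\zeta_5^2,\zeta_5^3,\zeta_5^4,-1,1$). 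Feeding $g(E_i)=Q_{c(i)}$, $g(Q_i)=E_{c(i)}$ and $g(L_{ij})=L_{c(i)c(j)}$ into the incidence relations yields the orbits: the skew pair $\{E_6,Q_6\}$, which $g$ interchanges; a single orbit of size~$10$ on the remaining exceptional and conic classes, namely the decuple $\{E_1,\dots,E_5,Q_1,\dots,Q_5\}$; and the same three $L$-quintuples as before, of which $\{L_{16},\dots,L_{56}\}$ is skew and the other two are in star position. The uniqueness statements follow from the orbit sizes $2,5,5,5,10$: the only $g$-invariant set of two lines is $\{E_6,Q_6\}$ and there are no fixed lines, while the only invariant quintuples are the three $L$-orbits, exactly one of which consists of skew lines.

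The main obstacle is the correct identification of the $A_1$-involution $\iota$ commuting with the $5$-cycle: it is \emph{not} a transposition of the points $P_i$, since no transposition in $\mathfrak{S}_6$ commutes with a $5$-cycle, but rather the less obvious reflection $s_\alpha$ in $\alpha=2H-\sum E_i$ that interchanges each $E_i$ with $Q_i$ and fixes all the $L_{ij}$. Once this element is in hand, the remaining work is the routine, if slightly delicate, bookkeeping of orbits and the verification that the two non-skew $L$-quintuples realize the $5$-cycle graph of Figure~\ref{fig1}.
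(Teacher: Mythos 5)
Your proof is correct, but it takes a genuinely different route from the paper: the paper disposes of this lemma in two lines by citing the cycle-type data in Table~7.1 of Banwait--Fit\'e--Loughran and then invoking Theorem~\ref{t4},\,(ii), to descend from the algebraic to the separable closure, whereas you reconstruct the orbit decomposition from scratch. Your reduction to a single representative of each class is legitimate, since Table~\ref{tab2} shows that $A_4$ and $A_4\times A_1$ are the unique classes of orders $5$ and $10$ and conjugation permutes the $27$ $(-1)$-classes preserving the intersection form; your orbit computations for $c=(1\,2\,3\,4\,5)$ and for $c\,s_\alpha$ with $\alpha=2H-\sum E_i$ check out (in particular $s_\alpha(E_i)=Q_i$, $s_\alpha(L_{ij})=L_{ij}$, and $c(\alpha)=\alpha$ so the two commute), and the orbit sizes $1,1,5,5,5,5,5$ and $2,5,5,5,10$ do force every invariant pair, quintuple and decuple to be a single orbit, which is exactly what pins down the uniqueness and transitivity claims. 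The identification of the commuting $A_1$-involution as the reflection in $2H-\sum E_i$ rather than a transposition is the one genuinely nonobvious point, and you handle it correctly; it is also consistent with Lemma~\ref{l6}, which says the centralizer of an order-$5$ element is $\mathbb{Z}/10\mathbb{Z}$ rather than the $\mathbb{Z}/5\mathbb{Z}$ one sees inside $\mathfrak{S}_6$. What your approach buys is a self-contained, verifiable argument that does not depend on an external table; what the paper's approach buys is brevity. Both are sound.
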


\begin{proof}
Over an algebraically closed field the lemma follows from Table~7.1, co\mbox{lumn~8,} in~\cite{1}. From Theorem~\ref{t4},\,(ii), we obtain the statement for a~separably closed field.

The lemma is proved.
\end{proof}

\begin{lemma}
\label{l13}
Let $S$ be a~smooth cubic surface over a~separably closed field. Let~$l_1$ and $l_2$ be two intersecting lines on $S$. Then there is a~unique line $l_3 \subset S$ that intersects both $l_1$ and $l_2$.
\end{lemma}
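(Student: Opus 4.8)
The plan is to work entirely in $\mathrm{Pic}(S)$, which is legitimate here because $S$ is defined over a separably closed field, so all twenty-seven lines and their intersection numbers (recorded in \S\,\ref{s3}) are available; cf.\ Theorem~\ref{t4}. The geometric picture to keep in mind is that $l_1$ and $l_2$ span a plane $\Pi \subset \mathbb{P}^3$, and $\Pi \cap S$ is a plane cubic containing $l_1 \cup l_2$; its residual component should be the sought line $l_3$, and since the three lines are then coplanar they pairwise meet. Translated into divisor classes, a plane section has class $-K_S$ (the hyperplane class), which suggests taking as candidate
$$
l_3 := -K_S - l_1 - l_2.
$$

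First I would establish existence. Using $l_1^2 = l_2^2 = -1$, $l_1 \cdot l_2 = 1$, $K_S \cdot l_i = -1$ and $K_S^2 = 3$, a direct computation gives $l_3^2 = -1$, $l_3 \cdot K_S = -1$, and $l_3 \cdot l_1 = l_3 \cdot l_2 = 1$. It is classical that the twenty-seven lines on $S$ are exactly the classes $C$ with $C^2 = C \cdot K_S = -1$ (such a class is effective by Riemann--Roch together with ampleness of $-K_S$, and an effective class of anticanonical degree $1$ is a line), so the class $l_3$ is genuinely one of the lines. The relations $l_3 \cdot l_1 = l_3 \cdot l_2 = 1$ then say that $l_3$ meets both $l_1$ and $l_2$, while $l_3 \neq l_1, l_2$ because $l_3 \cdot l_i = 1 \neq -1 = l_i^2$.

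For uniqueness, suppose $l_3'$ is any line with $l_3' \cdot l_1 = l_3' \cdot l_2 = 1$, and set $N := l_1 + l_2 + l_3' + K_S$. A short computation, again using the intersection numbers above, yields $N \cdot (-K_S) = 0$ and $N^2 = 0$. Since $-K_S$ is ample with $(-K_S)^2 = 3 > 0$, the Hodge index theorem forces $N^2 \leq 0$, with equality only for a numerically trivial class; as $\mathrm{Pic}(S)$ is torsion-free this gives $N = 0$, that is, $l_3' = -K_S - l_1 - l_2 = l_3$. Hence the third line is unique.

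The step I expect to be the real subtlety is uniqueness, not existence. A naive geometric count of lines in $\Pi$ must still work when $l_1, l_2, l_3$ are concurrent at an Eckardt point, and one would then have to exclude a spurious fourth line through that point. The virtue of the lattice argument via Hodge index is that it is insensitive to this degeneration: it never asks whether the three lines form a genuine triangle or a pencil, so it treats the Eckardt case and the generic case uniformly. If one instead prefers the purely geometric route, the analogous obstacle is to show that the residual line $l_3$ is reduced and distinct from $l_1, l_2$, i.e.\ that $\Pi \cap S$ is never $2l_1 + l_2$; this amounts to verifying that the tangent plane of $S$ is non-constant along $l_1$, and that is precisely where the smoothness of $S$ enters.
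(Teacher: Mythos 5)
Your proof is correct, but it takes a different route from the paper's in both halves, even though the pivot is the same class identity $l_3=-K_S-l_1-l_2$. For existence the paper argues geometrically: the plane $T$ spanned by $l_1$ and $l_2$ cuts out $T\cap S=l_1\cup l_2\cup l_3$ for some third line $l_3$ (the residual component of a plane cubic containing two lines), which simultaneously yields the relation $-K_S=l_1+l_2+l_3$ in $\mathrm{Pic}(S)$. You instead reconstruct $l_3$ purely lattice-theoretically, invoking the characterization of the $27$ lines as the classes with $C^2=C\cdot K_S=-1$ via Riemann--Roch and ampleness of $-K_S$ --- correct, but heavier machinery than the paper needs. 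For uniqueness the paper just pairs a hypothetical extra line $l_4$ against the relation above: on one hand $-K_S\cdot l_4=1$, on the other $l_1\cdot l_4+l_2\cdot l_4+l_3\cdot l_4=2+l_3\cdot l_4\geqslant 2$, a contradiction; you run the Hodge index theorem on $N=l_1+l_2+l_3'+K_S$. Both uniqueness arguments are equally insensitive to the Eckardt (concurrent) configuration, so the advantage you claim for the lattice route over ``a naive geometric count'' applies just as well to the paper's one-line degree count, which is the more elementary of the two. One small point you leave implicit: concluding $l_3'=l_3$ from equality of classes uses that a $(-1)$-class contains a unique line (two distinct irreducible curves in the same class would have $l_3\cdot l_3'=-1<0$); this is standard but worth a word.
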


\begin{proof}
Consider the hyperplane $T$ in $\mathbb{P}^3$ such that $l_1,l_2 \subset T$. Then
\begin{equation}
\label{eq3.1}
T \cap S=l_1 \cup l_2 \cup l_3
\end{equation}
for some line $l_3 \subset S$. Assume that there is another line $l_4$ that does not lie in $T$ and intersects both $l_1$ and $l_2$. By~\eqref{eq3.1} we get that
\begin{equation}
\label{eq3.2}
-K_S=l_1+l_2+l_3.
\end{equation}
Therefore, on the one hand we obtain $-K_S \cdot l_4=1$, and on the other hand, by~\eqref{eq3.2} we obtain
$$
-K_S \cdot l_4=l_1 \cdot l_4+l_2 \cdot l_4+l_3 \cdot l_4=2+l_3 \cdot l_4 \neq 1.
$$
This contradiction gives us the uniqueness of the line which intersects both $l_1$ and~$l_2$.

The lemma is proved.
\end{proof}

\begin{lemma}
\label{l14}
Let $S$ be a~smooth cubic surface over a~separably closed field. Consider an element $g \in \operatorname{Aut}(S)$ of order~$5$ acting on $S$. Then the two lines invariant under the action of $g$ are skew lines.
\end{lemma}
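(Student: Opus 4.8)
The plan is to argue by contradiction and use the uniqueness statement of Lemma~\ref{l13} to manufacture a third $g$-invariant line, which will then contradict the exact count of invariant lines provided by Lemma~\ref{l12}.

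First I would invoke Lemma~\ref{l12},\,(i): since $g$ is of order~$5$, there are exactly two lines $l_1$ and $l_2$ on $S$ that are invariant under the action of $g$. Suppose, towards a contradiction, that $l_1$ and $l_2$ are not skew, that is, they intersect. Then Lemma~\ref{l13} applies to the pair $l_1, l_2$ and furnishes a unique line $l_3 \subset S$ meeting both $l_1$ and $l_2$; concretely, $l_3$ is the residual line in the plane section $T \cap S = l_1 \cup l_2 \cup l_3$, where $T$ is the hyperplane spanned by $l_1$ and $l_2$, so in particular $l_3$ is distinct from $l_1$ and $l_2$.

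The key step is to show that $l_3$ is itself $g$-invariant. Since $g \in \operatorname{Aut}(S)$ permutes the $27$ lines of $S$ and preserves their intersection numbers, and since $g(l_1)=l_1$ and $g(l_2)=l_2$, the line $g(l_3)$ again meets both $l_1$ and $l_2$. By the uniqueness asserted in Lemma~\ref{l13} we conclude $g(l_3)=l_3$. Consequently $l_1$, $l_2$ and $l_3$ are three pairwise distinct lines fixed by $g$, contradicting the fact that $g$ has exactly two invariant lines. This contradiction shows that $l_1$ and $l_2$ must be skew.

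I do not expect a serious obstacle here: the argument is purely combinatorial once the two preceding lemmas are in hand. The only point deserving care is to read Lemma~\ref{l12},\,(i), as giving \emph{exactly} two invariant lines (rather than merely two among possibly more), which is precisely what makes the production of a third invariant line an actual contradiction; this reading is consistent with the decomposition of the remaining $25$ lines into the quintuples described in that lemma.
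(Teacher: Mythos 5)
Your argument is correct and coincides with the paper's own proof: both assume the two invariant lines meet, use the uniqueness in Lemma~\ref{l13} to produce a third $g$-invariant line, and derive a contradiction with the count in Lemma~\ref{l12},\,(i). You merely spell out more explicitly the step that $g(l_3)=l_3$ follows from uniqueness, which the paper leaves implicit.
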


\begin{proof}
Let $l_1$ and $l_2$ be two lines on $S$ that are invariant under the action of~$g$. Assume that $l_1$ and $l_2$ intersect. Then by Lemma~\ref{l13} there is a~unique line $l_3$ such that $l_3$ intersects both $l_1$ and $l_2$. However, since $l_1$ and $l_2$ are fixed by $g$, $l_3$ is also fixed by $g$. This contradicts Lemma~\ref{l12},\,(i).

The lemma is proved.
\end{proof}

\begin{lemma}
\label{l15}
Let $\mathbf{F}$ be a~separably closed field of characteristic different from~$3$. Let~$S$ be a~smooth cubic surface over $\mathbf{F}$. Let $g$ be an automorphism of $S$ of order~$3$. Denote by $\omega \in \mathbf{F}$ a~nontrivial cube root of unity. Then for the automorphism~$g$ one of the following holds.

{\rm (i)} The automorphism $g$ lies in the conjugacy class $A_2$, and up to conjugation the action on $\mathbb{P}^3$ is given by
$$
\begin{pmatrix}
\omega^2 & 0 & 0 & 0\\
0 & \omega^2 & 0 & 0\\
0 & 0 & \omega & 0\\
0 & 0 & 0 & \omega
\end{pmatrix}.
$$

{\rm (ii)} The automorphism $g$ lies in the conjugacy class $A_2^2$, and up to conjugation the action on $\mathbb{P}^3$ is given by
$$
\begin{pmatrix}
\omega^2 & 0 & 0 & 0\\
0 & \omega & 0 & 0\\
0 & 0 & 1 & 0\\
0 & 0 & 0 & 1
\end{pmatrix}.
$$

{\rm (iii)} The automorphism $g$ lies in the conjugacy class $A_2^3$, and up to conjugation the action on $\mathbb{P}^3$ is given by
$$
\begin{pmatrix}
\omega & 0 & 0 & 0\\
0 & 1 & 0 & 0\\
0 & 0 & 1 & 0\\
0 & 0 & 0 & 1
\end{pmatrix}.
$$
\end{lemma}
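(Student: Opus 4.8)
The plan is to first show that, over the separably closed field $\mathbf{F}$, the order-$3$ element $g$ acts on $\mathbb{P}^3$ by a diagonalizable matrix whose eigenvalues are cube roots of unity, producing exactly the three normal forms in (i)--(iii), and then to match each form with one of the conjugacy classes $A_2$, $A_2^2$, $A_2^3$ in $W(\mathrm{E}_6)$ by computing the fixed locus of $g$ on $S$.

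First I would use that $S$ is anticanonically embedded, so that $g$ is induced by a linear transformation of $\mathbb{P}^3$, and lift $g$ to a matrix $\tilde g \in \mathrm{GL}_4(\mathbf{F})$. Since $g$ has order $3$ in $\mathrm{PGL}_4(\mathbf{F})$ we have $\tilde g^3 = cE$ for some $c \in \mathbf{F}^*$; as $\mathbf{F}$ is separably closed and $\operatorname{char}(\mathbf{F}) \neq 3$, the polynomial $x^3 - c$ splits, so after rescaling $\tilde g$ we may assume $\tilde g^3 = E$. The polynomial $x^3 - 1 = (x-1)(x-\omega)(x-\omega^2)$ has distinct roots in $\mathbf{F}$, hence $\tilde g$ is diagonalizable with eigenvalues among $\{1, \omega, \omega^2\}$. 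Up to conjugation by a permutation matrix and multiplication by a scalar (which shifts all eigenvalue exponents simultaneously), the eigenvalue multiset is recorded by the partition of $4$ giving the multiplicities of the three cube roots of unity. Discarding the trivial partition $(4,0,0)$, which is the identity in $\mathrm{PGL}_4(\mathbf{F})$, the only possibilities are $(2,2,0)$, $(2,1,1)$ and $(3,1,0)$, yielding exactly the matrices in (i), (ii) and (iii).

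It remains to determine the conjugacy class of $g$ in $W(\mathrm{E}_6)$. By Theorem~\ref{t5} the element $g$ acts faithfully on $\mathrm{Pic}(S_{\mathbf{F}^{\mathrm{sep}}})$, so it lies in one of $A_2$, $A_2^2$, $A_2^3$, the only classes of order $3$ in Table~\ref{tab2}. To pin it down I would compute the fixed locus $S^g$. For diagonal $g$ the fixed locus in $\mathbb{P}^3$ is the union of the projectivized eigenspaces; intersecting with $S$ and using that $g$-semi-invariance forces the defining cubic to have a specific shape (so that, by smoothness of $S$, the relevant coordinate points do not lie on $S$ and the relevant eigenspace-lines are not contained in $S$) gives: in case (i) two skew lines each meeting $S$ in three points, i.e.\ six points; in case (ii) one line meeting $S$ in three points, with the two isolated fixed points lying off $S$; and in case (iii) a plane section of $S$, with the remaining fixed point off $S$. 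Since $g$ has order prime to $\operatorname{char}(\mathbf{F})$, the locus $S^g$ is smooth; in particular the three points in case (ii) and the six points in case (i) are reduced, and in case (iii) the plane section is a smooth plane cubic, hence a curve of genus $1$. Then I would invoke the Lefschetz fixed-point formula over $\overline{\mathbf{F}}$ (in $\ell$-adic cohomology with $\ell \neq \operatorname{char}(\mathbf{F})$, using $\mathrm{Pic}(S_{\overline{\mathbf{F}}}) \simeq \mathrm{Pic}(S_{\mathbf{F}^{\mathrm{sep}}})$ from Theorem~\ref{t4}): as $S$ is rational, $\chi(S^g) = 2 + \operatorname{tr}(g \mid \mathrm{Pic}(S_{\overline{\mathbf{F}}}))$, and $\operatorname{tr}(g \mid \mathrm{Pic}) = 1 + \operatorname{tr}(g \mid \mathrm{E}_6)$ because $g$ fixes the canonical class. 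The Euler characteristics $6$, $3$, $0$ of the three fixed loci then force $\operatorname{tr}(g \mid \mathrm{E}_6) = 3, 0, -3$, which by the eigenvalue data in Table~\ref{tab2} identifies the classes as $A_2$, $A_2^2$, $A_2^3$, respectively.

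The main obstacle is exactly this last matching, and in particular the separation of $A_2^2$ from $A_2^3$: one checks that these two classes induce permutations of the same cycle type on the $27$ lines and fix the same number of lines (namely none), so they cannot be distinguished by the action on lines. The finer invariant $\operatorname{tr}(g \mid \mathrm{Pic})$ is needed, and the Euler characteristic of the (necessarily smooth) fixed locus, read off via the Lefschetz formula, is the cleanest way to access it. The delicate point is therefore the geometric bookkeeping in the previous paragraph — verifying, from the shape of the $g$-invariant cubic and the smoothness of $S$, that $S^g$ is a genuinely reduced $0$-dimensional scheme of the asserted length in cases (i) and (ii), and a smooth elliptic curve in case (iii).
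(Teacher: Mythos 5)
Your argument is correct, but it takes a genuinely different route from the paper. The paper's proof is a two-line reduction: the classification over an algebraically closed field is quoted from \cite{8}, Theorem~10.4, and the descent to a separably closed field is done via Lemma~\ref{l2}, which says that two order-$3$ elements of $\mathrm{PGL}_4(\mathbf{F})$ that are conjugate over an extension are already conjugate over $\mathbf{F}$. You instead reprove the statement from scratch: your diagonalization works directly over $\mathbf{F}^{\mathrm{sep}}$ (since $x^3-c$ is separable in characteristic $\neq 3$), so you never need the descent lemma, and you identify the $W(\mathrm{E}_6)$-class of each normal form by computing $\operatorname{tr}(g \mid \mathrm{Pic})$ from $\chi(S^g)$ via the tame Lefschetz fixed-point formula. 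I checked the bookkeeping you flag as delicate, and it goes through: for each diagonal form, semi-invariance of the cubic forces the character to be trivial (the nontrivial characters yield cubics singular along a fixed line), and smoothness then forces $S^g$ to consist of six reduced points, three reduced points, or a smooth plane cubic in cases (i), (ii), (iii), giving $\chi(S^g)=6,3,0$, hence $\operatorname{tr}(g\mid \mathrm{E}_6)=3,0,-3$, which matches $A_2$, $A_2^2$, $A_2^3$ by Table~\ref{tab2}. Your observation that $A_2^2$ and $A_2^3$ act on the $27$ lines with the same cycle type and no fixed lines is correct and justifies reaching for the finer invariant. What the paper's route buys is brevity and no cohomological input; what yours buys is self-containedness and an actual explanation of why each matrix lands in its class. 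One small point of precision: in case (iii) the matrices $\operatorname{diag}(\omega,1,1,1)$ and $\operatorname{diag}(\omega^2,1,1,1)=\operatorname{diag}(\omega,1,1,1)^2$ are not conjugate in $\mathrm{PGL}_4$, so ``up to conjugation'' there should be read as allowing the replacement of $g$ by $g^2$; this is harmless because each of the classes $A_2$, $A_2^2$, $A_2^3$ is stable under inversion.
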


\begin{proof}
For an algebraically closed field this lemma follows from Theorem~10.4 in~\cite{8}. Applying Lemma~\ref{l2} we obtain the statement for a~separably closed field.
\end{proof}

\begin{corollary}
\label{c2}
If $g$ is an automorphism of order~$3$ of a~smooth cubic surface over a~field $\mathbf{F}$ of characteristic different from~$3$, such that $\mathbf{F}$ does not contain a~nontrivial cube root of unity, then $g$ does not lie in the conjugacy class $A_2^3$.
\end{corollary}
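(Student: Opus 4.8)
The plan is to reduce the statement to a computation with eigenvalues over $\mathbf{F}^{\mathrm{sep}}$, which is then controlled by the Galois action. Since $g$ is an automorphism of $S\subset\mathbb{P}^3$ defined over $\mathbf{F}$, it preserves the hyperplane class $-K_S$ and so acts $\mathbf{F}$-linearly on $H^0(S,-K_S)=H^0(\mathbb{P}^3,\mathcal{O}(1))$; hence $g$ is induced by an element of $\mathrm{PGL}_4(\mathbf{F})$, of which I fix a lift $A\in\mathrm{GL}_4(\mathbf{F})$. As $g$ has order $3$ in $\mathrm{PGL}_4(\mathbf{F})$, the cube $A^3$ is scalar, say $A^3=sE$ with $s\in\mathbf{F}^*$. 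Arguing by contradiction, suppose $g$ lies in the conjugacy class $A_2^3$. By Lemma~\ref{l15},\,(iii), the element $g$ is conjugate in $\mathrm{PGL}_4(\mathbf{F}^{\mathrm{sep}})$ to $\mathrm{diag}(\omega,1,1,1)$, so $A$ is conjugate over $\mathbf{F}^{\mathrm{sep}}$ to $\mathrm{diag}(\lambda\omega,\lambda,\lambda,\lambda)$ for some $\lambda\in(\mathbf{F}^{\mathrm{sep}})^{*}$ with $\lambda^3=s$.

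The core of the argument is then a multiplicity count. The multiset of eigenvalues of $A$ equals $\{\lambda\omega,\lambda,\lambda,\lambda\}$, in which $\lambda$ has multiplicity $3$ and $\lambda\omega$ has multiplicity $1$. Because the characteristic polynomial of $A$ has coefficients in $\mathbf{F}$, the group $\mathrm{Gal}(\mathbf{F}^{\mathrm{sep}}/\mathbf{F})$ permutes these eigenvalues and preserves their multiplicities. The eigenvalue of multiplicity $3$ and the eigenvalue of multiplicity $1$ are each uniquely singled out, hence each is fixed by the Galois group; therefore $\lambda\in\mathbf{F}$ and $\lambda\omega\in\mathbf{F}$, whence $\omega=(\lambda\omega)\lambda^{-1}\in\mathbf{F}$. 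This contradicts the assumption that $\mathbf{F}$ contains no nontrivial cube root of unity, and so $g$ does not lie in $A_2^3$.

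The point demanding care---and the main obstacle---is to guarantee that $\mathrm{Gal}(\mathbf{F}^{\mathrm{sep}}/\mathbf{F})$ genuinely acts on the eigenvalues, i.e. that these lie in $\mathbf{F}^{\mathrm{sep}}$; this is exactly where the hypothesis on the characteristic is used. Each eigenvalue $\mu$ of $A$ satisfies $\mu^3=s\in\mathbf{F}^{*}$, and since $\operatorname{char}(\mathbf{F})\neq 3$ the polynomial $t^3-s$ is separable (its derivative $3t^2$ shares no root with it, as $s\neq 0$), so every eigenvalue is separable over $\mathbf{F}$. Alternatively, one can avoid lifting altogether: choosing $\sigma\in\mathrm{Gal}(\mathbf{F}^{\mathrm{sep}}/\mathbf{F})$ with $\sigma(\omega)=\omega^2$, the invariance of $g$ under $\sigma$ forces $g$ to be conjugate over $\mathbf{F}^{\mathrm{sep}}$ both to $\mathrm{diag}(\omega,1,1,1)$ and to $\mathrm{diag}(\omega^2,1,1,1)$, while a direct inspection of eigenvalue multiplicities shows that these two elements are not conjugate in $\mathrm{PGL}_4$, yielding the same contradiction.
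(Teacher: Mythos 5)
Your proof is correct and follows the same overall strategy as the paper's: both arguments feed the hypothesis into Lemma~\ref{l15},\,(iii), to put $g$ into the form $\mathrm{diag}(\omega,1,1,1)$ over $\mathbf{F}^{\mathrm{sep}}$, pass to a scalar multiple whose characteristic polynomial has coefficients in $\mathbf{F}$, and force the conclusion $\omega\in\mathbf{F}$. The difference lies in how that conclusion is extracted. The paper expands the characteristic polynomial of $a\cdot\mathrm{diag}(\omega,1,1,1)$ explicitly and observes that the ratio of the coefficient of $\lambda^2$ to the square of the coefficient of $\lambda^3$ equals $1/(3+\omega)$, which must lie in $\mathbf{F}$. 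You instead let $\mathrm{Gal}(\mathbf{F}^{\mathrm{sep}}/\mathbf{F})$ act on the multiset of eigenvalues $\{\lambda\omega,\lambda,\lambda,\lambda\}$ and note that the unique eigenvalue of multiplicity $3$ and the unique eigenvalue of multiplicity $1$ are each Galois-stable, hence lie in $\mathbf{F}$, so their ratio $\omega$ does as well. Your version buys two small improvements: it records explicitly why the eigenvalues are separable over $\mathbf{F}$ (they are roots of $t^3-s$ with $\operatorname{char}\mathbf{F}\neq 3$), and it avoids dividing by $3+\omega$, which vanishes for one choice of $\omega$ in characteristic~$7$ --- a degenerate case for the paper's formula, harmless only because every field of characteristic~$7$ already contains $\omega$, so the corollary is vacuous there. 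Your closing alternative (observing that $g$ would have to be conjugate in $\mathrm{PGL}_4(\mathbf{F}^{\mathrm{sep}})$ to both $\mathrm{diag}(\omega,1,1,1)$ and $\mathrm{diag}(\omega^2,1,1,1)$, which the multiplicity count rules out) is also sound and is really the same argument in a coordinate-free guise.
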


\begin{proof}
Assume that $g$ lies in the conjugacy class $A_2^3$. By Lemma~\ref{l15},\,(iii), over $\mathbf{F}^{\mathrm{sep}}$ the element $g$ is given up to conjugation by the matrix
$$
A=\begin{pmatrix}
\omega & 0 & 0 & 0\\
0 & 1 & 0 & 0\\
0 & 0 & 1 & 0\\
0 & 0 & 0 & 1
\end{pmatrix}.
$$

Consider the characteristic polynomial of $aA$, where $a \in \overline{\mathbf{F}}^*$. This polynomial is of the form
$$
F(\lambda)=(\lambda-a\omega)(\lambda-a)^3=\lambda^4-a\lambda^3(3+\omega)+a^2\lambda^2(3+\omega)-a^3\lambda(3+\omega)+a^4\omega.
$$
Since $g$ is an automorphism of a~smooth cubic surface over the field $\mathbf{F}$, there is~\mbox{$a \in \overline{\mathbf{F}}^*$} such that all coefficients of $F(\lambda)$ lie in~$\mathbf{F}$. In particular, the ratio of the coefficient of $\lambda^2$ to the square of that of $\lambda^3$ belongs to $\mathbf{F}$:
$$
\frac{a^2(3+\omega)}{a^2(3+\omega)^2}=\frac{1}{3+\omega} \in \mathbf{F}.
$$
But this is not true by assumption. This contradiction completes the proof.
\end{proof}

The contrapositive statement of Corollary~\ref{c2} is as follows.

\begin{corollary}
\label{c3}
Let $S$ be a~smooth cubic surface over a~field $\mathbf{F}$ of characteristic different from~$3$. Assume that there is an element $g \in \operatorname{Aut}(S)$ that lies in the conjugacy class $A^3_2$. Then $\mathbf{F}$ contains a~nontrivial cube root of unity $\omega \in \mathbf{F}^{\mathrm{sep}}$.
\end{corollary}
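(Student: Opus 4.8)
The plan is to obtain the statement directly as the contrapositive of Corollary~\ref{c2}. The only preliminary point that needs checking is that the hypothesis ``$g$ lies in the conjugacy class $A_2^3$'' already forces $g$ to be an automorphism of order~$3$, so that Corollary~\ref{c2} is genuinely applicable. This is immediate from Lemma~\ref{l3} (see Table~\ref{tab2}): an element of $A_2^3$ acts on the root lattice $\mathrm{E}_6$ with eigenvalues $\omega,\omega,\omega,\omega^2,\omega^2,\omega^2$, and hence has order exactly~$3$.

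With this in hand I would argue by contradiction. Suppose $\mathbf{F}$ contains no nontrivial cube root of unity. Since $\operatorname{char}(\mathbf{F}) \neq 3$ and $g \in \operatorname{Aut}(S)$ is an automorphism of order~$3$, Corollary~\ref{c2} applies and asserts that $g$ does not lie in the conjugacy class $A_2^3$. This contradicts the hypothesis $g \in A_2^3$. Hence $\mathbf{F}$ must contain a nontrivial cube root of unity $\omega$, which a~priori is an element of $\mathbf{F}^{\mathrm{sep}}$, giving exactly the asserted conclusion.

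Since the argument is a purely logical reformulation of an already-established corollary, there is essentially no obstacle to overcome here; the only substantive ingredient is the computation carried out inside the proof of Corollary~\ref{c2}, where the smoothness of $S$ guarantees that some scalar multiple $aA$ of the diagonal representative $A$ has all coefficients of its characteristic polynomial in $\mathbf{F}$, forcing $1/(3+\omega) \in \mathbf{F}$ and thus $\omega \in \mathbf{F}$. I would simply cite that corollary rather than reproduce the calculation.
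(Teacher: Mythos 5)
Your proposal is correct and matches the paper exactly: the paper introduces Corollary~\ref{c3} with the single line ``The contrapositive statement of Corollary~\ref{c2} is as follows'' and gives no further proof. Your additional observation that membership in $A_2^3$ forces order~$3$ (via Lemma~\ref{l3}) is a reasonable piece of due diligence that the paper leaves implicit.
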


Now we recall the structure of the automorphism group of the Fermat cubic surface.

\begin{example}
\label{e1}
Consider the Fermat cubic surface $S$ over a~field $\mathbf{F}$ of characteristic different from~$3$. Assume that a~nontrivial cube root of unity $\omega \in \mathbf{F}^{\mathrm{sep}}$ lies in $\mathbf{F}$. It~is not difficult to see that
$$
\operatorname{Aut}(S) \supseteq (\mathbb{Z}/3\mathbb{Z})^3 \rtimes \mathfrak{S}_4,
$$
where $(\mathbb{Z}/3\mathbb{Z})^3$ acts as multiplication by cube roots of unity on the homogeneous coordinates $x$, $y$, $z$ and~$t$ on $\mathbb{P}^3$ and $\mathfrak{S}_4$ acts as a~permutation of the homoge\-neous~coordinates $x$, $y$, $z$ and~$t$. By Theorem~\ref{t1} and Remark~\ref{r1} we have
$$
\operatorname{Aut}(S_{\mathbf{F}^{\mathrm{sep}}}) \simeq
\begin{cases}
 (\mathbb{Z}/3\mathbb{Z})^3 \rtimes \mathfrak{S}_4 &\text{if }\operatorname{char}\mathbf{F} \neq 2,
 \\
\mathrm{PSU}_4(\mathbb{F}_2) &\text{if }\operatorname{char}\mathbf{F} = 2.
\end{cases}
$$
Therefore, $\operatorname{Aut}(S) \simeq (\mathbb{Z}/3\mathbb{Z})^3 \rtimes \mathfrak{S}_4$ if, in addition, the characteristic of $\mathbf{F}$ is different from~$2$.
\end{example}

\begin{lemma}
\label{l16}
Let $G \simeq (\mathbb{Z}/3\mathbb{Z})^3 \rtimes \mathfrak{S}_4$ be isomorphic to a~group acting regularly on the Fermat cubic surface over an algebraically closed field of characteristic different from~$3$. Let $\tau=gh \in G$ be an element of order~$3$ such that $g \in (\mathbb{Z}/3\mathbb{Z})^3$ and $h \in \mathfrak{S}_4$ are nonidentity elements of $G$. Then in some coordinates $g$ is given by
$$
g=\begin{pmatrix}
\omega^a & 0 & 0 & 0\\
0 & \omega^b & 0 & 0\\
0 & 0 & \omega^c & 0\\
0 & 0 & 0 & 1
\end{pmatrix}
$$
and $h=(123)$ is a~permutation of the coordinates, where $a,b,c \in \{0,1,2\}$ are either pairwise different numbers, or all equal to $1$ or~$2$.
\end{lemma}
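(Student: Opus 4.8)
The plan is to exploit the semidirect-product structure of $G \simeq (\mathbb{Z}/3\mathbb{Z})^3 \rtimes \mathfrak{S}_4$ together with the explicit Fermat action recalled in Example~\ref{e1}: the normal subgroup $N = (\mathbb{Z}/3\mathbb{Z})^3$ acts diagonally by cube roots of unity on the coordinates $x,y,z,t$, while $\mathfrak{S}_4$ acts by permutation matrices permuting these coordinates.

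First I would pin down $h$. Projecting the relation $\tau^3 = 1$ to the quotient $G/N \simeq \mathfrak{S}_4$ gives $h^3 = 1$, and since $h$ is non-identity this forces $h$ to be a $3$-cycle. As all $3$-cycles are conjugate in $\mathfrak{S}_4$, after relabeling the coordinates (i.e.\ conjugating by a permutation matrix, which keeps $N$ in diagonal form) I may assume $h = (123)$, which fixes the coordinate $t$. Writing the conjugation action as ${}^{h}g = h g h^{-1}$, a direct expansion yields
$$\tau^3 = (gh)^3 = g \cdot {}^{h}g \cdot {}^{h^2}\!g \cdot h^3,$$
so, once $h^3 = 1$ is known, the condition $\tau^3 = 1$ reduces to $g \cdot {}^{h}g \cdot {}^{h^2}\!g = 1$ in $N$.

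Next I would translate this into a numerical condition. Representing $g$ by $\mathrm{diag}(\omega^{a_1},\omega^{a_2},\omega^{a_3},\omega^{a_4})$ and using the scalar freedom in $\mathrm{PGL}_4$ to normalize the last entry to $1$, I write $g = \mathrm{diag}(\omega^{a},\omega^{b},\omega^{c},1)$ with $a,b,c \in \{0,1,2\}$. Since $h = (123)$ cyclically permutes the first three coordinates and fixes the fourth, the matrices ${}^{h}g$ and ${}^{h^2}\!g$ are the cyclic shifts of the diagonal of $g$, and hence
$$g \cdot {}^{h}g \cdot {}^{h^2}\!g = \mathrm{diag}\bigl(\omega^{a+b+c},\,\omega^{a+b+c},\,\omega^{a+b+c},\,1\bigr).$$
This matrix is scalar, i.e.\ trivial in $\mathrm{PGL}_4$, precisely when $a + b + c \equiv 0 \pmod 3$.

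Finally I would enumerate. The triples $(a,b,c) \in \{0,1,2\}^3$ satisfying $a+b+c \equiv 0 \pmod 3$ are, up to order, $(0,0,0)$, $(1,1,1)$, $(2,2,2)$ and $(0,1,2)$; the first is excluded because $g$ is non-identity, leaving exactly the three asserted cases, namely $a,b,c$ pairwise distinct, or all equal to $1$, or all equal to $2$. I expect no genuine obstacle here: the only delicate points are getting the direction of the conjugation action of $h$ on the diagonal torus right and correctly accounting for the $\mathrm{PGL}_4$ scalar normalization, so that the order-$3$ condition on $\tau$ is faithfully encoded in the single congruence $a+b+c \equiv 0 \pmod 3$.
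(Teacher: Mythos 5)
Your proposal is correct and follows essentially the same route as the paper: both reduce $\tau^3=1$ to the relation $g\cdot{}^{h}g\cdot{}^{h^2}g=1$ in the diagonal subgroup, normalize $h$ to $(123)$ using conjugacy of $3$-cycles, and read off the congruence $a+b+c\equiv 0 \pmod 3$ from the permuted diagonal entries, with the fixed fourth coordinate pinning down the $\mathrm{PGL}_4$ scalar. The only cosmetic difference is that you normalize the last diagonal entry of $g$ to $1$ at the outset, whereas the paper keeps a fourth exponent $a_4$ and uses the $i=4$ entry to fix the scalar; the enumeration of admissible triples is identical.
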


\begin{proof}
As $\tau$ is an element of order~$3$, we obtain
$$
ghghgh=e,
$$
where $e \in G$ is the identity element. This is equivalent to the equation
\begin{equation}
\label{eq3.3}
g(hgh^{-1})(h^{-1}gh)=e.
\end{equation}

According to Example~\ref{e1} and by the uniqueness up to conjugation of the subgroup~\mbox{$(\mathbb{Z}/3\mathbb{Z})^3 \rtimes \mathfrak{S}_4$}
in the group $\mathrm{PSU}_4(\mathbb{F}_2)$, which follows from Lemma~\ref{l9}, $g$ is given up to conjugation by the element
$$
g=\begin{pmatrix}
\omega^{a_1} & 0 & 0 & 0\\
0 & \omega^{a_2} & 0 & 0\\
0 & 0 & \omega^{a_3} & 0\\
0 & 0 & 0 & \omega^{a_4}
\end{pmatrix}
$$ of $\mathrm{PGL}_4(\mathbf{F})$
for a~separably closed field $\mathbf{F}$ of characteristic different from $3$, where
$$
a_1, a_2, a_3, a_4 \in \{0,1,2\}
$$
are not all equal. Let $\sigma$ be the permutation of the set $\{1,2,3,4\}$ corresponding to~$h$. Then the element
$$
g(hgh^{-1})(h^{-1}gh)
$$
is given by
\begin{equation}
\label{eq3.4}
\begin{pmatrix}
\omega^{a_1+a_{\sigma(1)}+a_{\sigma^2(1)}} & 0 & 0 & 0\\
0 & \omega^{a_2+a_{\sigma(2)}+a_{\sigma^2(2)}} & 0 & 0\\
0 & 0 & \omega^{a_3+a_{\sigma(3)}+a_{\sigma^2(3)}} & 0\\
0 & 0 & 0 & \omega^{a_4+a_{\sigma(4)}+a_{\sigma^2(4)}}
\end{pmatrix}.
\end{equation}

According to~\eqref{eq3.3}, this is equal to the identity element. By our assumptions the element $\sigma$ is of order~$3$. So up to conjugation we have $\sigma=(123)$. This means that
$$
 \omega^{a_4+a_{\sigma(4)}+a_{\sigma^2(4)}}=1.
$$ Thus,
since the element~\eqref{eq3.4} is the identity element, we get that
$$
 \omega^{a_i+a_{\sigma(i)}+a_{\sigma^2(i)}}=1
$$
for $i=1,2,3$. Therefore, $a_1, a_2, a_3 \in \{0,1,2\}$ are either pairwise different, or all equal to $1$ or~$2$.

The lemma is proved.
\end{proof}

\begin{lemma}
\label{l17}
Let $G \simeq (\mathbb{Z}/3\mathbb{Z})^3 \rtimes \mathfrak{S}_4$ be a~group isomorphic to the group acting regularly on the Fermat cubic surface over an algebraically closed field of characteristic different from~$3$. Let
$$
\mathrm{pr} \colon G \to \mathfrak{S}_4
$$
denote the projection homomorphism onto $\mathfrak{S}_4$. Let us consider the group $G$ as a~subgroup of $W(\mathrm{E}_6)$. Then all elements of $G$ from the conjugacy class~$A_2^3$ lie in~$\mathrm{Ker}(\mathrm{pr})$.
\end{lemma}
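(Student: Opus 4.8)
The plan is to argue by contraposition: I will show that any order-$3$ element $\tau \in G$ whose image $\mathrm{pr}(\tau)$ is nontrivial cannot lie in the class $A_2^3$, because its eigenvalues on $\mathbb{P}^3$ force it into the class $A_2^2$ instead. Since every element of the conjugacy class $A_2^3$ has order $3$ (by Lemma~\ref{l3}), this is enough to conclude that all elements of $G$ from $A_2^3$ lie in $\mathrm{Ker}(\mathrm{pr})$.

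First I would record that $\mathrm{Ker}(\mathrm{pr}) = (\mathbb{Z}/3\mathbb{Z})^3$ and that the semidirect product structure lets me write each $\tau \in G$ uniquely as $\tau = gh$ with $g \in (\mathbb{Z}/3\mathbb{Z})^3$ and $h = \mathrm{pr}(\tau) \in \mathfrak{S}_4$. Take $\tau$ of order $3$ and suppose $h \neq e$. Since $\mathrm{pr}$ is a homomorphism and $\tau^3 = e$, we get $h^3 = e$, so $h$ is a $3$-cycle. If $g \neq e$ I apply Lemma~\ref{l16} to assume, after conjugation, that $h = (123)$ and $g = \mathrm{diag}(\omega^a, \omega^b, \omega^c, 1)$ with $a+b+c \equiv 0 \pmod 3$; the case $g = e$ (the pure $3$-cycle) is the same with $a=b=c=0$.

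Next I compute the eigenvalues of the monomial matrix $\tau = gh$ acting on $\mathbb{P}^3$. The fourth coordinate is fixed by $h$ and scaled by $1$, contributing the eigenvalue $1$. On the span of the first three coordinates $\tau$ is a cyclic monomial matrix whose characteristic polynomial is $\lambda^3 - \omega^{a+b+c} = \lambda^3 - 1$, so its eigenvalues there are $1, \omega, \omega^2$. Hence $\tau$ acts on $\mathbb{P}^3$ with eigenvalue multiset $\{1, 1, \omega, \omega^2\}$. Comparing with Lemma~\ref{l15}, this is precisely the eigenvalue pattern of an element of the class $A_2^2$ (three distinct eigenvalues with multiplicities $2,1,1$), and it differs from the pattern $\{1,1,1,\omega\}$ of $A_2^3$ (multiplicities $3,1$). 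As the three order-$3$ classes are distinguished by their $\mathbb{P}^3$-eigenvalues even up to a common scalar, $\tau$ lies in $A_2^2$, contradicting $\tau \in A_2^3$. Therefore $h = e$, i.e.\ $\tau \in \mathrm{Ker}(\mathrm{pr})$.

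The only genuinely delicate point is the bookkeeping that identifies the $\mathrm{PGL}_4$-eigenvalue pattern with the correct $W(\mathrm{E}_6)$-conjugacy class, but this is exactly what Lemma~\ref{l15} supplies. Everything else is the routine eigenvalue computation for a cyclic monomial matrix, for which the constraint $a+b+c\equiv 0\pmod 3$ furnished by Lemma~\ref{l16} (equivalently, forced by $\tau$ having order $3$) is all that is needed.
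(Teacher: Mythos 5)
Your proposal is correct and follows essentially the same route as the paper: reduce to the normal form $\tau=gh$ with $h=(123)$ and $g=\mathrm{diag}(\omega^a,\omega^b,\omega^c,1)$ via Lemma~\ref{l16}, then compare eigenvalues against Lemma~\ref{l15} to see that $\tau$ falls in $A_2^2$ rather than $A_2^3$. You actually carry out the eigenvalue computation that the paper dismisses as ``not difficult to see,'' and you also cover the case $g=e$ (a pure $3$-cycle), which the paper's proof does not explicitly address.
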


\begin{proof}
It is not difficult to see from Lemma~\ref{l15},\,(iii), and Example~\ref{e1} that there are elements of $\mathrm{Ker}(\mathrm{pr})$ lying in~$A_2^3$. Let us prove that no elements of $G$ belonging to~$A_2^3$ can occur out of $\mathrm{Ker}(\mathrm{pr})$. Assume that there is an element $\tau=gh$ in $G$, where~\mbox{$g \in (\mathbb{Z}/3\mathbb{Z})^3$} and $h \in \mathfrak{S}_4$ are nonidentity elements such that $\tau$ lies in $A_2^3$. Thus,~$g$ and $h$ satisfy the assumptions of Lemma~\ref{l16}. So up to conjugation~$g$ is given by
$$
\begin{pmatrix}
\omega^a & 0 & 0 & 0\\
0 & \omega^b & 0 & 0\\
0 & 0 & \omega^c & 0\\
0 & 0 & 0 & 1
\end{pmatrix},
$$
where $a,b,c \in \{0,1,2\}$ are either pairwise different, or all equal to $1$ or~$2$, and $h$~acts on the diagonal elements of $g$ as the permutation $(123)$. It is not difficult to see from Lemma~\ref{l15},\,(iii), that such elements~$gh$ do not lie in the conjugacy class~$A_2^3$.

The lemma is proved.
\end{proof}

\begin{lemma}
\label{l18}
Let $G \simeq (\mathbb{Z}/3\mathbb{Z})^3 \rtimes \mathfrak{S}_4$ be isomorphic to a~group acting regularly on the Fermat cubic surface over an algebraically closed field of characteristic different from~$3$. Let $g$ be a~nonidentity element of the normal subgroup~$(\mathbb{Z}/3\mathbb{Z})^3$ of $G$. Then the centralizer of $g$ in $G$ is isomorphic to~$(\mathbb{Z}/3\mathbb{Z})^3 \rtimes D$, where $D$ is isomorphic either to $\mathbb{Z}/2\mathbb{Z}$, $(\mathbb{Z}/2\mathbb{Z})^2$, or to~$\mathfrak{S}_3$.
\end{lemma}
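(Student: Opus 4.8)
The plan is to identify $C_G(g)$ with a semidirect product $(\mathbb{Z}/3\mathbb{Z})^3 \rtimes D$ and then pin down $D$ by a short orbit--stabilizer analysis. Following Example~\ref{e1}, I realize $G$ with its normal subgroup $V=(\mathbb{Z}/3\mathbb{Z})^3$ acting by multiplication of the coordinates $x,y,z,t$ by cube roots of unity, and $\mathfrak{S}_4$ acting by permuting these coordinates. Writing an element of $V$ as a diagonal matrix $\operatorname{diag}(\omega^{a_1},\omega^{a_2},\omega^{a_3},\omega^{a_4})$ taken modulo the common scalar, I identify $V$ with the quotient $(\mathbb{F}_3)^4/\langle(1,1,1,1)\rangle$, on which $\mathfrak{S}_4$ acts by permuting entries. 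Since $V$ is abelian and normal, $V\subseteq C_G(g)$ automatically. For $\tau=vh$ with $v\in V$ and $h\in\mathfrak{S}_4$, a direct computation in the semidirect product gives $\tau g\tau^{-1}=h\cdot g$, so $\tau$ centralizes $g$ if and only if $h$ fixes $g$ under the permutation action. Hence $C_G(g)=V\rtimes D$ with $D=\operatorname{Stab}_{\mathfrak{S}_4}(g)$, and it remains only to determine $D$.

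Represent $g$ by a vector $(a_1,a_2,a_3,a_4)\in(\mathbb{F}_3)^4$. The condition $h\cdot g=g$ in $V$ means $a_{h^{-1}(i)}=a_i+c$ for all $i$ and some constant $c\in\mathbb{F}_3$. The key observation is that $c$ must vanish: if $c\neq 0$, then $c$ generates $\mathbb{Z}/3\mathbb{Z}$, so the multiplicity vector $(|g^{-1}(0)|,|g^{-1}(1)|,|g^{-1}(2)|)$ would have to be invariant under a full cyclic shift, forcing all three entries to be equal; but they sum to $4$, which is not divisible by $3$. Thus $c=0$, and $D$ is precisely the subgroup of $\mathfrak{S}_4$ preserving each level set of $g$, that is, the stabilizer of the partition of $\{1,2,3,4\}$ into the fibres of $g$ with their values respected.

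Finally I enumerate the possibilities. Because $\mathbb{F}_3$ has only three elements, the four coordinates split into at most three value-classes, so for a nonidentity $g$ the fibre partition of $\{1,2,3,4\}$ has type $(3,1)$, $(2,2)$ or $(2,1,1)$; the type $(4)$ is the identity element, and the type $(1,1,1,1)$ is impossible since it would require four distinct values. In these three cases the level-set-preserving stabilizer is $\mathfrak{S}_3$, $(\mathbb{Z}/2\mathbb{Z})^2$ and $\mathbb{Z}/2\mathbb{Z}$ respectively, which yields $C_G(g)\simeq(\mathbb{Z}/3\mathbb{Z})^3\rtimes D$ with $D$ as claimed. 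The only genuinely delicate point is the vanishing of $c$: without it one would wrongly expect a block-swap in the $(2,2)$ case and obtain $\mathfrak{D}_4$ instead of $(\mathbb{Z}/2\mathbb{Z})^2$. The divisibility remark that $3$ does not divide $4$ is exactly what rules this out and is the crux of the argument.
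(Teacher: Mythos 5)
Your proof is correct and follows essentially the same route as the paper's: realize $(\mathbb{Z}/3\mathbb{Z})^3$ as diagonal matrices modulo scalars with $\mathfrak{S}_4$ permuting the entries, observe that $\mathrm{C}_G(g)=(\mathbb{Z}/3\mathbb{Z})^3\rtimes \mathrm{Stab}_{\mathfrak{S}_4}(g)$, and determine the stabilizer by a case analysis on the pattern of repeated entries of $g$. Your explicit elimination of the scalar shift $c$ (via the multiplicity-vector and the fact that $3\nmid 4$) is a point the paper's case-by-case listing passes over silently, and it is a worthwhile addition, since it is precisely what excludes $\mathfrak{D}_4$ in the $(2,2)$ case.
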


\begin{proof}
 By Example~\ref{e1} and the uniqueness of the subgroup $(\mathbb{Z}/3\mathbb{Z})^3 \rtimes \mathfrak{S}_4$ in $\mathrm{PSU}_4(\mathbb{F}_2)$ up to conjugation, which follows from Lemma~\ref{l9}, the normal subgroup $(\mathbb{Z}/3\mathbb{Z})^3$ of~$G$ consists of the elements
\begin{equation}
\label{eq3.5}
\begin{pmatrix}
\omega^a & 0 & 0 & 0\\
0 & \omega^b & 0 & 0\\
0 & 0 & \omega^c & 0\\
0 & 0 & 0 & \omega^d
\end{pmatrix} \in \mathrm{PGL}_4(\mathbf{F}),
\end{equation}
where $\omega \in \mathbf{F}$ is a~nontrivial cube root of unity in a~field $\mathbf{F}$ of characteristic different from $3$ and $a,b,c,d \in \{0,1,2\}$. The subgroup~$\mathfrak{S}_4$ acts on $(\mathbb{Z}/3\mathbb{Z})^3$ as a~permutation of~the diagonal elements in the representation of elements of $(\mathbb{Z}/3\mathbb{Z})^3$ as elements~of the form~\eqref{eq3.5} in $\mathrm{PGL}_4(\mathbf{F})$.

Now consider all possible centralizers of a~nonidentity element
$$
g \in (\mathbb{Z}/3\mathbb{Z})^3
$$
of the group~$G$. Up to conjugation we have the following three cases.
\begin{itemize}
\item Let $g$ be an element such that $a=b$, $c \neq d$ and $c,d \neq a$. Then the centralizer of $g$ in $G$ is $\mathrm{C}_G(g) \simeq (\mathbb{Z}/3\mathbb{Z})^3 \rtimes \mathbb{Z}/2\mathbb{Z}$.

\item Let $g$ be an element such that $a=b$, $c=d$ and $c \neq a$. Then the centralizer of $g$ in $G$ is $\mathrm{C}_G(g) \simeq (\mathbb{Z}/3\mathbb{Z})^3 \rtimes (\mathbb{Z}/2\mathbb{Z})^2$.

\item Let $g$ be an element such that $a=b=c$ and $d \neq a$. Then the centralizer of~$g$ in $G$ is $\mathrm{C}_G(g) \simeq (\mathbb{Z}/3\mathbb{Z})^3 \rtimes \mathfrak{S}_3$.
\end{itemize}

The lemma is proved.
\end{proof}

\section{The group $\mathfrak{S}_5$ and the Clebsch cubic surface}
\label{s4}

In this section we consider the group $\mathfrak{S}_5$ as the automorphism group of a~smooth cubic surface over an arbitrary field $\mathbf{F}$ of characteristic different from~$5$. First of~all, recall the structure of the automorphism group of the Clebsch cubic surface.

\begin{example}
\label{e2}
It is obvious that the automorphism group of the Clebsch cubic surface over an arbitrary field $\mathbf{F}$ of characteristic different from $5$ contains the group~$\mathfrak{S}_5$ which acts on the projective space~$\mathbb{P}^4$ by permutations of the homogeneous coordinates~$x$,~$y$,~$z$,~$t$ and~$w$. Note that in this case, by Lemma~\ref{l7} the image of the absolute Galois group $\mathrm{Gal}(\mathbf{F}^{\mathrm{sep}}/\mathbf{F})$ in~$W(\mathrm{E}_6)$ lies in a~group isomorphic to~$\mathbb{Z}/2\mathbb{Z}$.
\end{example}

\begin{remark}
\label{r5}
Let $P_1$, $P_2$, $P_3$ and~$P_4$ be four points in general position on~$\mathbb{P}^2$. Then this quadruple of points is unique up to a~projective transformation. 
\end{remark}

\begin{lemma}
\label{l19}
Let $F$ be a~smooth del~Pezzo surface of degree~$5$ over a~field $\mathbf{F}$. Assume that all lines on $F_{\mathbf{F}^{\mathrm{sep}}}$ are defined over~$\mathbf{F}$. Then such an $F$ is unique up to isomorphism.
\end{lemma}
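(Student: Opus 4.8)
The plan is to reduce $F$ to a blowup of $\mathbb{P}^2_{\mathbf{F}}$ at four $\mathbf{F}$-rational points in general position and then invoke Remark~\ref{r5}. Recall that over $\mathbf{F}^{\mathrm{sep}}$ a~del~Pezzo surface of degree~$5$ is the blowup of $\mathbb{P}^2$ at four points $P_1, \dots, P_4$ in general position, and its ten lines are the four exceptional curves $E_1, \dots, E_4$ together with the six proper transforms $L_{ij}$ of the lines through $P_i$ and $P_j$; among these the $E_i$ form a~quadruple of pairwise skew $(-1)$-curves.

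First I would contract the four exceptional curves. By hypothesis each $E_i$ is defined over $\mathbf{F}$, so the Castelnuovo contraction $\pi \colon F \to Y$ blowing down $E_1, \dots, E_4$ is defined over $\mathbf{F}$, and the target $Y$ is a~smooth projective surface which over $\mathbf{F}^{\mathrm{sep}}$ becomes $\mathbb{P}^2$; that is, $Y$ is a~Severi--Brauer surface. Since each $E_i$ is an $\mathbf{F}$-curve mapping to a~single point $P_i \in Y$, the point $P_i$ is fixed by $\mathrm{Gal}(\mathbf{F}^{\mathrm{sep}}/\mathbf{F})$ and hence is $\mathbf{F}$-rational. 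A~Severi--Brauer surface that possesses an $\mathbf{F}$-rational point is isomorphic to $\mathbb{P}^2_{\mathbf{F}}$, so $Y \simeq \mathbb{P}^2_{\mathbf{F}}$ and $F$ is the blowup of $\mathbb{P}^2_{\mathbf{F}}$ at the four $\mathbf{F}$-rational points $P_1, \dots, P_4$.

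Next I would check that $P_1, \dots, P_4$ are in general position. This is a~geometric condition (no three of them collinear), and it holds over $\mathbf{F}^{\mathrm{sep}}$ because $F_{\mathbf{F}^{\mathrm{sep}}}$ is a~del~Pezzo surface of degree~$5$; hence it holds over $\mathbf{F}$ as well. By Remark~\ref{r5} any four points in general position form a~projective frame, so there is an element of $\mathrm{PGL}_3(\mathbf{F})$---defined over $\mathbf{F}$ because the $P_i$ are $\mathbf{F}$-rational---carrying $P_1, \dots, P_4$ to a~fixed standard quadruple. This isomorphism of $\mathbb{P}^2_{\mathbf{F}}$ lifts to an isomorphism of the blowups, and therefore every such $F$ is isomorphic over $\mathbf{F}$ to the single surface obtained by blowing up the standard four points. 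This proves uniqueness.

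The main obstacle I expect is the justification that the contracted surface $Y$ is genuinely $\mathbb{P}^2_{\mathbf{F}}$ rather than a~nontrivial Severi--Brauer surface; this is exactly where the hypothesis that the lines (in particular the $E_i$) are individually defined over $\mathbf{F}$ is used, since it forces the existence of the $\mathbf{F}$-rational points $P_i$ on $Y$. A~secondary point requiring care is that the identification in Remark~\ref{r5} can be realized by a~transformation defined over $\mathbf{F}$; this is automatic once the four points are known to be $\mathbf{F}$-rational, as the entries of the normalizing matrix are then rational functions of the coordinates of the $P_i$.
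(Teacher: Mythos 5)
Your proof is correct and follows essentially the same route as the paper's: blow down the four skew exceptional lines (defined over $\mathbf{F}$ by hypothesis) to reach $\mathbb{P}^2_{\mathbf{F}}$, then invoke Remark~\ref{r5} on the resulting quadruple of points in general position. The only difference is that you spell out the step the paper leaves implicit—namely that the contracted surface is a Severi--Brauer surface which is trivialized by the $\mathbf{F}$-rational images of the $E_i$—which is a legitimate and welcome filling-in rather than a different argument.
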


\begin{proof}
As all lines on $F_{\mathbf{F}^{\mathrm{sep}}}$ are defined over $\mathbf{F}$, we can blow down four skew lines and obtain~$\mathbb{P}^2$. The surface $F$ is a~blowup of four points in general position on $\mathbb{P}^2$. By Remark~\ref{r5} four points in general position on $\mathbb{P}^2$ are defined uniquely up to an automorphism of $\mathbb{P}^2$. 
Therefore, a~del~Pezzo surface of degree~$5$ such that all lines on it over $\mathbf{F}^{\mathrm{sep}}$ are defined over $\mathbf{F}$ is unique up to isomorphism.

The lemma is proved.
\end{proof}

\begin{example}
\label{e3}
Let $\mathbf{F}$ be a~separably closed field of characteristic different from~$5$. Let us construct a~smooth del~Pezzo surface of degree~$5$ with a~nontrivial regular action of~$\mathbb{Z}/5\mathbb{Z}$ over the field $\mathbf{F}$. Let $C \subset \mathbb{P}^2$ be a~smooth conic with a~nontrivial action of~$\mathbb{Z}/5\mathbb{Z}$. The action of~$\mathbb{Z}/5\mathbb{Z}$ on $C$ extends to an action of $\mathbb{P}^2$. The group~$\mathbb{Z}/5\mathbb{Z}$ fixes two points~$P_1$ and~$P_2$ on~$C$. Therefore, it fixes the line $\mathcal{L} \subset \mathbb{P}^2$ passing through $P_1$ and~$P_2$. Note that $\mathbb{Z}/5\mathbb{Z}$ acts on $\mathcal{L}$ nontrivially, because otherwise~$\mathbb{Z}/5\mathbb{Z}$ is generated up to conjugation by an element
of the form
$$
\begin{pmatrix}
\zeta_5 & 0 & 0\\
0 & 1 & 0\\
0 & 0 & 1
\end{pmatrix} \in \mathrm{PGL}_3(\mathbf{F}),
$$
where $\zeta_5 \in \mathbf{F}$ is a~nontrivial fifth root of unity.
In this case there are no smooth conics on~$\mathbb{P}^2$ fixed by $\mathbb{Z}/5\mathbb{Z}$. Thus, there is a~point~$P_3$ outside~$C$ that is fixed by~$\mathbb{Z}/5\mathbb{Z}$.

Let us blow up an orbit of five points on $C$ under the action of $\mathbb{Z}/5\mathbb{Z}$. Then we obtain a~$\mathbb{Z}/5\mathbb{Z}$-equivariant morphism
$$
p \colon X \to \mathbb{P}^2
$$
such that $p^{-1}C$ is a~$(-1)$-curve. So we can $\mathbb{Z}/5\mathbb{Z}$-equivariantly blow down $p^{-1}C$ and obtain the smooth del~Pezzo surface of degree~$5$ with two fixed points.
\end{example}

Now we formulate a~direct consequence of Theorem~1.5 in~\cite{19}.

\begin{lemma}
\label{l20}
Let $F$ be a~smooth del~Pezzo surface of degree~$5$ over a~field $\mathbf{F}$ such that all lines on $F_{\mathbf{F}^{\mathrm{sep}}}$ are defined over $\mathbf{F}$. Then
$$
\operatorname{Aut}(F) \simeq \mathfrak{S}_5.
$$
\end{lemma}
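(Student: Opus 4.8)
The plan is to squeeze $\operatorname{Aut}(F)$ between $\mathfrak{S}_5$ from above and from below, and then conclude equality. For the upper bound I would first note that every $\mathbf{F}$-automorphism of $F$ base changes to an automorphism of $F_{\mathbf{F}^{\mathrm{sep}}}$, giving an injection $\operatorname{Aut}(F) \hookrightarrow \operatorname{Aut}(F_{\mathbf{F}^{\mathrm{sep}}})$. By Theorem~\ref{t4},\,(i), the latter group agrees with $\operatorname{Aut}(F_{\overline{\mathbf{F}}})$, and over a separably (equivalently algebraically) closed field a smooth del~Pezzo surface of degree~$5$ carries exactly ten lines forming a Petersen configuration, whose full symmetry group is $\mathfrak{S}_5$. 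Since this action on the configuration of lines is faithful (the line classes generate $\mathrm{Pic}(F_{\mathbf{F}^{\mathrm{sep}}})$ and an automorphism is determined by its action on them), I obtain $\operatorname{Aut}(F_{\mathbf{F}^{\mathrm{sep}}}) \simeq \mathfrak{S}_5$ and hence $\operatorname{Aut}(F) \hookrightarrow \mathfrak{S}_5$.

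For the lower bound I would exploit the hypothesis that all lines on $F_{\mathbf{F}^{\mathrm{sep}}}$ are rational: this forces the image of the absolute Galois group $\mathrm{Gal}(\mathbf{F}^{\mathrm{sep}}/\mathbf{F})$ in $\operatorname{Aut}(\mathrm{Pic}(F_{\mathbf{F}^{\mathrm{sep}}})) \simeq \mathfrak{S}_5$ to be trivial, as each of the ten generating line classes is Galois-fixed. I would then invoke the description of $\operatorname{Aut}(F)$ in Theorem~1.5 of~\cite{19}, which expresses the $\mathbf{F}$-automorphism group of a smooth del~Pezzo surface of degree~$5$ in terms of this Galois image; with trivial image the whole geometric group descends, so $\mathfrak{S}_5 \subseteq \operatorname{Aut}(F)$. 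Combined with the upper bound this yields $\operatorname{Aut}(F) \simeq \mathfrak{S}_5$.

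The main obstacle is precisely this lower bound, that is, showing that \emph{every} geometric automorphism is defined over $\mathbf{F}$; the inclusion $\operatorname{Aut}(F)\hookrightarrow\mathfrak{S}_5$ is formal, whereas surjectivity requires the clean output of~\cite{19}. If one prefers a self-contained argument instead of citing~\cite{19}, I would use Lemma~\ref{l19} to identify $F$ with the unique split model: blowing down four skew rational lines produces $\mathbb{P}^2$ with four rational points in general position, which by Remark~\ref{r5} may be normalized to the standard coordinate points over the prime field. On this model $\mathfrak{S}_5$ acts by automorphisms defined over the prime field (for instance realizing $F$ as the moduli space of five marked points on $\mathbb{P}^1$ with $\mathfrak{S}_5$ permuting the markings), hence over $\mathbf{F}$, giving the required $\mathfrak{S}_5 \subseteq \operatorname{Aut}(F)$. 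A minor point to verify along the way is that there are no geometric automorphisms beyond $\mathfrak{S}_5$, which is special to degree~$5$ and follows from faithfulness on the Petersen configuration together with Theorem~\ref{t4} reducing the comparison from $\overline{\mathbf{F}}$ to $\mathbf{F}^{\mathrm{sep}}$.
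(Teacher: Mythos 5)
Your proposal is correct, and its core step---invoking Theorem~1.5 of~\cite{19} for the split quintic del~Pezzo surface---is exactly what the paper does; the paper offers no argument beyond that citation. Your self-contained alternative (normalizing $F$ to the blowup of the four standard points via Lemma~\ref{l19} and Remark~\ref{r5}, and realizing the $\mathfrak{S}_5$-action over the prime field) together with the faithfulness of the action on the Petersen configuration of lines goes beyond what the paper records and is also sound.
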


\begin{corollary}
\label{c4}
Let $F$ be a~smooth del~Pezzo surface of degree~$5$ with a~nontrivial action of $\mathbb{Z}/5\mathbb{Z}$ over an arbitrary field $\mathbf{F}$ of characteristic different from~$5$. Then this action fixes precisely two points on~$F_{\mathbf{F}^{\mathrm{sep}}}$.
\end{corollary}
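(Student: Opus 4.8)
The plan is to reduce everything to the separably closed field $\mathbf{F}^{\mathrm{sep}}$ and then exploit the rigidity of degree-$5$ del~Pezzo surfaces. First I would base change to $\mathbf{F}^{\mathrm{sep}}$: the nontrivial action of $\mathbb{Z}/5\mathbb{Z}$ is defined over $\mathbf{F}$, hence induces a nontrivial action on $F_{\mathbf{F}^{\mathrm{sep}}}$, and the fixed points we must count live on $F_{\mathbf{F}^{\mathrm{sep}}}$. Since $\mathbf{F}^{\mathrm{sep}}$ is separably closed, all lines on $F_{\mathbf{F}^{\mathrm{sep}}}$ are defined over $\mathbf{F}^{\mathrm{sep}}$ by Theorem~\ref{t4},\,(ii), so Lemma~\ref{l20} applies and gives $\operatorname{Aut}(F_{\mathbf{F}^{\mathrm{sep}}}) \simeq \mathfrak{S}_5$. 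Here I would also note that $\zeta_5 \in \mathbf{F}^{\mathrm{sep}}$ because $\operatorname{char}\mathbf{F} \neq 5$, so the construction of Example~\ref{e3} is available over $\mathbf{F}^{\mathrm{sep}}$.

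The key structural remark is that the number of fixed points is a conjugacy invariant. A generator of the acting $\mathbb{Z}/5\mathbb{Z}$ is an order-$5$ element of $\mathfrak{S}_5 \simeq \operatorname{Aut}(F_{\mathbf{F}^{\mathrm{sep}}})$, hence a $5$-cycle, and all $5$-cycles are conjugate in $\mathfrak{S}_5$. If an automorphism $\phi$ conjugates one order-$5$ automorphism $g$ to another $g'$, then $\phi$ carries the fixed locus of $g$ bijectively onto that of $g'$; therefore every nontrivial action of $\mathbb{Z}/5\mathbb{Z}$ on $F_{\mathbf{F}^{\mathrm{sep}}}$ has the same number of fixed points. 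It thus suffices to compute this number for one convenient model.

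Next I would invoke uniqueness. By Lemma~\ref{l19}, a smooth del~Pezzo surface of degree~$5$ all of whose lines are defined over the base field is unique up to isomorphism; applied over $\mathbf{F}^{\mathrm{sep}}$ this shows that $F_{\mathbf{F}^{\mathrm{sep}}}$ is isomorphic to the surface constructed in Example~\ref{e3}. For that surface I would count the fixed points directly: the order-$5$ element of $\mathrm{PGL}_3(\mathbf{F}^{\mathrm{sep}})$ acts on $\mathbb{P}^2$ with three isolated fixed points, namely the two points $P_1,P_2$ lying on the invariant conic $C$ and the point $P_3$ off $C$. The blown-up orbit of five points on $C$ is free, so the five exceptional curves are permuted cyclically and contribute no fixed points; the equivariant contraction of the strict transform of $C$ then merges $P_1$ and $P_2$ into a single fixed point, while $P_3$ yields the other. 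Hence the model has precisely two fixed points, and by the conjugacy invariance above so does $F_{\mathbf{F}^{\mathrm{sep}}}$.

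The main obstacle is the final count: one must check that the two fixed points produced are exactly the fixed locus, i.e.\ that no further fixed points are created by the blow-up or the contraction. This requires care in tracking the three fixed points on $\mathbb{P}^2$ through the birational modifications of Example~\ref{e3}, and in verifying that the free orbit of five blown-up points, together with the resulting cyclic permutation of the exceptional curves, introduces no new fixed points. Everything else---the reduction to $\mathbf{F}^{\mathrm{sep}}$, the identification of the automorphism group via Lemma~\ref{l20}, and the conjugacy of $5$-cycles in $\mathfrak{S}_5$---is formal.
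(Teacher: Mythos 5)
Your proposal is correct and follows essentially the same route as the paper: reduce to $\mathbf{F}^{\mathrm{sep}}$, use Lemma~\ref{l19} and Lemma~\ref{l20} to identify $F_{\mathbf{F}^{\mathrm{sep}}}$ with the model of Example~\ref{e3} and to see that all order-$5$ automorphisms are conjugate in $\mathfrak{S}_5$, then count fixed points on that model. The only difference is that you spell out the fixed-point count through the blow-up and contraction explicitly, whereas the paper delegates it to Example~\ref{e3}; your count is accurate.
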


\begin{proof}
By Lemma~\ref{l19}, over $\mathbf{F}^{\mathrm{sep}}$ a~smooth del~Pezzo surface of degree~$5$ is unique up to isomorphism. By Lemma~\ref{l20} we have
$$
\operatorname{Aut}(F_{\mathbf{F}^{\mathrm{sep}}}) \simeq \mathfrak{S}_5.
$$

Therefore, as there is a~unique element of order~$5$ in $\mathfrak{S}_5$ up to conjugation, 
 Example~\ref{e3} shows that~$\mathbb{Z}/5\mathbb{Z}$ fixes precisely two points on $F_{\mathbf{F}^{\mathrm{sep}}}$.

The corollary is proved.
\end{proof}

\begin{lemma}
\label{l21}
Let $F$ be a~smooth del~Pezzo surface of degree~$5$ over a~field $\mathbf{F}$ of characteristic different from~$5$. Assume that all lines on $F_{\mathbf{F}^{\mathrm{sep}}}$ are defined over $\mathbf{F}$. Then such a~del~Pezzo surface is unique up to isomorphism. Moreover, $F$ carries a~nontrivial action of $\mathbb{Z}/5\mathbb{Z}$, and such an action is unique up to conjugation.
\end{lemma}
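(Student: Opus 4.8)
The first assertion I would obtain immediately from Lemma~\ref{l19}: the hypotheses of the statement (a smooth del~Pezzo surface of degree~$5$ all of whose lines over $\mathbf{F}^{\mathrm{sep}}$ are defined over $\mathbf{F}$) are exactly those of Lemma~\ref{l19}, which already yields uniqueness up to isomorphism; the extra assumption $\operatorname{char}\mathbf{F}\neq 5$ is not needed for this part.

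For the existence of a nontrivial action of $\mathbb{Z}/5\mathbb{Z}$, I would apply Lemma~\ref{l20}: under the same hypotheses it gives $\operatorname{Aut}(F)\simeq\mathfrak{S}_5$, this being the automorphism group over $\mathbf{F}$. Since $\mathfrak{S}_5$ contains $5$-cycles, hence elements of order~$5$, the group $\operatorname{Aut}(F)$ contains a subgroup isomorphic to $\mathbb{Z}/5\mathbb{Z}$; as its elements are automorphisms defined over $\mathbf{F}$, this furnishes the required nontrivial action of $\mathbb{Z}/5\mathbb{Z}$ on $F$ over $\mathbf{F}$. Here the hypothesis $\operatorname{char}\mathbf{F}\neq 5$ guarantees that such an order-$5$ automorphism gives a genuine, tame $\mathbb{Z}/5\mathbb{Z}$-action.

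For uniqueness up to conjugation, I would argue entirely inside $\mathfrak{S}_5$. The elements of order~$5$ in $\mathfrak{S}_5$ are precisely the $5$-cycles, and since conjugacy classes in a symmetric group are determined by cycle type, these form a single conjugacy class. Hence, given two nontrivial actions $\phi_1,\phi_2\colon\mathbb{Z}/5\mathbb{Z}\to\operatorname{Aut}(F)$ sending a fixed generator to order-$5$ elements $\tau_1$ and $\tau_2$, one has $\tau_2=\psi\tau_1\psi^{-1}$ for some $\psi\in\operatorname{Aut}(F)$, so that $\phi_2(g)=\psi\,\phi_1(g)\,\psi^{-1}$ for all $g\in\mathbb{Z}/5\mathbb{Z}$. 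Equivalently, all Sylow $5$-subgroups of $\mathfrak{S}_5$ are conjugate by Sylow's theorem, so the image subgroup $\mathbb{Z}/5\mathbb{Z}$ is unique up to conjugation.

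I do not expect a serious obstacle here: both assertions reduce to the already-established Lemmas~\ref{l19} and~\ref{l20} together with the elementary conjugacy theory of $\mathfrak{S}_5$. The one point I would take care to state explicitly is that Lemma~\ref{l20} describes the automorphism group \emph{over $\mathbf{F}$}, so that the order-$5$ elements are honest $\mathbf{F}$-automorphisms and the resulting action is indeed defined over the ground field.
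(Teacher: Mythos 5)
Your proposal is correct and follows essentially the same route as the paper: uniqueness of $F$ from Lemma~\ref{l19}, the isomorphism $\operatorname{Aut}(F)\simeq\mathfrak{S}_5$ from Lemma~\ref{l20}, and uniqueness of the $\mathbb{Z}/5\mathbb{Z}$-action up to conjugation from the fact that the order-$5$ elements of $\mathfrak{S}_5$ form a single conjugacy class. Your write-up merely spells out the conjugacy argument that the paper leaves implicit.
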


\begin{proof}
By Lemma~\ref{l19} such an~$F$ is unique up to isomorphism. By Lemma~\ref{l20} the automorphism group of $F$ is isomorphic to~$\mathfrak{S}_5$. So there is a~unique action of $\mathbb{Z}/5\mathbb{Z}$ on~$F$ up to conjugation.

The lemma is proved.
\end{proof}

\begin{lemma}
\label{l22}
Let $S$ be a~smooth cubic surface over a~field $\mathbf{F}$ with a~regular action of~$G \simeq\mathbb{Z}/5\mathbb{Z}$ such that the image $\Gamma$ of the absolute Galois group $\mathrm{Gal}(\mathbf{F}^{\mathrm{sep}}/\mathbf{F})$ in the group $W(\mathrm{E}_6)$ lies in a~group isomorphic to~$\mathbb{Z}/2\mathbb{Z}$. Then the $G$-equivariant blowdown of two skew lines
$$
\pi \colon S \to F
$$
gives a~del~Pezzo surface $F$ of degree~$5$ such that any line on $F_{\mathbf{F}^{\mathrm{sep}}}$ is defined over~$\mathbf{F}$.
\end{lemma}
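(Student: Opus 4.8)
The plan is to contract the two lines on $S_{\mathbf{F}^{\mathrm{sep}}}$ that are fixed by a generator of $G$, and then to exploit the numerical incompatibility between $|\Gamma|\leqslant 2$ and the order-$5$ symmetry. Fix a generator $g$ of $G\simeq\mathbb{Z}/5\mathbb{Z}$. By Lemma~\ref{l12},\,(i), exactly two lines $l_1,l_2$ on $S_{\mathbf{F}^{\mathrm{sep}}}$ are invariant under $g$, and by Lemma~\ref{l14} they are skew; these are the two lines we contract.

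First I would check that the contraction descends to $\mathbf{F}$ and is $G$-equivariant. Since every element of $\operatorname{Aut}(S)$ is defined over $\mathbf{F}$, the image $\Gamma$ of the absolute Galois group centralizes $\operatorname{Aut}(S)\supseteq G$ (as recalled in \S\,\ref{s3}); in particular $\Gamma$ commutes with $g$ and hence preserves the $g$-fixed set $\{l_1,l_2\}$ of lines. Thus $\{l_1,l_2\}$ is stable under $\mathrm{Gal}(\mathbf{F}^{\mathrm{sep}}/\mathbf{F})$, so the simultaneous contraction of $l_1$ and $l_2$ descends to a morphism $\pi\colon S\to F$ over $\mathbf{F}$, where $F$ is a smooth del~Pezzo surface of degree~$5$. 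As $g$ fixes each of $l_1,l_2$, the morphism $\pi$ is $G$-equivariant and $g$ descends to an automorphism of $F$ of order~$5$ defined over~$\mathbf{F}$.

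It remains to show that all ten lines on $F_{\mathbf{F}^{\mathrm{sep}}}$ are defined over $\mathbf{F}$. The lines on $F_{\mathbf{F}^{\mathrm{sep}}}$ are exactly the images of the lines on $S_{\mathbf{F}^{\mathrm{sep}}}$ skew to both $l_1$ and $l_2$; since $g$ fixes $l_1$ and $l_2$, this set of ten lines is $g$-invariant, so by the orbit description in Lemma~\ref{l12},\,(i), it is the union of two of the $g$-orbits of size~$5$. Hence $g$ acts nontrivially on these ten lines, and its image $\bar g$ in $\operatorname{Aut}(\mathrm{Pic}(F_{\mathbf{F}^{\mathrm{sep}}}))\simeq W(\mathrm{A}_4)\simeq\mathfrak{S}_5$ is an element of order~$5$, that is, a $5$-cycle. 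Now let $\Gamma_F$ denote the image of $\mathrm{Gal}(\mathbf{F}^{\mathrm{sep}}/\mathbf{F})$ in $\operatorname{Aut}(\mathrm{Pic}(F_{\mathbf{F}^{\mathrm{sep}}}))\simeq\mathfrak{S}_5$. Because $\pi^*$ embeds $\mathrm{Pic}(F_{\mathbf{F}^{\mathrm{sep}}})$ into $\mathrm{Pic}(S_{\mathbf{F}^{\mathrm{sep}}})$ as a Galois-submodule, the group $\Gamma_F$ is a quotient of $\Gamma$, so $|\Gamma_F|$ divides~$2$. On the other hand $g$ is defined over $\mathbf{F}$, so $\bar g$ commutes with $\Gamma_F$, whence $\Gamma_F\subseteq\mathrm{C}_{\mathfrak{S}_5}(\bar g)=\langle\bar g\rangle\simeq\mathbb{Z}/5\mathbb{Z}$. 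Therefore $|\Gamma_F|$ divides $\gcd(2,5)=1$, so $\Gamma_F$ is trivial: the Galois group acts trivially on $\mathrm{Pic}(F_{\mathbf{F}^{\mathrm{sep}}})$, and by Theorem~\ref{t4},\,(ii), every line on $F_{\mathbf{F}^{\mathrm{sep}}}$ is defined over~$\mathbf{F}$.

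The main obstacle is the bookkeeping in the third step: one must verify that the ten lines of $F$ really assemble into two complete $g$-orbits, so that $\bar g$ is genuinely of order~$5$, and that $\Gamma_F$ is correctly identified as a quotient of $\Gamma$ commuting with $\bar g$. Once these points are secured, the conclusion is forced purely by the coprimality $\gcd(2,5)=1$, with the hypothesis $\Gamma\subseteq\mathbb{Z}/2\mathbb{Z}$ entering only through the bound $|\Gamma_F|\mid 2$.
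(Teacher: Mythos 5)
Your proof is correct, and while it sets up the contraction exactly as the paper does (the two $g$-fixed lines are skew by Lemmas~\ref{l12},\,(i), and~\ref{l14}, and the pair is Galois-stable because $\Gamma$ centralizes $G$), the final step takes a genuinely different route. The paper splits into the cases $\Gamma=1$ and $\Gamma\simeq\mathbb{Z}/2\mathbb{Z}$; in the latter it forms the order-$10$ element $\sigma=g\tau$, uses Lemma~\ref{l12},\,(ii), to see that $\tau$ interchanges $l_1$ and $l_2$, and then invokes Lemma~\ref{l5},\,(iii), together with the eigenvalue table of Lemma~\ref{l3} to conclude that $\tau$, lying in the class $A_1$ with eigenvalues $-1,1,1,1,1,1$ on $\mathrm{E}_6$, spends its single eigenvalue $-1$ on the sublattice $\mathbb{Z}\langle l_1,l_2\rangle$ and therefore acts trivially on $\pi^*\mathrm{Pic}(F_{\mathbf{F}^{\mathrm{sep}}})$. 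You instead pass directly to $\mathrm{Pic}(F_{\mathbf{F}^{\mathrm{sep}}})$ and play the bound $|\Gamma_F|\mid 2$ (valid since $\Gamma_F$ is a quotient of $\Gamma$, as $\pi^*\mathrm{Pic}(F_{\mathbf{F}^{\mathrm{sep}}})$ is a Galois submodule of $\mathrm{Pic}(S_{\mathbf{F}^{\mathrm{sep}}})$) against the containment $\Gamma_F\subseteq\mathrm{C}_{\mathfrak{S}_5}(\bar g)\simeq\mathbb{Z}/5\mathbb{Z}$, finishing by coprimality. The bookkeeping you flag does check out: the ten lines of $S_{\mathbf{F}^{\mathrm{sep}}}$ disjoint from $l_1\cup l_2$ form a $g$-stable set containing no $g$-fixed line, hence a union of two of the quintuple orbits from Lemma~\ref{l12},\,(i), so $\bar g$ really is an element of order~$5$, i.e.\ a $5$-cycle, in $W(\mathrm{A}_4)\simeq\mathfrak{S}_5$. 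Your route avoids the $W(\mathrm{E}_6)$ conjugacy-class machinery (Lemmas~\ref{l3} and~\ref{l5}) and the case split entirely, at the cost of not recording the fact, unneeded for the statement, that a nontrivial $\tau$ must swap $l_1$ and $l_2$.
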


\begin{proof}
By Lemmas~\ref{l12},\,(i), and~\ref{l14} there are two skew lines 
 $l_1$ and~$l_2$ on $S$
 that are fixed by~$G$. Suppose that $\Gamma$ is a~trivial group. Then their blowdown produces a~$G$-equivariant morphism from~$S$ to a~del~Pezzo surface $F$ of degree~$5$ such that any line on $F_{\mathbf{F}^{\mathrm{sep}}}$ is defined over~$\mathbf{F}$.

Now assume that $\Gamma \simeq \mathbb{Z}/2\mathbb{Z}$. Let $g$ be a~generator of $G$ and $\tau$ be a~generator of~$\Gamma$. Then $g$ and $\tau$ commute. So the element $\sigma=g\tau$ is of order~$10$. By Lemma~\ref{l5},\,(iii), the element $\tau$ lies in the conjugacy class~$A_1$. By Lemma~\ref{l12} the lines $l_1$ and $l_2$ are interchanged by $\sigma$. This means that they form a~$\Gamma$-orbit. Hence we can blowdown this $\Gamma$-orbit and obtain a~del~Pezzo surface of degree~$5$:
$$
\pi \colon S \to F.
$$
Since $l_1$ and $l_2$ are fixed by $G$, $\pi$ is a~$G$-equivariant blowdown.

By the properties of blowups we have
$$
\mathrm{Pic}(S_{\mathbf{F}^{\mathrm{sep}}})=\mathrm{Pic}(F_{\mathbf{F}^{\mathrm{sep}}}) \oplus \mathbb{Z}\langle l_1, l_2 \rangle.
$$
By Lemma~\ref{l3} the eigenvalues of the action of $\tau$ on $\mathrm{Pic}(S_{\mathbf{F}^{\mathrm{sep}}})$ are
$$
1,\ 1,\ 1,\ 1,\ 1,\ 1,\ -1.
$$
 Moreover, the element $\tau$ acts nontrivially on the sublattice $\mathbb{Z}\langle l_1, l_2 \rangle$ because $\{l_1, l_2\}$ forms a~$\Gamma$-orbit of length~$2$. This means that $\tau$ acts trivially on $\mathrm{Pic}(F_{\mathbf{F}^{\mathrm{sep}}})$. \mbox{Therefore}, all lines on $F_{\mathbf{F}^{\mathrm{sep}}}$ are defined over~$\mathbf{F}$.

The lemma is proved.
\end{proof}

\begin{lemma}
\label{l23}
Let $S$ be a~smooth cubic surface over a~field $\mathbf{F}$ of characteristic different from~$5$ such that~$\operatorname{Aut}(S) \supseteq \mathbb{Z}/5\mathbb{Z}$ and the image $\Gamma$ of the absolute Galois group~$\mathrm{Gal}(\mathbf{F}^{\mathrm{sep}}/\mathbf{F})$ in $W(\mathrm{E}_6)$ lies in the group isomorphic to $\mathbb{Z}/2\mathbb{Z}$. Then $S$ is isomorphic to the Clebsch cubic surface.
\end{lemma}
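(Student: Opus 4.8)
The plan is to run Lemma~\ref{l22} to replace $S$ by the unique degree-5 del~Pezzo surface, then recover $S$ from that surface by a blowup whose centre is canonically determined by the $\mathbb{Z}/5\mathbb{Z}$-action, thereby forcing $S$ to be unique up to isomorphism; finally one checks that the Clebsch cubic surface satisfies the very same hypotheses, so that $S$ must be isomorphic to it.

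First I would apply Lemma~\ref{l22}. The subgroup $\mathbb{Z}/5\mathbb{Z} \subseteq \operatorname{Aut}(S)$ gives a regular action of $G \simeq \mathbb{Z}/5\mathbb{Z}$, and by hypothesis the image $\Gamma$ of the absolute Galois group lies in a group isomorphic to $\mathbb{Z}/2\mathbb{Z}$; thus Lemma~\ref{l22} applies and yields a $G$-equivariant blowdown $\pi \colon S \to F$ of two skew lines $l_1, l_2$, where $F$ is a smooth del~Pezzo surface of degree~$5$ all of whose lines over $\mathbf{F}^{\mathrm{sep}}$ are defined over~$\mathbf{F}$. By Lemma~\ref{l21} such an $F$ is unique up to isomorphism, has $\operatorname{Aut}(F) \simeq \mathfrak{S}_5$, and the induced nontrivial $\mathbb{Z}/5\mathbb{Z}$-action on~$F$ is unique up to conjugation.

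Next I would recover $S$ canonically from the pair $(F, G)$. Since $\pi$ is $G$-equivariant, the images $p_1 = \pi(l_1)$ and $p_2 = \pi(l_2)$ are distinct $\mathbb{Z}/5\mathbb{Z}$-fixed points on $F_{\mathbf{F}^{\mathrm{sep}}}$; by Corollary~\ref{c4} the action fixes exactly two such points, so $\{p_1, p_2\}$ is precisely the fixed locus and is therefore determined intrinsically by the action. This set is Galois-stable, mirroring the behaviour of $l_1, l_2$ established in Lemma~\ref{l22}: when $\Gamma$ is trivial each $p_i$ is defined over $\mathbf{F}$, while when $\Gamma \simeq \mathbb{Z}/2\mathbb{Z}$ the two points are interchanged by the Galois action. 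Consequently $S \simeq \mathrm{Bl}_{\{p_1, p_2\}} F$, a description depending only on $F$ and its $\mathbb{Z}/5\mathbb{Z}$-action. As $F$ is unique up to isomorphism and the action is unique up to conjugation, $S$ is unique up to isomorphism.

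Finally I would exhibit the Clebsch cubic surface as one such $S$. By Proposition~\ref{p1},\,(i), the Clebsch cubic surface is smooth when $\operatorname{char}\mathbf{F} \neq 5$, and by Example~\ref{e2} its automorphism group contains $\mathfrak{S}_5 \supseteq \mathbb{Z}/5\mathbb{Z}$ with the image of the absolute Galois group lying in a group isomorphic to $\mathbb{Z}/2\mathbb{Z}$; hence it meets all the hypotheses imposed on $S$. By the uniqueness just proved, $S$ is isomorphic to the Clebsch cubic surface. The step I expect to be most delicate is the canonical reconstruction: one must confirm that the two $\mathbb{Z}/5\mathbb{Z}$-fixed points are exactly the centres of $\pi$ and that the Galois action on this pair is compatible with the descent data, so that blowing them up returns an $\mathbf{F}$-form of $S$ that is fixed solely by the isomorphism class of $(F, G)$.
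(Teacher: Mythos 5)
Your proposal is correct and follows essentially the same route as the paper's own proof: apply Lemma~\ref{l22} to blow down to the degree-$5$ del~Pezzo surface, invoke Lemma~\ref{l21} and Corollary~\ref{c4} to see that the pair of $\mathbb{Z}/5\mathbb{Z}$-fixed points (the centres of the blowdown) is canonically determined, conclude uniqueness of $S$, and identify it with the Clebsch cubic via Example~\ref{e2}. The only difference is that you make the Galois-compatibility of the reconstruction slightly more explicit, which is a harmless elaboration of the same argument.
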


\begin{proof}
Consider an element $g \in \operatorname{Aut}(S)$ of order~$5$ and the group $\mathbb{Z}/5\mathbb{Z}$ generated by this element. By Lemma~\ref{l22} there is a~$\mathbb{Z}/5\mathbb{Z}$-equivariant blowdown of~$S$ to a~del~Pezzo surface~$F$ of degree~$5$
$$
\pi \colon S \to F
$$
such that all lines on $F_{\mathbf{F}^{\mathrm{sep}}}$ are defined over $\mathbf{F}$. By Lemma~\ref{l21} this $F$ is unique up to isomorphism and there is a~nontrivial regular action of $\mathbb{Z}/5\mathbb{Z}$ on $F$, which is unique up to conjugation. By Corollary~\ref{c4} the number of fixed points of the action of~$\mathbb{Z}/5\mathbb{Z}$ on~$F$ over $\mathbf{F}^{\mathrm{sep}}$ is equal to~$2$. So the set of two points on $F$ which are fixed by a~regular action of $\mathbb{Z}/5\mathbb{Z}$ is unique up to an automorphism since the regular action of $\mathbb{Z}/5\mathbb{Z}$ is unique up to conjugation. Therefore, a~$\mathbb{Z}/5\mathbb{Z}$-equivariant blowup of two points is unique up to isomorphism. So we obtain the uniqueness of a~smooth cubic surface with the required properties. By Example~\ref{e2} the group~$\mathbb{Z}/5\mathbb{Z}$ acts regularly on the Clebsch cubic surface. This means that $S$ is isomorphic to the Clebsch cubic surface.

The lemma is proved.
\end{proof}

Now we present a~direct corollary of this lemma, which we need in what follows for the proof of the uniqueness of a~smooth cubic surface with a~regular action of~$\mathfrak{S}_5$ over a~field of characteristic different from~$5$.

\begin{corollary}
\label{c5}
Let $\mathbf{F}$ be a~field of characteristic different from $5$. Let $S$ be a~smooth cubic surface over $\mathbf{F}$ such that $\operatorname{Aut}(S) \supseteq \mathfrak{S}_5$. Then $S$ is isomorphic to the Clebsch~cubic surface.
\end{corollary}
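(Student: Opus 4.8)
The plan is to deduce this directly from Lemma~\ref{l23}, so the whole task reduces to verifying that the hypothesis on the Galois image is automatically satisfied once $\operatorname{Aut}(S) \supseteq \mathfrak{S}_5$. First I would invoke Theorem~\ref{t5} to regard the automorphism group as a subgroup of $W(\mathrm{E}_6)$, so that we have a chain of inclusions
$$
\mathfrak{S}_5 \subseteq \operatorname{Aut}(S) \subseteq W(\mathrm{E}_6).
$$
The key point recorded in \S\,\ref{s3} is that the image $\Gamma$ of the absolute Galois group $\mathrm{Gal}(\mathbf{F}^{\mathrm{sep}}/\mathbf{F})$ in $W(\mathrm{E}_6)$ lies in the centralizer of $\operatorname{Aut}(S)$, because every element of $\operatorname{Aut}(S)$ is defined over $\mathbf{F}$ and hence commutes with the Galois action on the Picard lattice.

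Next I would combine this with Lemma~\ref{l7}. Since $\Gamma$ centralizes all of $\operatorname{Aut}(S)$, in particular it centralizes the subgroup $\mathfrak{S}_5$, so
$$
\Gamma \subseteq \mathrm{C}_{W(\mathrm{E}_6)}(\mathfrak{S}_5).
$$
By Lemma~\ref{l7} this centralizer lies in a subgroup of $W(\mathrm{E}_6)$ isomorphic to $\mathbb{Z}/2\mathbb{Z}$, and therefore $\Gamma$ itself lies in a group isomorphic to $\mathbb{Z}/2\mathbb{Z}$.

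Finally, the symmetric group $\mathfrak{S}_5$ contains elements of order~$5$, so $\operatorname{Aut}(S) \supseteq \mathbb{Z}/5\mathbb{Z}$. Now both hypotheses of Lemma~\ref{l23} are met: $\mathbf{F}$ has characteristic different from~$5$, the automorphism group contains $\mathbb{Z}/5\mathbb{Z}$, and $\Gamma$ lies in a group isomorphic to $\mathbb{Z}/2\mathbb{Z}$. Lemma~\ref{l23} then yields that $S$ is isomorphic to the Clebsch cubic surface, completing the argument. There is essentially no hard step here: all of the geometric work (the two skew fixed lines, the equivariant blowdown to the quintic del~Pezzo surface, and its rigidity) has already been carried out in Lemmas~\ref{l22} and~\ref{l23}, and the only thing to check is the purely group-theoretic fact that an $\mathfrak{S}_5$ inside $W(\mathrm{E}_6)$ has centralizer small enough to force $\Gamma$ into $\mathbb{Z}/2\mathbb{Z}$, which is exactly Lemma~\ref{l7}. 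The mild subtlety worth stating explicitly is that the bound on $\Gamma$ comes from centralizing the \emph{whole} $\mathfrak{S}_5$ and not merely an order-$5$ element, although in this situation either viewpoint suffices.
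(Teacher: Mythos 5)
Your argument is exactly the paper's proof: use the fact that $\Gamma$ centralizes $\operatorname{Aut}(S)\supseteq\mathfrak{S}_5$ together with Lemma~\ref{l7} to place $\Gamma$ inside a group isomorphic to $\mathbb{Z}/2\mathbb{Z}$, then apply Lemma~\ref{l23}. The proposal is correct; it merely spells out the intermediate steps (Theorem~\ref{t5}, the order-$5$ element) that the paper leaves implicit.
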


\begin{proof}
Since $\operatorname{Aut}(S) \supseteq \mathfrak{S}_5$, by Lemma~\ref{l7} the image $\Gamma$ of the absolute Galois group~$\mathrm{Gal}(\mathbf{F}^{\mathrm{sep}}/\mathbf{F})$ in $W(\mathrm{E}_6)$ lies in a~group isomorphic to $\mathbb{Z}/2\mathbb{Z}$. So we can apply Lemma~\ref{l23} and see that $S$
is isomorphic to the Clebsch cubic surface.

The corollary is proved.
\end{proof}

Before we state some consequences of Lemma~\ref{l23} for characteristic~$2$, we formulate the following two lemmas.

\begin{lemma}
\label{l24}
Let $\mathbf{F}$ be a~field of characteristic~$2$. If there is a~nontrivial cube root of unity in $\mathbf{F}$, then $\mathbb{F}_4 \subset \mathbf{F}$.
\end{lemma}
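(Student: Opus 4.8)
The plan is to show that the prime subfield $\mathbb{F}_2$ together with a nontrivial cube root of unity $\omega \in \mathbf{F}$ already generates a copy of $\mathbb{F}_4$ inside $\mathbf{F}$. First I would record the defining properties of $\omega$: it satisfies $\omega^3 = 1$ and $\omega \neq 1$. From the factorization
$$
\omega^3 - 1 = (\omega - 1)(\omega^2 + \omega + 1)
$$
and the fact that $\omega - 1 \neq 0$ (here one uses that in a field there are no zero divisors), I conclude that $\omega$ satisfies the quadratic relation $\omega^2 + \omega + 1 = 0$.

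Next I would observe that the polynomial $x^2 + x + 1$ is irreducible over $\mathbb{F}_2$, since neither $0$ nor $1$ is a root (both give value $1$). Hence the subring $\mathbb{F}_2[\omega] \subseteq \mathbf{F}$ generated by $\omega$ over the prime field is isomorphic to $\mathbb{F}_2[x]/(x^2 + x + 1)$, which is precisely the field $\mathbb{F}_4$ with four elements $\{0, 1, \omega, \omega^2\}$. Since $\mathbb{F}_2[\omega] \subseteq \mathbf{F}$, this yields the desired inclusion $\mathbb{F}_4 \subseteq \mathbf{F}$.

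There is no serious obstacle here: the content is just the observation that the splitting field of $x^3 - 1$ over $\mathbb{F}_2$ is $\mathbb{F}_4$. The only subtlety worth stating explicitly is that in characteristic $2$ a nontrivial cube root of unity cannot lie in the prime field (indeed $x^2+x+1$ has no root in $\mathbb{F}_2$), so $\omega$ is automatically quadratic over $\mathbb{F}_2$; this is exactly what forces $\mathbb{F}_2(\omega)$ to be two-dimensional over $\mathbb{F}_2$ and hence equal to $\mathbb{F}_4$.
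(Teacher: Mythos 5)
Your proof is correct and follows essentially the same route as the paper, which simply notes that $\mathbb{F}_2 \subset \mathbf{F}$ and that $\mathbb{F}_2(\omega)=\mathbb{F}_2[t]/(t^2+t+1)=\mathbb{F}_4$. You merely spell out the intermediate steps (that $\omega$ satisfies $\omega^2+\omega+1=0$ and that this polynomial is irreducible over $\mathbb{F}_2$) which the paper leaves implicit.
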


\begin{proof}
Since the characteristic of $\mathbf{F}$ is equal to~$2$, we get that $\mathbb{F}_2 \subset \mathbf{F}$. As a~nontrivial cube root of unity $\omega \in \mathbf{F}^{\mathrm{sep}}$ lies in $\mathbf{F}$, we obtain
$$
\mathbf{F} \supset \mathbb{F}_2(\omega)=\mathbb{F}_2[t]/(t^2+t+1)=\mathbb{F}_4.
$$

The lemma is proved.
\end{proof}

\begin{lemma}[{see~\cite{18}, Lemmas~5.15 and~5.17}]
\label{l25}
Let $\mathbf{F}$ be a~finite field of characteristic~$2$. Let $S$ be a~smooth cubic surface over $\mathbf{F}$.

{\rm (i)} If\/ $\mathbf{F}=\mathbb{F}_{4^k}$ for some $k \in \mathbb{N}$ and $\operatorname{Aut}(S) \simeq \mathrm{PSU}_4(\mathbb{F}_2)$, then the image in~$W(\mathrm{E}_6)$ of the absolute Galois group $\mathrm{Gal}(\mathbf{F}^{\mathrm{sep}}/\mathbf{F})$ is trivial.

{\rm (ii)} If\/ $\mathbf{F}=\mathbb{F}_{2^{2k+1}}$ for some $k \in \mathbb{Z}_{\geqslant 0}$ and $\operatorname{Aut}(S) \simeq \mathfrak{S}_6$, then the image in $W(\mathrm{E}_6)$ of the absolute Galois group $\mathrm{Gal}(\mathbf{F}^{\mathrm{sep}}/\mathbf{F})$ is isomorphic to~$\mathbb{Z}/2\mathbb{Z}$.
\end{lemma}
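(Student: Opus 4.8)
The plan is to treat the two assertions separately. For~(i) I would simply invoke Lemma~\ref{l11}: since $\operatorname{Aut}(S)\simeq\mathrm{PSU}_4(\mathbb{F}_2)$, that lemma already forces the image $\Gamma$ of the absolute Galois group in $W(\mathrm{E}_6)$ to be trivial, and the hypothesis $\mathbf{F}=\mathbb{F}_{4^k}$ is not even needed. All the work is in~(ii).

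For~(ii) I would first pin down the geometry over the separable closure. We have the inclusion $\mathfrak{S}_6\simeq\operatorname{Aut}(S)\hookrightarrow\operatorname{Aut}(S_{\mathbf{F}^{\mathrm{sep}}})$, and by parts~$(\mathrm{i})'$ and~$(\mathrm{i})''$ of Theorem~\ref{t1} together with Remark~\ref{r1}, the order of $\operatorname{Aut}(S_{\mathbf{F}^{\mathrm{sep}}})$ over the separably closed field of characteristic~$2$ is either at most~$192$ or equal to~$25\,920$. Since $720>192$, the second case must occur, so $S_{\mathbf{F}^{\mathrm{sep}}}$ is the Fermat cubic surface and $N:=\operatorname{Aut}(S_{\mathbf{F}^{\mathrm{sep}}})\simeq\mathrm{PSU}_4(\mathbb{F}_2)$. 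As $N$ has order exactly half that of $W(\mathrm{E}_6)$, under the embedding of Theorem~\ref{t5} it is the unique index-two, hence normal, subgroup of $W(\mathrm{E}_6)$.

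Next I would exploit that $\mathbf{F}$ is finite: then $\mathrm{Gal}(\mathbf{F}^{\mathrm{sep}}/\mathbf{F})$ is procyclic, topologically generated by Frobenius, so $\Gamma=\langle\gamma\rangle$ with $\gamma=\boldsymbol{g}(\mathrm{Frob})$. By Galois descent the $\mathbf{F}$-automorphisms are exactly the Galois-fixed ones, and since the Galois action on $N$ is conjugation by $\Gamma$, this reads $\operatorname{Aut}(S)=\mathrm{C}_N(\gamma)\simeq\mathfrak{S}_6$. The group theory then finishes the job. Because $N$ is normal of index~$2$, we have $\gamma^2\in N$, and clearly $\mathrm{C}_N(\gamma)\subseteq\mathrm{C}_N(\gamma^2)$. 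By Lemma~\ref{l9} the subgroup $\mathfrak{S}_6$ is maximal in $N$, so $\mathrm{C}_N(\gamma^2)$ equals either $\mathfrak{S}_6$ or $N$. If $\mathrm{C}_N(\gamma^2)=N$, then $\gamma^2$ centralizes the simple group $N$, whence $\gamma^2=1$; if $\mathrm{C}_N(\gamma^2)=\mathfrak{S}_6$, then $\gamma^2\in\mathrm{C}_N(\gamma^2)$ lies in $Z(\mathfrak{S}_6)=\{1\}$, again $\gamma^2=1$. Finally $\gamma\neq1$, for otherwise $\operatorname{Aut}(S)=\mathrm{C}_N(1)=N\simeq\mathrm{PSU}_4(\mathbb{F}_2)$, contradicting $\operatorname{Aut}(S)\simeq\mathfrak{S}_6$. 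Hence $\gamma$ has order~$2$ and $\Gamma\simeq\mathbb{Z}/2\mathbb{Z}$.

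The steps I expect to require the most care are the two descent identifications rather than the final computation: namely, that the embedding $\operatorname{Aut}(S_{\mathbf{F}^{\mathrm{sep}}})\hookrightarrow W(\mathrm{E}_6)$ identifies $N$ with the normal index-two subgroup, and that $\operatorname{Aut}(S)$ is \emph{precisely} the centralizer $\mathrm{C}_N(\gamma)$ and not merely contained in it, which rests on the fact that the Galois action on $\operatorname{Aut}(S_{\mathbf{F}^{\mathrm{sep}}})$ is conjugation by the image $\Gamma$. Once these are secured, the maximality of $\mathfrak{S}_6$ in $\mathrm{PSU}_4(\mathbb{F}_2)$ and the triviality of the relevant centres make the conclusion $\gamma^2=1$ purely formal, with no appeal to the explicit surface~\eqref{eq1.2} or to any field-extension computation.
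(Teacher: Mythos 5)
Your argument is correct. Note that the paper gives no internal proof of Lemma~\ref{l25}: both parts are imported from~\cite{18} (Lemmas~5.15 and~5.17), so there is nothing to compare step by step. Part~(i) of your proposal is in effect the paper's own logic, since Lemma~\ref{l11} is exactly the imported statement of~\cite{18}, Lemma~5.15, and you are right that it subsumes~(i) without any finiteness hypothesis. For part~(ii) you supply a genuinely self-contained replacement for the citation of~\cite{18}, Lemma~5.17, and every ingredient is already available in the paper: Theorem~\ref{t1},\,$(\mathrm{i})'$ and~$(\mathrm{i})''$, with Remark~\ref{r1} forces $\operatorname{Aut}(S_{\mathbf{F}^{\mathrm{sep}}})\simeq\mathrm{PSU}_4(\mathbb{F}_2)=N$; the equality $\operatorname{Aut}(S)=\mathrm{C}_N(\gamma)$ is the standard descent identification (the paper records only the inclusion $\Gamma\subseteq\mathrm{C}_{W(\mathrm{E}_6)}(\operatorname{Aut}(S))$ in \S\,\ref{s3}, but the reverse inclusion follows from Theorem~\ref{t4},\,(i), the faithfulness in Theorem~\ref{t5} and Galois descent of morphisms, exactly as you indicate); $\gamma^2\in N$ because $N$ is the unique index-two, hence normal, subgroup of $W(\mathrm{E}_6)$; and the maximality of $\mathfrak{S}_6$ in $\mathrm{PSU}_4(\mathbb{F}_2)$ from Lemma~\ref{l9}, combined with the simplicity of $\mathrm{PSU}_4(\mathbb{F}_2)$ and $Z(\mathfrak{S}_6)=\{1\}$, does force $\gamma^2=1$ in both branches of your dichotomy. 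One small observation: your proof of~(ii) uses the hypothesis on $\mathbf{F}$ only through finiteness (to make $\Gamma$ cyclic), so you in fact prove the stronger statement that $\operatorname{Aut}(S)\simeq\mathfrak{S}_6$ over \emph{any} finite field of characteristic~$2$ forces $\Gamma\simeq\mathbb{Z}/2\mathbb{Z}$. What your route buys is independence of~\cite{18} for part~(ii); what the paper's citation buys is brevity.
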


\begin{corollary}
\label{c6}
Let $\mathbf{F}$ be a~field of characteristic~$2$. Then the following hold.

{\rm (i)} If\/ $\mathbf{F}$ contains a~nontrivial cube root of unity, then the Fermat cubic surface is isomorphic to the Clebsch cubic surface.

{\rm (ii)} The cubic surface~\eqref{eq1.2} is isomorphic to the Clebsch cubic surface.
\end{corollary}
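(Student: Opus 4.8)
The plan is to reduce both parts to Corollary~\ref{c5}, which says that a smooth cubic surface over a field of characteristic different from $5$ whose $\mathbf{F}$-automorphism group contains $\mathfrak{S}_5$ is necessarily isomorphic to the Clebsch cubic surface. Since $\operatorname{char}\mathbf{F}=2\neq 5$, it suffices in each case to produce a copy of $\mathfrak{S}_5$ inside $\operatorname{Aut}(S)$ defined over $\mathbf{F}$. Both surfaces are smooth in this characteristic: by Proposition~\ref{p1},(i), the Fermat surface is singular only in characteristic $3$ and surface~\eqref{eq1.2} is singular only in characteristic $3$ (this is also an immediate Jacobian computation). The key technical mechanism throughout is that an automorphism given by a linear substitution with coefficients in a subfield $\mathbb{F}_q \subseteq \mathbf{F}$ base-changes to an automorphism over $\mathbf{F}$, yielding an injection $\operatorname{Aut}(S_{\mathbb{F}_q}) \hookrightarrow \operatorname{Aut}(S_{\mathbf{F}})$.

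For~(i), I would first invoke Lemma~\ref{l24} to convert the hypothesis $\omega \in \mathbf{F}$ into the inclusion $\mathbb{F}_4 \subseteq \mathbf{F}$. Applying Theorem~\ref{t2},(i), with the Fermat surface $S$ over the field $\mathbb{F}_4 = \mathbb{F}_{4^1}$ gives $\operatorname{Aut}(S_{\mathbb{F}_4}) \simeq \mathrm{PSU}_4(\mathbb{F}_2)$. By Lemma~\ref{l9} this group contains a subgroup isomorphic to $\mathfrak{S}_6$, and hence one isomorphic to $\mathfrak{S}_5$. Every automorphism in this $\mathfrak{S}_5$ is defined over $\mathbb{F}_4$, so it base-changes along $\mathbb{F}_4 \subseteq \mathbf{F}$ to give an embedding $\mathfrak{S}_5 \hookrightarrow \operatorname{Aut}(S_{\mathbf{F}})$. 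Corollary~\ref{c5} then identifies $S_{\mathbf{F}}$ with the Clebsch cubic surface.

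For~(ii), the same strategy applies with the prime field in place of $\mathbb{F}_4$, and it handles both $\omega \in \mathbf{F}$ and $\omega \notin \mathbf{F}$ simultaneously, so no case split is needed. Since $\operatorname{char}\mathbf{F}=2$ we have $\mathbb{F}_2 \subseteq \mathbf{F}$ automatically. Taking $k=0$ in Theorem~\ref{t2},(ii) (so that the base field is $\mathbb{F}_2$) yields $\operatorname{Aut}(S_{\mathbb{F}_2}) \simeq \mathfrak{S}_6 \supseteq \mathfrak{S}_5$ for the surface $S$ defined by~\eqref{eq1.2}. These automorphisms are defined over $\mathbb{F}_2$, hence base-change to automorphisms of $S_{\mathbf{F}}$, so $\operatorname{Aut}(S_{\mathbf{F}}) \supseteq \mathfrak{S}_5$, and Corollary~\ref{c5} again gives the isomorphism with the Clebsch surface.

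The only delicate point I expect is the ``defined over the subfield, hence base-changes injectively'' step. One must check that the $\mathfrak{S}_5$ exhibited over $\mathbb{F}_4$ (respectively $\mathbb{F}_2$) genuinely consists of $\mathbb{F}_4$- (respectively $\mathbb{F}_2$-) morphisms, and that a nontrivial projective linear automorphism over $\mathbb{F}_q$ remains nontrivial after extending scalars to $\mathbf{F}$. Both are immediate once one recalls that the generators of $\mathfrak{S}_6$ acting on these two surfaces are explicit linear substitutions with entries in the relevant prime power field, as recorded in~\cite{18}; extending scalars to $\mathbf{F}$ leaves these matrices unchanged, so the homomorphism $\operatorname{Aut}(S_{\mathbb{F}_q}) \hookrightarrow \operatorname{Aut}(S_{\mathbf{F}})$ is injective and the desired $\mathfrak{S}_5 \subseteq \operatorname{Aut}(S_{\mathbf{F}})$ is obtained. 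Everything else is a direct appeal to the results quoted above.
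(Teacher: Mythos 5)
Your proposal is correct. The paper's own argument is close in spirit but routes the deduction differently: it applies the uniqueness statement (Lemma~\ref{l23}) directly over the finite subfield ($\mathbb{F}_4$ for part~(i), $\mathbb{F}_2$ for part~(ii)), using only the cyclic subgroup $\mathbb{Z}/5\mathbb{Z} \subset \operatorname{Aut}(S)$ together with the finite-field Galois computation of Lemma~\ref{l25} to verify that the image of $\mathrm{Gal}(\mathbf{F}^{\mathrm{sep}}/\mathbf{F})$ in $W(\mathrm{E}_6)$ lies in a $\mathbb{Z}/2\mathbb{Z}$, and then base-changes the resulting isomorphism with the Clebsch surface to $\mathbf{F}$. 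You instead base-change the automorphisms first, exhibiting $\mathfrak{S}_5 \subset \operatorname{Aut}(S_{\mathbf{F}})$ via the chain $\mathrm{PSU}_4(\mathbb{F}_2) \supset \mathfrak{S}_6 \supset \mathfrak{S}_5$ from Lemma~\ref{l9}, and then invoke Corollary~\ref{c5} over $\mathbf{F}$ itself; this replaces Lemma~\ref{l25} by the purely group-theoretic Lemma~\ref{l7} (hidden inside Corollary~\ref{c5}), at the cost of needing the full subgroup $\mathfrak{S}_6$ rather than just an element of order~$5$. Both routes ultimately rest on the same del~Pezzo degree~$5$ uniqueness mechanism of Lemma~\ref{l23}; yours has the advantage of making parts (i) and (ii) literally the same argument with no case distinction on the Galois image, while the paper's keeps the group-theoretic input minimal. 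Your attention to the injectivity of $\operatorname{Aut}(S_{\mathbb{F}_q}) \hookrightarrow \operatorname{Aut}(S_{\mathbf{F}})$ is the right place to be careful, and your justification (scalar matrices over $\mathbf{F}$ with entries in $\mathbb{F}_q$ are already scalar over $\mathbb{F}_q$) is adequate.
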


\begin{proof}
We prove assertion~(i). Let $S$ be the Fermat cubic surface over $\mathbb{F}_4$. By Theorem~\ref{t3},\,(i), we have $\operatorname{Aut}(S) \simeq \mathrm{PSU}_4(\mathbb{F}_2)$, so
$$
\operatorname{Aut}(S) \supset \mathbb{Z}/5\mathbb{Z}.
$$
By Lemma~\ref{l25},\,(i), the image $\Gamma$ of the absolute Galois group $\mathrm{Gal}(\overline{\mathbb{F}}_2/\mathbb{F}_4)$ in $W(\mathrm{E}_6)$~is tri\-vial. Hence by Lemma~\ref{l23} the cubic surface $S$ is isomorphic to the Clebsch~cubic sur\-face over~$\mathbb{F}_4$. By Lemma~\ref{l24} the field $\mathbf{F}$ contains $\mathbb{F}_4$. Therefore,~the~Fermat cubic~surface is isomorphic to the Clebsch cubic surface over any extension $\mathbf{F}$ of $\mathbb{F}_4$.

Let us prove~(ii). Let $S$ be the cubic surface~\eqref{eq1.2} over $\mathbb{F}_2$. By Theorem~\ref{t2},\,(ii), we have $\operatorname{Aut}(S) \simeq \mathfrak{S}_6$, so
$$
\operatorname{Aut}(S) \supset \mathbb{Z}/5\mathbb{Z}.
$$
By Lemma~\ref{l25},\,(ii), the image~$\Gamma$ of the absolute Galois group $\mathrm{Gal}(\overline{\mathbb{F}}_2/\mathbb{F}_2)$ in $W(\mathrm{E}_6)$ is isomorphic to $\mathbb{Z}/2\mathbb{Z}$. So by Lemma~\ref{l23} the cubic surface~$S$ is isomorphic to the~Clebsch cubic surface over~$\mathbb{F}_2$. Therefore, the surface~\eqref{eq1.2} is isomorphic to the Clebsch cubic surface over any extension $\mathbf{F}$ of $\mathbb{F}_2$.

The corollary is proved.
\end{proof}

\section{Characteristic $3$}
\label{s5}

In this section we study smooth cubic surfaces with the largest automorphism group over fields of characteristic~$3$. First of all, we formulate obvious lemmas about fields of characteristic~$3$.

\begin{lemma}
\label{l26}
Let $\mathbf{F}$ be a~field of characteristic~$3$. Then $\mathbf{F}$ contains a~primitive fourth root of unity if and only if\/ $\mathbf{F}$ contains the subfield $\mathbb{F}_9$.
\end{lemma}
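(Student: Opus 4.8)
The plan is to prove both implications by elementary finite field theory, in the same spirit as Lemma~\ref{l24}. The guiding observation is that in characteristic~$3$ a primitive fourth root of unity $\mathbf{i}$ is exactly an element satisfying $\mathbf{i}^2=-1$, i.e.\ a root of the polynomial $t^2+1$; this is what lets me pin down the subfield it generates.

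For the forward direction I would assume $\mathbf{F}$ contains a primitive fourth root of unity $\mathbf{i}$, so that $\mathbf{i}^2=-1$. First I would record that $t^2+1$ has no root in the prime field $\mathbb{F}_3$: the squares in $\mathbb{F}_3$ are $0$ and $1$, whereas $-1=2$ is not among them. Being of degree~$2$ with no root, $t^2+1$ is therefore irreducible over $\mathbb{F}_3$, so the subfield generated by $\mathbf{i}$ satisfies $\mathbb{F}_3(\mathbf{i})=\mathbb{F}_3[t]/(t^2+1)\cong\mathbb{F}_9$, and hence $\mathbb{F}_9\subset\mathbf{F}$.

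For the reverse direction I would assume $\mathbb{F}_9\subset\mathbf{F}$ and use that the multiplicative group $\mathbb{F}_9^{*}$ is cyclic of order~$8$. Since $4\mid 8$, this cyclic group contains an element $\mathbf{i}$ of exact order~$4$, which is by definition a primitive fourth root of unity, and it lies in $\mathbf{F}$. I do not expect any genuine obstacle here; the only point requiring a moment's care is the irreducibility of $t^2+1$ over $\mathbb{F}_3$ (equivalently, that $-1$ is a nonsquare modulo~$3$), and that is immediate by inspecting the three elements of $\mathbb{F}_3$.
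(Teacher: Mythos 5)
Your proof is correct and follows essentially the same route as the paper: the forward direction identifies $\mathbb{F}_3(\mathbf{i})=\mathbb{F}_3[t]/(t^2+1)=\mathbb{F}_9$ exactly as the paper does, and the reverse direction extracts a fourth root of unity from $\mathbb{F}_9^{*}$. If anything, your reverse direction is slightly more careful (invoking cyclicity of $\mathbb{F}_9^{*}$ to get an element of exact order $4$, rather than just an element with $t^4=1$), but the argument is the same in substance.
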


\begin{proof}
Since the characteristic of $\mathbf{F}$ is equal to~$3$, we get that $\mathbb{F}_3 \subset \mathbf{F}$. Assume that a~primitive fourth root of unity lies in $\mathbf{F}$. We denote it by $\mathbf{i}$. Then we have
$$
\mathbf{F} \supset \mathbb{F}_3(\mathbf{i})=\mathbb{F}_3[t]/(t^2+1)=\mathbb{F}_9.
$$
Assume that $\mathbf{F} \supset \mathbb{F}_9$. Then there are elements of $\mathbf{F}$ satisfying $t^8=1$. In particular, there are elements of $\mathbf{F}$ satisfying $t^4=1$. This means that there is a~primitive fourth root of unity in~$\mathbf{F}$.

The lemma is proved.
\end{proof}

\begin{lemma}
\label{l27}
Let $\mathbf{F}$ be a~field of characteristic~$3$. Then $\mathbf{F}$ contains a~primitive fourth root of unity if and only if\/ $\mathbf{F}$ contains a~primitive eighth root of unity.
\end{lemma}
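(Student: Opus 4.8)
The plan is to deduce both implications from Lemma~\ref{l26}, which already identifies the presence of a primitive fourth root of unity in $\mathbf{F}$ with the containment $\mathbb{F}_9 \subseteq \mathbf{F}$. The two directions are then very short, so I would simply dispatch them in turn.

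The reverse implication is immediate and in fact holds in any characteristic: if $\mathbf{F}$ contains a primitive eighth root of unity $\zeta_8$, then $\zeta_8^2$ has order exactly~$4$, hence is a primitive fourth root of unity lying in $\mathbf{F}$. I would record this first, since it uses nothing special about characteristic~$3$.

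For the forward implication I would invoke the finite-field structure specific to characteristic~$3$. Assume $\mathbf{F}$ contains a primitive fourth root of unity. By Lemma~\ref{l26} this yields $\mathbb{F}_9 \subseteq \mathbf{F}$. The multiplicative group $\mathbb{F}_9^{*}$ is cyclic of order~$8$, so it contains an element of order~$8$; equivalently, since $\gcd(8,3)=1$ the polynomial $t^8-1$ is separable and its eight roots are precisely the elements of $\mathbb{F}_9^{*}$. Any generator of $\mathbb{F}_9^{*}$ is therefore a primitive eighth root of unity, and it lies in $\mathbf{F}$, completing the argument.

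I do not expect a genuine obstacle here: the entire content is the observation, already packaged in Lemma~\ref{l26}, that in characteristic~$3$ both a primitive fourth root and a primitive eighth root of unity are obtained by passing to $\mathbb{F}_9$, whose multiplicative group has order exactly~$8$. The only subtlety worth flagging is that one must use the cyclicity of $\mathbb{F}_9^{*}$ (or, equivalently, separability of $t^8-1$) to guarantee an element of full order~$8$, rather than stopping at an element of order~$4$.
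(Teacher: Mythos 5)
Your proof is correct and follows essentially the same route as the paper: the forward direction reduces via Lemma~\ref{l26} to $\mathbb{F}_9 \subseteq \mathbf{F}$ and then uses that $\mathbb{F}_9^{*}$ is cyclic of order $8$ (the paper phrases this as the existence of an element with $t^8=1$ and $t^i\neq 1$ for $1\leqslant i\leqslant 7$), while the reverse direction is the contrapositive of the paper's final observation. Your explicit remark that $\zeta_8^2$ is a primitive fourth root of unity just makes the easy direction a little more concrete than the paper's "it is obvious."
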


\begin{proof}
If $\mathbf{F}$ contains a~primitive fourth root of unity, then by Lemma~\ref{l26} we get the inclusion~\mbox{$\mathbf{F} \supset \mathbb{F}_9$.} Therefore, there are elements in $\mathbf{F}$ which satisfy
$$
t^8=1
\quad \text{and}\quad t^i \neq 1
\quad \text{for any integer } 1 \leqslant i \leqslant 7.
$$
This means that there is a~primitive eighth root of unity in $\mathbf{F}$. If $\mathbf{F}$ does not contain a~primitive fourth root of unity, then it is obvious that it does not contain a~primitive eighth root of unity either.

The lemma is proved.
\end{proof}

Let $\mathbf{F}$ be a~field of characteristic~$3$. We divide this section into two parts: in the first part we consider a~field $\mathbf{F}$ which contains a~primitive fourth root of unity and in the second part we consider a~field $\mathbf{F}$ not containing primitive fourth roots of unity.

We start from the case when $\mathbf{F}$ contains a~primitive fourth root of unity. By Theorem~\ref{t1},\,$(\mathrm{ii})'$, the automorphism group of a~smooth cubic surface of maximum order over an algebraically closed field of characteristic~$3$ is isomorphic to~\mbox{$\mathcal{H}_3(\mathbb{F}_3) \rtimes \mathbb{Z}/8\mathbb{Z}$.}
Let us prove that this group is also realized as the automorphism group of the surface~\eqref{eq1.1} over $\mathbf{F}$.

\begin{lemma}
\label{l28}
Let $\mathbf{F}$ be a~field of characteristic~$3$ containing a~primitive fourth root of unity. Then the automorphism group of the smooth cubic surface~\eqref{eq1.1} over~$\mathbf{F}$ is isomorphic to $\mathcal{H}_3(\mathbb{F}_3) \rtimes \mathbb{Z}/8\mathbb{Z}$.
\end{lemma}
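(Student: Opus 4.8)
The plan is to reduce everything to an order count over $\mathbf{F}^{\mathrm{sep}}$ and then to realize the entire geometric automorphism group by explicit equations with coefficients in $\mathbf{F}$. First I would record that the surface~\eqref{eq1.1} is smooth in characteristic~$3$ by Proposition~\ref{p1},\,(i), and that by Theorem~\ref{t1},\,$(\mathrm{ii})'$, Remark~\ref{r1} and Theorem~\ref{t4},\,(i), its geometric automorphism group satisfies
$$\operatorname{Aut}(S_{\overline{\mathbf{F}}}) \simeq \operatorname{Aut}(S_{\mathbf{F}^{\mathrm{sep}}}) \simeq \mathcal{H}_3(\mathbb{F}_3) \rtimes \mathbb{Z}/8\mathbb{Z},$$
a group $H$ of order $216$. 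Since $\operatorname{Aut}(S)$ is the subgroup of automorphisms defined over $\mathbf{F}$, we have $\operatorname{Aut}(S) \subseteq H$, so it suffices to produce a subgroup of $\operatorname{Aut}(S)$ of order $216$: it must then coincide with $H$. By Lemmas~\ref{l26} and~\ref{l27} the hypothesis $\mathbf{i} \in \mathbf{F}$ gives $\mathbb{F}_9 \subseteq \mathbf{F}$ and $\zeta_8 \in \mathbf{F}$, so it is enough to write all the required automorphisms with coefficients in $\mathbb{F}_9$.

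The key element is the cyclic factor, and this is exactly where the hypothesis enters. I would check directly that
$$g_8 \colon [x:y:z:t] \mapsto [\mathbf{i}x : \zeta_8 y : z : -t], \qquad \zeta_8^2 = \mathbf{i},$$
satisfies $g_8^{*}F = -F$ (using $\zeta_8^2 = \mathbf{i}$ and $\mathbf{i}^2 = -1$), hence is an automorphism, and that it has order $8$ in $\mathrm{PGL}_4$. It requires a primitive eighth root of unity, equivalently (Lemma~\ref{l27}) a primitive fourth root of unity. For the normal $3$-subgroup I would exploit the characteristic-$3$ Taylor expansion of the cubic form: writing $s = w\cdot u$ for a linear form, one has
$$F(u + s v) = F(u) + s\,(v\cdot\nabla F)(u) + \tfrac12 s^{2}\, v^{\top}H_F(u)\,v + s^{3}F(v),$$
and it is precisely the cancellation $\mathbf{i}^3 + \mathbf{i} = 0$ (i.e. $\mathbf{i}$ has trace zero over $\mathbb{F}_3$) that makes the transvection
$$\psi \colon [x:y:z:t] \mapsto [x:y:z:t + \mathbf{i}z]$$
preserve $F$; it is an element of order $3$ defined over $\mathbb{F}_9$.

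The main obstacle is to produce the rest of the order-$27$ Heisenberg subgroup over $\mathbb{F}_9$ and to verify the semidirect-product structure. Concretely, I would solve the identity $F(u+sv)=F(u)$ above for all admissible pairs $(v,w)$ and, more generally, determine all unipotent order-$3$ automorphisms (some of Jordan type $(3,1)$ or $(2,2)$ rather than single transvections), exhibit generators $a,b$ with central commutator $[a,b]$ of order $3$, and check that $g_8$ normalizes the group they generate so that $\langle a,b\rangle \rtimes \langle g_8\rangle$ has order $27\cdot 8 = 216$. Since all of these matrices have entries in $\mathbb{F}_9 \subseteq \mathbf{F}$, they lie in $\operatorname{Aut}(S)$, and the order count forces $\operatorname{Aut}(S) = H \simeq \mathcal{H}_3(\mathbb{F}_3)\rtimes \mathbb{Z}/8\mathbb{Z}$.

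An alternative route avoids building the full Heisenberg subgroup by a descent argument. Since $g_8 \in \operatorname{Aut}(S)$ is defined over $\mathbf{F}$, the image $\Gamma$ of the absolute Galois group centralizes it; by Lemma~\ref{l5},\,(ii), and Table~\ref{tab2} an order-$8$ element lies in the class $D_5$ with centralizer of order $8$, so $\mathrm{C}_{W(\mathrm{E}_6)}(g_8) = \langle g_8\rangle$ and hence $\Gamma \subseteq \langle g_8\rangle$. Adjoining the rational automorphism $\psi$ (which $g_8$ conjugates to $\psi^{-1}$, so that only $\langle g_8^2\rangle$ survives) together with suitably chosen further order-$3$ rational elements cuts $\Gamma$ down to the trivial group, whence $\operatorname{Aut}(S) = C_H(\Gamma) = \operatorname{Aut}(S_{\mathbf{F}^{\mathrm{sep}}}) = H$. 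The delicate point in either route is the same: exhibiting enough order-$3$ automorphisms explicitly over $\mathbb{F}_9$.
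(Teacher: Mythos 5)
Your overall strategy is the same as the paper's: use Theorem~\ref{t1},\,$(\mathrm{ii})'$ together with Remark~\ref{r1} and Theorem~\ref{t4},\,(i), to bound $\operatorname{Aut}(S)$ above by a group of order $216$, and then exhibit $216$ automorphisms with coefficients in $\mathbf{F}$ (via $\mathbb{F}_9\subseteq\mathbf{F}$ and $\zeta_8\in\mathbf{F}$, exactly as in Lemmas~\ref{l26} and~\ref{l27}). The two elements you actually verify are correct: your $g_8$ multiplies the cubic form by $-1$ and has order $8$ in $\mathrm{PGL}_4$, and the transvection $\psi\colon t\mapsto t+\mathbf{i}z$ preserves the form because $\mathbf{i}^3+\mathbf{i}=0$ in characteristic~$3$. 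But this is where the proof stops being a proof: $\langle\psi,g_8\rangle$ has order only $24$ (since $g_8\psi g_8^{-1}=\psi^{\pm1}$), and the entire remaining content of the lemma is the existence of the full order-$27$ Heisenberg subgroup over $\mathbb{F}_9$, which you announce as a plan ("I would solve the identity \dots for all admissible pairs") but do not carry out. This step is genuinely not routine. The missing $24$ elements are not transvections: in the paper they are the matrices~\eqref{eq5.1}, whose upper-left block involves $\alpha^3$ and $-\alpha^6$ and whose bottom row involves a second parameter $c$ constrained by $c^3+c=\alpha^4$; one must check both that these preserve the cubic form and that the solutions $(\alpha,c)$ all lie in $\mathbb{F}_9$ (the paper does the latter by observing that the roots of $c^3+c\pm1$ are $\pm1$ and $\pm1\pm\mathbf{i}$), and then verify via the composition law $(\alpha_1,c_1)(\alpha_2,c_2)=(\alpha_1+\alpha_2,\alpha_1\alpha_2^3+c_1+c_2)$ that one gets a nonabelian group of order $27$ and exponent $3$. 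None of this is present in your argument.

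Your "alternative route" does not avoid the gap. Even granting that $\operatorname{Aut}(S)$ is the centralizer of $\Gamma$ in $\operatorname{Aut}(S_{\mathbf{F}^{\mathrm{sep}}})$ and that $\Gamma\subseteq\langle g_8\rangle$, the elements you have produced over $\mathbf{F}$ only force $\Gamma\subseteq\langle g_8^2\rangle$; since the centre of $\mathcal{H}_3(\mathbb{F}_3)\rtimes\mathbb{Z}/8\mathbb{Z}$ is trivial (Lemma~\ref{l34}), any nontrivial $\Gamma$ would give $\operatorname{Aut}(S)\subsetneq\operatorname{Aut}(S_{\mathbf{F}^{\mathrm{sep}}})$, so you still need the "suitably chosen further order-$3$ rational elements" --- i.e.\ exactly the explicit unipotent elements you have not constructed. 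As written, the proposal establishes $|{\operatorname{Aut}(S)}|\geqslant 24$, not $216$.
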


\begin{proof}
By Lemma~\ref{l27} a~primitive eighth root of unity
$\zeta_8 \in \mathbf{F}^{\mathrm{sep}}$
lies in $\mathbf{F}$. Consider the element of order~$8$
$$
h=\begin{pmatrix}
\zeta_8^6 & 0 & 0 & 0\\
0 & \zeta_8 & 0 & 0\\
0 & 0 & \zeta_8^4 & 0\\
0 & 0 & 0 & 1
\end{pmatrix} \in \mathrm{PGL}_4(\mathbf{F}).
$$

Consider the group $G$ of elements
\begin{equation}
\label{eq5.1}
\begin{pmatrix}
1 & \alpha^3 & -\alpha^6 & 0\\
0 & 1 & \alpha^3 & 0\\
0 & 0 & 1 & 0\\
0 & \alpha & c & 1
\end{pmatrix} \in \mathrm{PGL}_4(\overline{\mathbf{F}}),
\end{equation}
where $\alpha$ and $c$ lie in $\overline{\mathbf{F}}$ and satisfy the equations
\begin{equation}
\alpha^9-\alpha=0 \label{eq5.2}
\end{equation}
and
\begin{equation}
c^3+c-\alpha^4=0. \label{eq5.3}
\end{equation}

We can see that $\alpha, c \in \mathbf{F}$. Indeed, by Lemma~\ref{l26} we have $\alpha \in \mathbf{F}$. Moreover, by the same lemma the primitive fourth root of unity $\mathbf{i} \in \mathbf{F}^{\mathrm{sep}}$ lies in $\mathbf{F}$. If $\alpha=0$, then equation~\eqref{eq5.3} has three solutions, $0$ and $\pm \mathbf{i}$. If $\alpha \neq 0$, then from~\eqref{eq5.2} we get that $\alpha^4=\pm 1$. Therefore,~\eqref{eq5.3} can be written as $c^3+c \pm 1=0$. The roots of the polynomial $c^3+c+1$ are $1$ and $1 \pm \mathbf{i} \in \mathbf{F}$, and the roots of $c^3+c-1$ are $-1$ and~\mbox{$-1 \pm \mathbf{i} \in \mathbf{F}$.} Hence the elements of $G$ lie in~$\mathrm{PGL}_4(\mathbf{F})$.

We denote by $(\alpha,c)$ the element~\eqref{eq5.1} of the group $G$. It is not difficult to see the following relation among elements of $G$:
\begin{equation}
\label{eq5.4}
(\alpha_1, c_1) \cdot (\alpha_2,c_2)=(\alpha_1+\alpha_2, \alpha_1\alpha_2^3+c_1+c_2).
\end{equation}

We can check by straightforward computations that $G$ is of order~$27$, and by~\eqref{eq5.4} any nonidentity element of $G$ is of order~$3$ and $G$ is nonabelian. Hence $G \simeq \mathcal{H}_3(\mathbb{F}_3)$.

The smooth cubic surface~\eqref{eq1.1} is invariant under the action of $h$ and $G$ on~$\mathbb{P}^3$ with homogeneous coordinates $x$, $y$, $z$ and~$t$. Therefore, the automorphism group of the surface~\eqref{eq1.1} contains a~subgroup of order~$216$. By Theorem~\ref{t1},\,$(\mathrm{ii})'$, and Remark~\ref{r1}, over the field~$\mathbf{F}^{\mathrm{sep}}$ the automorphism group of the surface~\eqref{eq1.1} is isomorphic to~$\mathcal{H}_3(\mathbb{F}_3) \rtimes \mathbb{Z}/8\mathbb{Z}$, which is of order~$216$. This means that the automorphism group of the smooth cubic surface~\eqref{eq1.1} over $\mathbf{F}$ is isomorphic to the group~$\mathcal{H}_3(\mathbb{F}_3) \rtimes \mathbb{Z}/8\mathbb{Z}$.

The lemma is proved.
\end{proof}

\goodbreak

\begin{lemma}
\label{l29}
Let $\mathbf{F}$ be a~field of characteristic different from~$2$. Assume that it contains a~primitive eighth root of unity $\zeta_8 \in \mathbf{F}^{\mathrm{sep}}$. Let $S$ be a~smooth cubic surface over a~field $\mathbf{F}$. Let $H$ be a~cyclic subgroup of order~$8$ in the automorphism group of~$S$. Then up to conjugation $H$ is generated by the element
\begin{equation}
\label{eq5.5}
h=\begin{pmatrix}
\zeta_8^6 & 0 & 0 & 0\\
0 & \zeta_8 & 0 & 0\\
0 & 0 & \zeta_8^4 & 0\\
0 & 0 & 0 & 1
\end{pmatrix} \in \mathrm{PGL}_4(\mathbf{F}).
\end{equation}
\end{lemma}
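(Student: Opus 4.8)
The plan is to reduce to the already-known classification over a separably closed field and then to descend the conjugacy to $\mathbf{F}$. First I would use Theorem~\ref{t5} to regard $H$ as a cyclic subgroup of order~$8$ in $W(\mathrm{E}_6)$; by Lemma~\ref{l3} (Table~\ref{tab2}) there is a single conjugacy class of order-$8$ elements in $W(\mathrm{E}_6)$, namely $D_5$. Since $8$ is prime to the characteristic, a generator $g$ of $H$ is diagonalizable over $\mathbf{F}^{\mathrm{sep}}$, and by the classification of automorphisms of smooth cubic surfaces over separably closed fields (Theorem~\ref{t1} together with Remark~\ref{r1} and the normal forms of~\cite{8}, exactly as in the proof of Lemma~\ref{l15}) the group $H$ is conjugate in $\mathrm{PGL}_4(\mathbf{F}^{\mathrm{sep}})$ to $\langle h \rangle$. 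I would fix a generator $g$ of $H$ that is conjugate over $\mathbf{F}^{\mathrm{sep}}$ to $h^k$ for some $k$ prime to~$8$; it suffices to prove that this conjugacy can be realized over $\mathbf{F}$, for then $H=\langle g\rangle$ is carried to $\langle h\rangle$ by an element of $\mathrm{PGL}_4(\mathbf{F})$.

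For the descent I would lift everything to $\mathrm{GL}_4$. The surjection $\mathrm{GL}_4(\mathbf{F})\twoheadrightarrow\mathrm{PGL}_4(\mathbf{F})$ (Hilbert~90) lets me choose lifts $A\in\mathrm{GL}_4(\mathbf{F})$ of $g$ and $B\in\mathrm{GL}_4(\mathbf{F})$ of $h^k$, where $B$ is the explicit diagonal matrix whose entries are the $k$-th powers of $\zeta_8^6,\zeta_8,\zeta_8^4,1$; note that $B$ is defined over $\mathbf{F}$ because $\zeta_8\in\mathbf{F}$. The separably closed conjugacy gives $\lambda\in(\mathbf{F}^{\mathrm{sep}})^*$ and $C\in\mathrm{GL}_4(\mathbf{F}^{\mathrm{sep}})$ with $CAC^{-1}=\lambda B$ (as in Remark~\ref{r2}). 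The key point is to show $\lambda\in\mathbf{F}^*$. The eigenvalues of $A$ are $\lambda$ times the four distinct eighth roots of unity on the diagonal of $B$, all of which lie in $\mathbf{F}$; since $A\in\mathrm{GL}_4(\mathbf{F})$, its eigenvalue multiset is stable under $\mathrm{Gal}(\mathbf{F}^{\mathrm{sep}}/\mathbf{F})$. Hence for each $\sigma$ the ratio $u=\sigma(\lambda)/\lambda$ scales the set of these four roots of unity to itself, so $u$ is an eighth root of unity $\zeta_8^s$; writing the diagonal exponents as $\{0,1,4,6\}\subset\mathbb{Z}/8\mathbb{Z}$, the equality $s+\{0,1,4,6\}=\{0,1,4,6\}$ forces $s=0$ because this set has no nontrivial cyclic period. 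Thus $\sigma(\lambda)=\lambda$ for all $\sigma$, so $\lambda\in\mathbf{F}^*$. Now $A$ and $\lambda B$ are matrices over $\mathbf{F}$ that are conjugate over $\mathbf{F}^{\mathrm{sep}}$, so by Lemma~\ref{l1} they are conjugate over $\mathbf{F}$; projectivizing gives the desired conjugacy of $g$ and $h^k$, hence of $H$ and $\langle h\rangle$, in $\mathrm{PGL}_4(\mathbf{F})$.

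I expect the descent to be the main obstacle: the argument of Lemma~\ref{l2} does not apply verbatim, because for order~$8$ the relations $A^8=sE$ and $\det(CAC^{-1})=\det(\lambda B)$ only yield $\lambda^4\in\mathbf{F}^*$ rather than $\lambda\in\mathbf{F}^*$. The extra input that resolves this is the explicit, asymmetric eigenvalue pattern of the class $D_5$: the exponent set $\{0,1,4,6\}$ admits no nonzero cyclic shift fixing it, which is exactly what pins down $\lambda$ rationally. The same rigidity makes the choice of generator immaterial, since $k\cdot\{0,1,4,6\}$ has no nontrivial cyclic period either.
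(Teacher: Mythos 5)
Your proposal is correct, and it is in fact more complete than what the paper records: the paper's entire justification for Lemma~\ref{l29} is the one-line remark that the statement over an algebraically closed field follows from~\cite{7}, \S\,9.5.1, leaving the descent from $\overline{\mathbf{F}}$ (or $\mathbf{F}^{\mathrm{sep}}$) down to $\mathbf{F}$ implicit. You start from the same reduction but then supply exactly that missing step. Your diagnosis of why the paper's own descent device, Lemma~\ref{l2}, does not transfer is accurate: for an element of order~$8$ the two relations give only $\lambda^8\in\mathbf{F}^*$ and $\lambda^4\in\mathbf{F}^*$, and $\gcd(8,4)=4$, so $\lambda$ is not pinned down. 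Your replacement works: since $CAC^{-1}=\lambda B$ forces the eigenvalue multiset of $A$ to be $\lambda\cdot\{\zeta_8^m\colon m\in k\{0,1,4,6\}\}$ with $\zeta_8\in\mathbf{F}$, Galois stability of the characteristic polynomial of $A\in\mathrm{GL}_4(\mathbf{F})$ gives $\sigma(\lambda)/\lambda=\zeta_8^s$ with $s+k\{0,1,4,6\}=k\{0,1,4,6\}$ in $\mathbb{Z}/8\mathbb{Z}$; such a set would have to be a union of cosets of $\langle s\rangle$, and $\{0,1,4,6\}$ (hence any unit multiple of it) is not, so $s=0$ and $\lambda\in\mathbf{F}^*$, after which Lemma~\ref{l1} finishes the job. (An even quicker route to $\lambda\in\mathbf{F}^*$ is available: $\operatorname{tr}(A)=\lambda(\zeta_8^{k}+\zeta_8^{6k})$ lies in $\mathbf{F}$ and the factor $\zeta_8^{k}+\zeta_8^{6k}$ is a nonzero element of $\mathbf{F}$, so $\lambda$ is their quotient; but your shift-rigidity argument is equally valid and also explains why the choice of generator $h^k$ is immaterial.) In short: same skeleton as the paper, plus a correct and genuinely needed rational-conjugacy argument that the paper omits.
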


For an algebraically closed field of characteristic different from~$2$ this lemma follows directly from~\cite{7}, \S\,9.5.1.

\begin{lemma}
\label{l30}
Let $\mathbf{F}$ be a~field of characteristic different from $2$ such that it contains a~primitive eighth root of unity $\zeta_8 \in \mathbf{F}^{\mathrm{sep}}$. Let $S$ be a~smooth cubic surface over $\mathbf{F}$ such that
$$
\operatorname{Aut}(S) \supset \mathbb{Z}/8\mathbb{Z}.
$$
Then $S$ is isomorphic to a~smooth cubic surface defined by the equation
\begin{equation}
\label{eq5.6}
\alpha t^3+tz^2-xy^2+x^2z=0
\end{equation}
for some $\alpha \!\in\! \mathbf{F}^*$, and the element of $\mathrm{PGL}_4(\mathbf{F})$ of the form~\eqref{eq5.5} fixes this \mbox{polynomial}.
\end{lemma}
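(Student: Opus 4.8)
The plan is to use Lemma~\ref{l29} to put the order-$8$ symmetry into diagonal form and then to read off which cubic forms it can leave invariant. First I would note that, since a smooth cubic surface is anticanonically embedded and its automorphisms are linear, we have $\mathbb{Z}/8\mathbb{Z} \subset \operatorname{Aut}(S) \subset \mathrm{PGL}_4(\mathbf{F})$. By Lemma~\ref{l29}, after a change of coordinates over $\mathbf{F}$ I may assume that the generator of this cyclic group is the element $h$ of \eqref{eq5.5}. Let $f$ be the cubic form defining $S$. Because $h$ preserves $S$ and $f$ is determined up to scalar, $f$ is a semi-invariant, $h^{*}f=\lambda f$. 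As $\operatorname{char}\mathbf{F}\neq 2$, the order of $h$ is prime to the characteristic, so $h$ acts semisimply on the $20$-dimensional space of cubic forms in $x,y,z,t$, and the monomials form an eigenbasis. Hence $f$ lies in a single eigenspace; that is, all of its monomials share one common weight modulo $8$, where $x,y,z,t$ carry weights $6,1,4,0$.

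The next step is to compute these weights and discard every eigenspace except the one containing the target form. Since $6a+4c$ is always even, a monomial $x^{a}y^{b}z^{c}t^{d}$ has odd weight exactly when $b$ is odd, that is, exactly when it is divisible by $y$; thus an odd-weight $f$ is divisible by $y$, so $S$ is reducible and singular. For the even weights one checks directly that the weight-$4$ monomials are precisely $x^{2}t,\ z^{3},\ zt^{2}$, none of which involves $y$, so a weight-$4$ surface is a cone with vertex $[0:1:0:0]$ and is singular. For weights $2$ and $6$ every monomial is divisible by $x$ or by $y^{2}$, so the line $\ell=\{x=y=0\}$ lies on $S$; restricting the partial derivatives to $\ell$ kills all of them except $\partial f/\partial x$, which becomes a binary quadratic in $z,t$ (namely $b\,zt$ in weight $2$ and $a\,z^{2}+b\,t^{2}$ in weight $6$). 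This quadratic has a zero over $\overline{\mathbf{F}}$, and at that point the whole gradient vanishes, contradicting smoothness. Therefore $f$ lies in the weight-$0$ eigenspace, spanned by $x^{2}z,\ xy^{2},\ z^{2}t,\ t^{3}$, so that $f=a\,x^{2}z+b\,xy^{2}+c\,z^{2}t+d\,t^{3}$ and $h$ indeed fixes $f$.

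Finally I would normalize the coefficients. Smoothness forces $a,b,c,d\in\mathbf{F}^{*}$: if any one vanishes the surface acquires an obvious singular point (e.g.\ $[1:0:0:0]$ when $a=0$, the vertex $[0:1:0:0]$ when $b=0$, and $[0:0:0:1]$ when $d=0$). Applying the diagonal substitution $x\mapsto\lambda_{1}x$, $y\mapsto\lambda_{2}y$, $z\mapsto\lambda_{3}z$, $t\mapsto\lambda_{4}t$ with $\lambda_{i}\in\mathbf{F}^{*}$ multiplies the four coefficients by $\lambda_{1}^{2}\lambda_{3}$, $\lambda_{1}\lambda_{2}^{2}$, $\lambda_{3}^{2}\lambda_{4}$, $\lambda_{4}^{3}$ respectively; being diagonal, this substitution commutes with the diagonal matrix $h$ and leaves $h$ literally unchanged in $\mathrm{PGL}_4(\mathbf{F})$. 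I would then solve $a\lambda_{1}^{2}\lambda_{3}=1$, $b\lambda_{1}\lambda_{2}^{2}=-1$, $c\lambda_{3}^{2}\lambda_{4}=1$, which after eliminating $\lambda_{3}$ and $\lambda_{4}$ reduces to the single requirement $\lambda_{2}^{2}=-1/(b\lambda_{1})$; the choice $\lambda_{1}=-1/b$ makes the right-hand side equal to $1$, so $\lambda_{2}=1$ works and no field extension is needed. The last coefficient then becomes $\alpha=d\lambda_{4}^{3}\in\mathbf{F}^{*}$, and $f$ takes the form \eqref{eq5.6} with $h$ still fixing it.

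The main obstacle I anticipate is precisely this final normalization over $\mathbf{F}$ rather than over $\mathbf{F}^{\mathrm{sep}}$: the three normalizing equations involve the squares $\lambda_{1}^{2},\lambda_{2}^{2},\lambda_{3}^{2}$, and a naive approach would demand square roots that need not lie in $\mathbf{F}$. The point is that one of the four scaling parameters stays free after the three conditions are imposed, and a single judicious choice of $\lambda_{1}$ turns the only genuine square-root condition into the trivial $\lambda_{2}^{2}=1$. A secondary point requiring care is that all the smoothness arguments above remain valid in characteristic~$3$; this is automatic here because every singular point is exhibited on a locus where the characteristic-sensitive terms (such as $3x^{2}$) are evaluated at $0$.
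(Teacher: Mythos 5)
Your proposal is correct and follows essentially the same route as the paper: apply Lemma~\ref{l29} to diagonalize the order-$8$ element, decompose the space of cubic forms into $h$-eigenspaces (the paper's Table~\ref{tab4}), use smoothness to rule out every eigenspace except the one spanned by $t^3, x^2z, xy^2, z^2t$ with all four coefficients nonzero, and then rescale the coordinates diagonally over $\mathbf{F}$ to reach~\eqref{eq5.6}. The only difference is one of detail: you spell out explicitly why each nonzero-weight eigenspace yields a singular or reducible surface and why the normalization avoids square roots, both of which the paper leaves to the reader.
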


\begin{proof}
Let $h \in \operatorname{Aut}(S)$ be an automorphism of order~$8$. Then by Lemma~\ref{l29}, up to conjugation the element $h$ has the form~\eqref{eq5.5}. Denote by $x$, $y$, $z$ and~$t$ the homogeneous coordinates on $\mathbb{P}^3$ such that $h$ is of the form~\eqref{eq5.5}. Then the automorphism~$h$ acts on monomials of the third degree by multiplication by eighth roots of unity.
In Table~\ref{t4}, for each monomial of the third degree with respect to the coordinates $x$, $y$, $z$ and~$t$ one can find the coefficient with which $h$ acts on it.

\begin{table}[!htb]
\renewcommand{\arraystretch}{1.2} 
\centering\begin{tabular}[c]{| >{\small}c| >{\small}c|}
\hline
\text{Coefficient} & \text{Monomials}\\
\hline
$1$ & $t^3$, $x^2z$, $y^2x$, $z^2t$ \\
\hline
$\zeta_8$ & $z^2y$, $t^2y$ \\
\hline
$\zeta_8^2$ & $x^3$, $y^2t$, $xzt$ \\
\hline
$\zeta_8^3$ & $y^3$, $xyz$ \\
\hline
$\zeta_8^4$ & $z^3$, $x^2t$, $t^2z$ \\
\hline
$\zeta_8^5$ & $x^2y$, $yzt$ \\
\hline
$\zeta_8^6$ & $y^2z$, $z^2x$, $t^2x$ \\
\hline
$\zeta_8^7$ & $xyt$ \\
\hline
\end{tabular}
\vskip3mm
\caption{Monomials of the third degree with action of $h$}
\label{tab4}
\end{table}

We can see from Table~\ref{t4} that all smooth cubic surfaces fixed by $h$ are isomorphic to some cubic surface defined by the equation
\begin{equation}
\label{eq5.7}
at^3+btz^2+cxy^2+dx^2z=0
\end{equation}
for $a,b,c,d \in \mathbf{F}^*$. Note that if at least one of the coefficients $a$, $b$, $c$ and $d$ is zero, then~\eqref{eq5.7} is singular. By the change of coordinates
$$
x \mapsto x,
\qquad y \mapsto y,
\qquad -\frac{d}{c}z \mapsto z
\quad\text{and}\quad -\frac{bc}{d^2}t \mapsto t
$$ the
smooth cubic surface~\eqref{eq5.7} is transformed into the surface~\eqref{eq5.6} with~\mbox{$\alpha=\frac{ad^6}{b^3c^4}$.}

The lemma is proved.
\end{proof}

\begin{lemma}
\label{l31}
Let $\mathbf{F}$ be a~field of characteristic~$3$ such that it contains a~primitive eighth root of unity $\zeta_8 \in \mathbf{F}^{\mathrm{sep}}$. Let $S$ be a~smooth cubic surface over $\mathbf{F}$ isomorphic to a~smooth cubic surface~\eqref{eq5.6} for some $\alpha \in \mathbf{F}^*$. Assume that all lines on $S_{\mathbf{F}^{\mathrm{sep}}}$ are defined over $\mathbf{F}$. Then $\alpha \in \mathbf{F}^8$.
\end{lemma}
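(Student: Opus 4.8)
The plan is to exhibit $S$ as a Galois twist of the fixed surface~\eqref{eq1.1}, to compute the twisting cocycle explicitly as a power of the order-$8$ automorphism~\eqref{eq5.5}, and then to use the hypothesis on the lines to force this cocycle to be trivial. Write $S_0$ for the surface~\eqref{eq1.1}, that is, for~\eqref{eq5.6} with parameter $1$.

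First I would fix a root $\beta$ of $x^8-\alpha$ in $\mathbf{F}^{\mathrm{sep}}$; since $\operatorname{char}\mathbf{F}=3$ we have $\gcd(8,3)=1$, so this polynomial is separable, $\beta\in\mathbf{F}^{\mathrm{sep}}$, and $\sigma(\beta)=\zeta_8^{k(\sigma)}\beta$ for some homomorphism $k\colon\mathrm{Gal}(\mathbf{F}^{\mathrm{sep}}/\mathbf{F})\to\mathbb{Z}/8\mathbb{Z}$. Inspecting the four monomials $t^3$, $tz^2$, $xy^2$, $x^2z$, one checks that the diagonal element $D=\operatorname{diag}(\beta^2,\beta,1,\beta^4)\in\mathrm{PGL}_4(\mathbf{F}^{\mathrm{sep}})$ carries~\eqref{eq5.6} to~\eqref{eq1.1}, so it gives an isomorphism $S_{\mathbf{F}^{\mathrm{sep}}}\to S_{0,\mathbf{F}^{\mathrm{sep}}}$. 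Since both surfaces are defined over $\mathbf{F}$, the assignment $\sigma\mapsto c_\sigma=\sigma(D)D^{-1}$ is a cocycle with values in $\operatorname{Aut}(S_{0,\mathbf{F}^{\mathrm{sep}}})$, and $S$ is the twist of $S_0$ by $c$. A direct computation gives $c_\sigma=\operatorname{diag}(\zeta_8^{2k(\sigma)},\zeta_8^{k(\sigma)},1,\zeta_8^{4k(\sigma)})$, which in $\mathrm{PGL}_4$ is exactly the power $h^{-3k(\sigma)}$ of the generator~\eqref{eq5.5} of the cyclic group $\mathbb{Z}/8\mathbb{Z}\subset\mathcal{H}_3(\mathbb{F}_3)\rtimes\mathbb{Z}/8\mathbb{Z}=\operatorname{Aut}(S_{0,\mathbf{F}^{\mathrm{sep}}})$, the last equality coming from Lemma~\ref{l28} and Theorem~\ref{t4},\,(i).

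Next I would compare the Galois actions on the $27$ lines. Twisting by $c$ alters the action of $\sigma$ on the configuration of lines by the automorphism $c_\sigma$; concretely, the image $\Gamma$ of the Galois group attached to $S$ and the image $\Gamma_0$ attached to $S_0$ are related by $\Gamma(\sigma)=c_\sigma\,\Gamma_0(\sigma)$ in $W(\mathrm{E}_6)$. The crucial point is that $\Gamma_0$ is trivial, i.e.\ all lines of $S_0$ are already defined over $\mathbf{F}$. Indeed $\Gamma_0$ lies in the centralizer of $\operatorname{Aut}(S_{0,\mathbf{F}^{\mathrm{sep}}})$ in $W(\mathrm{E}_6)$; since $h$ has order~$8$ and the unique conjugacy class of order-$8$ elements has centralizer of order~$8$ by Table~\ref{tab2}, the centralizer of $h$ equals $\langle h\rangle$, and no nontrivial power of $h$ commutes with the whole Heisenberg subgroup $\mathcal{H}_3(\mathbb{F}_3)$, because $\mathbb{Z}/8\mathbb{Z}$ acts faithfully on it inside the semidirect product. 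Hence $C_{W(\mathrm{E}_6)}(\operatorname{Aut}(S_{0,\mathbf{F}^{\mathrm{sep}}}))$ is trivial and $\Gamma_0=1$.

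Finally, the hypothesis that all lines on $S_{\mathbf{F}^{\mathrm{sep}}}$ are defined over $\mathbf{F}$ says precisely that $\Gamma=1$, so $c_\sigma=h^{-3k(\sigma)}=1$ for every $\sigma$; as $\gcd(3,8)=1$ this forces $k(\sigma)\equiv 0\pmod 8$, whence $\sigma(\beta)=\beta$ for all $\sigma$, so $\beta\in\mathbf{F}$ and $\alpha=\beta^8\in\mathbf{F}^8$. I expect the main obstacle to lie in the middle paragraph: verifying that the centralizer of $\operatorname{Aut}(S_{0,\mathbf{F}^{\mathrm{sep}}})$ in $W(\mathrm{E}_6)$ is trivial (equivalently, that $\langle h\rangle$ meets the centre of $\operatorname{Aut}(S_{0,\mathbf{F}^{\mathrm{sep}}})$ only in the identity), and in pinning down the twisting relation $\Gamma(\sigma)=c_\sigma\,\Gamma_0(\sigma)$ together with the precise exponent $h^{-3k(\sigma)}$ without confusing the labelling of the $27$ lines.
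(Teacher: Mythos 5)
Your argument is correct in substance, but it takes a genuinely different route from the paper's. The paper works directly on the surface~\eqref{eq5.6}: it takes the line $x=t=0$, runs through the pencil of planes $x=\lambda t$ containing it, and computes that the residual conic $\alpha t^2+\lambda^2 tz+z^2-\lambda y^2$ splits into two lines exactly when $\lambda=0$ or $\alpha=\lambda^4$, in which case (in characteristic~$3$) it equals $(\lambda^2t-z-\mu y)(\lambda^2t-z+\mu y)$ with $\mu^2=\lambda$; rationality of these two lines forces $\mu\in\mathbf{F}$ and hence $\alpha=\mu^8\in\mathbf{F}^8$. That argument is elementary, self-contained, and uses the rationality of only a handful of lines. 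Your proof instead realizes $S$ as the twist of~\eqref{eq1.1} by the cocycle $c_\sigma=\sigma(D)D^{-1}=h^{-3k(\sigma)}$ and deduces $k\equiv 0 \pmod 8$ from $\Gamma=\Gamma_0=1$; this is more conceptual and would generalize, but it leans on heavier inputs: the identification $\operatorname{Aut}(S_{0,\mathbf{F}^{\mathrm{sep}}})\simeq\mathcal{H}_3(\mathbb{F}_3)\rtimes\mathbb{Z}/8\mathbb{Z}$ via Theorem~\ref{t1},\,$(\mathrm{ii})'$, the injectivity of $\operatorname{Aut}(S_0)\to W(\mathrm{E}_6)$, and the centralizer data of Table~\ref{tab2}. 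The step you flag as delicate is a real gap as written but is fillable: that no nontrivial power of $h$ centralizes $\mathcal{H}_3(\mathbb{F}_3)$ does not follow from the symbol $\rtimes$ alone, yet a one-line conjugation with the matrices~\eqref{eq5.1} gives $h(\alpha,c)h^{-1}=(\zeta_8^7\alpha,-c)$, so the $\mathbb{Z}/8\mathbb{Z}$-action is faithful; alternatively, the triviality of $\Gamma_0$ is exactly the content of Lemmas~\ref{l34} and~\ref{l35}, which are proved independently of Lemma~\ref{l31}, so invoking them creates no circularity. Your cocycle computation itself checks out: $\operatorname{diag}(\zeta_8^{2},\zeta_8,1,\zeta_8^{4})$ equals $h^{-3}$ in $\mathrm{PGL}_4$ after rescaling by $\zeta_8^4$, and the sign ambiguity in the twisting relation $\Gamma(\sigma)=c_\sigma^{\pm1}\Gamma_0(\sigma)$ is harmless since both sides are being forced to be trivial.
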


\begin{proof}
Fix $\alpha \in \mathbf{F}^*$ such that $S$ is isomorphic to $S'$ defined by the equation
$$
\alpha t^3+tz^2-xy^2+x^2z=0.
$$

The line $l$ defined by the system of equations
$$
\begin{cases}
x=0,\\
t=0
\end{cases}
$$ in $\mathbb{P}^3$
lies on $S'$. Consider the pencil of hyperplanes passing through $l$ in $\mathbb{P}^3_{\mathbf{F}^{\mathrm{sep}}}$. Any~hyperplane in this pencil is defined by an equation
\begin{equation}
\label{eq5.8}
x=\lambda t
\quad\text{for } \lambda \in \mathbf{F}^{\mathrm{sep}}
\quad\text{or}\quad t=0.
\end{equation}

Let us find all hyperplanes $T$ of the form~\eqref{eq5.8} such that the intersection of $T$ and~$S'$ consists of three lines. If $T$ is defined by $t=0$, then $T \cap S'$ is a~line and a~smooth conic. Assume that $T$ is defined by $x=\lambda t$ for some $ \lambda \in \mathbf{F}^{\mathrm{sep}}$. Then~$T \cap S'$ is defined by
$$
t(\alpha t^2+\lambda^2 tz +z^2-\lambda y^2)=0.
$$
Let
$$
f=\alpha t^2+\lambda^2 tz +z^2-\lambda y^2.
$$
It is sufficient to find all $\lambda$ such that $f$ is a~product of two different linear functions. We can see from direct computations that this occurs if and only if either $\lambda =0$, or
\begin{equation}
\label{eq5.9}
\alpha=\lambda^4.
\end{equation}
Therefore, from~\eqref{eq5.9} we obtain
$$
f=(\lambda^2t-z)^2-\lambda y^2=(\lambda^2 t-z-\mu y)(\lambda^2 t-z+\mu y),
$$
where $\mu \in \mathbf{F}^{\mathrm{sep}}$ and $\mu^2=\lambda$. Since any line on $S'_{\mathbf{F}^{\mathrm{sep}}}$ is defined over $\mathbf{F}$, we see, in particular, that the lines
$$
\begin{cases}
x=\lambda t,\\
\lambda^2 t-z \pm \mu y=0
\end{cases}
$$
are defined over $\mathbf{F}$. This means that $\mu \in \mathbf{F}$ or, equivalently, $\lambda \in \mathbf{F}^2$. Therefore, by~\eqref{eq5.9} we have $\alpha=\gamma^8$ for some $\gamma \in \mathbf{F}^*$.

The lemma is proved.
\end{proof}

\begin{lemma}
\label{l32}
Let $\mathbf{F}$ be a~field of characteristic~$3$ containing a~primitive eighth root of unity $\zeta_8$. Let $S$ be a~smooth cubic surface over $\mathbf{F}$ isomorphic to~\eqref{eq5.6} for some~\mbox{$\alpha \in \mathbf{F}^*$.} Assume that all lines on $S_{\mathbf{F}^{\mathrm{sep}}}$ are defined over~$\mathbf{F}$. Then $S$ is isomorphic to the surface~\eqref{eq1.1}.
\end{lemma}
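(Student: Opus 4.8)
The plan is to reduce the statement to the preceding Lemma~\ref{l31} and then produce an explicit diagonal coordinate change over $\mathbf{F}$. First I would invoke Lemma~\ref{l31}: since $S$ is isomorphic to the surface \eqref{eq5.6} for some $\alpha \in \mathbf{F}^*$ and all lines on $S_{\mathbf{F}^{\mathrm{sep}}}$ are defined over $\mathbf{F}$, that lemma tells us $\alpha$ is an eighth power, so we may write $\alpha = \gamma^8$ for some $\gamma \in \mathbf{F}^*$. This is the only substantive input; everything after it is bookkeeping.

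Next I would pin down the correct weights for a diagonal substitution $x = \gamma^{p}x'$, $y = \gamma^{q}y'$, $z = \gamma^{r}z'$, $t = \gamma^{s}t'$ that clears the factor $\gamma$. The four monomials occurring in \eqref{eq5.6} are $t^3$, $tz^2$, $xy^2$ and $x^2z$, so after substitution their coefficients are multiplied by $\gamma^{8+3s}$, $\gamma^{s+2r}$, $\gamma^{p+2q}$ and $\gamma^{2p+r}$ respectively (the summand $8$ in the first exponent coming from $\alpha = \gamma^8$). Requiring these four powers to coincide and normalising $s=0$ forces the common power to be $\gamma^8$ and then determines $(p,q,r,s)=(2,3,4,0)$ uniquely. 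Thus the substitution to use is $x = \gamma^{2}x'$, $y = \gamma^{3}y'$, $z = \gamma^{4}z'$, $t = t'$, which is defined over $\mathbf{F}$ because $\gamma \in \mathbf{F}^*$.

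Finally I would carry out the substitution: under it the left-hand side $\gamma^8 t^3 + tz^2 - xy^2 + x^2z$ becomes $\gamma^8\bigl(t'^3 + t'z'^2 - x'y'^2 + x'^2z'\bigr)$, and dividing by the projective scalar $\gamma^8$ recovers exactly the defining equation \eqref{eq1.1}. Hence $S$ is isomorphic over $\mathbf{F}$ to the surface \eqref{eq1.1}. I do not expect a genuine obstacle here: the difficulty is entirely concentrated in Lemma~\ref{l31} (the geometric argument that the line configuration forces $\alpha$ to be an eighth power), and once that is in hand the present lemma is a routine verification of an explicit change of coordinates, valid verbatim in characteristic $3$ since it involves only multiplicative scalings.
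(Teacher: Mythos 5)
Your proposal is correct and matches the paper's proof essentially verbatim: both invoke Lemma~\ref{l31} to write $\alpha=\gamma^8$ and then apply the same diagonal substitution $x\mapsto\gamma^2x$, $y\mapsto\gamma^3y$, $z\mapsto\gamma^4z$, $t\mapsto t$ to transform~\eqref{eq5.6} into~\eqref{eq1.1}. The only difference is that you additionally explain how the exponents $(2,3,4,0)$ are determined, which the paper leaves implicit.
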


\begin{proof}
By Lemma~\ref{l31} the surface $S$ is isomorphic~\eqref{eq5.6} for some nonzero $\alpha \in \mathbf{F}^8$. So we have $\alpha = \gamma^8$ for some $\gamma \in \mathbf{F}^*$. By the change of coordinates
$$
x \mapsto \gamma^2 x, \qquad y \mapsto \gamma^3 y, \qquad z \mapsto \gamma^4 z, \qquad t \mapsto t
$$
equation~\eqref{eq5.6} is transformed into~\eqref{eq1.1}, and we obtain the required result.

The lemma is proved.
\end{proof}

\begin{lemma}[{see~\cite{8}, Table~7}]
\label{l33}
Let $S$ be a~smooth cubic surface over a~field $\mathbf{F}$ of characteristic~$3$. Assume that $\operatorname{Aut}(S) \simeq \mathcal{H}_3(\mathbb{F}_3) \rtimes \mathbb{Z}/8\mathbb{Z}$. If $g \in \operatorname{Aut}(S)$ is an element of order~$6$, then it lies in the conjugacy class $E_6(a_2)$. If $g \in \operatorname{Aut}(S)$ is an~element of order~$3$, then it lies either in the conjugacy class $A_2^2$, or in the conjugacy class $A_2^3$.
\end{lemma}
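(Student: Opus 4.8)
The plan is to reduce the statement to a computation in the Weyl group $W(\mathrm{E}_6)$ by exploiting the known structure of $\operatorname{Aut}(S) \simeq \mathcal{H}_3(\mathbb{F}_3) \rtimes \mathbb{Z}/8\mathbb{Z}$ as a subgroup of $W(\mathrm{E}_6)$, together with the eigenvalue data recorded in Lemma~\ref{l3} and Lemma~\ref{l4}. Since the quoted source~\cite{8}, Table~7, classifies the conjugacy classes realized by automorphisms of this surface over an algebraically closed field, and since by Lemma~\ref{l2} and Theorem~\ref{t4},\,(i), conjugacy of elements of prime order~$3$ descends from the separable closure to $\mathbf{F}$, the essential content is a statement about which $W(\mathrm{E}_6)$-conjugacy classes the elements of given order inside a copy of $\mathcal{H}_3(\mathbb{F}_3) \rtimes \mathbb{Z}/8\mathbb{Z}$ can occupy.

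First I would fix a concrete realization of $G \simeq \mathcal{H}_3(\mathbb{F}_3) \rtimes \mathbb{Z}/8\mathbb{Z}$ acting on the surface~\eqref{eq1.1} as in Lemma~\ref{l28}, where the Heisenberg subgroup $G_0 \simeq \mathcal{H}_3(\mathbb{F}_3)$ consists of the elements~\eqref{eq5.1} and the cyclic factor is generated by the order-$8$ element~$h$ of~\eqref{eq5.5}. Every element of order~$3$ lies in a Sylow $3$-subgroup; since $|G| = 216 = 2^3 \cdot 3^3$, the unique Sylow $3$-subgroup is precisely $G_0 \simeq \mathcal{H}_3(\mathbb{F}_3)$, so all order-$3$ elements lie in $G_0$. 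Using the explicit matrices~\eqref{eq5.1} one computes the eigenvalues of each such element on $\mathbb{P}^3$ and hence, via the standard dictionary between the action on $\mathbb{P}^3$ and the action on $\mathrm{Pic}(S_{\mathbf{F}^{\mathrm{sep}}})$, reads off the eigenvalues on the root lattice $\mathrm{E}_6$. Matching against the eigenvalue column of Table~\ref{tab2} shows these eigenvalue patterns are exactly those of $A_2^2$ and $A_2^3$, while the pattern of $A_2$, namely $\omega, \omega^2, 1,1,1,1$, does not occur; this is the content of the order-$3$ assertion.

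For the order-$6$ claim I would argue that an element $g$ of order~$6$ must have $g^3$ of order~$2$ and $g^2$ of order~$3$, and that the only central involution available forces $g^3$ into the class $A_1^4$ and $g^2$ into $A_2^3$, as in Lemma~\ref{l5},\,(i). By Lemma~\ref{l4} the class $E_6(a_2)$ is characterized among order-$6$ classes by its eigenvalues $-\omega, -\omega, -\omega^2, -\omega^2, \omega, \omega^2$; squaring gives $A_2^3$ and cubing gives $A_1^4$, matching the constraints forced by the group structure. I would then verify that no other order-$6$ conjugacy class of $W(\mathrm{E}_6)$ has $g^2 \in A_2^3$ and $g^3 \in A_1^4$ simultaneously, which pins $g$ to $E_6(a_2)$; alternatively one simply quotes~\cite{8}, Table~7, which lists the realized classes directly.

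The main obstacle is the bookkeeping of translating the eigenvalues of the $4 \times 4$ projective matrices~\eqref{eq5.1} and~\eqref{eq5.5} into eigenvalues on the six-dimensional $\mathrm{E}_6$ lattice, since the action on $\mathrm{Pic}$ is not literally the action on the ambient $\mathbb{P}^3$; one must use the induced permutation of the $27$ lines and the resulting orthogonal transformation of $\mathrm{Pic}(S_{\mathbf{F}^{\mathrm{sep}}})$. In practice this subtlety is absorbed by invoking~\cite{8}, Table~7, which already performs this classification over an algebraically closed field; Lemma~\ref{l2} and Theorem~\ref{t4},\,(i), then guarantee that the conjugacy type is unchanged over $\mathbf{F}$, so the statement follows immediately from the cited table.
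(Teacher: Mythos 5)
Your proposal is correct and ultimately rests on the same step as the paper, which offers no proof at all beyond the citation in the lemma header: since $\operatorname{Aut}(S)\simeq\operatorname{Aut}(S_{\mathbf{F}^{\mathrm{sep}}})$ and membership in a $W(\mathrm{E}_6)$-conjugacy class is read off from the action on $\mathrm{Pic}(S_{\mathbf{F}^{\mathrm{sep}}})$, the statement reduces to the separably closed case, which is exactly~\cite{8}, Table~7. Your extra sketches (the Sylow argument, the eigenvalue bookkeeping, and the slightly circular use of Lemma~\ref{l5},\,(i), for the order-$6$ case) are redundant once the table is invoked, and the appeal to Lemma~\ref{l2} is unnecessary here since the relevant conjugacy is in $W(\mathrm{E}_6)$ rather than in $\mathrm{PGL}_4(\mathbf{F})$.
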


\begin{lemma}
\label{l34}
Let $S$ be a~smooth cubic surface over a~field $\mathbf{F}$ of characteristic~$3$. Assume that $\operatorname{Aut}(S) \simeq \mathcal{H}_3(\mathbb{F}_3) \rtimes \mathbb{Z}/8\mathbb{Z}$. Then the centre of $\operatorname{Aut}(S)$ is trivial.
\end{lemma}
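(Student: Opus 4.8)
The plan is to compute the centre directly in the explicit matrix model supplied by Lemma~\ref{l28}, using that the centre depends only on the abstract isomorphism type of $\operatorname{Aut}(S)$. By hypothesis this type is $\mathcal{H}_3(\mathbb{F}_3)\rtimes\mathbb{Z}/8\mathbb{Z}$, which is realised in the proof of Lemma~\ref{l28} as the subgroup of $\mathrm{PGL}_4$ generated by the Heisenberg group $N$ of elements $(\alpha,c)$ as in \eqref{eq5.1} and the order-$8$ element $h$ of \eqref{eq5.5}; here $N$ is the normal (thus unique) Sylow $3$-subgroup, so it is canonically defined, and $\langle h\rangle\simeq\mathbb{Z}/8\mathbb{Z}$ is a complement. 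Writing a general element as $z=(\alpha,c)h^k$, I would split the centrality condition into commuting with $h$ and commuting with all of $N$, and exploit the semidirect product law \eqref{eq5.4}.

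The first key computation is the conjugation action of $h$ on $N$. A direct computation with the matrices \eqref{eq5.1} and \eqref{eq5.5} gives
\begin{equation*}
h\,(\alpha,c)\,h^{-1}=(\zeta_8^{-1}\alpha,\,-c),
\end{equation*}
and one checks that this preserves the defining relations \eqref{eq5.2} and \eqref{eq5.3}, since $(\zeta_8^{-1}\alpha)^4=-\alpha^4$ and $(-c)^3+(-c)=-(c^3+c)$. In particular the induced automorphism of $N$ has order $8$, so $\langle h\rangle$ acts faithfully on $N$. From the displayed formula I read off the fixed points: $(\alpha,c)$ is fixed by conjugation by $h$ if and only if $\zeta_8^{-1}\alpha=\alpha$ and $-c=c$, which forces $\alpha=c=0$ in characteristic~$3$; thus $\mathrm{C}_N(h)=\{e\}$. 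Now if $z=(\alpha,c)h^k$ lies in $\mathrm{Z}(\operatorname{Aut}(S))$, then $zh=hz$ yields $(\alpha,c)=h\,(\alpha,c)\,h^{-1}$, so $(\alpha,c)\in\mathrm{C}_N(h)=\{e\}$ and hence $z=h^k$.

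It remains to rule out $z=h^k$ with $1\le k\le 7$. Such a $z$ must commute with every element of $N$, that is, $h^k$ must act trivially on $N$ by conjugation; but by the displayed formula $h^k$ sends $(\alpha,c)$ to $(\zeta_8^{-k}\alpha,(-1)^k c)$, and the factor $\zeta_8^{-k}$ on the $\alpha$-coordinate is nontrivial whenever $8\nmid k$. Hence $h^k$ centralises $N$ only for $k\equiv0\pmod 8$, i.e.\ $z=e$. Combining the two cases gives $\mathrm{Z}(\operatorname{Aut}(S))=\{e\}$.

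I expect the only real obstacle to be carrying out the conjugation $h\,(\alpha,c)\,h^{-1}=(\zeta_8^{-1}\alpha,-c)$ carefully, matching the off-diagonal entries $\alpha^3$, $-\alpha^6$ and $c$ of \eqref{eq5.1} against the diagonal $(\zeta_8^6,\zeta_8,\zeta_8^4,1)$ of $h$ and verifying consistency across all four relevant entries, together with the minor check that the resulting pair still satisfies \eqref{eq5.2}--\eqref{eq5.3}. Everything else is formal manipulation inside the semidirect product, and the invariance of the centre under isomorphism lets me work in this concrete model regardless of the actual field $\mathbf{F}$.
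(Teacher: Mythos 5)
Your proof is correct, and I verified the key conjugation computation: with $h=\mathrm{diag}(\zeta_8^6,\zeta_8,\zeta_8^4,1)$ acting on the matrix~\eqref{eq5.1} entrywise by $m_{ij}\mapsto d_id_j^{-1}m_{ij}$, the entries $(1,2)$, $(2,3)$, $(1,3)$, $(4,2)$, $(4,3)$ all transform consistently with $(\alpha,c)\mapsto(\zeta_8^{-1}\alpha,-c)$, and the relations~\eqref{eq5.2}--\eqref{eq5.3} are preserved. But your route is genuinely different from the paper's. The paper never touches the matrix model: it assumes the centre $Z$ is nontrivial, so $|Z|$ is divisible by $2$ or by $3$, and kills each case with conjugacy-class data in $W(\mathrm{E}_6)$. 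A central involution $t$ would make $g^2t$ an order-$6$ element in the class $E_6(a_2)$ for every order-$3$ element $g$ (Lemma~\ref{l33}), whence $g=(g^2t)^2$ would lie in $A_2^3$ by Lemma~\ref{l5},\,(i), contradicting the existence of order-$3$ automorphisms in $A_2^2$; a central element of order $3$ would combine with an order-$8$ element to give an element of order $24$, contradicting Lemma~\ref{l8}. Your argument trades that Weyl-group machinery for a self-contained computation inside the concrete realisation of Lemma~\ref{l28}: you identify $\mathrm{C}_N(h)=\{e\}$ and note that $h^k$ acts on $N$ with the nontrivial multiplier $\zeta_8^{-k}$ for $8\nmid k$, which is all that is needed. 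The one point worth making explicit is that the notation $\mathcal{H}_3(\mathbb{F}_3)\rtimes\mathbb{Z}/8\mathbb{Z}$ could a priori denote several non-isomorphic semidirect products (some with nontrivial centre), so your reduction to the model of Lemma~\ref{l28} rests on the paper's convention that this symbol denotes the automorphism group of the surface~\eqref{eq1.1} over a separably closed field, which Lemma~\ref{l28} realises explicitly; you gesture at this and it is correct, but it deserves a sentence. Overall your proof is more elementary and independent of Lemmas~\ref{l5}, \ref{l8} and~\ref{l33}, at the cost of a matrix verification; the paper's proof is shorter given the conjugacy-class tables it has already assembled.
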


\begin{proof}
Let $Z$ be the centre of the group $\operatorname{Aut}(S)$. Assume that $Z$ is not the trivial group. Then the order of $Z$ is divisible either by~$2$, or by~$3$. Assume that the order of $Z$ is divisible by $2$. This means that there is an element $h \in Z$ of order~$2$. So for any element $g \in \operatorname{Aut}(S)$ of order~$3$ we have the element
\begin{equation}
\label{eq5.10}
g^2h=hg^2=\tau
\end{equation}
of order~$6$, which lies in the conjugacy class $E_6(a_2)$ by Lemma~\ref{l33}. From~\eqref{eq5.10} we immediately get that $g=\tau^2$. In particular, this means that each element of order~$3$ in~$\operatorname{Aut}(S)$ is a~square of some element of $E_6(a_2)$. By Lemma~\ref{l5},\,(i), this means that each element of order~$3$ in $\operatorname{Aut}(S)$ has to lie in the conjugacy class $A_2^3$, which is impossible by Lemma~\ref{l33}.

Assume that the order of $Z$ is divisible by $3$. Then there is an element of order~$24$ in $\operatorname{Aut}(S)$. However, this is impossible by Lemma~\ref{l8}.

The lemma is proved.
\end{proof}

\begin{lemma}
\label{l35}
Let $S$ be a~smooth cubic surface over a~field $\mathbf{F}$ of characteristic~$3$. Assume that $\operatorname{Aut}(S) \simeq \mathcal{H}_3(\mathbb{F}_3) \rtimes \mathbb{Z}/8\mathbb{Z}$. Then the image $\Gamma$ of the absolute Galois group $\mathrm{Gal}(\mathbf{F}^{\mathrm{sep}}/\mathbf{F})$ in $W(\mathrm{E}_6)$ is trivial.
\end{lemma}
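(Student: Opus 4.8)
The plan is to exploit the fact, recalled in \S\,\ref{s3}, that $\Gamma$ lies in the centralizer of $\operatorname{Aut}(S)$ inside $W(\mathrm{E}_6)$, combined with the triviality of the centre of $\operatorname{Aut}(S)$ established in Lemma~\ref{l34}. The idea is to pin down $\Gamma$ by examining how it is forced to commute with a single well-chosen element of $\operatorname{Aut}(S)$, namely a generator of the cyclic factor $\mathbb{Z}/8\mathbb{Z}$, whose centralizer in $W(\mathrm{E}_6)$ is as small as possible.

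First I would fix an element $h \in \operatorname{Aut}(S)$ of order~$8$ generating the subgroup $\mathbb{Z}/8\mathbb{Z}$ of $\mathcal{H}_3(\mathbb{F}_3)\rtimes\mathbb{Z}/8\mathbb{Z}$. Viewing $\operatorname{Aut}(S)$ as a subgroup of $W(\mathrm{E}_6)$ via Theorem~\ref{t5}, the element $h$ has order~$8$ in $W(\mathrm{E}_6)$, so by Lemma~\ref{l3} it lies in the conjugacy class $D_5$, the unique class of order-$8$ elements in Table~\ref{tab2}, whose centralizer has order~$8$. Since $h$ itself lies in $\mathrm{C}_{W(\mathrm{E}_6)}(h)$ and generates a cyclic group of order~$8$, I conclude that
$$
\mathrm{C}_{W(\mathrm{E}_6)}(h)=\langle h\rangle\simeq\mathbb{Z}/8\mathbb{Z};
$$
this is entirely parallel to the argument of Lemma~\ref{l6} for order-$5$ elements.

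Next, because $\Gamma$ centralizes all of $\operatorname{Aut}(S)$, it in particular centralizes $h$, whence
$$
\Gamma\subseteq \mathrm{C}_{W(\mathrm{E}_6)}(h)=\langle h\rangle\subseteq\operatorname{Aut}(S).
$$
Thus every element of $\Gamma$ lies in $\operatorname{Aut}(S)$ and at the same time commutes with every element of $\operatorname{Aut}(S)$; that is, $\Gamma$ is contained in the centre of $\operatorname{Aut}(S)$. By Lemma~\ref{l34} this centre is trivial, so $\Gamma$ is trivial, which is exactly the assertion.

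The only delicate point is the determination of the centralizer in the second step: it hinges on Table~\ref{tab2} exhibiting exactly one conjugacy class of order-$8$ elements, so that any order-$8$ element of $W(\mathrm{E}_6)$ has a centralizer of order~$8$ that necessarily coincides with the cyclic group it generates. Everything else is purely formal, using only that $\Gamma$ centralizes $\operatorname{Aut}(S)$ and that $Z(\operatorname{Aut}(S))$ is trivial; no characteristic-$3$ geometry of the cubic surface is required beyond what already feeds into Lemma~\ref{l34}.
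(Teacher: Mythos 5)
Your proof is correct and follows essentially the same route as the paper: take an order-$8$ element $h\in\operatorname{Aut}(S)$, use Lemma~\ref{l3} to identify $\mathrm{C}_{W(\mathrm{E}_6)}(h)=\langle h\rangle$, conclude $\Gamma\subseteq\operatorname{Aut}(S)$ and hence $\Gamma\subseteq Z(\operatorname{Aut}(S))$, which is trivial by Lemma~\ref{l34}. The argument is complete as written.
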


\begin{proof}
By Theorem~\ref{t1},\,$(\mathrm{ii})'$, and Theorem~\ref{t4},\,(i), we have
$$
\operatorname{Aut}(S_{\mathbf{F}^{\mathrm{sep}}}) \simeq \mathcal{H}_3(\mathbb{F}_3) \rtimes \mathbb{Z}/8\mathbb{Z},
$$
and $S_{\mathbf{F}^{\mathrm{sep}}}$ is isomorphic to the surface~\eqref{eq1.1}. This means that $\operatorname{Aut}(S)$ is isomorphic to the automorphism group of~\eqref{eq1.1}. Consider the element $g \in \operatorname{Aut}(S)$ of order~$8$. Since $\Gamma$ commutes with any element of~$\operatorname{Aut}(S)$, $\Gamma$ lies in the centralizer $\mathrm{C}_{W(\mathrm{E}_6)}(g)$ of $g$ in~$W(\mathrm{E}_6)$. By Lemma~\ref{l3} there is a~unique conjugacy class of elements of order~$8$ in $W(\mathrm{E}_6)$ and its centralizer~$\mathrm{C}_{W(\mathrm{E}_6)}(g)$ is of order~$8$. Hence $\mathrm{C}_{W(\mathrm{E}_6)}(g)$ is a~cyclic group of order~$8$ generated by~$g$, and we have
$$
\Gamma \subset \mathrm{C}_{W(\mathrm{E}_6)}(g) \subset \operatorname{Aut}(S).
$$
However, by Lemma~\ref{l34} the centre of $\operatorname{Aut}(S)$ is trivial. This means that $\Gamma$ is trivial.

The lemma is proved.
\end{proof}

\begin{lemma}
\label{l36}
Let $\mathbf{F}$ be a~field of characteristic~$3$ containing a~primitive eighth root of unity $\zeta_8 \in \mathbf{F}^{\mathrm{sep}}$. Let $S$ be a~smooth cubic surface over $\mathbf{F}$ such that
$$
\operatorname{Aut}(S) \simeq \mathcal{H}_3(\mathbb{F}_3) \rtimes \mathbb{Z}/8\mathbb{Z}.
$$
Then $S$ is isomorphic to the surface~\eqref{eq1.1}.
\end{lemma}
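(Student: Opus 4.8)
The plan is to chain together Lemmas~\ref{l30},~\ref{l35} and~\ref{l32}, each of whose hypotheses is met under our assumptions. First I would note that $\operatorname{Aut}(S) \simeq \mathcal{H}_3(\mathbb{F}_3) \rtimes \mathbb{Z}/8\mathbb{Z}$ contains a cyclic subgroup of order~$8$, so in particular $\operatorname{Aut}(S) \supset \mathbb{Z}/8\mathbb{Z}$. Since $\mathbf{F}$ has characteristic~$3 \neq 2$ and contains a primitive eighth root of unity, Lemma~\ref{l30} applies and shows that $S$ is isomorphic to a surface of the form~\eqref{eq5.6} for some $\alpha \in \mathbf{F}^*$.

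Next I would invoke Lemma~\ref{l35}: its hypotheses (characteristic~$3$ and $\operatorname{Aut}(S) \simeq \mathcal{H}_3(\mathbb{F}_3) \rtimes \mathbb{Z}/8\mathbb{Z}$) hold verbatim, so the image $\Gamma$ of the absolute Galois group $\mathrm{Gal}(\mathbf{F}^{\mathrm{sep}}/\mathbf{F})$ in $W(\mathrm{E}_6)$ is trivial. The one step that deserves explicit attention is the passage from triviality of $\Gamma$ to the statement that all lines on $S_{\mathbf{F}^{\mathrm{sep}}}$ are defined over~$\mathbf{F}$. This follows from the construction of the homomorphism $\boldsymbol{g}$ together with Theorem~\ref{t4},\,(ii): the $27$ lines are defined over $\mathbf{F}^{\mathrm{sep}}$, and the absolute Galois group permutes them through its image $\Gamma$ in $W(\mathrm{E}_6)$, which acts on the configuration of lines; as $\Gamma$ is trivial, every line is fixed by the Galois group and hence is defined over~$\mathbf{F}$.

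Finally, having shown that $S$ is isomorphic to a surface~\eqref{eq5.6} with all lines on $S_{\mathbf{F}^{\mathrm{sep}}}$ defined over~$\mathbf{F}$, I would apply Lemma~\ref{l32}, all of whose hypotheses are now in place, to conclude that $S$ is isomorphic to the surface~\eqref{eq1.1}. I do not expect any genuine obstacle, since the substantive work has been carried out in the preceding lemmas; the only point requiring care is the translation between triviality of $\Gamma$ and the rationality of the lines, and that is a routine consequence of how $\boldsymbol{g}$ was defined.
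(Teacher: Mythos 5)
Your proposal is correct and follows exactly the same route as the paper's proof: apply Lemma~\ref{l30} to reduce to a surface of the form~\eqref{eq5.6}, use Lemma~\ref{l35} to get triviality of $\Gamma$ and hence rationality of all the lines, and conclude via Lemma~\ref{l32}. The extra care you take in justifying that triviality of $\Gamma$ implies the lines are defined over $\mathbf{F}$ is a point the paper simply asserts, and your justification is sound.
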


\begin{proof}
By Lemma~\ref{l30} the smooth cubic surface $S$ is isomorphic to the surface $S'$ defined by equation~\eqref{eq5.6} for some $\alpha \in \mathbf{F}^*$. By Lemma~\ref{l35} the image $\Gamma$ of the Galois group $\mathrm{Gal}(\mathbf{F}^{\mathrm{sep}}/\mathbf{F})$ in the Weyl group $W(\mathrm{E}_6)$ is trivial. This means that all lines on $S'_{\mathbf{F}^{\mathrm{sep}}}$ are defined over $\mathbf{F}$. So by Lemma~\ref{l32} the smooth cubic surface~$S'$ is isomorphic to the surface~\eqref{eq1.1}.

The lemma is proved.
\end{proof}

Now we consider smooth cubic surfaces over a~field $\mathbf{F}$ of characteristic~$3$ containing no primitive fourth roots of unity. Let us formulate some auxiliary lemmas.

\begin{lemma}
\label{l37}
Let $S$ be a~smooth cubic surface over separabely closed field $\mathbf{F}$ of characteristic different from~$2$. Let $h \in \operatorname{Aut}(S)$ be an automorphism of order~$8$. Then $h$ has three fixed points on~$S$. Moreover, the intersection of $S$ with the tangent planes at fixed points is a~cuspidal cubic curve for one of these points, and it is reducible for the other two.
\end{lemma}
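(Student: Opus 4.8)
The plan is to put $h$ into the normal form supplied by Lemma~\ref{l30} and then read off both the fixed points and the tangent plane sections directly from the resulting equation. Since $\mathbf{F}$ is separably closed of characteristic different from~$2$, it contains a primitive eighth root of unity, so $\operatorname{Aut}(S) \supset \langle h\rangle \simeq \mathbb{Z}/8\mathbb{Z}$ puts us in the situation of Lemma~\ref{l30}. Thus, after a suitable choice of homogeneous coordinates $x,y,z,t$, we may assume that $S$ is defined by
$$
\alpha t^3+tz^2-xy^2+x^2z=0
$$
for some $\alpha \in \mathbf{F}^*$ as in~\eqref{eq5.6}, and that $h$ is the diagonal element $\operatorname{diag}(\zeta_8^6,\zeta_8,\zeta_8^4,1)$ of $\mathrm{PGL}_4(\mathbf{F})$ appearing in~\eqref{eq5.5}; the scaling coordinate change used in Lemma~\ref{l30} is diagonal and so commutes with $h$, hence this is legitimate, and the number of fixed points and the isomorphism types of the tangent sections are preserved.

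First I would locate the fixed points. The exponents $6,1,4,0$ are pairwise distinct modulo~$8$, so $h$ has four distinct eigenvalues and its fixed locus in $\mathbb{P}^3$ is exactly the four coordinate points. Substituting each into the equation, one checks that the three points $[1:0:0:0]$, $[0:1:0:0]$ and $[0:0:1:0]$ lie on $S$, whereas $[0:0:0:1]$ does not, the latter precisely because $\alpha \neq 0$. Hence $h$ has exactly three fixed points on $S$, which proves the first assertion.

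Next I would compute the gradient of the defining polynomial and read off the tangent plane at each fixed point, obtaining $\{z=0\}$ at $[1:0:0:0]$, then $\{x=0\}$ at $[0:1:0:0]$, and $\{t=0\}$ at $[0:0:1:0]$. Restricting the cubic to each of these planes yields, respectively,
$$
\alpha t^3-xy^2, \qquad t(\alpha t^2+z^2), \qquad x(xz-y^2).
$$
The crux of the argument, and the step requiring the most care, is the classification of these three plane cubics. For the first, I would check that $\alpha t^3-xy^2$ has a unique singular point, namely the point of tangency, and that in the affine chart $x=1$ it reads $y^2=\alpha t^3$; the tangent cone is a double line, so the singularity is a cusp rather than a node, and the section is an irreducible cuspidal cubic. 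For the remaining two, one sees that $t(\alpha t^2+z^2)$ and $x(xz-y^2)$ are manifestly reducible: the first is a union of three lines through $[0:1:0:0]$, and the second is the line $\{x=0\}$ together with the smooth conic $\{xz=y^2\}$ meeting it tangentially at $[0:0:1:0]$.

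The one genuine subtlety, and the reason characteristic~$2$ is excluded, lies in the classification of the section at $[0:1:0:0]$: the hypothesis $\operatorname{char}\mathbf{F}\neq 2$ guarantees that $\alpha t^2+z^2$ is separable and therefore splits into two \emph{distinct} linear forms $z\mp\mu t$ with $\mu^2=-\alpha$ (which exists since $\mathbf{F}$ is separably closed), giving a genuine triangle of three lines rather than a line together with a double line. With this in hand the analysis is uniform in $\alpha$ and valid in every characteristic different from~$2$, in particular in characteristic~$3$, and we conclude that the tangent section is a cuspidal cubic at one fixed point and reducible at the other two, as claimed.
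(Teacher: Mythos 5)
Your proposal is correct and follows essentially the same route as the paper: reduce $h$ and $S$ to the normal forms of Lemmas~\ref{l29} and~\ref{l30}, identify the three coordinate fixed points, and classify the three tangent-plane sections $\alpha t^3-xy^2$, $t(\alpha t^2+z^2)$ and $x(xz-y^2)$ as cuspidal and reducible, respectively. The extra remarks you add (why $[0:0:0:1]\notin S$, and why $\operatorname{char}\mathbf{F}\neq 2$ makes $\alpha t^2+z^2$ split into distinct factors) are correct elaborations of points the paper leaves implicit.
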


\begin{proof}
By Lemmas~\ref{l29} and~\ref{l30}, up to conjugation $h$ acts on $\mathbb{P}^3$ as
$$
\begin{pmatrix}
\zeta_8^6 & 0 & 0 & 0\\
0 & \zeta_8 & 0 & 0\\
0 & 0 & \zeta_8^4 & 0\\
0 & 0 & 0 & 1
\end{pmatrix} \in \mathrm{PGL}_4(\mathbf{F})
$$
and $S$ is defined by
\begin{equation}
\label{eq5.11}
\alpha t^3+tz^2-xy^2+x^2z=0
\end{equation}
for some $\alpha \in \mathbf{F}^*$.
From~the form of $h$ and $S$
we can see that $h$ fixes three points on~$S$, namely,
$$
[1:0:0:0], \quad [0:1:0:0], \quad [0:0:1:0].
$$
The intersection of $S$ with the tangent plane at $[1:0:0:0]$ is defined by
$$
\begin{cases}
z=0,\\
\alpha t^3-xy^2=0,
\end{cases}
$$
which is a~cuspidal cubic curve. The intersection of $S$ with the tangent plane at~$[0:1:0:0]$ is defined by
$$
\begin{cases}
x=0,\\
t(\alpha t^2+z^2)=0,
\end{cases}
$$
which is a~reducible curve. The intersection of $S$ with the tangent plane at the point~\mbox{$[0:0:1:0]$} is defined by
$$
\begin{cases}
t=0,\\
x(xz-y^2)=0,
\end{cases}
$$
which is a~reducible curve.

The lemma is proved.
\end{proof}

\begin{lemma}
\label{l38}
Let $C$ be a~cuspidal cubic curve over a~field $\mathbf{F}$ such that its singular point is $\mathbf{F}$-rational. Then
$$
\operatorname{Aut}(C) \subseteq \mathbf{F} \rtimes \mathbf{F}^*,
$$
where $\mathbf{F} \rtimes \mathbf{F}^*$ is isomorphic to the group of automorphisms of $\mathbb{P}^1_{\mathbf{F}}$ that fix a point
$$
Q \in \mathbb{P}^1(\mathbf{F}).
$$
\end{lemma}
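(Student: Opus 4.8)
The plan is to pass to the normalization of $C$. Since $C$ is a cuspidal cubic, its normalization is $\mathbb{P}^1_{\mathbf{F}}$, so I would begin by fixing the normalization morphism $\nu \colon \mathbb{P}^1_{\mathbf{F}} \to C$, which is defined over $\mathbf{F}$ because $C$ is. This morphism is a bijection on points: it restricts to an isomorphism over the smooth locus $C^{\mathrm{sm}}$ and sends a single point $Q \in \mathbb{P}^1$ to the cusp. As the cusp is $\mathbf{F}$-rational by hypothesis and $\nu$ is defined over $\mathbf{F}$, the point $Q$ is $\mathbf{F}$-rational, i.e.\ $Q \in \mathbb{P}^1(\mathbf{F})$.

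Next I would use the functoriality and uniqueness of normalization. Any $\phi \in \operatorname{Aut}(C)$ is an $\mathbf{F}$-automorphism of $C$, and hence lifts uniquely to an $\mathbf{F}$-automorphism $\tilde{\phi}$ of $\mathbb{P}^1_{\mathbf{F}}$ satisfying $\nu \circ \tilde{\phi} = \phi \circ \nu$. This yields a homomorphism $\operatorname{Aut}(C) \to \operatorname{Aut}(\mathbb{P}^1_{\mathbf{F}}) = \mathrm{PGL}_2(\mathbf{F})$, which is injective since $\nu$ is dominant. Moreover $\phi$ preserves the singular locus of $C$, so it fixes the cusp; therefore $\tilde{\phi}$ fixes its unique preimage $Q$. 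Thus the image of $\operatorname{Aut}(C)$ lies in the stabilizer of $Q$ in $\mathrm{PGL}_2(\mathbf{F})$.

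Finally I would identify this stabilizer. Choosing an affine coordinate $z$ on $\mathbb{A}^1 = \mathbb{P}^1 \setminus \{Q\}$, so that $Q$ becomes the point at infinity, the automorphisms of $\mathbb{P}^1_{\mathbf{F}}$ fixing $Q$ are exactly the affine maps $z \mapsto az + b$ with $a \in \mathbf{F}^*$ and $b \in \mathbf{F}$. The translations form a normal subgroup isomorphic to $(\mathbf{F}, +)$ and the scalings a complementary subgroup isomorphic to $(\mathbf{F}^*, \cdot)$, so this stabilizer is precisely $\mathbf{F} \rtimes \mathbf{F}^*$. Hence $\operatorname{Aut}(C) \subseteq \mathbf{F} \rtimes \mathbf{F}^*$, as claimed.

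The step I expect to require the most care is the rationality bookkeeping over the non-closed field $\mathbf{F}$: namely, that the normalization morphism and the lift $\tilde{\phi}$ are genuinely defined over $\mathbf{F}$ rather than merely over $\overline{\mathbf{F}}$, and that the single preimage $Q$ of the cusp is $\mathbf{F}$-rational. This is exactly where the hypothesis that the singular point is $\mathbf{F}$-rational is used; once the $\mathbf{F}$-rationality of $Q$ is secured, the remaining identifications are routine.
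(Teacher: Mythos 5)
Your argument is correct and follows essentially the same route as the paper: pass to the normalization, lift automorphisms via its universal property, observe that they fix the unique $\mathbf{F}$-rational preimage of the cusp, and identify the stabilizer of a rational point in $\mathrm{PGL}_2(\mathbf{F})$ with $\mathbf{F}\rtimes\mathbf{F}^*$. The only cosmetic difference is one of ordering: the paper first treats the normalization as a curve $D$ with $D_{\mathbf{F}^{\mathrm{sep}}}\simeq\mathbb{P}^1_{\mathbf{F}^{\mathrm{sep}}}$ and only deduces $D\simeq\mathbb{P}^1_{\mathbf{F}}$ from the $\mathbf{F}$-rationality of $\eta^{-1}(P)$, whereas you assert this identification up front; since you do establish that the preimage of the cusp is $\mathbf{F}$-rational, this is a presentational point rather than a gap.
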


\begin{proof}
Let $P$ be a~singular point on $C$. Consider the normalization $\eta \colon D \to C$, where $D_{\mathbf{F}^{\mathrm{sep}}} \simeq \mathbb{P}^1_{\mathbf{F}^{\mathrm{sep}}}$. Consider the morphism $\eta$ under the extension of scalars
$$
\eta_{\mathbf{F}^{\mathrm{sep}}} \colon \mathbb{P}^1_{\mathbf{F}^{\mathrm{sep}}} \to C_{ \mathbb{P}^1_{\mathbf{F}^{\mathrm{sep}}}}.
$$
From the definition of a~cuspidal curve we can easily see that $\eta^{-1}_{\mathbf{F}^{\mathrm{sep}}}(P)$ is a~point. Since $P \in C(\mathbf{F})$, we get that $\eta^{-1}(P) \in D(\mathbf{F})$. This means that $D \simeq \mathbb{P}^1$. The morphism $\eta$ is an isomorphism outside~$P$. So the automorphism group of~$C$ lies in a~subgroup in $\mathrm{PGL}_2(\mathbf{F})$ consisting of the automorphisms that fix the point $\eta^{-1}(P)$.

The lemma is proved.
\end{proof}

\begin{lemma}
\label{l39}
Let $\mathbf{F}$ be a~field of characteristic different from~$2$ containing no primitive eighth root of unity. Let $S$ be a~smooth cubic surface over~$\mathbf{F}$. Then $\operatorname{Aut}(S)$ does not contain elements of order~$8$.
\end{lemma}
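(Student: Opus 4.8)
The plan is to argue by contradiction, using the distinguished fixed point singled out by Lemma~\ref{l37} and then restricting $h$ to the associated cuspidal cubic in order to apply Lemma~\ref{l38}. So suppose $h \in \operatorname{Aut}(S)$ has order~$8$. Since $\operatorname{char}\mathbf{F} \neq 2$, a primitive eighth root of unity $\zeta_8$ lies in $\mathbf{F}^{\mathrm{sep}}$, and $h$ induces an automorphism of order~$8$ of $S_{\mathbf{F}^{\mathrm{sep}}}$. Applying Lemma~\ref{l37} to $S_{\mathbf{F}^{\mathrm{sep}}}$, I obtain three $h$-fixed points, exactly one of which, call it $P$, has the property that the section of $S_{\mathbf{F}^{\mathrm{sep}}}$ by the tangent plane $T_P$ at $P$ is a cuspidal cubic $C$, while the tangent sections at the other two fixed points are reducible.

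The next step is a Galois descent. As $h$ is defined over $\mathbf{F}$, it commutes with the action of $\operatorname{Gal}(\mathbf{F}^{\mathrm{sep}}/\mathbf{F})$ on $S_{\mathbf{F}^{\mathrm{sep}}}$, so the Galois group permutes the three fixed points of $h$. Carrying a cuspidal (as opposed to reducible) tangent section is preserved by the Galois action, and $P$ is the only fixed point of this kind; hence $P$ is Galois-invariant and so $P \in S(\mathbf{F})$. Consequently $T_P$ and $C = T_P \cap S$ are defined over $\mathbf{F}$, and the cusp of $C$, being its unique singular point, is $\mathbf{F}$-rational. Since $h$ fixes $P$, it preserves $T_P$ and restricts to an automorphism $h|_C$ of $C$ over $\mathbf{F}$. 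By Lemma~\ref{l38} we therefore have $h|_C \in \operatorname{Aut}(C) \subseteq \mathbf{F} \rtimes \mathbf{F}^*$, the group of automorphisms of the normalization $\mathbb{P}^1_{\mathbf{F}}$ of $C$ fixing the preimage of the cusp.

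Finally I would compute the order of $h|_C$ and derive the contradiction. In the normal form of Lemma~\ref{l37} over $\mathbf{F}^{\mathrm{sep}}$ one has $P = [1:0:0:0]$, the curve $C$ is cut out by $z = 0$ and $\alpha t^3 - xy^2 = 0$, and $h$ is the diagonal transformation with entries $\zeta_8^6, \zeta_8, \zeta_8^4, 1$. Parametrizing $C$ by its normalization $u \mapsto [\alpha : u^3 : u^2]$ (with the cusp at $u = 0$), a direct check shows that $h|_C$ sends $u \mapsto \zeta_8 u$, so $h|_C$ has order~$8$; since the order of an automorphism is unchanged under the extension $\mathbf{F} \subset \mathbf{F}^{\mathrm{sep}}$, the element $h|_C \in \mathbf{F} \rtimes \mathbf{F}^*$ has order~$8$. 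But an element $u \mapsto au + b$ of $\mathbf{F} \rtimes \mathbf{F}^*$ of order~$8$ cannot be a nontrivial translation, since a nontrivial translation has order $\operatorname{char}\mathbf{F}$ (a prime, hence different from~$8$) or infinite order when $\operatorname{char}\mathbf{F} = 0$; hence $a \neq 1$, and after conjugating away $b$ we find that $a \in \mathbf{F}^*$ has order~$8$. Thus $\mathbf{F}$ contains a primitive eighth root of unity, contradicting the hypothesis.

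The main obstacle is the descent step: everything hinges on the fact that among the three fixed points of $h$ exactly one carries a cuspidal tangent section, which is precisely the uniqueness clause in Lemma~\ref{l37} and is what forces $P$, and hence $C$ and its cusp, to be defined over $\mathbf{F}$. Once this $\mathbf{F}$-rationality is secured, the order computation via the normalization of the cuspidal cubic together with the elementary structure of $\mathbf{F} \rtimes \mathbf{F}^*$ is routine.
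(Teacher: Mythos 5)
Your argument is correct and follows essentially the same route as the paper: invoke Lemma~\ref{l37}, descend the unique fixed point with cuspidal tangent section to $\mathbf{F}$, apply Lemma~\ref{l38}, and extract a primitive eighth root of unity from an order-$8$ element of $\mathbf{F}\rtimes\mathbf{F}^*$. The only difference is that where the paper cites Theorem~3.7 of~\cite{5} to get a faithful $\mathbb{Z}/8\mathbb{Z}$-action on $T_P(S)$ and hence on $C$, you verify directly in the normal form of Lemma~\ref{l37} that $h$ acts on the normalization parameter by $u\mapsto\zeta_8 u$; this computation is correct and makes that step self-contained.
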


\begin{proof}
Assume the converse. Let $h \in \operatorname{Aut}(S)$ be an element of order~$8$. Then by Lemma~\ref{l37} the element $h$, as considered over $\mathbf{F}^{\mathrm{sep}}$, has three fixed points on $S_{\mathbf{F}^{\mathrm{sep}}}$ and the intersection of $S_{\mathbf{F}^{\mathrm{sep}}}$ with the tangent plane of the fixed points is a~cuspidal cubic curve for only one of these three points. We denote this point by~$P$ and let~\mbox{$C=S \cap T_{P}S$.} Then $P$ is defined over $\mathbf{F}$, and so is~$C$.

By Theorem~3.7 in~\cite{5}, for the group $\mathbb{Z}/8\mathbb{Z}$ generated by~$h$ we obtain
$$
\mathbb{Z}/8\mathbb{Z} \subset \mathrm{GL}(T_P(S)).
$$
This means that $\mathbb{Z}/8\mathbb{Z} \subset \operatorname{Aut}(C)$. By Lemma~\ref{l38} this means that $\mathbb{Z}/8\mathbb{Z} \subset \mathbf{F} \rtimes \mathbf{F}^*$. Consider the projective homomorphism
$$
\mathrm{pr} \colon \mathbf{F} \rtimes \mathbf{F}^* \to \mathbf{F}^*.
$$
 Since the characteristic of $\mathbf{F}$ is different from $2$, we get that
$$
\mathrm{pr}(\mathbb{Z}/8\mathbb{Z}) \simeq \mathbb{Z}/8\mathbb{Z}.
$$
However, this is impossible because $\mathbf{F}$ does not contain primitive eighth roots of unity.

The lemma is proved.
\end{proof}

\begin{lemma}
\label{l40}
Let $\mathbf{F}$ be a~field of characteristic~$3$ not containing primitive eighth roots of unity. Let $S$ be a~smooth cubic surface over $\mathbf{F}$. Then $|{\operatorname{Aut}(S)}| \leqslant 120$. Moreover, equality holds if and only if $\operatorname{Aut}(S) \simeq \mathfrak{S}_5$ and $S$ is isomorphic to the Clebsch cubic surface.
\end{lemma}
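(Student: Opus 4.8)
The plan is to compare $\operatorname{Aut}(S)$ with $\operatorname{Aut}(S_{\mathbf{F}^{\mathrm{sep}}})$, into which it injects, and to invoke the classification over separably closed fields. By Theorem~\ref{t1},\,$(\mathrm{ii})'$ and $(\mathrm{ii})''$, together with Remark~\ref{r1}, the group $\operatorname{Aut}(S_{\mathbf{F}^{\mathrm{sep}}})$ is either isomorphic to $\mathcal{H}_3(\mathbb{F}_3) \rtimes \mathbb{Z}/8\mathbb{Z}$ (of order $216$), or has order at most $120$, with $120$ attained precisely when $\operatorname{Aut}(S_{\mathbf{F}^{\mathrm{sep}}}) \simeq \mathfrak{S}_5$ and $S_{\mathbf{F}^{\mathrm{sep}}}$ is the Clebsch cubic surface. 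Since $\mathbf{F}$ has characteristic $3 \neq 2$ and contains no primitive eighth root of unity, Lemma~\ref{l39} shows that $\operatorname{Aut}(S)$ contains no element of order~$8$.

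First I would treat the case $\operatorname{Aut}(S_{\mathbf{F}^{\mathrm{sep}}}) \simeq \mathcal{H}_3(\mathbb{F}_3) \rtimes \mathbb{Z}/8\mathbb{Z}$, which I expect to be the crux. Here $\operatorname{Aut}(S)$ is a subgroup of $G = \mathcal{H}_3(\mathbb{F}_3) \rtimes \mathbb{Z}/8\mathbb{Z}$ containing no element of order~$8$, and I claim $|\operatorname{Aut}(S)| \leqslant 108$. Consider the projection $\operatorname{pr} \colon G \to \mathbb{Z}/8\mathbb{Z}$ with kernel $\mathcal{H}_3(\mathbb{F}_3)$ of order~$27$ and exponent~$3$. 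If $\operatorname{pr}(\operatorname{Aut}(S)) = \mathbb{Z}/8\mathbb{Z}$, choose $g \in \operatorname{Aut}(S)$ with $\operatorname{pr}(g)$ a generator; then $g^8$ lies in $\mathcal{H}_3(\mathbb{F}_3)$, so $g^{24} = e$ and the order of $g$ is a multiple of~$8$ dividing~$24$, that is, it equals $8$ or~$24$. Since $\operatorname{Aut}(S) \subset W(\mathrm{E}_6)$ by Theorem~\ref{t5} and $W(\mathrm{E}_6)$ has no element of order~$24$ by Lemma~\ref{l8}, the order of $g$ must be~$8$, contradicting Lemma~\ref{l39}. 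Hence $\operatorname{pr}(\operatorname{Aut}(S))$ has order at most~$4$, so $|\operatorname{Aut}(S)| \leqslant 4 \cdot 27 = 108 < 120$.

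In the remaining case $|\operatorname{Aut}(S_{\mathbf{F}^{\mathrm{sep}}})| \leqslant 120$, the embedding $\operatorname{Aut}(S) \hookrightarrow \operatorname{Aut}(S_{\mathbf{F}^{\mathrm{sep}}})$ gives $|\operatorname{Aut}(S)| \leqslant 120$ at once, and combining the two cases yields the bound $|\operatorname{Aut}(S)| \leqslant 120$. For the equality statement, if $|\operatorname{Aut}(S)| = 120$ then the order-$216$ case is excluded by the previous paragraph, so $|\operatorname{Aut}(S_{\mathbf{F}^{\mathrm{sep}}})| = 120$ and the embedding is an isomorphism; by Theorem~\ref{t1},\,$(\mathrm{ii})''$ this forces $\operatorname{Aut}(S) \simeq \mathfrak{S}_5$, whence Corollary~\ref{c5} gives that $S$ is isomorphic to the Clebsch cubic surface. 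Conversely, the Clebsch cubic surface is smooth in characteristic~$3$ by Proposition~\ref{p1},\,(i); by Example~\ref{e2} its automorphism group contains $\mathfrak{S}_5$, while by Theorem~\ref{t1},\,$(\mathrm{ii})''$ and Remark~\ref{r1} we have $\operatorname{Aut}(S_{\mathbf{F}^{\mathrm{sep}}}) \simeq \mathfrak{S}_5$, so $\operatorname{Aut}(S) \simeq \mathfrak{S}_5$ is of order~$120$, confirming sharpness.

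The main obstacle is the order-$216$ case: the decisive point is to combine the absence of elements of order~$8$ in $\operatorname{Aut}(S)$ (from the hypothesis on eighth roots of unity) with the absence of elements of order~$24$ in $W(\mathrm{E}_6)$, which together pin the image of $\operatorname{Aut}(S)$ in $\mathbb{Z}/8\mathbb{Z}$ down to order at most~$4$ and thereby strictly separate this case from the value~$120$.
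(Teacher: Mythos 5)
Your proposal is correct and follows essentially the same route as the paper: invoke Theorem~\ref{t1},\,$(\mathrm{ii})'$--$(\mathrm{ii})''$ with Remark~\ref{r1} over $\mathbf{F}^{\mathrm{sep}}$, use Lemma~\ref{l39} to exclude the order-$216$ case (the paper gets the bound $108$ directly from the fact that a subgroup of index less than $2$ in $\mathcal{H}_3(\mathbb{F}_3)\rtimes\mathbb{Z}/8\mathbb{Z}$ would be the whole group and hence contain an element of order~$8$, whereas you reach the same bound via the projection to $\mathbb{Z}/8\mathbb{Z}$ and Lemma~\ref{l8}), and then conclude with Example~\ref{e2} and Corollary~\ref{c5}. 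The extra detail you supply in the order-$216$ case is a harmless elaboration of the same idea, not a different argument.
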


\begin{proof}
 By Theorem~\ref{t1},\,$(\mathrm{ii})'$, and Remark~\ref{r1} the largest automorphism group of a~smooth cubic surface over $\mathbf{F}^{\mathrm{sep}}$ is isomorphic to $\mathcal{H}_3(\mathbb{F}_3) \rtimes \mathbb{Z}/8\mathbb{Z}$ and has order~$216$. By Lemma~\ref{l39} an element of order~$8$ does not lie in $\operatorname{Aut}(S)$. This means that if
$$
\operatorname{Aut}(S_{\mathbf{F}^{\mathrm{sep}}}) \simeq \mathcal{H}_3(\mathbb{F}_3) \rtimes \mathbb{Z}/8\mathbb{Z},
$$
then $|{\operatorname{Aut}(S)}| \leqslant 108$.

So let us consider a~smooth cubic surface $S$ over $\mathbf{F}$ such that $\operatorname{Aut}(S_{\mathbf{F}^{\mathrm{sep}}})$ is the next largest automorphism group of a~smooth cubic surface over $\mathbf{F}^{\mathrm{sep}}$. By Theorem~\ref{t1},\,$(\mathrm{ii})''$, this means that $\operatorname{Aut}(S_{\mathbf{F}^{\mathrm{sep}}}) \simeq \mathfrak{S}_5$. So the order of~$\operatorname{Aut}(S)$ is at most~$120$. By Example~\ref{e2} the group $\mathfrak{S}_5$ is realized on the Clebsch cubic surface over~$\mathbf{F}$. By Corollary~\ref{c5} the smooth cubic surface $S$ is unique up to isomorphism and therefore isomorphic to the Clebsch cubic surface.

The lemma is proved.
\end{proof}

\section{The Fermat cubic surface}
\label{s6}

In this section we study the Fermat cubic surface and the group~\mbox{$(\mathbb{Z}/3\mathbb{Z})^3 \rtimes \mathfrak{S}_4$} which acts regularly on a~smooth cubic surface.
 Note that in what follows we regard~\mbox{$(\mathbb{Z}/3\mathbb{Z})^3 \rtimes \mathfrak{S}_4$} as a~group acting regularly 
 on the Fermat cubic surface over a~separably closed field of characteristic different from $3$. This means that, up to conjugation,~$(\mathbb{Z}/3\mathbb{Z})^3$ acts as multiplication by cube roots of unity on~the homogeneous coordinates $x$, $y$, $z$~and~$t$ on~$\mathbb{P}^3$ and $\mathfrak{S}_4$ acts as a~permutation of the~homogeneous coordinates $x$, $y$,~$z$ and~$t$.

\begin{lemma}
\label{l41}
The centre of the group $G \simeq (\mathbb{Z}/3\mathbb{Z})^3 \rtimes \mathfrak{S}_4$ is trivial.
\end{lemma}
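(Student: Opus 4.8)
The plan is to work with the explicit model of $G$ furnished by Example~\ref{e1} and equation~\eqref{eq3.5}, in which the normal subgroup $N=(\mathbb{Z}/3\mathbb{Z})^3$ consists of the classes of diagonal matrices $\operatorname{diag}(\omega^a,\omega^b,\omega^c,\omega^d)$ in $\mathrm{PGL}_4$ and the complement $\mathfrak{S}_4$ acts by permuting the four homogeneous coordinates. I will prove separately that the centre $Z(G)$ is contained in $N$ and that $Z(G)\cap N$ is trivial; together these give $Z(G)=\{e\}$.

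For the first inclusion, let $z\in Z(G)$ and let $\sigma=\mathrm{pr}(z)\in\mathfrak{S}_4$ be its image under the projection of Lemma~\ref{l17}. Conjugation by $z$ acts on the abelian normal subgroup $N$ exactly as the permutation $\sigma$ does, since the $N$-part of $z$ centralizes $N$. As $z$ is central, this conjugation action is trivial, so $\sigma$ fixes every element of $N$. It remains to check that the permutation action of $\mathfrak{S}_4$ on $N$ is faithful. Identifying $N$ with $\mathbb{F}_3^4$ modulo the diagonal line spanned by $(1,1,1,1)$ (the scalar subgroup one quotients out in passing from $\mathrm{GL}_4$ to $\mathrm{PGL}_4$), a permutation $\sigma$ fixing every class must satisfy $e_{\sigma(i)}-e_i\in\langle(1,1,1,1)\rangle$ for each standard basis vector $e_i$. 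But a vector of the form $e_{\sigma(i)}-e_i$ lies on the diagonal line only when it is zero, which forces $\sigma(i)=i$ for all $i$; hence $\sigma=e$ and $z\in N$.

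For the second step I invoke Lemma~\ref{l18}: for any nonidentity $g\in N$ the centralizer $\mathrm{C}_G(g)$ is isomorphic to $(\mathbb{Z}/3\mathbb{Z})^3\rtimes D$ with $D$ one of $\mathbb{Z}/2\mathbb{Z}$, $(\mathbb{Z}/2\mathbb{Z})^2$ or $\mathfrak{S}_3$, so $|\mathrm{C}_G(g)|\leqslant 27\cdot 6=162<648=|G|$. In particular $\mathrm{C}_G(g)\neq G$, so no nonidentity element of $N$ is central; that is, $Z(G)\cap N=\{e\}$. Combining this with the inclusion $Z(G)\subseteq N$ established above shows that $Z(G)$ is trivial.

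The computations involved are routine, and there is no serious obstacle; the only point requiring care is the projective bookkeeping in the faithfulness argument, namely keeping track of the scalar (diagonal) subgroup quotiented out in $\mathrm{PGL}_4$, so that the condition ``$\sigma$ fixes $N$ pointwise'' is correctly read off as $e_{\sigma(i)}-e_i$ lying on the diagonal line of $\mathbb{F}_3^4$ rather than being required to vanish outright.
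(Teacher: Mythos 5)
Your proof is correct. It has the same two-step skeleton as the paper's argument — first kill the $\mathfrak{S}_4$-component of a central element, then kill its $(\mathbb{Z}/3\mathbb{Z})^3$-component — but each step is carried by a different fact. The paper writes a central element as a pair $(a,b)$, computes with the semidirect-product multiplication law, and concludes $b=e$ from the triviality of the centre of $\mathfrak{S}_4$; you instead observe that conjugation by a central element acts on the abelian normal subgroup $N$ as its image $\sigma\in\mathfrak{S}_4$ does, and deduce $\sigma=e$ from faithfulness of the permutation action on $N\simeq\mathbb{F}_3^4/\langle(1,1,1,1)\rangle$, which you verify correctly (the observation that $e_{\sigma(i)}-e_i$ has a zero coordinate when $\sigma(i)\neq i$, hence cannot be a nonzero multiple of $(1,1,1,1)$, is exactly the projective bookkeeping that needs care). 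For the second step the paper simply notes that the $\mathfrak{S}_4$-action on $N$ has no nonzero fixed vectors, while you route the same fact through Lemma~\ref{l18}: every nonidentity element of $N$ has centralizer of order at most $162<648$, hence is not central. Both routes are sound; yours reuses an already-proved lemma and avoids writing out the multiplication law explicitly, while the paper's is a shorter self-contained computation.
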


\begin{proof}
Assume the converse. Let $(a,b) \in G$ be an element of the centre of~$G$. Then for any $(c,d) \in G$ we have
\begin{equation}
\label{eq6.1}
(a,b) \cdot (c,d)=(a \ {}^b c,bd)=(c\ {}^d a,db)=(c,d)\cdot (a,b),
\end{equation}
where ${}^b{c}$ and ${}^d{a}$ are the results of the action of $\mathfrak{S}_4$ 
 on $(\mathbb{Z}/3\mathbb{Z})^3$. As the centre of~$\mathfrak{S}_4$ is trivial, we see from~\eqref{eq6.1} that $b$ is the identity element. This means that for any~\mbox{$d \in \mathfrak{S}_4$} we have $a={}^d{a}$. By the definition of $G$ \ $a$ is the identity element.

The lemma is proved.
\end{proof}

Now we are ready to study smooth cubic surfaces with a~regular action of the group $(\mathbb{Z}/3\mathbb{Z})^3 \rtimes \mathfrak{S}_4$ over a~field $\mathbf{F}$ of characteristic different from~$3$.

\begin{lemma}
\label{l42}
Let $S$ be a~smooth cubic surface over a~field $\mathbf{F}$ of characteristic different from~$3$. Assume that $\operatorname{Aut}(S) \supseteq (\mathbb{Z}/3\mathbb{Z})^3 \rtimes \mathfrak{S}_4$. Then the image $\Gamma$ of the absolute Galois group $\mathrm{Gal}(\mathbf{F}^{\mathrm{sep}}/\mathbf{F})$ in $W(\mathrm{E}_6)$ is trivial.
\end{lemma}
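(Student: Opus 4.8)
The plan is to single out one element of $G:=(\mathbb{Z}/3\mathbb{Z})^3\rtimes\mathfrak{S}_4$ whose centralizer in $W(\mathrm{E}_6)$ is so small that $\Gamma$ is forced inside $G$, and then to kill $\Gamma$ using the triviality of the centre of $G$. This mirrors the strategy of Lemma~\ref{l35}. Recall that every automorphism of $S$ is defined over $\mathbf{F}$, so $\Gamma$ lies in the centralizer of $\operatorname{Aut}(S)$ in $W(\mathrm{E}_6)$; since $\operatorname{Aut}(S)\supseteq G$, this gives
$$
\Gamma\subseteq \mathrm{C}_{W(\mathrm{E}_6)}(\operatorname{Aut}(S))\subseteq \mathrm{C}_{W(\mathrm{E}_6)}(G).
$$
Hence it suffices to show that $\mathrm{C}_{W(\mathrm{E}_6)}(G)$ is trivial, a purely group-theoretic statement inside $W(\mathrm{E}_6)$.

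First I would produce a witness element $\tau\in G$ of order~$9$. Take a~$3$-cycle $h=(123)\in\mathfrak{S}_4$ and let $g\in(\mathbb{Z}/3\mathbb{Z})^3$ be the class of $\operatorname{diag}(\omega,1,1,1)$. Since $h^3=e$, the same kind of computation as in the proof of Lemma~\ref{l16} gives
$$
\tau^3=(gh)^3=g\cdot({}^{h}g)\cdot({}^{h^2}g)=\operatorname{diag}(\omega,\omega,\omega,1),
$$
which is a~nonidentity element of $(\mathbb{Z}/3\mathbb{Z})^3$, so $\tau$ has order~$9$. By Lemma~\ref{l3} there is a~unique conjugacy class of elements of order~$9$ in $W(\mathrm{E}_6)$, namely $E_6(a_1)$, and by Table~\ref{tab2} the centralizer of such an element has order~$9$. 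As $\tau$ itself belongs to its centralizer and has order~$9$, I would conclude
$$
\mathrm{C}_{W(\mathrm{E}_6)}(\tau)=\langle\tau\rangle\simeq\mathbb{Z}/9\mathbb{Z}\subseteq G.
$$

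Combining the two displays, and using that $\tau\in G$ implies $\mathrm{C}_{W(\mathrm{E}_6)}(G)\subseteq \mathrm{C}_{W(\mathrm{E}_6)}(\tau)$, I obtain $\Gamma\subseteq\langle\tau\rangle\subseteq G$. Thus every element of $\Gamma$ lies in $G$ and, lying in $\mathrm{C}_{W(\mathrm{E}_6)}(G)$, commutes with all of $G$; that is, $\Gamma\subseteq Z(G)$. By Lemma~\ref{l41} the centre $Z(G)$ is trivial, whence $\Gamma$ is trivial. The argument works uniformly in all characteristics different from~$3$, and it never needs to know whether $\operatorname{Aut}(S)$ is strictly larger than $G$, since only the inclusion $\operatorname{Aut}(S)\supseteq G$ and the centralizer of the single element $\tau$ are used. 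The one point that must be checked carefully — and which I expect to be the only real obstacle — is that the chosen $\tau=gh$ genuinely has order~$9$ rather than~$3$, i.e. that $\tau^3=g\,({}^{h}g)\,({}^{h^2}g)$ is a~nonidentity element of $(\mathbb{Z}/3\mathbb{Z})^3$; this is precisely the computation underlying Lemma~\ref{l16}, and the choice $g=\operatorname{diag}(\omega,1,1,1)$, $h=(123)$ is made so that it comes out nontrivial.
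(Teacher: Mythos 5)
Your proposal is correct and follows essentially the same route as the paper: locate an element of order~$9$ in $(\mathbb{Z}/3\mathbb{Z})^3\rtimes\mathfrak{S}_4$, use Lemma~\ref{l3} to see that its centralizer in $W(\mathrm{E}_6)$ is the cyclic group of order~$9$ it generates, deduce that $\Gamma$ lies inside the group, and kill it with the triviality of the centre (Lemma~\ref{l41}). The only difference is that you explicitly exhibit the order-$9$ element $\tau=gh$ and verify $\tau^3\neq e$, whereas the paper simply asserts the existence of such an element; this is a harmless (and slightly more careful) elaboration.
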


\begin{proof}
As $\operatorname{Aut}(S)$ contains $(\mathbb{Z}/3\mathbb{Z})^3 \rtimes \mathfrak{S}_4$, there is an element $g$ of order~$9$ in $\operatorname{Aut}(S)$. Since $\Gamma$ commutes with any element of~$\operatorname{Aut}(S)$, $\Gamma$ lies in the centralizer~$\mathrm{C}_{W(\mathrm{E}_6)}(g)$ of $g$ in~$W(\mathrm{E}_6)$. By Lemma~\ref{l3} there is a~unique conjugacy class of elements of order~$9$ in $W(\mathrm{E}_6)$ and its centralizer~$\mathrm{C}_{W(\mathrm{E}_6)}(g)$ is of order~$9$. Hence $\mathrm{C}_{W(\mathrm{E}_6)}(g)$~is a~cyclic group of order~$9$ generated by~$g$, and we have
$$
\Gamma \subset \mathrm{C}_{W(\mathrm{E}_6)}(g) \subset \operatorname{Aut}(S).
$$
However, the centre of $(\mathbb{Z}/3\mathbb{Z})^3 \rtimes \mathfrak{S}_4$ is trivial by Lemma~\ref{l41}. This means that $\Gamma$ is trivial.

The lemma is proved.
\end{proof}

\begin{lemma}
\label{l43}
Let $S$ be a~smooth cubic surface over a~field $\mathbf{F}$ of characteristic different from~$3$. Assume that $\operatorname{Aut}(S) \supseteq (\mathbb{Z}/3\mathbb{Z})^3 \rtimes \mathfrak{S}_4$. Then $\mathbf{F}$ contains a~nontrivial cube root of unity $\omega \in \mathbf{F}^{\mathrm{sep}}$.
\end{lemma}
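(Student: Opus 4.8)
The plan is to produce a single $\mathbf{F}$-rational automorphism of $S$ lying in the conjugacy class $A_2^3$ of $W(\mathrm{E}_6)$; once this is in hand, Corollary~\ref{c3} applies verbatim and yields a nontrivial cube root of unity in $\mathbf{F}$, which is precisely the assertion. To find such an element I would work inside the normal abelian subgroup, not the whole of $G$. By hypothesis $\operatorname{Aut}(S)$ contains a subgroup $G \simeq (\mathbb{Z}/3\mathbb{Z})^3 \rtimes \mathfrak{S}_4$, hence it contains $N=(\mathbb{Z}/3\mathbb{Z})^3$, all of whose nonidentity elements are $\mathbf{F}$-rational automorphisms of order~$3$. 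Since automorphisms of a smooth cubic surface preserve $-K_S=\mathcal{O}_S(1)$ and therefore are induced by linear transformations, we have $N \subseteq \operatorname{Aut}(S) \subseteq \mathrm{PGL}_4(\mathbf{F})$.

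Next I would pass to $\mathbf{F}^{\mathrm{sep}}$ and diagonalize. Because $\operatorname{char}\mathbf{F} \neq 3$, every nonidentity element of $N$ is semisimple, and $N$ is abelian, so the elements of $N$ are simultaneously diagonalizable over $\mathbf{F}^{\mathrm{sep}}$. As $\mathbf{F}^{\mathrm{sep}}$ contains a primitive cube root of unity, the $3$-torsion of the diagonal torus in $\mathrm{PGL}_4(\mathbf{F}^{\mathrm{sep}})$ is isomorphic to $(\mathbb{Z}/3\mathbb{Z})^3$ and has exactly $27$ elements; since $N\simeq(\mathbb{Z}/3\mathbb{Z})^3$ injects into it, a comparison of orders shows that, up to conjugation, $N$ is the full diagonal $3$-torus $\{\operatorname{diag}(\omega^a,\omega^b,\omega^c,\omega^d)\}$. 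In particular $N$ contains an element $g$ whose eigenvalue multiset on $\mathbb{P}^3$ is $\{\omega,1,1,1\}$, i.e.\ with eigenvalue pattern $3$--$1$. By Lemma~\ref{l15} an order-$3$ automorphism lies in $A_2$, $A_2^2$, or $A_2^3$, with $\mathbb{P}^3$-action of eigenvalue pattern $2$--$2$, $2$--$1$--$1$, or $3$--$1$ respectively. These three patterns are pairwise distinct up to global scaling by cube roots, so the pattern of $g$ forces $g$ to lie in the conjugacy class $A_2^3$.

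Finally, $g$ is defined over $\mathbf{F}$, being an element of $N\subseteq\operatorname{Aut}(S)$, and it lies in $A_2^3$; hence Corollary~\ref{c3} gives the desired nontrivial cube root of unity in $\mathbf{F}$. The only delicate point is the \emph{converse} use of Lemma~\ref{l15} in the previous step: I must argue not merely that $A_2^3$ has a representative of shape $\operatorname{diag}(\omega,1,1,1)$, but that the $3$--$1$ eigenvalue pattern \emph{determines} the class, which is exactly the pairwise non-conjugacy in $\mathrm{PGL}_4$ of the three diagonal shapes listed in Lemma~\ref{l15}. An alternative that fits the paper's Fermat-based framing would first invoke Theorem~\ref{t1} and Remark~\ref{r1} (an order count, using $|G|=648$) to conclude that $S_{\mathbf{F}^{\mathrm{sep}}}$ is the Fermat cubic surface, then use the uniqueness up to conjugation of $G$ from Lemma~\ref{l9} together with the observation already made in the proof of Lemma~\ref{l17} (via Example~\ref{e1} and Lemma~\ref{l15}) that $N$ meets $A_2^3$; in that route the point requiring attention is that the $A_2^3$-element produced is $\mathbf{F}$-rational, which again holds because it lies in $G\subseteq\operatorname{Aut}(S)$.
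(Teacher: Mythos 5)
Your proof is correct and funnels into the same final step as the paper --- exhibiting an $\mathbf{F}$-rational automorphism in the class $A_2^3$ and applying Corollary~\ref{c3} --- but you produce that element by a genuinely different route. The paper takes an element $g$ of order~$9$ in $(\mathbb{Z}/3\mathbb{Z})^3\rtimes\mathfrak{S}_4$, reads off its eigenvalues on the root lattice from Lemma~\ref{l3} (the class $E_6(a_1)$), and notes that $g^3$ then has eigenvalues $\omega,\omega,\omega,\omega^2,\omega^2,\omega^2$ and so lies in $A_2^3$; this needs only the existence of one order-$9$ element and Table~\ref{tab2}, and involves no linear algebra in $\mathrm{PGL}_4$. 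You instead locate an $A_2^3$-element inside the normal subgroup $N\simeq(\mathbb{Z}/3\mathbb{Z})^3$ by identifying $N$ with the full diagonal $3$-torsion of $\mathrm{PGL}_4(\mathbf{F}^{\mathrm{sep}})$ and then using Lemma~\ref{l15}; your observation that the multiplicity patterns $2$--$2$, $2$--$1$--$1$ and $3$--$1$ are invariant under global scaling and therefore separate $A_2$, $A_2^2$ and $A_2^3$ is exactly the converse use of Lemma~\ref{l15} that is required, and it is valid. The one step you should justify more carefully is the simultaneous diagonalizability of $N$: an abelian subgroup of semisimple elements of $\mathrm{PGL}_n$ need not be diagonalizable (the image of the Heisenberg group in $\mathrm{PGL}_3$ is the standard counterexample), so ``$N$ is abelian, hence diagonalizable'' is not a formal implication. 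It does hold here: if $A,B\in\mathrm{GL}_4(\mathbf{F}^{\mathrm{sep}})$ lift two commuting elements of order~$3$, then $ABA^{-1}B^{-1}=\lambda E$ with $\lambda^4=1$ (take determinants) and $\lambda^3=1$ (cube both sides of $ABA^{-1}=\lambda B$ and use that $B^3$ is scalar), so $\lambda=1$ and the lifts generate an abelian group of semisimple matrices. Alternatively, your second suggested route --- invoking the uniqueness up to conjugation of $(\mathbb{Z}/3\mathbb{Z})^3\rtimes\mathfrak{S}_4$ via Example~\ref{e1}, Lemma~\ref{l9} and Lemma~\ref{l17} --- avoids this issue entirely and is essentially how the paper performs the same identification in Lemmas~\ref{l16} and~\ref{l18}. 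What your argument buys, beyond the lemma itself, is the explicit fact that the whole diagonal $3$-torus sits in $\operatorname{Aut}(S)$ over $\mathbf{F}$; what the paper's argument buys is brevity and independence from any normal form for $N$.
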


\begin{proof}
As $\operatorname{Aut}(S)$ contains $ (\mathbb{Z}/3\mathbb{Z})^3 \rtimes \mathfrak{S}_4$, there is an element~$g$ of order~$9$ in $\operatorname{Aut}(S)$. By Lemma~\ref{l3} the eigenvalues of the representation~$g$ on the root lattice~$\mathrm{E}_6$ are
$$
\zeta_9,\ \zeta_9^2,\ \zeta_9^4,\ \zeta_9^5,\ \zeta_9^7,\ \zeta_9^8.
$$
By Lemma~\ref{l3} this means that the cube $g^3$ lies in the conjugacy class $A_2^3$. Therefore,~by Corollary~\ref{c3} there is a~nontrivial cube root of unity in $\mathbf{F}$.

The lemma is proved.
\end{proof}

\begin{lemma}
\label{l44}
Let $S$ be a~smooth cubic surface over a~field $\mathbf{F}$ of characteristic different from $3$. Assume that $\operatorname{Aut}(S) \supseteq (\mathbb{Z}/3\mathbb{Z})^3 \rtimes \mathfrak{S}_4$. Then $S$ is isomorphic to the Fermat cubic surface.
\end{lemma}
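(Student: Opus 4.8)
The plan is to combine the two preceding lemmas with an explicit diagonalization of the normal subgroup $(\mathbb{Z}/3\mathbb{Z})^3$. By Lemma~\ref{l43} the field $\mathbf{F}$ contains a nontrivial cube root of unity $\omega$, and by Lemma~\ref{l42} the image $\Gamma$ of the absolute Galois group in $W(\mathrm{E}_6)$ is trivial. Triviality of $\Gamma$ means that all lines on $S_{\mathbf{F}^{\mathrm{sep}}}$ are defined over $\mathbf{F}$ and that $\operatorname{Aut}(S) \simeq \operatorname{Aut}(S_{\mathbf{F}^{\mathrm{sep}}})$, so every automorphism of $S$ is already defined over $\mathbf{F}$; in particular $(\mathbb{Z}/3\mathbb{Z})^3 \rtimes \mathfrak{S}_4 \subset \mathrm{PGL}_4(\mathbf{F})$. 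Over $\mathbf{F}^{\mathrm{sep}}$ the inclusion $\operatorname{Aut}(S_{\mathbf{F}^{\mathrm{sep}}}) \supseteq (\mathbb{Z}/3\mathbb{Z})^3 \rtimes \mathfrak{S}_4$ already forces, through Theorem~\ref{t1} and Remark~\ref{r1} in every characteristic different from~$3$, that $S_{\mathbf{F}^{\mathrm{sep}}}$ is the Fermat cubic surface; after conjugation we may assume, as in Example~\ref{e1} and Lemma~\ref{l18} and by the uniqueness in Lemma~\ref{l9}, that $(\mathbb{Z}/3\mathbb{Z})^3$ is the diagonal $3$-torus and $\mathfrak{S}_4$ permutes the four homogeneous coordinates.

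First I would descend the diagonalizing coordinates to $\mathbf{F}$. The four common eigenpoints of the torus $(\mathbb{Z}/3\mathbb{Z})^3$ are the coordinate points of $\mathbb{P}^3_{\mathbf{F}^{\mathrm{sep}}}$, distinguished by the four distinct characters of $(\mathbb{Z}/3\mathbb{Z})^3$ on the coordinate lines. Since the elements of $(\mathbb{Z}/3\mathbb{Z})^3$ are defined over $\mathbf{F}$ and $\omega \in \mathbf{F}$, the absolute Galois group fixes each of these characters, hence fixes each eigenline. Therefore the four eigenpoints are $\mathbf{F}$-rational, and choosing coordinates $x,y,z,t$ on $\mathbb{P}^3_{\mathbf{F}}$ with these as coordinate points makes $(\mathbb{Z}/3\mathbb{Z})^3$ act diagonally over $\mathbf{F}$.

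Next I would pin down the equation of $S$. The defining cubic $F \in \mathbf{F}[x,y,z,t]$ is a semi-invariant of $(\mathbb{Z}/3\mathbb{Z})^3$, so all of its monomials lie in a single weight space for the diagonal action. A direct bookkeeping of the twenty degree-three monomials shows that every weight space other than the one spanned by $x^3,y^3,z^3,t^3$ consists of a single monomial; as $S$ is smooth, $F$ cannot be supported on such a space, so
$$
F = a x^3 + b y^3 + c z^3 + d t^3, \qquad a,b,c,d \in \mathbf{F}^*.
$$
Finally, $\mathfrak{S}_4 \subset \mathrm{PGL}_4(\mathbf{F})$ normalizes the torus, so it acts by monomial matrices permuting the four coordinate points; writing out the condition that such a matrix preserves $\{F=0\}$ shows that $a,b,c,d$ coincide up to multiplication by cubes of elements of $\mathbf{F}^*$. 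Rescaling each coordinate by the corresponding element of $\mathbf{F}^*$ transforms $F$ into a scalar multiple of $x^3+y^3+z^3+t^3$, so $S$ is isomorphic to the Fermat cubic surface over $\mathbf{F}$.

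The main obstacle is the rationality bookkeeping rather than any new geometric input: one must be certain that both the diagonalization of $(\mathbb{Z}/3\mathbb{Z})^3$ and, crucially, the final normalization to the Fermat equation are carried out over $\mathbf{F}$ itself and not merely over $\mathbf{F}^{\mathrm{sep}}$. This is precisely where $\omega \in \mathbf{F}$ (Lemma~\ref{l43}) and the triviality of $\Gamma$ (Lemma~\ref{l42}) enter, and where one must check that the cube ambiguities among the coefficients $a,b,c,d$ are resolved by cube roots that genuinely lie in $\mathbf{F}$ — namely the ratios of the $\mathbf{F}$-rational entries of the monomial matrices representing $\mathfrak{S}_4$.
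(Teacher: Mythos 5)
Your proposal is correct, and up to the point where you obtain the diagonal equation $ax^3+by^3+cz^3+dt^3=0$ over $\mathbf{F}$ it runs parallel to the paper: both arguments invoke Lemma~\ref{l43} to get $\omega\in\mathbf{F}$, put the torus $(\mathbb{Z}/3\mathbb{Z})^3$ in standard diagonal form over $\mathbf{F}$ (the paper via Lemma~\ref{l2}, you via Galois-invariance of the four common eigenpoints --- arguably a cleaner way to descend the whole subgroup rather than one element at a time), and use the weight-space decomposition of the cubic monomials. Where you genuinely diverge is the final descent step, showing that the ratios $a:b:c:d$ are cubes in $\mathbf{F}^*$. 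The paper uses Lemma~\ref{l42} (triviality of $\Gamma$, hence all $27$ lines rational) and the hyperplanes $x+\alpha y=0$ with $\alpha^3=a/b$, each of which cuts out three lines, to conclude $\alpha\in\mathbf{F}$. You instead exploit the $\mathfrak{S}_4$ factor: a transposition or $3$-cycle is represented by a monomial matrix with entries in $\mathbf{F}^*$, and semi-invariance of $F$ forces, say, $a/b=(\beta/\gamma)^3$. Your route is more elementary --- it needs no information about the lines, and in fact Lemma~\ref{l42} becomes superfluous for this lemma, since the only rationality inputs are $\omega\in\mathbf{F}$ and the fact that the group elements lie in $\mathrm{PGL}_4(\mathbf{F})$, which is part of the hypothesis. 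Two points worth making explicit in a write-up: the eigenpoint descent should use specific elements such as $[\mathrm{diag}(\omega,1,1,1)]$, whose tangent weights single out one fixed point so that a Galois element (fixing $\omega$) cannot permute the four points; and one should note that a cubic form supported on a single non-cube weight space defines a singular surface, which is why $F$ must live in the span of $x^3,y^3,z^3,t^3$.
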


\begin{proof}
Since $\operatorname{Aut}(S) \supseteq (\mathbb{Z}/3\mathbb{Z})^3 \rtimes \mathfrak{S}_4$, by Theorem~\ref{t1} we have
$$
\operatorname{Aut}(S_{\mathbf{F}^{\mathrm{sep}}}) \simeq
\begin{cases}
 (\mathbb{Z}/3\mathbb{Z})^3 \rtimes \mathfrak{S}_4 &\text{if } \operatorname{char}\mathbf{F} \neq 2,
 \\
\mathrm{PSU}_4(\mathbb{F}_2) &\text{if }\operatorname{char}\mathbf{F} = 2,
\end{cases}
$$
and $S_{\mathbf{F}^{\mathrm{sep}}}$ is isomorphic to the Fermat cubic surface. By Lemma~\ref{l43} there is a~nontrivial cube root of unity $\omega \in \mathbf{F}$. Moreover, by Lemma~\ref{l42} the image $\Gamma$ of the absolute Galois group $\mathrm{Gal}(\mathbf{F}^{\mathrm{sep}}/\mathbf{F})$ in $W(\mathrm{E}_6)$ is trivial. Therefore, all lines on~$S_{\mathbf{F}^{\mathrm{sep}}}$ are defined over $\mathbf{F}$.

Consider the normal subgroup $G$ of $(\mathbb{Z}/3\mathbb{Z})^3 \rtimes \mathfrak{S}_4$ isomorphic to $(\mathbb{Z}/3\mathbb{Z})^3$. By Example~\ref{e1} and the uniqueness up to conjugation of the subgroup $(\mathbb{Z}/3\mathbb{Z})^3 \rtimes \mathfrak{S}_4$ in~$\mathrm{PSU}_4(\mathbb{F}_2)$, which follows from Lemma~\ref{l9}, up to conjugation over $\mathbf{F}^{\mathrm{sep}}$ the group~$G$ acts on the homogeneous coordinates $x$, $y$, $z$ and~$t$ on $\mathbb{P}^3$ by multiplication by cube roots of unity. By Lemma~\ref{l2} this realization holds up to conjugation over $\mathbf{F}$. Note that for any two monomials of degree~$3$ different from $x^3$, $y^3$, $z^3$ and~$t^3$ there exists~\mbox{$g \in (\mathbb{Z}/3\mathbb{Z})^3$} such that $g$ acts on these two monomials by multiplication by two different cube roots of unity. Therefore, the smooth cubic surfaces fixed by $G$ are defined by the equations
\begin{equation}
\label{eq6.2}
x^3+ay^3+bz^3+ct^3=0,
\end{equation}
where $a,b,c \in \mathbf{F}^*$.

Consider the hyperplane $T$ in $\mathbb{P}^3_{\mathbf{F}^{\mathrm{sep}}}$ defined by $x+\alpha y=0$, where $\alpha \in \mathbf{F}^{\mathrm{sep}}$ satisfies $\alpha^3=a$. There are three lines in the intersection of $T$ and $S_{\mathbf{F}^{\mathrm{sep}}}$.

Assume that $\alpha \notin \mathbf{F}$. Then there is an element $\tau \in \mathrm{Gal}(\mathbf{F}^{\mathrm{sep}}/\mathbf{F})$ satisfying
\begin{equation}
\label{eq6.3}
\tau(\alpha) \neq \alpha.
\end{equation}
The intersection of $\tau(T)$ and $S_{\mathbf{F}^{\mathrm{sep}}}$ also consists of three lines. However, the intersection of $T$ and $\tau(T)$ is a~line by~\eqref{eq6.3}. This means that not all lines on the intersection of $T$ and $S_{\mathbf{F}^{\mathrm{sep}}}$ are defined over $\mathbf{F}$. However, by Lemma~\ref{l42} the image~$\Gamma$ in $W(\mathrm{E}_6)$ of the absolute Galois group~$\mathrm{Gal}(\mathbf{F}^{\mathrm{sep}}/\mathbf{F})$ is trivial. Therefore, all lines on $S_{\mathbf{F}^{\mathrm{sep}}}$ are defined over $\mathbf{F}$. Thus, $\alpha \in \mathbf{F}$.

The same works for the hyperplanes $x+\beta z=0$ and $x+\gamma t=0$, where $\beta^3=b$ and~\mbox{$\gamma^3=c$,} respectively. Thus, $a,b,c \in \mathbf{F}^3$. Therefore, by the change of coordinates
$$
x \mapsto x,\qquad \sqrt[3]{a}\, y \to y,\qquad \sqrt[3]{b}\, z \to z \quad\text{and}\quad \sqrt[3]{c}\, t \to t
$$
the surface~\eqref{eq6.2} is transformed into
$$
x^3+y^3+z^3+t^3=0,
$$
which is the Fermat cubic surface.

The lemma is proved.
\end{proof}

\section{Fields without cube roots of unity}
\label{s7}

In this section we study smooth cubic surfaces over fields of characteristic different from~$3$ that do not contain nontrivial cube roots of unity. As in \S\,\ref{s6}, we regard~\mbox{$(\mathbb{Z}/3\mathbb{Z})^3 \rtimes \mathfrak{S}_4$} as a~group acting regularly 
 on the Fermat cubic surface over a~separably closed field of characteristic different from $2$ and $3$. This means that, up to conjugation,~$(\mathbb{Z}/3\mathbb{Z})^3$ acts on the homogeneous coordinates $x$, $y$, $z$ and~$t$ on~$\mathbb{P}^3$ by multiplication by cube roots of unity, $\mathfrak{S}_4$ acts by permutations of $x$, $y$, $z$ and~$t$. The first part of this section deals with possible automorphism groups of smooth cubic surfaces over fields without nontrivial cube roots of unity. Then for fields of characteristic~$5$ without nontrivial cube roots of unity we find the largest group among them and find smooth cubic surfaces with a~regular action of this group. In addition, we find all fields over which there is an isomorphism between the Fermat cubic surface and the surface~\eqref{eq1.2}.

\begin{lemma}
\label{l45}
Let $G$ be a~group isomorphic to a~subgroup of the automorphism group of the Fermat cubic surface over an algebraically closed field of characteristic different from $2$ and $3$. Consider the homomorphism
$$
\mathrm{pr} \colon (\mathbb{Z}/3\mathbb{Z})^3 \rtimes \mathfrak{S}_4 \to \mathfrak{S}_4
$$
and denote by $\mathrm{pr} |_G$ the restriction of $\mathrm{pr}$ to~$G$. Assume that $\mathrm{Ker}(\mathrm{pr} |_G)$ is neither trivial, nor isomorphic to $(\mathbb{Z}/3\mathbb{Z})^3$. Then $\mathrm{Coker}(\mathrm{pr} |_G)$ is isomorphic neither to $\mathfrak{S}_4$, nor to $\mathfrak{A}_4$.
\end{lemma}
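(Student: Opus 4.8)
The plan is to prove the contrapositive: assuming that $\mathrm{Coker}(\mathrm{pr}|_G)$, i.e.\ the image $H=\mathrm{pr}(G)\simeq G/\mathrm{Ker}(\mathrm{pr}|_G)$, equals $\mathfrak{S}_4$ or $\mathfrak{A}_4$, I would derive that $\mathrm{Ker}(\mathrm{pr}|_G)$ is forced to be either trivial or all of $(\mathbb{Z}/3\mathbb{Z})^3$, contradicting the hypothesis. The first step is the observation that $\mathrm{Ker}(\mathrm{pr}|_G)=G\cap(\mathbb{Z}/3\mathbb{Z})^3$ is normal in $G$, and that for $g=(n,\sigma)\in G$ conjugation by $g$ on the abelian normal subgroup $(\mathbb{Z}/3\mathbb{Z})^3$ coincides with the action of $\sigma=\mathrm{pr}(g)$, since $(n,\sigma)(m,e)(n,\sigma)^{-1}=({}^{\sigma}m,e)$. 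Hence $\mathrm{Ker}(\mathrm{pr}|_G)$ is an $H$-invariant subgroup of $(\mathbb{Z}/3\mathbb{Z})^3$. The abstract subgroups of $(\mathbb{Z}/3\mathbb{Z})^3$ are exactly its $\mathbb{F}_3$-subspaces, of dimensions $0,1,2,3$, and the hypothesis rules out dimensions $0$ and $3$; so it suffices to prove that $H$ fixes no proper nonzero subspace of $(\mathbb{Z}/3\mathbb{Z})^3$ whenever $H\supseteq\mathfrak{A}_4$.

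To do this I would identify the $\mathfrak{S}_4$-module $(\mathbb{Z}/3\mathbb{Z})^3$ explicitly. By Example~\ref{e1} the subgroup $(\mathbb{Z}/3\mathbb{Z})^3$ consists of the classes of $\mathrm{diag}(\omega^{a_1},\omega^{a_2},\omega^{a_3},\omega^{a_4})$ in $\mathrm{PGL}_4$, so as an $\mathbb{F}_3[\mathfrak{S}_4]$-module it is the permutation module $\mathbb{F}_3^4$ modulo the diagonal line $\Delta=\langle(1,1,1,1)\rangle$, with $\mathfrak{S}_4$ permuting the four coordinates. Because $1+1+1+1=4\neq0$ in $\mathbb{F}_3$, the sum-zero submodule $W=\{(a_1,a_2,a_3,a_4)\colon a_1+a_2+a_3+a_4=0\}$ satisfies $\mathbb{F}_3^4=W\oplus\Delta$, and therefore $(\mathbb{Z}/3\mathbb{Z})^3\simeq W$ as $\mathfrak{S}_4$-modules. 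I am thus reduced to showing that $W$ is an irreducible $\mathbb{F}_3[\mathfrak{A}_4]$-module.

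For irreducibility I would restrict to the normal Klein four-subgroup $V=\{e,(12)(34),(13)(24),(14)(23)\}\subseteq\mathfrak{A}_4$, which acts simply transitively on $\{1,2,3,4\}$, so that $\mathbb{F}_3^4$ is the regular representation $\mathbb{F}_3[V]$. As $|V|=4$ is coprime to $3$ and all characters of $V$ are already defined over $\mathbb{F}_3$, this decomposes as the direct sum of the four distinct characters of $V$, each with multiplicity one; the trivial one spans $\Delta$, so $W\simeq\chi_1\oplus\chi_2\oplus\chi_3$ is the multiplicity-free sum of the three nontrivial characters. Any $\mathfrak{A}_4$-invariant subspace is in particular $V$-invariant, hence a sum of a subset of the three (pairwise nonisomorphic) character lines; but conjugation by the $3$-cycle $(123)\in\mathfrak{A}_4$ permutes the three nontrivial characters of $V$ cyclically, and so permutes the three lines cyclically. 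The only cyclically invariant subsets of a three-element set are the empty set and the whole set, whence the only $\mathfrak{A}_4$-invariant subspaces of $W$ are $0$ and $W$. This contradicts the hypothesis on $\mathrm{Ker}(\mathrm{pr}|_G)$ and finishes the proof. The one genuinely substantive point is this modular irreducibility of $W$ over $\mathbb{F}_3$; the reduction to it, and the semidirect-product conjugation computation, are routine.
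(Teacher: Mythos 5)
Your proof is correct, and it takes a genuinely different route from the paper's. Both arguments turn on the conjugation action of $\mathrm{pr}(G)$ on the elementary abelian kernel $\mathrm{Ker}(\mathrm{pr}|_G)\subseteq(\mathbb{Z}/3\mathbb{Z})^3$, but they exploit it differently. The paper forms the action homomorphism $f\colon \mathrm{Coker}(\mathrm{pr}|_G)\to\operatorname{Aut}((\mathbb{Z}/3\mathbb{Z})^i)$, observes that $\mathrm{GL}_2(\mathbb{F}_3)$ contains no copy of $\mathfrak{A}_4$, and then eliminates the possible kernels of $f$ (namely $\mathfrak{S}_4$, $\mathfrak{A}_4$ and $(\mathbb{Z}/2\mathbb{Z})^2$) using the centralizer computation of Lemma~\ref{l18} together with a final count of the elements whose centralizer contains a Klein four-group. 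You instead prove directly that $(\mathbb{Z}/3\mathbb{Z})^3$, identified with the sum-zero submodule $W\subset\mathbb{F}_3^4$, is an irreducible $\mathbb{F}_3[\mathfrak{A}_4]$-module, by restricting to the regular Klein four-subgroup and using the multiplicity-free decomposition into its three nontrivial characters, which the $3$-cycle permutes cyclically. This rules out invariant subspaces of dimensions $1$ and $2$ in one stroke, is self-contained (no appeal to Lemma~\ref{l18} or to the structure of $\mathrm{GL}_2(\mathbb{F}_3)$), and in fact establishes the stronger statement that the module remains absolutely irreducible after restriction to $\mathfrak{A}_4$ in characteristic $3$; the paper's argument stays closer to the explicit matrix description already set up in \S\,\ref{s3} and avoids representation-theoretic language. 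The only step you leave implicit is that a subgroup of $\mathfrak{S}_4$ abstractly isomorphic to $\mathfrak{A}_4$ must be the alternating subgroup itself (being of index $2$ it contains all squares, hence all $3$-cycles), which is what justifies reducing to the case $H\supseteq\mathfrak{A}_4$.
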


\begin{proof}
Assume the converse. Then we have the exact sequence
$$
1 \to (\mathbb{Z}/3\mathbb{Z})^i \to G \xrightarrow{\mathrm{pr}|_{G}} \mathrm{Coker}(\mathrm{pr} |_G) \to 1,
$$
where $i=1$ or $2$ and $\mathrm{Coker}(\mathrm{pr}|_G)$ is either isomorphic to $\mathfrak{S}_4$, or to $\mathfrak{A}_4$. Then, as~$\mathrm{Ker}(\mathrm{pr}|_G)$ is an abelian group, we have the homomorphism
$$
f\colon \mathrm{Coker}(\mathrm{pr} |_G) \to \operatorname{Aut}( (\mathbb{Z}/3\mathbb{Z})^i ).
$$

\goodbreak

\noindent We have the isomorphisms
\begin{equation}
\label{eq7.1}
\operatorname{Aut}( \mathbb{Z}/3\mathbb{Z} ) \simeq \mathbb{Z}/2\mathbb{Z} \quad \text{and} \quad \operatorname{Aut}( (\mathbb{Z}/3\mathbb{Z})^2 ) \simeq \mathrm{GL}_2(\mathbb{F}_3).
\end{equation}

The group $\mathrm{GL}_2(\mathbb{F}_3)$ does not contain a~subgroup isomorphic to $\mathfrak{A}_4$, because otherwise the group $\mathrm{SL}_2(\mathbb{F}_3) \subset \mathrm{GL}_2(\mathbb{F}_3)$ would contain a~subgroup isomorphic to~$\mathfrak{A}_4$. However, this is false because $\mathrm{SL}_2(\mathbb{F}_3) \subset \mathrm{GL}_2(\mathbb{F}_3)$ does not contain a~subgroup isomorphic to the Klein four-group.

Thus, by~\eqref{eq7.1} this means that $\mathrm{Ker}(f)$ is isomorphic either
to $\mathfrak{S}_4$, to $\mathfrak{A}_4$, or to~$(\mathbb{Z}/2\mathbb{Z})^2$.
Therefore, either~$\mathfrak{S}_4$,~$\mathfrak{A}_4$,
or~$(\mathbb{Z}/2\mathbb{Z})^2$ acts trivially by conjugations
on~$(\mathbb{Z}/3\mathbb{Z})^i$. By Lemma~\ref{l18} the centralizer of any
nonidentity element of the normal subgroup~$ (\mathbb{Z}/3\mathbb{Z})^3$ of
$(\mathbb{Z}/3\mathbb{Z})^3 \rtimes \mathfrak{S}_4$ is isomorphic
to~$(\mathbb{Z}/3\mathbb{Z})^3 \rtimes D$, where $D$ is a~subgroup of
$\mathfrak{S}_4$ isomorphic to $\mathfrak{S}_3$,
$(\mathbb{Z}/2\mathbb{Z})^2$, or $\mathbb{Z}/2\mathbb{Z}$. Neither
$\mathfrak{S}_4$, nor~$\mathfrak{A}_4$ lies in these~groups. The only
possibility is when $\mathrm{Ker}(f)\mkern-2mu \simeq \mkern-1mu (\mathbb{Z}/2\mathbb{Z})^2$ and
${\mathrm{Coker}(\mathrm{pr} |_G) \mkern-2mu\simeq \mathfrak{A}_4}$. In this case $i=2$.
One can see that there are only $6$ elements of order~$3$ in
$\mathrm{Ker}(\mathrm{pr})$ such that the group $D$ is isomorphic to
$(\mathbb{Z}/3\mathbb{Z})^2$ by the proof of Lemma~\ref{l18}. This means that
this case is impossible. This contradiction completes the proof.
\end{proof}

\begin{lemma}
\label{l46}
Let $S$ be a~smooth cubic surface over a~field $\mathbf{F}$ of characteristic different from $2$ and~$3$. Assume that $\mathbf{F}$ does not contain a~nontrivial cube root of unity. Then $|{\operatorname{Aut}(S)}| \leqslant 120$.
\end{lemma}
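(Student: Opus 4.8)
The plan is to control $\operatorname{Aut}(S)$ through its ambient group $\operatorname{Aut}(S_{\mathbf{F}^{\mathrm{sep}}})$, which is pinned down by Theorem~\ref{t1} via Remark~\ref{r1}, and then to treat the single dangerous configuration by a group-theoretic estimate inside $W(\mathrm{E}_6)$. Since $\operatorname{char}\mathbf{F}\neq 2,3$, either $\operatorname{char}\mathbf{F}=5$ or $\operatorname{char}\mathbf{F}\neq 2,3,5$. In both cases Theorem~\ref{t1} (together with Remark~\ref{r1}) says that over $\mathbf{F}^{\mathrm{sep}}$ the only automorphism group of order $648$ is that of the Fermat cubic, namely $(\mathbb{Z}/3\mathbb{Z})^3\rtimes\mathfrak{S}_4$, while any smaller automorphism group has order at most $120$ (for $\operatorname{char}\mathbf{F}\neq 2,3,5$) or at most $108$ (for $\operatorname{char}\mathbf{F}=5$). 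Hence, unless $\operatorname{Aut}(S_{\mathbf{F}^{\mathrm{sep}}})\simeq(\mathbb{Z}/3\mathbb{Z})^3\rtimes\mathfrak{S}_4$, we already get $|{\operatorname{Aut}(S)}|\leqslant|{\operatorname{Aut}(S_{\mathbf{F}^{\mathrm{sep}}})}|\leqslant 120$ and the lemma holds.

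It remains to analyse the case $G:=\operatorname{Aut}(S_{\mathbf{F}^{\mathrm{sep}}})\simeq(\mathbb{Z}/3\mathbb{Z})^3\rtimes\mathfrak{S}_4$, where $S_{\mathbf{F}^{\mathrm{sep}}}$ is the Fermat cubic. Write $H:=\operatorname{Aut}(S)\subseteq G$. Since $\mathbf{F}$ contains no nontrivial cube root of unity and $\operatorname{char}\mathbf{F}\neq 3$, Corollary~\ref{c2} shows that no element of $H$ lies in the conjugacy class $A_2^3$. On the other hand, by the proof of Lemma~\ref{l17} the normal subgroup $(\mathbb{Z}/3\mathbb{Z})^3=\mathrm{Ker}(\mathrm{pr})$ of $G$ does contain elements of $A_2^3$. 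Therefore $K:=H\cap(\mathbb{Z}/3\mathbb{Z})^3$ is a proper subgroup of $(\mathbb{Z}/3\mathbb{Z})^3$, and in particular $|K|\leqslant 9$.

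Finally I would combine this with the projection $\mathrm{pr}\colon G\to\mathfrak{S}_4$. Setting $\bar H:=\mathrm{pr}(H)\subseteq\mathfrak{S}_4$, we have $K=\mathrm{Ker}(\mathrm{pr}|_H)$ and $|H|=|K|\cdot|\bar H|$. If $K$ is trivial, then $H$ embeds into $\mathfrak{S}_4$ and $|H|\leqslant 24$. If $K$ is nontrivial, then $K$ is neither trivial nor all of $(\mathbb{Z}/3\mathbb{Z})^3$, so Lemma~\ref{l45} applies and forces $\bar H$ to be isomorphic neither to $\mathfrak{S}_4$ nor to $\mathfrak{A}_4$; as the largest subgroup of $\mathfrak{S}_4$ other than these two has order $8$ (the only subgroups of order $12$ and $24$ being $\mathfrak{A}_4$ and $\mathfrak{S}_4$ themselves), we obtain $|\bar H|\leqslant 8$ and hence $|H|\leqslant 9\cdot 8=72$. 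In every case $|{\operatorname{Aut}(S)}|=|H|\leqslant 72\leqslant 120$, as required.

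The main obstacle is precisely the Fermat case: the naive estimate $|H|=|K|\cdot|\bar H|\leqslant 9\cdot 24=216$ is far too weak. The decisive point is that excluding the class $A_2^3$ makes $K$ a proper subgroup of $(\mathbb{Z}/3\mathbb{Z})^3$, which is exactly the hypothesis needed to invoke Lemma~\ref{l45} and cut the image $\bar H$ down from order $24$ to at most $8$. Thus the two facts that have to be secured are that $(\mathbb{Z}/3\mathbb{Z})^3$ genuinely meets $A_2^3$ (so that $K$ is proper) and that $\mathfrak{A}_4$ is the unique subgroup of order $12$ in $\mathfrak{S}_4$; these are what make the sharp bound $72$ emerge.
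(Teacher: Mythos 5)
Your proposal is correct and follows essentially the same route as the paper: reduce to the case $\operatorname{Aut}(S)\subseteq(\mathbb{Z}/3\mathbb{Z})^3\rtimes\mathfrak{S}_4$ via Theorem~\ref{t1}, use Corollary~\ref{c2} together with the fact (from the proof of Lemma~\ref{l17}) that $(\mathbb{Z}/3\mathbb{Z})^3$ meets $A_2^3$ to make the kernel of $\mathrm{pr}$ proper, and then invoke Lemma~\ref{l45} to cut down the image in $\mathfrak{S}_4$. The only cosmetic difference is that you bound the image by $8$ directly and obtain the sharper intermediate bound $72$, whereas the paper enumerates the divisors of $648$ exceeding $120$ and rules out the sole survivor $216$ by Lemma~\ref{l45}.
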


\begin{proof}
Assume that
$$
|{\operatorname{Aut}(S)}| > 120.
$$
Then by Theorem~\ref{t1}, parts $(\mathrm{iii})'$, $(\mathrm{iii})''$, $(\mathrm{iv})'$ and $(\mathrm{iv})''$, we have
$$
\operatorname{Aut}(S) \subseteq (\mathbb{Z}/3\mathbb{Z})^3 \rtimes \mathfrak{S}_4.
$$
Consider the homomorphism
$$
\mathrm{pr} \colon (\mathbb{Z}/3\mathbb{Z})^3 \rtimes \mathfrak{S}_4 \to \mathfrak{S}_4
$$
which is the natural projection onto $\mathfrak{S}_4$. Consider the restriction $\mathrm{pr} |_{\operatorname{Aut}(S)}$ of $\mathrm{pr}$ to the subgroup $\operatorname{Aut}(S)$. The kernel of $\mathrm{pr}|_{\operatorname{Aut}(S)}$ is a~subgroup of the normal subgroup~$(\mathbb{Z}/3\mathbb{Z})^3$ of $(\mathbb{Z}/3\mathbb{Z})^3 \rtimes \mathfrak{S}_4$. Since $\mathbf{F}$ does not contain nontrivial cube roots of unity, from Corollary~\ref{c3} and Lemma~\ref{l17} we get that
\begin{equation}
\label{eq7.2}
\mathrm{Ker}(\mathrm{pr} |_{\operatorname{Aut}(S)}) \subsetneq (\mathbb{Z}/3\mathbb{Z})^3.
\end{equation}
Since $648 \geqslant |{\operatorname{Aut}(S)}| > 120$, the group $\operatorname{Aut}(S)$ has order~$162$, $216$, $324$, or $648$. So,~by~\eqref{eq7.2} the only possibility for the order of $\operatorname{Aut}(S)$ is~$216$, that is, for $\operatorname{Aut}(S)$ the following exact sequence
$$
1 \to (\mathbb{Z}/3\mathbb{Z})^2 \to \operatorname{Aut}(S) \xrightarrow{\mathrm{pr}|_{\operatorname{Aut}(S)}} \mathfrak{S}_4 \to 1
$$
holds. By Lemma~\ref{l45} this is impossible. This contradiction completes the proof of the inequality
$$
|{\operatorname{Aut}(S)}| \leqslant 120.
$$

The lemma is proved.
\end{proof}

\begin{lemma}
\label{l47}
Let $S$ be a~smooth cubic surface over a~field $\mathbf{F}$ of characteristic~$2$. Assume that $\mathbf{F}$ contains no nontrivial cube roots of unity. Then
$$
|{\operatorname{Aut}(S)}| \leqslant 720.
$$
\end{lemma}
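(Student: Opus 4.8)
The plan is to reduce, via Theorem~\ref{t1} and Remark~\ref{r1}, to studying $\operatorname{Aut}(S)$ as a~subgroup of $\mathrm{PSU}_4(\mathbb{F}_2)$, and then to eliminate the two largest candidate subgroups by exploiting the hypothesis that $\mathbf{F}$ contains no nontrivial cube root of unity.

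First I would note that over $\mathbf{F}^{\mathrm{sep}}$ the group $\operatorname{Aut}(S_{\mathbf{F}^{\mathrm{sep}}})$ is, by Theorem~\ref{t1},\,$(\mathrm{i})'$--$(\mathrm{i})''$ together with Remark~\ref{r1}, either isomorphic to $\mathrm{PSU}_4(\mathbb{F}_2)$ or of order at most~$192$. In the latter case $|{\operatorname{Aut}(S)}| \leqslant |{\operatorname{Aut}(S_{\mathbf{F}^{\mathrm{sep}}})}| \leqslant 192 < 720$, and there is nothing to prove. So the substantive case is $\operatorname{Aut}(S_{\mathbf{F}^{\mathrm{sep}}}) \simeq \mathrm{PSU}_4(\mathbb{F}_2)$, where I regard $\operatorname{Aut}(S)$ as a~subgroup of it. By Lemma~\ref{l9} every proper subgroup lies in a~maximal one of order $960$, $720$, $648$, $648$ or~$576$; a~short index count then shows that the only subgroups of order exceeding~$720$ are $\mathrm{PSU}_4(\mathbb{F}_2)$ itself and the subgroup $(\mathbb{Z}/2\mathbb{Z})^4 \rtimes \mathfrak{A}_5$ of order~$960$ (since any proper subgroup of the latter has order at most~$480$, and any subgroup not contained in it lies in a~maximal subgroup of order at most~$720$). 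Hence it suffices to exclude the two cases $\operatorname{Aut}(S) \simeq \mathrm{PSU}_4(\mathbb{F}_2)$ and $\operatorname{Aut}(S) \simeq (\mathbb{Z}/2\mathbb{Z})^4 \rtimes \mathfrak{A}_5$.

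In both remaining cases I would invoke Lemma~\ref{l11} to conclude that the image $\Gamma$ of the absolute Galois group in $W(\mathrm{E}_6)$ is trivial, so that all $27$ lines on $S_{\mathbf{F}^{\mathrm{sep}}}$ are defined over~$\mathbf{F}$. Both groups contain an element $g$ of order~$5$, which, lying in $\operatorname{Aut}(S)$, is defined over~$\mathbf{F}$. By Lemma~\ref{l12},\,(i), and Lemma~\ref{l14} this $g$ has exactly two invariant lines $l_1, l_2$ on $S_{\mathbf{F}^{\mathrm{sep}}}$, and they are skew; since $\Gamma$ is trivial, each $l_i$ is defined over~$\mathbf{F}$, so $g$ induces an $\mathbf{F}$-automorphism of $l_1 \simeq \mathbb{P}^1_{\mathbf{F}}$, that is, an element of $\mathrm{PGL}_2(\mathbf{F})$. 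The key point is to check that this induced element has order exactly~$5$: if $g$ fixed $l_1$ pointwise, then for any line $m$ meeting $l_1$ the line $g(m)$ would meet $l_1$ in the same ($g$-fixed) point, and since at most three lines of a~smooth cubic surface pass through a~single point while $g$ has order~$5$, the element $g$ would have to fix $m$, producing a~$g$-invariant line other than $l_1$ and~$l_2$ and contradicting Lemma~\ref{l12},\,(i). Thus $g$ acts on $l_1$ as an element of order~$5$ in $\mathrm{PGL}_2(\mathbf{F})$, which is impossible by Lemma~\ref{l10}. This contradiction rules out both large subgroups and gives $|{\operatorname{Aut}(S)}| \leqslant 720$.

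I expect the main obstacle to be precisely this last nontriviality step, namely showing that the order-$5$ automorphism acts faithfully on an invariant line. The descent of the two invariant lines to~$\mathbf{F}$ is immediate from the triviality of~$\Gamma$, and the subgroup bookkeeping is a~direct consequence of Lemma~\ref{l9}; but the passage from ``$g$ has order~$5$ on $S$'' to ``$g$ has order~$5$ on $l_1$'' requires the incidence argument above (or, equivalently, a~check on the eigenvalues of $g$ acting on $\mathbb{P}^3$). Once that is in place, Lemma~\ref{l10} closes the argument cleanly.
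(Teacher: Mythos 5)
Your proposal is correct and follows essentially the same route as the paper: reduce via Theorem~\ref{t1} and Lemma~\ref{l9} to the case where $\operatorname{Aut}(S)$ contains $(\mathbb{Z}/2\mathbb{Z})^4 \rtimes \mathfrak{A}_5$, apply Lemma~\ref{l11} to get trivial Galois image, and then show that an order-$5$ automorphism acts nontrivially on a $g$-invariant line (by the same incidence argument via Lemma~\ref{l12},\,(i), and the three-lines-through-a-point bound of Lemma~\ref{l13}), contradicting Lemma~\ref{l10}. The step you flagged as the main obstacle is exactly the one the paper also singles out, and your argument for it matches the paper's.
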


\begin{proof}
Assume that $|{\operatorname{Aut}(S)}| > 720$. Then by Theorem~\ref{t1},~$(\mathrm{i})'$ and $(\mathrm{i})''$, we obtain an isomorphism $\operatorname{Aut}(S_{\mathbf{F}^{\mathrm{sep}}}) \simeq \mathrm{PSU}_4(\mathbb{F}_2)$. From Lemma~\ref{l9}, using the restriction on the order of the automorphism group we obtain
$$
\operatorname{Aut}(S) \supseteq (\mathbb{Z}/2\mathbb{Z})^4 \rtimes \mathfrak{A}_5.
$$

 By Lemma~\ref{l11} this means that the image of the absolute Galois group $\mathrm{Gal}(\mathbf{F}^{\mathrm{sep}}/\mathbf{F})$ in the Weyl group $W(\mathrm{E}_6)$ is trivial. In other words, any line on $S_{\mathbf{F}^{\mathrm{sep}}}$ is defined over~$\mathbf{F}$.

Consider the element $g \in \operatorname{Aut}(S)$ of order~$5$. By Lemma~\ref{l12},\,(i), there is a~$g$-invari\-ant line $l \subset S$. The action of $g$ on $l$ is nontrivial since otherwise any line intersecting~$l$ would be $g$-invariant, which is impossible by Lemma~\ref{l12},\,(i), and because there are at most three lines meeting in a~single point on a~smooth cubic surface (see Lemma~\ref{l13}). This means that
\begin{equation}
\label{eq7.3}
\mathbb{Z}/5\mathbb{Z} \subset \mathrm{PGL}_2(\mathbf{F}).
\end{equation}
However, as $\mathbf{F}$ does not contain nontrivial cube roots of unity, the inclusion~\eqref{eq7.3} is impossible by Lemma~\ref{l10}. This contradiction completes the proof.
\end{proof}

\begin{lemma}
\label{l48}
Let $S$ be a~smooth cubic surface over a~field $\mathbf{F}$ of characteristic different from $2$, $3$ and~$5$ such that $\mathbf{F}$ does not contain nontrivial cube roots of unity. Then
$$
|{\operatorname{Aut}(S)}| \leqslant 120.
$$
Moreover, equality holds if and only if $\operatorname{Aut}(S) \simeq \mathfrak{S}_5$ and $S$ is isomorphic to the Clebsch cubic surface.
\end{lemma}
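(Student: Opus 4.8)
The plan is to derive the upper bound directly from Lemma~\ref{l46} and then settle the equality case by a short order argument together with Corollary~\ref{c5}.

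Since $\mathbf{F}$ has characteristic different from $2$ and $3$ (it is different from $2$, $3$ and~$5$) and contains no nontrivial cube root of unity, Lemma~\ref{l46} applies verbatim and gives $|\operatorname{Aut}(S)| \leqslant 120$. It remains to analyse when equality is attained, and I would treat the two implications of the biconditional separately.

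Suppose first that $|\operatorname{Aut}(S)| = 120$. Since $\operatorname{Aut}(S)$ embeds into $\operatorname{Aut}(S_{\mathbf{F}^{\mathrm{sep}}})$ and $\mathbf{F}^{\mathrm{sep}}$ is separably closed, Theorem~\ref{t1},\,$(\mathrm{iv})'$ and $(\mathrm{iv})''$, together with Remark~\ref{r1}, shows that $\operatorname{Aut}(S_{\mathbf{F}^{\mathrm{sep}}})$ either is isomorphic to $(\mathbb{Z}/3\mathbb{Z})^3 \rtimes \mathfrak{S}_4$, of order $648$, or has order at most $120$. The decisive point is that $648 = 2^3 \cdot 3^4$ is not divisible by~$5$, so by Lagrange's theorem the group $(\mathbb{Z}/3\mathbb{Z})^3 \rtimes \mathfrak{S}_4$ contains no subgroup of order $120 = 2^3 \cdot 3 \cdot 5$. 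Hence $\operatorname{Aut}(S_{\mathbf{F}^{\mathrm{sep}}})$ cannot be the order-$648$ group, so it has order exactly $120$; by Theorem~\ref{t1},\,$(\mathrm{iv})''$, this forces $\operatorname{Aut}(S_{\mathbf{F}^{\mathrm{sep}}}) \simeq \mathfrak{S}_5$. As $\operatorname{Aut}(S) \subseteq \operatorname{Aut}(S_{\mathbf{F}^{\mathrm{sep}}})$ and both have order $120$, we conclude that $\operatorname{Aut}(S) \simeq \mathfrak{S}_5$. Applying Corollary~\ref{c5} (valid since the characteristic differs from~$5$) to $\operatorname{Aut}(S) \supseteq \mathfrak{S}_5$ then yields that $S$ is isomorphic to the Clebsch cubic surface.

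For the converse I would use Example~\ref{e2}: the Clebsch cubic surface, which is smooth in characteristic different from~$5$, carries the regular action of $\mathfrak{S}_5$ permuting the five homogeneous coordinates, so $\operatorname{Aut}(S) \supseteq \mathfrak{S}_5$; combined with the bound $|\operatorname{Aut}(S)| \leqslant 120$ this gives $\operatorname{Aut}(S) \simeq \mathfrak{S}_5$ and $|\operatorname{Aut}(S)| = 120$. The only real content beyond assembling earlier results is the divisibility observation that excludes the Fermat surface's automorphism group $(\mathbb{Z}/3\mathbb{Z})^3 \rtimes \mathfrak{S}_4$ as a carrier of an order-$120$ automorphism group; everything else is a routine combination of Lemma~\ref{l46}, Theorem~\ref{t1}, Corollary~\ref{c5} and Example~\ref{e2}.
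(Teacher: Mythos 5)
Your proof is correct and follows essentially the same route as the paper's: Lemma~\ref{l46} for the upper bound, Theorem~\ref{t1}, $(\mathrm{iv})'$ and $(\mathrm{iv})''$, for identifying $\operatorname{Aut}(S)\simeq\mathfrak{S}_5$ in the equality case, Corollary~\ref{c5} for the uniqueness of the surface, and Example~\ref{e2} for the converse. The only difference is that you make explicit, via the Lagrange observation that $5 \nmid 648$, why an order-$120$ group cannot embed in $(\mathbb{Z}/3\mathbb{Z})^3\rtimes\mathfrak{S}_4$ — a step the paper leaves implicit in its one-line appeal to Theorem~\ref{t1}.
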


\begin{proof}
By Lemma~\ref{l46} the inequality $|{\operatorname{Aut}(S)}| \leqslant 120$ holds. By Theorem~\ref{t1}, $(\mathrm{iv})'$ and~$(\mathrm{iv})''$, equality holds if and only if
$$
\operatorname{Aut}(S) \simeq \mathfrak{S}_5.
$$
By Example~\ref{e2} the group~$\mathfrak{S}_5$ is realized over $\mathbf{F}$ as the automorphism group of the Clebsch cubic surface.

Let $\operatorname{Aut}(S) \simeq \mathfrak{S}_5$. Then by Corollary~\ref{c5} the surface $S$ is isomorphic to the Clebsch cubic surface.

The lemma is proved.
\end{proof}

\begin{lemma}
\label{l49}
Let $S$ be a~smooth cubic surface over a~field $\mathbf{F}$ of characteristic~$2$ such that $\mathbf{F}$ does not contain nontrivial cube roots of unity. Then
$$
|{\operatorname{Aut}(S)}| \leqslant 720.
$$
Moreover, equality holds if and only if $\operatorname{Aut}(S) \simeq \mathfrak{S}_6$ and $S$ is isomorphic to~\eqref{eq1.2}.
\end{lemma}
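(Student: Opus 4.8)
The inequality $|{\operatorname{Aut}(S)}| \leqslant 720$ is exactly Lemma~\ref{l47}, so only the description of the equality case needs an argument, and the plan is to split it into the two implications together with a short realizability remark.

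First I would settle the ``if'' direction and realizability at once. The surface~\eqref{eq1.2} is defined over the prime field $\mathbb{F}_2 \subseteq \mathbf{F}$, where by Theorem~\ref{t2},\,(ii), it is a smooth cubic surface whose automorphism group is isomorphic to $\mathfrak{S}_6$; smoothness persists after the base change to $\mathbf{F}$. Base change of automorphisms gives an injection of the automorphism group of~\eqref{eq1.2} over $\mathbb{F}_2$ into that over $\mathbf{F}$, so the latter contains a copy of $\mathfrak{S}_6$. Together with Lemma~\ref{l47} this forces the automorphism group of~\eqref{eq1.2} over $\mathbf{F}$ to be isomorphic to $\mathfrak{S}_6$ and of order $720$. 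In particular $\operatorname{Aut}(S) \simeq \mathfrak{S}_6$ implies $|{\operatorname{Aut}(S)}| = 720$, which is the ``if'' implication.

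For the ``only if'' implication, suppose $|{\operatorname{Aut}(S)}| = 720$. Since $\operatorname{Aut}(S)$ injects into $\operatorname{Aut}(S_{\mathbf{F}^{\mathrm{sep}}})$, which is controlled by Theorem~\ref{t4},\,(i), together with Remark~\ref{r1}, the parts $(\mathrm{i})'$ and $(\mathrm{i})''$ of Theorem~\ref{t1} leave only the options $\operatorname{Aut}(S_{\mathbf{F}^{\mathrm{sep}}}) \simeq \mathrm{PSU}_4(\mathbb{F}_2)$ or $|{\operatorname{Aut}(S_{\mathbf{F}^{\mathrm{sep}}})}| \leqslant 192$. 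As $720 > 192$, the first option must hold, so $\operatorname{Aut}(S)$ is a proper subgroup of $\mathrm{PSU}_4(\mathbb{F}_2)$ of order $720$. Now I would use Lemma~\ref{l9}: every proper subgroup lies in a maximal one, and among the maximal-subgroup orders $960$, $720$, $648$, $648$, $576$ the only multiple of $720$ is $720$ itself, so the subgroup is itself maximal and, by the uniqueness up to conjugation in Lemma~\ref{l9}, isomorphic to $\mathfrak{S}_6$. Hence $\operatorname{Aut}(S) \simeq \mathfrak{S}_6$.

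It remains to identify $S$. Since $\operatorname{Aut}(S) \simeq \mathfrak{S}_6$ contains $\mathfrak{S}_5$, and hence $\mathbb{Z}/5\mathbb{Z}$, I would bound the Galois image $\Gamma$: as $\Gamma$ centralizes $\operatorname{Aut}(S)$ in $W(\mathrm{E}_6)$, it centralizes the copy of $\mathfrak{S}_5$, so by Lemma~\ref{l7} it lies in a subgroup isomorphic to $\mathbb{Z}/2\mathbb{Z}$. With $\operatorname{char}\mathbf{F} = 2 \neq 5$, $\operatorname{Aut}(S) \supseteq \mathbb{Z}/5\mathbb{Z}$ and $\Gamma$ inside $\mathbb{Z}/2\mathbb{Z}$, Lemma~\ref{l23} shows that $S$ is isomorphic to the Clebsch cubic surface, and Corollary~\ref{c6},\,(ii), identifies the latter with~\eqref{eq1.2} in characteristic~$2$. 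The step I expect to be the crux is deducing $\operatorname{Aut}(S) \simeq \mathfrak{S}_6$ from the bare order $720$: it hinges on the arithmetic of the maximal-subgroup orders in Lemma~\ref{l9} (that $720 \nmid 960$), and then on feeding the resulting subgroup $\mathfrak{S}_5 \subseteq W(\mathrm{E}_6)$ into Lemma~\ref{l7} so that the hypotheses of Lemma~\ref{l23} are met.
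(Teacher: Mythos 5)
Your proposal is correct and follows essentially the same route as the paper: Lemma~\ref{l47} for the bound, Theorem~\ref{t1} plus the maximal-subgroup list of Lemma~\ref{l9} to force $\operatorname{Aut}(S) \simeq \mathfrak{S}_6$ (your divisibility check among the orders $960$, $720$, $648$, $648$, $576$ is exactly the intended use of that table), and then the identification of $S$ via the Clebsch surface and Corollary~\ref{c6},\,(ii). The only cosmetic difference is that where you re-derive the step ``$\mathfrak{S}_5 \subseteq \operatorname{Aut}(S)$ implies $S$ is the Clebsch cubic'' from Lemmas~\ref{l7} and~\ref{l23}, the paper simply cites its Corollary~\ref{c5}, which packages precisely that argument.
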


\begin{proof}
By Lemma~\ref{l47} we have $|{\operatorname{Aut}(S)}| \leqslant 720$. If equality holds, then by Theorem~\ref{t1},~$(\mathrm{i})'$ and $(\mathrm{i})''$, we have
$$
\operatorname{Aut}(S) \subset \mathrm{PSU}_4(\mathbb{F}_2).
$$
By Lemma~\ref{l9} this means that $\operatorname{Aut}(S) \simeq \mathfrak{S}_6$. By Theorem~\ref{t2},\,(ii), this group is realized as the automorphism group of smooth cubic surface~\eqref{eq1.2}. Using Corollaries~\ref{c5} and~\ref{c6},\,(ii), we get that $S$ is isomorphic to cubic surface~\eqref{eq1.2}.

The lemma is proved.
\end{proof}

Before we consider the automorphism groups of smooth cubic surfaces over a~field of characteristic~$5$ not containing nontrivial cube roots of unity, we present some auxiliary lemmas relating to subgroups of the group $(\mathbb{Z}/3\mathbb{Z})^3 \rtimes \mathfrak{S}_4$.

\begin{lemma}
\label{l50}
Up to conjugation the group $(\mathbb{Z}/3\mathbb{Z})^3 \rtimes \mathfrak{S}_4$ contains a~unique subgroup isomorphic to $(\mathbb{Z}/3\mathbb{Z})^2 \rtimes \mathfrak{D}_4$X.
\end{lemma}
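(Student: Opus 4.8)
The plan is to combine the group-theoretic constraints coming from Lemma~\ref{l45} with a~conjugacy argument for Sylow $2$-subgroups and a~short representation-theoretic computation over~$\mathbb{F}_3$. Write $G=(\mathbb{Z}/3\mathbb{Z})^3 \rtimes \mathfrak{S}_4$, so that $|G|=648=2^3 \cdot 3^4$, and let $N=(\mathbb{Z}/3\mathbb{Z})^3=\mathrm{Ker}(\mathrm{pr})$ be the normal subgroup of order~$27$. Consider a~subgroup $H \leqslant G$ with $H \simeq (\mathbb{Z}/3\mathbb{Z})^2 \rtimes \mathfrak{D}_4$, so $|H|=72=2^3 \cdot 3^2$. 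I would first analyse the restriction $\mathrm{pr}|_H \colon H \to \mathfrak{S}_4$, whose kernel is $H \cap N$. Since $N$ has odd order, $\mathrm{pr}$ is injective on any $2$-subgroup; hence a~Sylow $2$-subgroup $P$ of $H$ (which is isomorphic to $\mathfrak{D}_4$ and has order~$8$) maps isomorphically onto a~Sylow $2$-subgroup of $\mathfrak{S}_4$. Thus $\mathrm{pr}(H)$ contains a~copy of $\mathfrak{D}_4$, and being a~subgroup of $\mathfrak{S}_4$ it is either $\mathfrak{D}_4$ or all of $\mathfrak{S}_4$.

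The first key step is to rule out the possibility $\mathrm{pr}(H)=\mathfrak{S}_4$. In that case $|H \cap N|=72/24=3$, so $\mathrm{Ker}(\mathrm{pr}|_H) \simeq \mathbb{Z}/3\mathbb{Z}$, which is neither trivial nor isomorphic to $(\mathbb{Z}/3\mathbb{Z})^3$, while $\mathrm{Coker}(\mathrm{pr}|_H)=\mathrm{pr}(H) \simeq \mathfrak{S}_4$. This contradicts Lemma~\ref{l45}. Therefore $\mathrm{pr}(H) \simeq \mathfrak{D}_4$ and $V:=H \cap N$ has order~$9$; since $N$ is elementary abelian, $V \simeq (\mathbb{Z}/3\mathbb{Z})^2$, and $V$ is normal in $H$ as the intersection of $H$ with the normal subgroup $N$.

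Next I would move $H$ into a~standard position. The Sylow $2$-subgroup $P$ of $H$ has order $8=|G|_2$, so it is also a~Sylow $2$-subgroup of $G$; the standard $\mathfrak{D}_4 \subset \mathfrak{S}_4 \subset G$ is another one. By Sylow's theorem some conjugate of $H$ has Sylow $2$-subgroup equal to this fixed $D=\mathfrak{D}_4$. Replacing $H$ by this conjugate, we have $H=V \rtimes D$, where $V=H \cap N$ is a~two-dimensional $\mathbb{F}_3$-subspace of $N$ that is invariant under the conjugation action of $D$ (because $V$ is normal in $H$). It remains to show that such a~$D$-invariant plane $V \subset N$ is unique.

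For the last step I would identify $N$, as an $\mathbb{F}_3[\mathfrak{S}_4]$-module, with the standard representation $W=\{(a,b,c,d) \in \mathbb{F}_3^4 : a+b+c+d=0\}$, on which $\mathfrak{S}_4$ acts by permuting the coordinates (this uses that the all-ones vector does not lie in $W$ in characteristic~$3$). Taking $D=\langle r,s\rangle$ with $r=(1234)$ and $s=(13)$, the characteristic polynomial of $r$ on $W$ is $(x+1)(x^2+1)$, and $x^2+1$ is irreducible over $\mathbb{F}_3$. Hence $W$ splits as an $\langle r\rangle$-module into the $(-1)$-eigenline and a~unique irreducible two-dimensional block $M$, so $M$ is the only two-dimensional $r$-invariant subspace of $W$. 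Since $s$ normalizes $\langle r\rangle$, the subspace $sM$ is again a~two-dimensional $r$-invariant subspace, whence $sM=M$ and $M$ is $D$-invariant. Thus $V=M$ is forced, $H=M \rtimes D$ is the unique subgroup of the required shape up to conjugation, and its existence is clear since $M \rtimes D$ is a~genuine subgroup of order~$72$ isomorphic to $(\mathbb{Z}/3\mathbb{Z})^2 \rtimes \mathfrak{D}_4$. The main obstacle is the bookkeeping in the reduction to standard position together with the irreducibility computation for $x^2+1$ over $\mathbb{F}_3$ that pins down $M$; everything else is a~routine order count.
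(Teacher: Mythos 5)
Your proof is correct, and its overall strategy coincides with the paper's: reduce to the fact that $\mathrm{pr}(H)$ is a copy of $\mathfrak{D}_4$ in $\mathfrak{S}_4$ (unique up to conjugacy) and that the $\mathfrak{D}_4$-invariant plane in $(\mathbb{Z}/3\mathbb{Z})^3$ is unique. The two write-ups differ in how each step is carried out. For the first step the paper simply asserts $\mathrm{pr}(G)\simeq\mathfrak{D}_4$, whereas you justify it by Sylow theory together with Lemma~\ref{l45}; this is a genuine improvement, since a priori an abstract copy of $(\mathbb{Z}/3\mathbb{Z})^2\rtimes\mathfrak{D}_4$ could meet $\mathrm{Ker}(\mathrm{pr})$ in a subgroup of order~$3$ and surject onto $\mathfrak{S}_4$, and excluding that case is precisely what Lemma~\ref{l45} provides. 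For the second step the paper argues module-theoretically: by Maschke's theorem the three-dimensional $\mathbb{F}_3[\mathfrak{D}_4]$-module splits, it is not irreducible (no three-dimensional irreducible representation of $\mathfrak{D}_4$ in characteristic~$3$) and not a sum of lines ($\mathfrak{D}_4$ is non-abelian), so the decomposition is $1+2$ and the plane is unique. You instead identify the module with the standard representation, factor the characteristic polynomial of the $4$-cycle as $(x+1)(x^2+1)$ and use irreducibility of $x^2+1$ over $\mathbb{F}_3$; this is more computational but pins down the invariant plane explicitly. Your uniqueness argument for the plane (intersecting a hypothetical second invariant plane with the irreducible block) is the same dimension count the paper leaves implicit, so both proofs are complete at that point.
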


\begin{proof}
Let $G \simeq (\mathbb{Z}/3\mathbb{Z})^2 \rtimes_{\phi} \mathfrak{D}_4$ be a~subgroup of $(\mathbb{Z}/3\mathbb{Z})^3 \rtimes \mathfrak{S}_4$ for some homomorphism
$$
\phi\colon \mathfrak{D}_4 \to \operatorname{Aut}( (\mathbb{Z}/3\mathbb{Z})^2 ) \simeq \mathrm{GL}_2(\mathbb{F}_3).
$$

We show that all such subgroups of $(\mathbb{Z}/3\mathbb{Z})^3 \rtimes \mathfrak{S}_4$ are conjugate. Consider the projection homomorphism
$$
\mathrm{pr} \colon (\mathbb{Z}/3\mathbb{Z})^3 \rtimes \mathfrak{S}_4 \to \mathfrak{S}_4.
$$

Then we have $\mathrm{pr}(G) \simeq \mathfrak{D}_4$. In the group $(\mathbb{Z}/3\mathbb{Z})^3 \rtimes \mathfrak{S}_4$ we can view the action of $\mathfrak{S}_4$ on~$(\mathbb{Z}/3\mathbb{Z})^3$ as a~representation
$$
\rho \colon \mathfrak{S}_4 \to \mathrm{GL}(V),
$$
where $V$ is the vector space of dimension $3$ over $\mathbb{F}_3$ corresponding to the normal subgroup $ (\mathbb{Z}/3\mathbb{Z})^3$ of $(\mathbb{Z}/3\mathbb{Z})^3 \rtimes \mathfrak{S}_4$. The homomorphism $\rho$ is injective, so $\rho |_{\mathfrak{D}_4}$ is also injective.

Let us prove that the action of~$\mathfrak{D}_4$ on $V$ preserves a~subspace of dimension~$2$. Assume the converse. Then either $V$ is irreducible, or there is an invariant subspace~$L$ of dimension~$1$ in~$V$. The first case is impossible, because otherwise it would mean that there is an irreducible representation of dimension~$3$ of the group~$\mathfrak{D}_4$ over~$\overline{\mathbb{F}}_3$, which is impossible since the characteristic of the field cannot divide~the dimension of a~representation by Corollary~4.6 in~\cite{12}, Ch.~18. Assume that the~second case holds. Then by Maschke's theorem (see, for instance, Theorem~1.2 in~\cite{12}, Ch.~18) the space~$V$ is a~direct sum of subrepresentations of dimension~$1$. This would mean that $\mathfrak{D}_4$ is commutative.

So by Maschke's theorem $V$ is a~direct sum of subrepresentations of dimensions~$1$ and $2$. This shows the existence and uniqueness of a~subspace of dimension $2$ of $V$ that is invariant under the action of $\mathfrak{D}_4$. Since $\mathfrak{D}_4$ is a~unique subgroup of order~$8$ up to conjugation in $\mathfrak{S}_4$, we obtain the required result.

The lemma is proved.
\end{proof}

\begin{remark}
\label{r6}
From now on we mean by $(\mathbb{Z}/3\mathbb{Z})^2 \rtimes \mathfrak{D}_4$ a~subgroup of $(\mathbb{Z}/3\mathbb{Z})^3 \rtimes \mathfrak{S}_4$, which is unique up to conjugation by Lemma~\ref{l50}.
\end{remark}

\begin{example}
\label{e4}
Let $\mathbf{F}$ be a~field not containing nontrivial cube roots of unity. Consider the projective space $\mathbb{P}^3$ with homogeneous coordinates $x$, $y$, $z$ and~$t$. Consider the subgroup isomorphic to $(\mathbb{Z}/3\mathbb{Z})^2$ in the automorphism group of $\mathbb{P}^3$ that is generated by
\begin{equation}
\label{eq7.4}
\begin{pmatrix}
-1 & 1 & 0 & 0\\
-1 & 0 & 0 & 0\\
0 & 0 & 1 & 0\\
0 & 0 & 0 & 1
\end{pmatrix} \quad \text{and} \quad
\begin{pmatrix}
1 & 0 & 0 & 0\\
0 & 1 & 0 & 0\\
0 & 0 & -1 & 1\\
0 & 0 & -1 & 0
\end{pmatrix}.
\end{equation}
Consider the group $\mathfrak{D}_4 \subset \mathrm{PGL}_4(\mathbf{F})$ generated by the permutations~$(12)$ and~$(1324)$ of the homogeneous coordinates $x$, $y$, $z$ and~$t$ in~$\mathbb{P}^3$. Then $(\mathbb{Z}/3\mathbb{Z})^2$ and~$\mathfrak{D}_4$ generate the group $(\mathbb{Z}/3\mathbb{Z})^2 \rtimes \mathfrak{D}_4$ mentioned in Remark~\ref{r6}.
\end{example}

\begin{lemma}
\label{l51}
Let $G$ be a~subgroup of the group $ (\mathbb{Z}/3\mathbb{Z})^3 \rtimes \mathfrak{S}_4 \subset W(\mathrm{E}_6)$. Assume that $G$ does not contain elements in the conjugacy class $A^3_2$. Then $|G| \leqslant 72$, and equality holds if and only if
$$
G \simeq (\mathbb{Z}/3\mathbb{Z})^2 \rtimes \mathfrak{D}_4.
$$
Moreover, in the case of equality the group $G$ with such properties is unique up to conjugation.
\end{lemma}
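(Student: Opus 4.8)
The plan is to study $G$ through the short exact sequence coming from the projection $\mathrm{pr}\colon (\mathbb{Z}/3\mathbb{Z})^3 \rtimes \mathfrak{S}_4 \to \mathfrak{S}_4$. Write $N=(\mathbb{Z}/3\mathbb{Z})^3=\mathrm{Ker}(\mathrm{pr})$, and set $H=\mathrm{Ker}(\mathrm{pr}|_G)=G\cap N$ and $P=\mathrm{pr}(G)\subseteq \mathfrak{S}_4$, so that $|G|=|H|\cdot|P|$. Since $N$ is normal and abelian, $H$ is normal in $G$ and the conjugation action of $G$ on $H$ factors through $P$; hence $H$ is a $P$-invariant $\mathbb{F}_3$-subspace of $N$, where $P$ acts by the restriction of the coordinate-permutation action $\rho$ of $\mathfrak{S}_4$. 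By Lemma~\ref{l17} every element of $(\mathbb{Z}/3\mathbb{Z})^3\rtimes\mathfrak{S}_4$ lying in the class $A_2^3$ already lies in $N$, so the hypothesis that $G$ contains no element of $A_2^3$ is equivalent to $H$ containing no element of $A_2^3$.

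Next I would make the combinatorics of $N$ explicit. Realizing $N$ as the diagonal subgroup as in Lemma~\ref{l18}, I identify it with the permutation module $V=\mathbb{F}_3^4/\langle(1,1,1,1)\rangle$, on which $\mathfrak{S}_4$ permutes coordinates. By Lemma~\ref{l15}\,(iii) the elements of $A_2^3$ are exactly those conjugate to $\mathrm{diag}(\omega,1,1,1)$, i.e. the classes whose exponent vectors have three equal entries; there are exactly eight of them, forming the ``bad set'' $B$. Writing a $2$-dimensional subspace $H$ as the kernel of a functional $\ell(x)=\sum c_i x_i$ with $\sum c_i=0$, the condition $H\cap B=\emptyset$ becomes $c_i\neq 0$ for all $i$; together with $\sum c_i=0$ over $\mathbb{F}_3$ this forces, up to scalar, exactly the three functionals indexed by the partitions of $\{1,2,3,4\}$ into two pairs. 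Thus there are exactly three $2$-dimensional subspaces avoiding $B$ (the ``good planes''), and no subspace of dimension $3$ (i.e. all of $V$) avoids $B$.

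I would then bound $|P|$ by $\mathrm{Stab}_{\mathfrak{S}_4}(H)$ in each case. A direct check shows a good plane is the invariant $2$-space of the stabilizer of its pair-partition, namely a copy of $\mathfrak{D}_4$ of order $8$; hence $\dim H=2$ gives $|G|\leqslant 9\cdot 8=72$. For $\dim H\leqslant 1$ one has $|H|\leqslant 3$, and every line has stabilizer of order at most $8$: indeed any subgroup $Q$ stabilizing a line maps to $\mathrm{GL}_1(\mathbb{F}_3)=\{\pm 1\}$, so if $Q\supseteq \mathfrak{A}_4$ then $\mathfrak{A}_4$ would fix the line pointwise, whereas $V$ has no nonzero $\mathfrak{A}_4$-invariant vector (and no $\mathfrak{S}_4$-stable line, since the standard module has no $1$-dimensional subrepresentation over $\mathbb{F}_3$); the only subgroups of $\mathfrak{S}_4$ not containing $\mathfrak{A}_4$ have order at most $8$. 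So $\dim H\leqslant 1$ gives $|G|\leqslant 3\cdot 8=24$, and $\dim H=0$ gives $|G|=|P|\leqslant 24$. In all cases $|G|\leqslant 72$.

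Finally, for equality, $72=|H|\cdot|P|$ with $|P|\leqslant 24$ leaves only $(|H|,|P|)=(9,8)$ or $(3,24)$, and the latter is excluded because no line is $\mathfrak{S}_4$-stable. Hence $H$ is a good plane and $P=\mathrm{Stab}_{\mathfrak{S}_4}(H)\simeq\mathfrak{D}_4$; since $\gcd(9,8)=1$, Schur--Zassenhaus gives $G\simeq(\mathbb{Z}/3\mathbb{Z})^2\rtimes\mathfrak{D}_4$, and Lemma~\ref{l50} yields uniqueness up to conjugation, while existence follows by taking $G=H\rtimes\mathrm{Stab}_{\mathfrak{S}_4}(H)$ for a good plane $H$ (which avoids $B$, hence contains no $A_2^3$). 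The main obstacle is the bookkeeping of the two middle paragraphs: pinning down the eight $A_2^3$-elements, and, above all, computing the stabilizers — in particular excluding any $\mathfrak{S}_4$- or $\mathfrak{A}_4$-invariant line, which is precisely what rules out a subgroup of order $72$ of type $(3,24)$ and forces the structure in the equality case.
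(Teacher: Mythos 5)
Your argument is correct, and its skeleton coincides with the paper's: both pass to the projection $\mathrm{pr}$ onto $\mathfrak{S}_4$, reduce the $A_2^3$-condition to the kernel via Lemma~\ref{l17}, and in the equality case apply the Schur--Zassenhaus theorem and Lemma~\ref{l50}. Where you genuinely diverge is in how the image of $G$ is bounded. The paper observes that $\mathrm{Ker}(\mathrm{pr}|_G)$ is a proper subgroup of $(\mathbb{Z}/3\mathbb{Z})^3$ and then invokes Lemma~\ref{l45} --- whose proof runs through $\mathfrak{A}_4\not\hookrightarrow\mathrm{GL}_2(\mathbb{F}_3)$ and the centralizer computation of Lemma~\ref{l18} --- to exclude cokernels $\mathfrak{S}_4$ and $\mathfrak{A}_4$; this simultaneously kills the order-$72$ extension $1\to\mathbb{Z}/3\mathbb{Z}\to G\to\mathfrak{S}_4\to 1$ and forces $|\mathrm{pr}(G)|\leqslant 8$ whenever the kernel is nontrivial. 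You instead work explicitly in $V=\mathbb{F}_3^4/\langle(1,1,1,1)\rangle$: you enumerate the eight $A_2^3$-elements of the normal subgroup, find the three planes avoiding them (the functionals with two coefficients $1$ and two coefficients $2$), compute that each such plane has stabilizer $\mathfrak{D}_4$ of order $8$, and bound the stabilizer of any line by $8$ using the absence of nonzero $\mathfrak{A}_4$-fixed vectors in $V$ --- all of which checks out. What this buys you is independence from Lemma~\ref{l45}, a concrete description of the extremal subgroups as $H\rtimes\mathrm{Stab}_{\mathfrak{S}_4}(H)$ for a good plane $H$, and a direct verification that these subgroups avoid $A_2^3$ (which the paper delegates to Lemma~\ref{l50} and Example~\ref{e4}); the paper's route is shorter on the page only because the representation-theoretic work in Lemma~\ref{l45} is reused elsewhere, notably in Lemma~\ref{l46}.
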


\begin{proof}
Consider the projection homomorphism
$$
\mathrm{pr} \colon (\mathbb{Z}/3\mathbb{Z})^3 \rtimes \mathfrak{S}_4 \to \mathfrak{S}_4.
$$

We denote by $\mathrm{pr}|_{G}$ the restriction of the homomorphism $\mathrm{pr}$ to the group $G$. By Lemma~\ref{l17} the kernel $\mathrm{Ker}(\mathrm{pr}|_{G})$ lies in $(\mathbb{Z}/3\mathbb{Z})^2$. So by Lemma~\ref{l45} either $\mathrm{pr}|_{G}$ is injective, or the order of $\mathrm{pr}(G)$ is at most~$8$. This means that $|G| \leqslant 72$.

Consider the case when $|G|=72$. We have two possibilities for $G$:
\begin{equation}
1 \to \mathbb{Z}/3\mathbb{Z} \to G \xrightarrow{\mathrm{pr}|_{G}} \mathfrak{S}_4 \to 1,
\label{eq7.5}
\end{equation}
or
\begin{equation}
1 \to (\mathbb{Z}/3\mathbb{Z})^2 \to G \xrightarrow{\mathrm{pr}|_{G}} \mathfrak{D}_4 \to 1. \label{eq7.6}
\end{equation}

By Lemma~\ref{l45} the case~\eqref{eq7.5} is not realized. Consider the case~\eqref{eq7.6}. By the Schur--Zassenhaus theorem (see~\cite{9}, \S\,17.4, Theorem~39) $$
G \simeq (\mathbb{Z}/3\mathbb{Z})^2 \rtimes_{\phi} \mathfrak{D}_4
$$
for some homomorphism 
$$
\phi\colon \mathfrak{D}_4 \to \operatorname{Aut}( (\mathbb{Z}/3\mathbb{Z})^2 ) \simeq \mathrm{GL}_2(\mathbb{F}_3).
$$
By Lemma~\ref{l50} such a~group 
exists and is unique up to conjugation.

The lemma is proved.
\end{proof}

\begin{lemma}
\label{l52}
Let $\mathbf{F}$ be a~field of characteristic~$5$ not containing nontrivial cube roots of unity. Let $S$ be a~smooth cubic surface over $\mathbf{F}$ such that
$$
\operatorname{Aut}(S_{\mathbf{F}^{\mathrm{sep}}}) \simeq \mathcal{H}_3(\mathbb{F}_3) \rtimes \mathbb{Z}/4\mathbb{Z}.
$$
Then $\operatorname{Aut}(S) \subsetneq \mathcal{H}_3(\mathbb{F}_3) \rtimes \mathbb{Z}/4\mathbb{Z}$.
\end{lemma}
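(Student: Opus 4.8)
The plan is to produce a single element of $\operatorname{Aut}(S_{\mathbf{F}^{\mathrm{sep}}}) \simeq \mathcal{H}_3(\mathbb{F}_3) \rtimes \mathbb{Z}/4\mathbb{Z}$ lying in the conjugacy class $A_2^3$ of $W(\mathrm{E}_6)$, and then to invoke Corollary~\ref{c3}. Recall that $\operatorname{Aut}(S)$ is precisely the subgroup of $\operatorname{Aut}(S_{\mathbf{F}^{\mathrm{sep}}})$ consisting of the automorphisms defined over $\mathbf{F}$, so in particular $\operatorname{Aut}(S) \subseteq \mathcal{H}_3(\mathbb{F}_3) \rtimes \mathbb{Z}/4\mathbb{Z}$. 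Since $\mathbf{F}$ is of characteristic $5 \neq 3$ and contains no nontrivial cube root of unity, Corollary~\ref{c3} guarantees that no element of $\operatorname{Aut}(S)$ can lie in $A_2^3$. Therefore, once we know that $\mathcal{H}_3(\mathbb{F}_3) \rtimes \mathbb{Z}/4\mathbb{Z}$ does contain an element of $A_2^3$, that element is not defined over $\mathbf{F}$, whence the inclusion $\operatorname{Aut}(S) \subseteq \mathcal{H}_3(\mathbb{F}_3) \rtimes \mathbb{Z}/4\mathbb{Z}$ is strict, as required.

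The heart of the matter, and the step I expect to be the main obstacle, is to exhibit such an $A_2^3$ element. I would take $z$ to be a generator of the centre of the normal subgroup $\mathcal{H}_3(\mathbb{F}_3)$, which is cyclic of order $3$. Since $\operatorname{Aut}(S_{\mathbf{F}^{\mathrm{sep}}})$ acts faithfully on $\mathbb{P}^3$, the element $z$ acts nontrivially, and hence lies in one of the three conjugacy classes $A_2$, $A_2^2$, $A_2^3$ of elements of order $3$ in $W(\mathrm{E}_6)$. By Lemma~\ref{l15} these classes are distinguished by the eigenvalue multiplicities of a lift to $\mathrm{GL}_4(\mathbf{F}^{\mathrm{sep}})$: the partitions of $4$ attached to $A_2$, $A_2^2$ and $A_2^3$ are $(2,2)$, $(2,1,1)$ and $(3,1)$, respectively. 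Thus it suffices to show that $z$ acts with eigenvalue multiplicities $(3,1)$.

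To compute these multiplicities I would pass to a genuine linear model. Choosing lifts $\tilde a,\tilde b \in \mathrm{GL}_4(\mathbf{F}^{\mathrm{sep}})$ of a pair of generators of $\mathcal{H}_3(\mathbb{F}_3)$, the commutator $\tilde z = \tilde a\tilde b\tilde a^{-1}\tilde b^{-1}$ is a lift of $z$ independent of the scalar ambiguity in $\tilde a$ and $\tilde b$. A short computation shows that $\tilde z$ commutes with $\tilde a$ and $\tilde b$: the scalar $\nu$ determined by $\tilde a\tilde z\tilde a^{-1} = \nu\tilde z$ satisfies $\nu^4 = 1$ by taking determinants and must permute the eigenvalues of $\tilde z$, which lie among the cube roots of the scalar $\tilde z^3$, and a direct check forces $\nu = 1$. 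After rescaling the generators so that their cubes equal the identity, we thereby obtain a genuine $4$-dimensional linear representation of $\mathcal{H}_3(\mathbb{F}_3)$ that is faithful on $\mathbb{P}^3$. Over $\overline{\mathbf{F}}$, whose characteristic $5$ is coprime to $3$, the irreducible representations of $\mathcal{H}_3(\mathbb{F}_3)$ have dimension $1$, on which $z$ acts trivially, or dimension $3$, on which $z$ acts by a nontrivial cube root of unity. Faithfulness on $\mathbb{P}^3$ forces a $3$-dimensional summand to occur, and since the representation is $4$-dimensional it must be the sum of one $3$-dimensional and one $1$-dimensional irreducible. Consequently $\tilde z$ has eigenvalues $\omega,\omega,\omega,1$, i.e. multiplicity pattern $(3,1)$, so $z$ lies in $A_2^3$. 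Alternatively, this last fact can be read off directly from~\cite{8}, Table~7, exactly as in Lemma~\ref{l33}.

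Combining the two parts, the central element $z$ of $\mathcal{H}_3(\mathbb{F}_3)$ lies in $A_2^3$, yet by Corollary~\ref{c3} no element of $\operatorname{Aut}(S)$ lies in $A_2^3$ over a field of characteristic $5$ without nontrivial cube roots of unity. Hence $z \notin \operatorname{Aut}(S)$, and the inclusion $\operatorname{Aut}(S) \subseteq \mathcal{H}_3(\mathbb{F}_3) \rtimes \mathbb{Z}/4\mathbb{Z}$ is strict.
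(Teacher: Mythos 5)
Your proof is correct and follows the same strategy as the paper's: Corollary~\ref{c2} (equivalently its contrapositive, Corollary~\ref{c3}) rules out elements of the conjugacy class $A_2^3$ in $\operatorname{Aut}(S)$ over a field of characteristic~$5$ without nontrivial cube roots of unity, while $\operatorname{Aut}(S_{\mathbf{F}^{\mathrm{sep}}}) \simeq \mathcal{H}_3(\mathbb{F}_3) \rtimes \mathbb{Z}/4\mathbb{Z}$ does contain such an element, forcing the inclusion to be strict. The only difference is that the paper obtains the existence of an $A_2^3$ element by citing Table~8 of~\cite{8}, whereas you give a self-contained (and correct) representation-theoretic argument that the central element of $\mathcal{H}_3(\mathbb{F}_3)$ has eigenvalue pattern $(3,1)$ and hence lies in $A_2^3$ --- an alternative you yourself note.
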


\begin{proof}
By Corollary~\ref{c2} no elements of $\operatorname{Aut}(S)$ belong to the conjugacy class~$A^3_2$. But by Table~8 in~\cite{8} and Remark~\ref{r1} the group $\operatorname{Aut}(S_{\mathbf{F}^{\mathrm{sep}}})$ contains elements lying in the conjugacy class $A^3_2$. Therefore, we have the strict inclusion
$$
\operatorname{Aut}(S) \subsetneq \mathcal{H}_3(\mathbb{F}_3) \rtimes \mathbb{Z}/4\mathbb{Z}.
$$

The lemma is proved.
\end{proof}

\begin{corollary}
\label{c7}
Let $\mathbf{F}$ be a~field of characteristic~$5$ not containing nontrivial cube roots of unity. Let $S$ be a~smooth cubic surface over $\mathbf{F}$. Then $|{\operatorname{Aut}(S)}| \leqslant 72$. Moreover, equality holds if and only if
$$
\operatorname{Aut}(S) \simeq (\mathbb{Z}/3\mathbb{Z})^2 \rtimes \mathfrak{D}_4.
$$
\end{corollary}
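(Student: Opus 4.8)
The plan is to reduce to a short case analysis over $\mathbf{F}^{\mathrm{sep}}$ via Theorem~\ref{t1} and then invoke the purely group-theoretic bound of Lemma~\ref{l51}. Since $\operatorname{Aut}(S) \subseteq \operatorname{Aut}(S_{\mathbf{F}^{\mathrm{sep}}})$, applying Theorem~\ref{t1},\,$(\mathrm{iii})'$ and $(\mathrm{iii})''$, together with Remark~\ref{r1}, I would observe that over the field of characteristic~$5$ the group $\operatorname{Aut}(S_{\mathbf{F}^{\mathrm{sep}}})$ has order $648$ (when it is $(\mathbb{Z}/3\mathbb{Z})^3 \rtimes \mathfrak{S}_4$), order $108$ (when it is $\mathcal{H}_3(\mathbb{F}_3) \rtimes \mathbb{Z}/4\mathbb{Z}$), or order at most~$54$. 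These are the only three cases to treat.

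In the first case $\operatorname{Aut}(S_{\mathbf{F}^{\mathrm{sep}}}) \simeq (\mathbb{Z}/3\mathbb{Z})^3 \rtimes \mathfrak{S}_4$, so by Theorem~\ref{t5} the group $\operatorname{Aut}(S)$ may be regarded as a subgroup of $(\mathbb{Z}/3\mathbb{Z})^3 \rtimes \mathfrak{S}_4 \subset W(\mathrm{E}_6)$. Because $\operatorname{char}\mathbf{F}=5 \neq 3$ and $\mathbf{F}$ contains no nontrivial cube root of unity, Corollary~\ref{c2} guarantees that no element of $\operatorname{Aut}(S)$ lies in the conjugacy class $A^3_2$. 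Lemma~\ref{l51} then yields at once $|{\operatorname{Aut}(S)}| \leqslant 72$, with equality precisely when $\operatorname{Aut}(S) \simeq (\mathbb{Z}/3\mathbb{Z})^2 \rtimes \mathfrak{D}_4$. This is the case that furnishes both the bound and the extremal group.

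In the second case $\operatorname{Aut}(S_{\mathbf{F}^{\mathrm{sep}}}) \simeq \mathcal{H}_3(\mathbb{F}_3) \rtimes \mathbb{Z}/4\mathbb{Z}$ has order~$108$; here Lemma~\ref{l52} supplies the \emph{strict} inclusion $\operatorname{Aut}(S) \subsetneq \mathcal{H}_3(\mathbb{F}_3) \rtimes \mathbb{Z}/4\mathbb{Z}$, so the order of $\operatorname{Aut}(S)$ is a proper divisor of~$108$ and hence at most~$54 < 72$. In the third case $|{\operatorname{Aut}(S)}| \leqslant |{\operatorname{Aut}(S_{\mathbf{F}^{\mathrm{sep}}})}| \leqslant 54 < 72$ directly. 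Thus in both remaining cases the inequality holds strictly and the value $72$ is not attained. Combining the three cases proves $|{\operatorname{Aut}(S)}| \leqslant 72$ and that equality forces $\operatorname{Aut}(S) \simeq (\mathbb{Z}/3\mathbb{Z})^2 \rtimes \mathfrak{D}_4$; the reverse implication is immediate, since this group has order~$72$. The substantive content is already packaged in Lemma~\ref{l51} (resting in turn on Lemmas~\ref{l17}, \ref{l45} and~\ref{l50}), so the only real point of care is the clean splitting into exactly these three cases and the use of the \emph{strict} inclusion of Lemma~\ref{l52} to prevent the order-$108$ case from collapsing to anything as large as~$72$.
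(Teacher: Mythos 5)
Your proposal is correct and follows essentially the same route as the paper: split on the possible groups $\operatorname{Aut}(S_{\mathbf{F}^{\mathrm{sep}}})$ given by Theorem~\ref{t1},\,$(\mathrm{iii})'$ and $(\mathrm{iii})''$, apply Lemma~\ref{l51} in the $(\mathbb{Z}/3\mathbb{Z})^3 \rtimes \mathfrak{S}_4$ case and Lemma~\ref{l52} in the $\mathcal{H}_3(\mathbb{F}_3) \rtimes \mathbb{Z}/4\mathbb{Z}$ case. If anything, you are slightly more explicit than the paper in checking the hypothesis of Lemma~\ref{l51} (no elements in $A_2^3$) via Corollary~\ref{c2}, whereas the paper leaves this implicit and instead opens with the redundant bound of Lemma~\ref{l46}.
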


\begin{proof}
By Lemma~\ref{l46} \ $|{\operatorname{Aut}(S)}| \leqslant 120$. By Theorem~\ref{t1},\,$(\mathrm{iii})''$, this implies the inequality $|{\operatorname{Aut}(S)}| \leqslant 108$. Assume that
$$
\operatorname{Aut}(S) \subset (\mathbb{Z}/3\mathbb{Z})^3 \rtimes \mathfrak{S}_4.
$$
Then by Lemma~\ref{l51} we have $|{\operatorname{Aut}(S)}| \leqslant 72$, and equality holds if and only if
$$
\operatorname{Aut}(S) \simeq (\mathbb{Z}/3\mathbb{Z})^2 \rtimes \mathfrak{D}_4.
$$

Otherwise, assume that $\operatorname{Aut}(S)$ lies in the
 second
largest automorphism group of a~smooth cubic surface over an algebraically closed field of characteristic~$5$. By~Theorem~\ref{t1},\,$(\mathrm{iii})''$, this means that
$$
\operatorname{Aut}(S) \subset \mathcal{H}_3(\mathbb{F}_3) \rtimes \mathbb{Z}/4\mathbb{Z}.
$$
However, by Lemma~\ref{l52} this means that the order of $\operatorname{Aut}(S)$ is at most $54$, which is less than~$72$. By Theorem~\ref{t1},\,$(\mathrm{iii})''$, the third largest automorphism group of a~smooth cubic surface over an algebraically closed field of characteristic~$5$ is the group $\mathcal{H}_3(\mathbb{F}_3) \rtimes \mathbb{Z}/2\mathbb{Z}$. Its order is $54$ and, again, it is less than $72$.

The corollary is proved.
\end{proof}

Now we are ready to study smooth cubic surfaces with a~regular action of the group $(\mathbb{Z}/3\mathbb{Z})^2 \rtimes \mathfrak{D}_4$.

\begin{lemma}
\label{l53}
Let $\mathbf{F}$ be a~field of characteristic different from $2$ and~$3$ such that it contains no nontrivial cube roots of unity. Let $S$ be a~smooth cubic surface over~$\mathbf{F}$ such that its automorphism group is isomorphic to $(\mathbb{Z}/3\mathbb{Z})^2 \rtimes \mathfrak{D}_4$. Then $S$ is isomorphic to the cubic surface
$$
2(x^3+y^3+z^3+t^3)-3(x^2y+xy^2+z^2t+zt^2)=0.
$$
\end{lemma}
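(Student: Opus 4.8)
The plan is to exploit the explicit model of $G \simeq (\mathbb{Z}/3\mathbb{Z})^2 \rtimes \mathfrak{D}_4$ given in Example~\ref{e4}, whose matrices have entries in the prime field and hence already define a subgroup of $\mathrm{PGL}_4(\mathbf{F})$ over $\mathbf{F}$ itself. I would reduce the lemma to two independent assertions. First, that the action of $\operatorname{Aut}(S) \simeq G$ on $\mathbb{P}^3$, induced by the anticanonical embedding $S \hookrightarrow \mathbb{P}^3$ and giving $G \hookrightarrow \mathrm{PGL}_4(\mathbf{F})$, is conjugate \emph{over $\mathbf{F}$} to the model of Example~\ref{e4}. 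Second, that the space of $G$-invariant cubic forms for that model is one-dimensional, spanned by
$$
F_0 = 2(x^3+y^3+z^3+t^3)-3(x^2y+xy^2+z^2t+zt^2).
$$
Granting both, the cubic form defining $S$ is $G$-invariant (since $G$ preserves $S$), hence a scalar multiple of $F_0$, so $S$ is projectively equivalent over $\mathbf{F}$ to the surface $F_0=0$, which is the claim.

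For the invariant computation I would work in coordinates diagonalising the normal subgroup $N \simeq (\mathbb{Z}/3\mathbb{Z})^2$ over $\mathbf{F}^{\mathrm{sep}}$, say $N = \langle \mathrm{diag}(\omega,\omega^2,1,1),\, \mathrm{diag}(1,1,\omega,\omega^2)\rangle$ in eigencoordinates $u,v,w,s$. These four eigencoordinates carry the four distinct nontrivial characters $(1,0),(2,0),(0,1),(0,2)$ of $N$, so a cubic monomial is $N$-invariant precisely when its weight vanishes modulo $3$; a direct check shows the only such monomials are $u^3,v^3,w^3,s^3$. Since $\mathfrak{D}_4$ is generated by the transposition $(12)$ and the four-cycle $(1324)$ of the coordinates, it permutes these four cubes transitively, so the $G$-invariant subspace of $\langle u^3,v^3,w^3,s^3\rangle$ is at most one-dimensional. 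As one verifies directly that $F_0 = C(x,y)+C(z,t)$ with $C(a,b)=2a^3-3a^2b-3ab^2+2b^3$ is fixed by the generators of Example~\ref{e4}, this invariant subspace is exactly $\langle F_0\rangle$.

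The crux is the first assertion, the descent to $\mathbf{F}$. Over $\mathbf{F}^{\mathrm{sep}}$ the group $N$ consists of commuting semisimple order-$3$ elements, hence is simultaneously diagonalisable; by Lemma~\ref{l15} each nonidentity element lies in one of the classes $A_2,A_2^2,A_2^3$, and combined with the uniqueness up to conjugation of $G$ inside $(\mathbb{Z}/3\mathbb{Z})^3\rtimes\mathfrak{S}_4$ (Lemma~\ref{l50}) this forces the weight configuration above and identifies the $G$-action on $\mathbb{P}^3_{\mathbf{F}^{\mathrm{sep}}}$ with the model of Example~\ref{e4}. To descend, I would use that $\mathbf{F}$ contains no nontrivial cube root of unity: the absolute Galois group swaps $\omega\leftrightarrow\omega^2$, interchanging the characters $(1,0)\leftrightarrow(2,0)$ and $(0,1)\leftrightarrow(0,2)$, so it pairs the eigenlines into two Galois-stable $2$-planes $\langle x,y\rangle$ and $\langle z,t\rangle$ defined over $\mathbf{F}$. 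On each plane the relevant order-$3$ generator is conjugate over $\mathbf{F}^{\mathrm{sep}}$ to the rational rotation block of Example~\ref{e4}, hence, by Lemma~\ref{l2}, already conjugate over $\mathbf{F}$; fixing such coordinates and matching the action of $\mathfrak{D}_4$ (pinned down by Lemma~\ref{l50}) should realise the whole $G$-action over $\mathbf{F}$ as in Example~\ref{e4}.

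The main obstacle I anticipate is precisely this rationality step: ensuring that a \emph{single} element of $\mathrm{PGL}_4(\mathbf{F})$ conjugates the entire representation, rather than each generator of $N$ separately, and that no residual twist by an outer character survives. Here I would invoke that the four-dimensional representation is absolutely irreducible, the four weights forming a single $\mathfrak{D}_4$-orbit, so that its $\mathbf{F}$-form is unique; this forces the two applications of Lemma~\ref{l2} to be compatible with the $\mathfrak{D}_4$-action. Once this compatibility is secured, the one-dimensionality of the invariant space established above closes the argument.
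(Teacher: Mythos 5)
Your overall strategy coincides with the paper's: identify $\operatorname{Aut}(S)$ with the explicit rational model of Example~\ref{e4} and then pin down the cubic form by an invariant-theory computation. Your character-theoretic version of that computation (diagonalising $N\simeq(\mathbb{Z}/3\mathbb{Z})^2$ over $\mathbf{F}^{\mathrm{sep}}$ and observing that the only $N$-invariant cubic monomials are the four eigencubes) is correct and cleaner than the paper's brute-force elimination. But there is a genuine gap in the step ``the cubic form defining $S$ is $G$-invariant (since $G$ preserves $S$), hence a scalar multiple of $F_0$''. An automorphism preserving $S$ only multiplies the defining form $f$ by a scalar, so $f$ is a \emph{semi-invariant}: $g\cdot f=\chi(g)f$ for some character $\chi\colon G\to\mathbf{F}^*$. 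On $N$ the character is forced to be trivial because the matrices~\eqref{eq7.4} have order $3$ as matrices and $\mathbf{F}$ contains no nontrivial cube roots of unity (note that this is a second, separate use of that hypothesis, which your write-up invokes only for the Galois action on eigenlines). On $\mathfrak{D}_4$, however, $\chi$ can be any of the four characters of $\mathfrak{D}_4^{\mathrm{ab}}\simeq(\mathbb{Z}/2\mathbb{Z})^2$, so $f$ may lie in any of the four $\chi$-eigenspaces of $\langle u^3,v^3,w^3,s^3\rangle$, not just the invariant line. The paper's proof spends equations \eqref{eq7.10}--\eqref{eq7.13} exactly on this point: the forms anti-invariant under $(12)$ or $(34)$ turn out to define singular surfaces, and the remaining semi-invariant $C(x,y)-C(z,t)$ must be shown isomorphic to $F_0=0$ by the explicit coordinate change $z\mapsto -z$, $t\mapsto -t$. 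Without examining these three extra eigenspaces your conclusion does not follow.

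On the descent step, which you correctly single out as the crux: you are right that one must know the embedding $\operatorname{Aut}(S)\hookrightarrow\mathrm{PGL}_4(\mathbf{F})$ is conjugate \emph{over $\mathbf{F}$} to the model of Example~\ref{e4}, and the paper passes over this in silence, so you are being more careful than the source. However, your proposed resolution is not yet a proof. The uniqueness of the $\mathbf{F}$-form of an absolutely irreducible representation (Noether--Deuring) is a statement about homomorphisms into $\mathrm{GL}_4$; for subgroups of $\mathrm{PGL}_4$ two embeddings conjugate over $\overline{\mathbf{F}}$ need not be conjugate over $\mathbf{F}$ in general (there is a Brauer-type obstruction), and Lemma~\ref{l2} as stated applies to single order-$3$ elements of $\mathrm{PGL}_4$, not to $2\times2$ blocks nor to the whole group at once. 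So this part of your argument, like the paper's, remains incomplete; it needs either a lifting of $G$ to $\mathrm{GL}_4(\mathbf{F})$ followed by Noether--Deuring, or a direct argument that the two Galois-stable planes and the $\mathfrak{D}_4$-action can be simultaneously rationalised.
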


\begin{proof}
First of all, recall that by Lemma~\ref{l50} and Remark~\ref{r6} the group
$$
(\mathbb{Z}/3\mathbb{Z})^2 \rtimes \mathfrak{D}_4
$$
is isomorphic to the group defined in Example~\ref{e4}.

Consider all cubic surfaces invariant under the action of $(\mathbb{Z}/3\mathbb{Z})^2$ generated by the elements~\eqref{eq7.4}. These cubic surfaces are defined by homogeneous polynomials~$f(x,y,z,t)$ of degree~$3$ such that each element $g \in (\mathbb{Z}/3\mathbb{Z})^2$ preserves $f(x,y,z,t)$ up to a~scalar factor. Since the matrices~\eqref{eq7.4} are of order~$3$ and $\mathbf{F}$ does not contain nontrivial cube roots of unity, this scalar is equal to~$1$. Thus, first let us find all homogeneous polynomials of degree~$3$ invariant under the action of
\begin{equation}
\label{eq7.7}
\begin{pmatrix}
-1 & 1 & 0 & 0\\
-1 & 0 & 0 & 0\\
0 & 0 & 1 & 0\\
0 & 0 & 0 & 1
\end{pmatrix}.
\end{equation}

Consider a~homogeneous polynomial
\begin{align}
\notag
f(x,y,z,t)
&=ax^3+by^3+cx^2y+dxy^2
\\
&\qquad+x^2l_1(z,t)+y^2l_2(z,t)+xyl_3(z,t)+xg_1(z,t)+yg_2(z,t)+h(z,t),
\label{eq7.8}
\end{align}
where $a,b,c,d \in \mathbf{F}$, $l_1(z,t)$, $l_2(z,t)$ and $l_3(z,t)$ are linear homogeneous polynomials,~$g_1(z,t)$ and~$g_2(z,t)$ are quadratic homogeneous polynomials, and $h(z,t)$ is a~cubic homogeneous polynomial. The element~\eqref{eq7.7} maps $f(x,y,z,t)$ to
\begin{align*}
\widetilde{f}(x,y,z,t)
&=(-a-b-c-d)x^3+ay^3+(3a+2c+d)x^2y-(3a+c)xy^2
\\
&\qquad
+(l_1(z,t)+l_2(z,t)+l_3(z,t))x^2+l_1(z,t)y^2{-}\,(2l_1(z,t)+l_3(z,t))xy
\\
&\qquad
-(g_1(z,t)+g_2(z,t))x+g_1(z,t)y+h(z,t).
\end{align*}
It is not difficult to see that a~homogeneous polynomial $f(x,y,z,t)$ such that
$$
f(x,y,z,t)=\widetilde{f}(x,y,z,t)
$$
is of the form
$$
f(x,y,z,t)=a(x^3+y^3-3xy^2)+bxy(x-y)+l(z,t)(x^2+y^2-xy)+h(z,t),
$$
where $a,b \in \mathbf{F}$, $l(z,t)$ is a~linear homogeneous polynomial, and $h(z,t)$ is a~cubic homogeneous polynomial. Applying the same reasoning to the matrix
$$
\begin{pmatrix}
1 & 0 & 0 & 0\\
0 & 1 & 0 & 0\\
0 & 0 & -1 & 1\\
0 & 0 & -1 & 0
\end{pmatrix}
$$
gives us a~homogeneous polynomial
\begin{equation}
\label{eq7.9}
f(x,y,z,t)=a(x^3+y^3-3xy^2)+bxy(x-y)+c(z^3+t^3-3zt^2)+dzt(z-t),
\end{equation}
where $a,b,c,d \in \mathbf{F}$. So all cubic surfaces invariant under the action of~$(\mathbb{Z}/3\mathbb{Z})^2$ generated by the elements~\eqref{eq7.4} are of the form~\eqref{eq7.9}. Among the surface defined by polynomials~\eqref{eq7.9} let us find the cubic surfaces that are also semi-invariant under the action of the group $\mathfrak{D}_4$ generated by the permutations $(12)$ and $(1324)$ of~the coordinates $x$, $y$, $z$ and~$t$. Consider the permutation $(12)$. It takes~\eqref{eq7.9} to the~homogeneous polynomial
\begin{equation}
\label{eq7.10}
\widehat{f}(x,y,z,t)=a(x^3+y^3-3yx^2)-bxy(x-y)+c(z^3+t^3-3zt^2)+dzt(z-t).
\end{equation}
We can see from~\eqref{eq7.10} that if $f(x,y,z,t)=-\widehat{f}(x,y,z,t)$, then $f(x,y,z,t)=0$ defines a~singular cubic surface. This means that the polynomials~\eqref{eq7.9} invariant under the action of the permutation $(12)$ and defining smooth cubic surfaces are of the form
\begin{equation}
\label{eq7.11}
f(x,y,z,t)=a(2(x^3+y^3)-3(x^2y+xy^2))+c(z^3+t^3-3zt^2)+dzt(z-t),
\end{equation}
where $a \in \mathbf{F}^*$ and $c,d \in \mathbf{F}$ are such that~\eqref{eq7.11} defines a~smooth surface. The same reasoning, as applied to the permutation
$$
(12)(1324)^2=(34),
$$
shows that smooth cubic surfaces must be defined by polynomials of the form
\begin{equation}
\label{eq7.12}
f(x,y,z,t)=a(2(x^3+y^3)-3(x^2y+xy^2))+c(2(z^3+t^3)-3(z^2t+zt^2)),
\end{equation}
where $a,c \in \mathbf{F}^*$. Using the semi-invariance of~\eqref{eq7.12} under the permutation $(1324)$ we obtain
\begin{equation}
\label{eq7.13}
f(x,y,z,t)=2(x^3+y^3)- 3(x^2y+xy^2)\pm (2(z^3+t^3)-3(z^2t+zt^2)).
\end{equation}
However, the change of coordinates
$$
x \mapsto x, \qquad y \mapsto y, \qquad z \mapsto -z \quad\text{and}\quad t \mapsto -t
$$
shows that the smooth cubic surfaces defined by the polynomials~\eqref{eq7.13} with plus and minus signs are isomorphic.

Lemma~\ref{l53} is proved.
\end{proof}

\begin{lemma}
\label{l54}
Let $\mathbf{F}$ be a~field of characteristic~$5$ not containing nontrivial cube roots of unity. Let $S$ be a~smooth cubic surface over $\mathbf{F}$ such that its automorphism group is isomorphic to $(\mathbb{Z}/3\mathbb{Z})^2 \rtimes \mathfrak{D}_4$. Then $S$ is isomorphic to the cubic surface~\eqref{eq1.2}.
\end{lemma}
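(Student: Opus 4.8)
The plan is to reduce to the surface already identified in Lemma~\ref{l53} and then perform an explicit change of coordinates special to characteristic~$5$. Since the characteristic is $5$ (in particular different from $2$ and~$3$) and $\mathbf{F}$ contains no nontrivial cube root of unity, Lemma~\ref{l53} applies verbatim and tells us that $S$ is isomorphic to the surface
\[
2(x^3+y^3+z^3+t^3)-3(x^2y+xy^2+z^2t+zt^2)=0.
\]
Everything then comes down to transforming this equation into \eqref{eq1.2} by a single $\mathbf{F}$-linear substitution.

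First I would simplify the defining polynomial modulo $5$. Because $2\equiv -3\pmod 5$, it equals $2\bigl[(x+y)(x^2+y^2)+(z+t)(z^2+t^2)\bigr]$, so up to an overall scalar $S$ is cut out by
\[
(x+y)(x^2+y^2)+(z+t)(z^2+t^2)=0.
\]
The two arithmetic identities I would rely on in characteristic~$5$ are $(x+2y)(x+3y)=x^2+5xy+6y^2=x^2+y^2$ and $2^{-1}=3$.

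Next I would introduce the new coordinates $X=x+y$, $T=x+3y$, $Y=z+t$, $Z=z+3t$. Using the first identity one gets $XT(X+T)=(x+y)(x+3y)\cdot 2(x+2y)=2(x+y)(x^2+y^2)$, hence $(x+y)(x^2+y^2)=3\,XT(X+T)$, and by symmetry $(z+t)(z^2+t^2)=3\,YZ(Y+Z)$. Therefore in the new coordinates $S$ is defined by $X^2T+XT^2+Y^2Z+YZ^2=0$, which under the identification $(X,T,Y,Z)\leftrightarrow(x,t,y,z)$ is exactly $x^2t+t^2x+y^2z+z^2y=0$, i.e.\ \eqref{eq1.2}. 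The substitution is block-diagonal with two identical blocks $\left(\begin{smallmatrix}1&1\\1&3\end{smallmatrix}\right)$ of determinant $2\neq 0$, so it lies in $\mathrm{PGL}_4(\mathbf{F})$ and is an isomorphism; moreover all its entries lie in the prime field $\mathbb{F}_5$, so it is defined over $\mathbf{F}$ with no auxiliary field elements required.

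The only point that needs genuine care—and the reason the argument succeeds over an arbitrary, possibly infinite, field without a cube-root obstruction—is the matching of the two scalar factors. A naive normalization that treats the two binary cubics $(x+y)(x^2+y^2)$ and $(z+t)(z^2+t^2)$ separately could produce different constants $\lambda$ and $\mu$, and reconciling $\lambda G_1+\mu G_2$ with $G_1+G_2$ would force $\mu/\lambda$ to be a cube in $\mathbf{F}$, which is not available in general. The symmetric substitution above sidesteps this entirely, since it yields the same factor $3$ for both summands. I expect the verification of the two displayed identities and of the invertibility of the substitution to be the whole computation, and it is routine.
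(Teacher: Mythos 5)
Your proposal is correct and follows essentially the same route as the paper: both reduce to the surface of Lemma~\ref{l53}, use $2=-3$ in characteristic~$5$ to rewrite the equation as $(x+y)(x^2+y^2)+(z+t)(z^2+t^2)=0$, and then apply an explicit invertible linear substitution with coefficients in $\mathbb{F}_5$ (yours via the blocks $X=x+y$, $T=x+3y$; the paper's via $x\mapsto x+2y$, $y\mapsto 2x+y$ followed by a coordinate permutation) to land on~\eqref{eq1.2}. The computations you outline check out, so no further changes are needed.
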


\begin{proof}
From Lemma~\ref{l53} we get that $S$ is isomorphic to
\begin{equation}
\label{eq7.14}
2(x^3+y^3+z^3+t^3)-3(x^2y+xy^2+z^2t+zt^2)=0.
\end{equation}
Since $2=-3$ in the field $\mathbf{F}$, the cubic surface~\eqref{eq7.14} is defined by the equation
\begin{equation}
\label{eq7.15}
x^3+y^3+z^3+t^3+x^2y+xy^2+z^2t+zt^2=0.
\end{equation}

By the change of coordinates
$$
x \mapsto x+ 2y, \qquad y \mapsto 2x+ y, \qquad z \mapsto z+ 2t \quad\text{and}\quad t \mapsto 2z+ t
$$
the equation~\eqref{eq7.15} is transformed into
\begin{equation}
\label{eq7.16}
x^2y+xy^2+z^2t+zt^2=0.
\end{equation}
By the change of coordinates $[x:y:z:t] \mapsto [x:t:z:y]$ the equation~\eqref{eq7.16} is transformed into
$$
 x^2t+y^2z+z^2y+t^2x=0.
$$

The lemma is proved.
\end{proof}

Now we prove that the Fermat cubic surface is not isomorphic to the surface~\eqref{eq1.2} over fields without cube roots of unity.

\begin{lemma}
\label{l55}
Let $\mathbf{F}$ be a~field of characteristic different from $3$ such that it does not contain nontrivial cube roots of unity. Then the Fermat cubic surface is not isomorphic to the surface~\eqref{eq1.2}.
\end{lemma}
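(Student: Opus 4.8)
The plan is to separate the two surfaces by the maximal rank of an elementary abelian $3$-subgroup of their automorphism groups over $\mathbf{F}$. Denote by $S_1$ the surface~\eqref{eq1.2} and by $S_2$ the Fermat cubic surface; both are smooth by Proposition~\ref{p1}. Since an isomorphism of cubic surfaces over $\mathbf{F}$ induces an isomorphism $\operatorname{Aut}(S_1)\simeq\operatorname{Aut}(S_2)$ of the automorphism groups over $\mathbf{F}$, it suffices to exhibit a subgroup isomorphic to $(\mathbb{Z}/3\mathbb{Z})^2$ in $\operatorname{Aut}(S_1)$ and to show that $\operatorname{Aut}(S_2)$ contains no such subgroup.

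The subgroup on $S_1$ is explicit. Writing the equation as $xt(x+t)+yz(y+z)=0$, the binary cubic $uv(u+v)$ has three distinct factors $u$, $v$ and $u+v$, so the linear map that in the corresponding block is $\left(\begin{smallmatrix}0&1\\-1&-1\end{smallmatrix}\right)$ cyclically permutes these three lines, and hence preserves $uv(u+v)$ up to a scalar which one checks to be~$1$; it has order~$3$ and is defined over the prime field, in particular over $\mathbf{F}$ for every characteristic different from~$3$. Applying it to the block $(x,t)$ and to the block $(y,z)$ yields two commuting automorphisms $\alpha$ and $\beta$ of $S_1$ of order~$3$ defined over $\mathbf{F}$, so $\langle\alpha,\beta\rangle\simeq(\mathbb{Z}/3\mathbb{Z})^2$ lies in $\operatorname{Aut}(S_1)$.

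For the Fermat surface in characteristic different from~$2$ and~$3$ the argument is clean. By Theorem~\ref{t1} and Remark~\ref{r1} the automorphism group of $S_2$ over $\mathbf{F}^{\mathrm{sep}}$ is $(\mathbb{Z}/3\mathbb{Z})^3\rtimes\mathfrak{S}_4$, and $\operatorname{Aut}(S_2)$ is its subgroup of elements defined over $\mathbf{F}$. A nontrivial element $\mathrm{diag}(\omega^{a_1},\dots,\omega^{a_4})$ of the diagonal torus $(\mathbb{Z}/3\mathbb{Z})^3$ lies in $\mathrm{PGL}_4(\mathbf{F})$ only if the $a_i$ are all congruent modulo~$3$, since otherwise a ratio $\omega^{\pm1}$ would lie in $\mathbf{F}$; as $\omega\notin\mathbf{F}$, the restriction to $\operatorname{Aut}(S_2)$ of the projection $\mathrm{pr}\colon(\mathbb{Z}/3\mathbb{Z})^3\rtimes\mathfrak{S}_4\to\mathfrak{S}_4$ therefore has trivial kernel (compare the proof of Corollary~\ref{c2}). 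Hence $\operatorname{Aut}(S_2)$ embeds into $\mathfrak{S}_4$, whose Sylow $3$-subgroups are cyclic, and so it contains no $(\mathbb{Z}/3\mathbb{Z})^2$. This settles the lemma in characteristic different from~$2$ and~$3$.

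The characteristic~$2$ case is the main obstacle, because there the automorphism group of $S_2$ over $\mathbf{F}^{\mathrm{sep}}$ is the simple group $\mathrm{PSU}_4(\mathbb{F}_2)$ and the projection above disappears. I would again aim to show that $\operatorname{Aut}(S_2)$ has cyclic Sylow $3$-subgroups. Writing $\Gamma$ for the image of the Galois group in $W(\mathrm{E}_6)$, the description in~\S\ref{s3} gives $\operatorname{Aut}(S_2)=\mathrm{PSU}_4(\mathbb{F}_2)\cap C_{W(\mathrm{E}_6)}(\Gamma)$, and the hypothesis $\omega\notin\mathbf{F}$ forces the nontrivial Galois element $\gamma$ to invert the diagonal torus (it sends $\mathrm{diag}(\omega^{a_i})$ to $\mathrm{diag}(\omega^{-a_i})$), so that $C_{W(\mathrm{E}_6)}(\gamma)$ meets $(\mathbb{Z}/3\mathbb{Z})^3$ trivially. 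The problem thus reduces to identifying the conjugacy class of $\gamma$ in $W(\mathrm{E}_6)$ and reading off the $3$-structure of its centralizer in $\mathrm{PSU}_4(\mathbb{F}_2)$. This class differs from the class $A_1$ occupied by the Galois element of the Clebsch cubic surface, which by Corollary~\ref{c6} coincides with $S_1$ here and whose Galois image lies in $C_{W(\mathrm{E}_6)}(\mathfrak{S}_5)\simeq\mathbb{Z}/2\mathbb{Z}$ by Lemma~\ref{l7}; consequently the two centralizers, and hence the two automorphism groups over $\mathbf{F}$, differ. Carrying out this class computation, equivalently bounding the order of $\operatorname{Aut}(S_2)$ in characteristic~$2$, is where I expect the real work to lie, and it is also the natural place to bring in the finite-field input used for $S_1$ through Corollary~\ref{c6}.
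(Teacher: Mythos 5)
There is a genuine gap: you have not proved the lemma in characteristic~$2$, and you say so yourself (``Carrying out this class computation \dots is where I expect the real work to lie''). The part you do complete is fine and is a genuinely different route from the paper's: your construction of $(\mathbb{Z}/3\mathbb{Z})^2\subset\operatorname{Aut}(S_1)$ via the two block matrices acting on $xt(x+t)+yz(y+z)$ is correct in every characteristic different from~$3$ (it is essentially the group of Example~\ref{e4}), and in characteristic different from $2$ and $3$ your observation that a nontrivial element of the diagonal torus $\operatorname{diag}(\omega^{a_i})$ cannot lie in $\mathrm{PGL}_4(\mathbf{F})$ when $\omega\notin\mathbf{F}$ does force $\operatorname{Aut}(S_2)$ to embed into $\mathfrak{S}_4$, which has cyclic Sylow $3$-subgroups. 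But in characteristic~$2$ the group $\operatorname{Aut}(S_{2,\mathbf{F}^{\mathrm{sep}}})\simeq\mathrm{PSU}_4(\mathbb{F}_2)$ has Sylow $3$-subgroups of order $81$, most of whose elements of order~$3$ do not lie in the torus normalizer $(\mathbb{Z}/3\mathbb{Z})^3\rtimes\mathfrak{S}_4$; showing that the subgroup of Galois-fixed elements contains no $(\mathbb{Z}/3\mathbb{Z})^2$ therefore requires identifying the class of the Galois generator and actually computing the $3$-structure of its centralizer in $\mathrm{PSU}_4(\mathbb{F}_2)$ --- none of which you carry out, and the observation that the Galois element inverts the diagonal torus controls only the torus part. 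Since the whole point of the invariant you chose is to distinguish the two surfaces, leaving this case open leaves the lemma unproved precisely where it is least obvious.

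For comparison, the paper's proof avoids automorphism groups entirely and is uniform in the characteristic: it writes down the $27$ lines of the Fermat surface over $\mathbf{F}^{\mathrm{sep}}$ explicitly (they all involve the coefficients $\omega^i$), observes that the quadratic Galois action $\omega\mapsto\omega^2$ fixes exactly the three lines with $i=j=0$, and contrasts this with the surface~\eqref{eq1.2}, whose equation factors as $xt(x+t)+yz(y+z)=0$ and hence visibly contains nine lines $\{L=M=0\}$ with $L\in\{x,t,x+t\}$ and $M\in\{y,z,y+z\}$, all defined over the prime field. Since an $\mathbf{F}$-isomorphism preserves the number of $\mathbf{F}$-rational lines, the surfaces are not isomorphic. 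If you want to salvage your approach, you must either supply the missing $\mathrm{PSU}_4(\mathbb{F}_2)$ computation or switch to an invariant, such as the count of $\mathbf{F}$-rational lines, that does not degenerate in characteristic~$2$.
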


\begin{proof}
Consider these cubic surfaces over $\mathbf{F}^{\mathrm{sep}}$. The lines on the Fermat cubic surface over $\mathbf{F}^{\mathrm{sep}}$ are defined by the systems of equations in $\mathbb{P}^3_{\mathbf{F}^{\mathrm{sep}}}$
\begin{equation}
\label{eq7.17}
\begin{cases}
x+\omega^i y=0,\\
z+\omega^jt=0,
\end{cases}
\qquad
\begin{cases}
x+\omega^i z=0,\\
t+\omega^jy=0
\end{cases}
\quad\text{and}\quad
\begin{cases}
x+\omega^i t=0,\\
z+\omega^jy=0,
\end{cases}
\end{equation}
where $\omega \in \mathbf{F}^{\mathrm{sep}}$ is a~nontrivial cube root of unity and $i,j \in \{0,1,2\}$. Let $\mathbf{L}$ be the extension of degree~$2$ of $\mathbf{F}$ such that $\mathbf{L}=\mathbf{F}[t]/(t^2+t+1)$. Then we immediately see from~\eqref{eq7.17} that all lines on the Fermat cubic surface over $\mathbf{F}^{\mathrm{sep}}$ are defined over~$\mathbf{L}$. Since the nonidentity element of the Galois group
$$
\mathrm{Gal}(\mathbf{L}/\mathbf{F}) \simeq \mathbb{Z}/2\mathbb{Z}
$$
maps $\omega$ to~$\omega^2$, there are only three lines on the Fermat surface that are defined over~$\mathbf{F}$, namely,
$$
\begin{cases}
x+ y=0,\\
z+t=0,
\end{cases}
\qquad
\begin{cases}
x+ z=0,\\
t+y=0
\end{cases}
\quad\text{and}\quad
\begin{cases}
x+ t=0,\\
z+y=0.
\end{cases}
$$

However, more than three lines on the surface~\eqref{eq1.2} are defined over $\mathbf{F}$, for instance,
$$
\begin{cases}
x=0,\\
z=0,
\end{cases}
\qquad
\begin{cases}
y=0,\\
z=0,
\end{cases}
\qquad
\begin{cases}
x=0,\\
t=0,
\end{cases}
\qquad
\begin{cases}
x=0,\\
z+t=0
\end{cases} \text{and so on}.
$$
This means that these cubic surfaces are not isomorphic.

The lemma is proved.
\end{proof}

After this lemma one wonders if the Fermat cubic surface is isomorphic to the surface~\eqref{eq1.2} over fields containing nontrivial cube roots of unity. The following lemma gives a~positive answer to this question.

\begin{lemma}
\label{l56}
Let $\mathbf{F}$ be a~field of characteristic different from~$3$ that contains a~nontrivial cube root of unity. Then the Fermat cubic surface is isomorphic to the surface~\eqref{eq1.2}.
\end{lemma}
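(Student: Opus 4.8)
The plan is to exhibit an explicit $\mathbf{F}$-linear change of coordinates carrying the surface~\eqref{eq1.2} to the Fermat cubic surface. The starting observation is that the defining polynomial of~\eqref{eq1.2} splits into two binary cubic forms,
\[
x^2t+y^2z+z^2y+t^2x=xt(x+t)+yz(y+z),
\]
one in the variables $(x,t)$ and one in $(y,z)$. Each of these binary cubics is a product of three pairwise non-proportional linear forms that are all defined over $\mathbf{F}$, namely $x$, $t$, $x+t$ and $y$, $z$, $y+z$. On the other hand, since $\mathbf{F}$ contains a nontrivial cube root of unity $\omega$, the ``Fermat'' binary cubic factors as $u^3+v^3=(u+v)(u+\omega v)(u+\omega^2 v)$, again into three $\mathbf{F}$-linear forms. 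So it suffices to transform each binary cubic $ab(a+b)$ into a sum of two cubes by an invertible $\mathbf{F}$-linear substitution in the two variables.

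First I would match the two factorizations. Writing $p=u+v$, $q=u+\omega v$, $r=u+\omega^2 v$, one checks these three forms satisfy the single linear relation $\omega p+\omega^2 q+r=0$, and conversely that $u=(p+q+r)/3$ and $v=(p+\omega^2 q+\omega r)/3$, where $\operatorname{char}\mathbf{F}\neq 3$ is used to invert $3$. Setting $p=\omega x$, $q=t$ and $r=-\omega^2(x+t)$ satisfies the relation identically, and then $u^3+v^3=pqr=-xt(x+t)$; replacing $(u,v)$ by $(-u,-v)$, which is legitimate because $(-1)^3=-1$, yields explicit $\mathbf{F}$-linear forms $u,v$ in $x,t$ with $u^3+v^3=xt(x+t)$. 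The determinant of the resulting $2\times 2$ substitution matrix is a nonzero multiple of $1-\omega$, hence nonzero, so the substitution is invertible. Performing the analogous substitution on the pair $(y,z)$ produces forms $w,s$ with $w^3+s^3=yz(y+z)$.

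Combining the two blocks gives an invertible $\mathbf{F}$-linear transformation of $\mathbb{P}^3$ in whose new coordinates $(u,v,w,s)$ the surface~\eqref{eq1.2} is cut out by $u^3+v^3+w^3+s^3=0$, that is, by the Fermat cubic, which establishes the claimed isomorphism over $\mathbf{F}$. The only genuinely delicate points are that the change of coordinates be defined over $\mathbf{F}$ and be invertible, and both rest precisely on the hypotheses $\omega\in\mathbf{F}$ and $\operatorname{char}\mathbf{F}\neq 3$; a secondary point is that the scalar relating the two cubic forms (here $-1$) be a cube in $\mathbf{F}$ so that it can be absorbed into the linear forms. Everything else is a direct verification.
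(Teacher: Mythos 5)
Your proof is correct and takes essentially the same route as the paper: both produce an explicit block-diagonal $\mathbf{F}$-linear collineation (a $2\times 2$ change of variables on each of the pairs of coordinates, built from $\omega$) identifying~\eqref{eq1.2} with the Fermat cubic. The paper simply writes the collineation down and asserts the verification, whereas you derive it from the factorization $u^3+v^3=(u+v)(u+\omega v)(u+\omega^2 v)$ matched against $xt(x+t)$ and $yz(y+z)$; the substance is the same.
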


\begin{proof}
Let $\omega \in \mathbf{F}$ be a~nontrivial cube root of unity. Then the isomorphism between the Fermat cubic surface and the surface~\eqref{eq1.2} is given by the collineation
$$
[x:y:z:t] \mapsto [x+y:\omega x+\omega^2 y:z+t:\omega z+\omega^2 t],
$$
which maps the surface~\eqref{eq1.2} to the Fermat cubic surface.

The lemma is proved.
\end{proof}

\section{Proof of the main results}
\label{s8}

In this section we prove Theorem~\ref{t3} and Proposition~\ref{p1}.

\begin{proof}[of Theorem~\ref{t3}]
Let us prove~$(\mathrm{i})_2$. The inequality $|{\operatorname{Aut}(S)}| \leqslant 25\,920$ follows from Theorem~\ref{t1},\,$(\mathrm{i})'$. The equality case follows from~\cite{8}, \S\,5.1. The subgroup of order~$25\,920$ in $W(\mathrm{E}_6)$ is isomorphic to $\mathrm{PSU}_4(\mathbb{F}_2)$. By Lemma~\ref{l9} it contains a~subgroup isomorphic to  $(\mathbb{Z}/3\mathbb{Z})^3 \rtimes \mathfrak{S}_4$. Hence by Lemma~\ref{l44} the smooth cubic surface~$S$ with a~regular action of  $(\mathbb{Z}/3\mathbb{Z})^3 \rtimes \mathfrak{S}_4$  is unique up to isomorphism and isomorphic to the Fermat cubic surface.

Assertion~$(\mathrm{ii})_2$ follows from Lemma~\ref{l49}.

Let us prove $(\mathrm{i})_3$. The inequality $|{\operatorname{Aut}(S)}| \leqslant 216$ follows from Theorem~\ref{t1},\,$(\mathrm{ii})'$. The equality condition holds by Lemma~\ref{l28}. The group of order~$216$ that is the automorphism group of a~smooth cubic surface is also unique by Theorem~\ref{t1},\,$(\mathrm{ii})'$, and isomorphic to $ \mathcal{H}_3(\mathbb{F}_3) \rtimes \mathbb{Z}/8\mathbb{Z}$. By Lemma~\ref{l36} the smooth cubic surface $S$
such that
$$
\operatorname{Aut}(S) \simeq \mathcal{H}_3(\mathbb{F}_3) \rtimes \mathbb{Z}/8\mathbb{Z}
$$
is unique up to isomorphism and isomorphic to the smooth surface~\eqref{eq1.1}.

Assertion $(\mathrm{ii})_3$ follows directly from Lemma~\ref{l40}.

Let us prove assertions $(\mathrm{i})_5$ and~$(\mathrm{i})_{\neq 2,3,5}$. The inequality $|{\operatorname{Aut}(S)}| \leqslant 648$ follows from Theorem~\ref{t1}, $(\mathrm{iii})'$ and $(\mathrm{iv})'$. The equality condition holds by Example~\ref{e1}. The group of order~$648$ that is the automorphism group of a~smooth cubic surface is unique by Theorem~\ref{t1}, $(\mathrm{iii})'$ and~$(\mathrm{iv})'$, and isomorphic to $(\mathbb{Z}/3\mathbb{Z})^3 \rtimes \mathfrak{S}_4$. So by Lemma~\ref{l44} the smooth cubic surface $S$ with such an automorphism group is unique up to isomorphism and isomorphic to the Fermat cubic.

Now we prove~$(\mathrm{ii})_5$. By Corollary~\ref{c7} \ $|{\operatorname{Aut}(S)}| \leqslant 72$ and equality holds if and only if $\operatorname{Aut}(S)$ is isomorphic to $(\mathbb{Z}/3\mathbb{Z})^2 \rtimes \mathfrak{D}_4$. By Lemma~\ref{l54} the smooth cubic surface~$S$ is unique up to isomorphism and isomorphic to the surface~\eqref{eq1.2}.

Assertion~$(\mathrm{ii})_{\neq 2,3,5}$ follows from Lemma~\ref{l48}.

Theorem~\ref{t3} is proved.
\end{proof}

Before we prove Proposition~\ref{p1}, we state the following lemmas.

\begin{lemma}
\label{l57}
Let $\mathbf{F}$ be a~separably closed field of characteristic different from~$2$ and~$3$. Then the automorphism group of the cubic surface~\eqref{eq1.1} over $\mathbf{F}$ is isomorphic to~$\mathbb{Z}/8\mathbb{Z}$.
\end{lemma}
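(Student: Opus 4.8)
The plan is to exhibit an explicit automorphism of order~$8$ and then bound the whole group from above using the classification over separably closed fields.

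First I would record that $\zeta_8 \in \mathbf{F}$: since $\mathbf{F}$ is separably closed and $\operatorname{char}\mathbf{F} \neq 2$, the polynomial $x^8-1$ is separable and splits over $\mathbf{F}$, so a primitive eighth root of unity lies in~$\mathbf{F}$. By Proposition~\ref{p1},\,(i), the surface~\eqref{eq1.1} is smooth (it is singular only in characteristic~$2$), so I may apply Theorems~\ref{t4} and~\ref{t5}. A direct substitution shows that the element~$h$ of the form~\eqref{eq5.5}, namely $x \mapsto \zeta_8^6 x$, $y \mapsto \zeta_8 y$, $z \mapsto \zeta_8^4 z$, $t \mapsto t$, sends each of the four monomials $t^3$, $tz^2$, $xy^2$, $x^2z$ to itself, hence preserves~\eqref{eq1.1}; and $h$ has order~$8$ in $\mathrm{PGL}_4(\mathbf{F})$ because $h^n$ is scalar only when $8 \mid n$. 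Therefore $\mathbb{Z}/8\mathbb{Z} \subseteq \operatorname{Aut}(S)$, which gives the lower bound.

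For the reverse inclusion I would argue that no strictly larger group occurs. By Theorem~\ref{t4},\,(i), $\operatorname{Aut}(S) \simeq \operatorname{Aut}(S_{\overline{\mathbf{F}}})$, so it suffices to work over an algebraically closed field of characteristic different from~$2$ and~$3$, where the automorphism groups of smooth cubic surfaces are completely classified (see~\cite{7}, \S\,9.5.1, in characteristic zero and~\cite{8} in positive characteristic). Since $\mathbb{Z}/8\mathbb{Z} \subseteq \operatorname{Aut}(S)$, the order of $\operatorname{Aut}(S)$ is divisible by~$8$ and $\operatorname{Aut}(S)$ contains an element of order~$8$. The groups in this classification whose order is divisible by~$8$ are the large groups of Theorem~\ref{t1}: $(\mathbb{Z}/3\mathbb{Z})^3 \rtimes \mathfrak{S}_4$, $\mathfrak{S}_5$, $\mathfrak{S}_4$, and, in characteristic~$5$, also $\mathcal{H}_3(\mathbb{F}_3) \rtimes \mathbb{Z}/4\mathbb{Z}$ and $\mathcal{H}_3(\mathbb{F}_3) \rtimes \mathbb{Z}/2\mathbb{Z}$, together with $\mathbb{Z}/8\mathbb{Z}$ itself. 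Each of the former has a Sylow $2$-subgroup of exponent at most~$4$ (the Sylow $2$-subgroup of $\mathfrak{S}_4$, $\mathfrak{S}_5$ and of $(\mathbb{Z}/3\mathbb{Z})^3\rtimes\mathfrak{S}_4$ is dihedral of order~$8$, and the two characteristic-$5$ groups have cyclic Sylow $2$-subgroups of order at most~$4$), so none of them contains an element of order~$8$. Hence the only possibility is $\operatorname{Aut}(S) \simeq \mathbb{Z}/8\mathbb{Z}$.

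The hard part is exactly this upper bound: Theorem~\ref{t1} lists only the few largest groups, so ruling out intermediate groups requires the full classification of~\cite{7} and~\cite{8} rather than Theorem~\ref{t1} alone. A useful structural constraint that supports the argument is Lemma~\ref{l3}: the unique conjugacy class of order-$8$ elements of $W(\mathrm{E}_6)$ (the class $D_5$) has a centralizer of order~$8$, which is therefore cyclic and generated by~$h$, so $\mathrm{C}_{\operatorname{Aut}(S)}(h) = \langle h \rangle$. Combined with $\mathrm{N}_{W(\mathrm{E}_6)}(\langle h \rangle)/\langle h\rangle \hookrightarrow \operatorname{Aut}(\mathbb{Z}/8\mathbb{Z}) \simeq (\mathbb{Z}/2\mathbb{Z})^2$, this already confines any putative overgroup of $\langle h\rangle$ to order at most~$32$; and if needed, the fixed-point data of Lemma~\ref{l37} (the point $[1:0:0:0]$ is the only fixed point of~$h$ whose tangent-plane section is a cuspidal cubic, while the other two give reducible sections) can be used to exclude the remaining configurations, giving a more self-contained route to $\operatorname{Aut}(S) \simeq \mathbb{Z}/8\mathbb{Z}$.
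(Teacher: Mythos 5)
Your main argument is correct, but it takes a genuinely different route from the paper. The paper proves Lemma~\ref{l57} by bare citation: Theorem~9.5.8 of~\cite{7} in characteristic zero and Lemma~12.12 of~\cite{8} in positive characteristic, transported from algebraically closed to separably closed fields via Theorem~\ref{t4},\,(i); that is, it quotes the precise entry of the classification that identifies the automorphism group of this particular surface. You instead prove the lower bound $\mathbb{Z}/8\mathbb{Z}\subseteq\operatorname{Aut}(S)$ by exhibiting the element~\eqref{eq5.5} explicitly (the computation is correct and mirrors what the paper does in Lemma~\ref{l28} in characteristic~$3$), and you obtain the upper bound from the complete list of automorphism groups of smooth cubic surfaces over algebraically closed fields of characteristic $\neq 2,3$: no group on that list other than $\mathbb{Z}/8\mathbb{Z}$ contains an element of order~$8$, because the Sylow $2$-subgroups of the larger groups are dihedral of order~$8$ or cyclic of order at most~$4$. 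That observation is true and the deduction is valid; both proofs ultimately lean on the external classification, yours on the full tables of~\cite{7} and~\cite{8} rather than on a single theorem, which buys a little more transparency at the cost of having to inspect every entry.

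Two blemishes deserve mention. First, a small one: $|\mathcal{H}_3(\mathbb{F}_3)\rtimes\mathbb{Z}/4\mathbb{Z}|=108$ and $|\mathcal{H}_3(\mathbb{F}_3)\rtimes\mathbb{Z}/2\mathbb{Z}|=54$, neither of which is divisible by~$8$, so these groups should not appear in your list of candidates with order divisible by~$8$; this is harmless since you exclude them anyway. Second, and more seriously, the supplementary remark in your final paragraph is incorrect: an overgroup $G$ of $\langle h\rangle$ in $W(\mathrm{E}_6)$ need not normalize $\langle h\rangle$, so the chain $G\subseteq \mathrm{N}_{W(\mathrm{E}_6)}(\langle h\rangle)$ and hence the bound $|G|\leqslant 8\cdot|{\operatorname{Aut}(\mathbb{Z}/8\mathbb{Z})}|=32$ do not follow --- $W(\mathrm{E}_6)$ itself is an overgroup of $\langle h\rangle$ of order $51\,840$. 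The only valid part of that remark is $\mathrm{C}_{W(\mathrm{E}_6)}(h)=\langle h\rangle$ (which does follow from Lemma~\ref{l3}), and that alone does not bound $\operatorname{Aut}(S)$. Since you offer this paragraph only as optional support, the proof stands on its first three paragraphs, but the normalizer claim should be deleted or repaired.
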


For characteristic zero this result follows from Theorem~9.5.8 in~\cite{7}. For a~positive characteristic it follows from Lemma~12.12 in~\cite{8} by applying Theorem~\ref{t4},\,(i).

\begin{lemma}
\label{l58} The
cubic surface~\eqref{eq1.1} is isomorphic neither to the Fermat cubic surface, nor to the Clebsch one, nor to the surface~\eqref{eq1.2}.
\end{lemma}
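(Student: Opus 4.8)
The plan is to separate the four surfaces by two isomorphism invariants: smoothness, and the order of the automorphism group computed over $\mathbf{F}^{\mathrm{sep}}$. An isomorphism $S_1 \simeq S_2$ over $\mathbf{F}$ induces an isomorphism $(S_1)_{\mathbf{F}^{\mathrm{sep}}} \simeq (S_2)_{\mathbf{F}^{\mathrm{sep}}}$, hence an isomorphism of automorphism groups over $\mathbf{F}^{\mathrm{sep}}$; by Theorem~\ref{t4},\,(i), these agree with the automorphism groups over $\overline{\mathbf{F}}$ recorded in Theorem~\ref{t1}. So it suffices to check that in every characteristic the surface~\eqref{eq1.1} either fails to be smooth exactly when one of the other three is smooth, or else has a geometric automorphism group of order different from theirs.

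First I would clear the characteristics in which smoothness alone distinguishes the surfaces, invoking Proposition~\ref{p1},\,(i). The surface~\eqref{eq1.1} is singular precisely in characteristic~$2$, where the Fermat, Clebsch and~\eqref{eq1.2} surfaces are all smooth; in characteristic~$3$ the Fermat surface and~\eqref{eq1.2} are singular while~\eqref{eq1.1} is smooth; and in characteristic~$5$ the Clebsch surface is singular while~\eqref{eq1.1} is smooth. In each of these situations the pair in question consists of a smooth surface and a non-smooth one, so no isomorphism can exist.

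It remains to treat the cases in which both surfaces are smooth, and here the comparison of automorphism orders does the work. For the comparison with~\eqref{eq1.2} I would first observe that over $\mathbf{F}^{\mathrm{sep}}$ a nontrivial cube root of unity is always present (in characteristic different from~$3$), so by Proposition~\ref{p1},\,(iv), the surface~\eqref{eq1.2} is isomorphic to the Fermat surface over $\mathbf{F}^{\mathrm{sep}}$; this reduces the comparison with~\eqref{eq1.2} to the one with the Fermat surface. In characteristic different from~$2$ and~$3$, Lemma~\ref{l57} shows that the automorphism group of~\eqref{eq1.1} over $\mathbf{F}^{\mathrm{sep}}$ is isomorphic to $\mathbb{Z}/8\mathbb{Z}$, of order~$8$, whereas Theorem~\ref{t1} gives order~$648$ for the Fermat surface and order~$120$ for the Clebsch surface; since these orders are pairwise distinct,~\eqref{eq1.1} is isomorphic to none of the three. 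The only remaining smooth pair is~\eqref{eq1.1} against the Clebsch surface in characteristic~$3$: here Theorem~\ref{t1},\,$(\mathrm{ii})'$, gives automorphism group $\mathcal{H}_3(\mathbb{F}_3) \rtimes \mathbb{Z}/8\mathbb{Z}$ of order~$216$ for~\eqref{eq1.1}, again different from the order~$120$ of $\mathfrak{S}_5$ for the Clebsch surface.

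The argument is in essence a bookkeeping over characteristics, and its only genuine input is Lemma~\ref{l57}: the surface~\eqref{eq1.1} carries an exceptionally small geometric automorphism group $\mathbb{Z}/8\mathbb{Z}$, whose order cannot match the much larger groups attached to the Fermat, Clebsch and~\eqref{eq1.2} surfaces. I expect no serious obstacle beyond ensuring that every characteristic is handled by exactly one of the two mechanisms; the one point to keep in mind is that~\eqref{eq1.2} is geometrically a Fermat surface away from characteristic~$3$, so it never requires separate treatment.
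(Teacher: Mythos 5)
Your proposal is correct and follows essentially the same route as the paper: separate the surfaces by smoothness in characteristics $2$, $3$ and $5$ where possible, and otherwise compare geometric automorphism groups via Theorem~\ref{t4},\,(i), Theorem~\ref{t1} and Lemma~\ref{l57}, using the fact that~\eqref{eq1.2} becomes the Fermat surface over $\mathbf{F}^{\mathrm{sep}}$. The only cosmetic difference is that you cite Proposition~\ref{p1},\,(iv), for that last fact where the paper cites Lemma~\ref{l56} directly; this is not circular (part~(iv) is proved independently of this lemma), but citing Lemma~\ref{l56} would be cleaner.
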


\begin{proof}
Over a~field of characteristic~$2$ the surface~\eqref{eq1.1} has a~singularity at the point~\mbox{$[0:0:1:1]$,} while the other surfaces listed above are smooth.

Over a~field of characteristic~$3$ the Fermat cubic surface is nonreduced and the surface~\eqref{eq1.2} is singular, while both the Clebsch cubic surface and the surface~\eqref{eq1.1} are smooth. However, the last two cubic surfaces are not isomorphic, since by Theorem~\ref{t1},~$(\mathrm{ii})'$ and $(\mathrm{ii})''$, they have different automorphism groups over an algebraically closed field of characteristic~$3$.

Let $\mathbf{F}$ be a~field of characteristic different from $2$ and~$3$. By Lemma~\ref{l57} the automorphism group of the cubic surface~\eqref{eq1.1} over~$\mathbf{F}^{\mathrm{sep}}$ is isomorphic to $\mathbb{Z}/8\mathbb{Z}$. Meanwhile, by Theorem~\ref{t1}, $(\mathrm{iii})'$ and~$(\mathrm{iv})'$, and Remark~\ref{r1} the automorphism group of the Fermat cubic surface over~$\mathbf{F}^{\mathrm{sep}}$ is isomorphic to $(\mathbb{Z}/3\mathbb{Z})^3 \rtimes \mathfrak{S}_4$. Since by Lemma~\ref{l56} the Fermat cubic surface is isomorphic to the surface~\eqref{eq1.2} over~$\mathbf{F}^{\mathrm{sep}}$, the automorphism group of the surface~\eqref{eq1.2} is also isomorphic to $(\mathbb{Z}/3\mathbb{Z})^3 \rtimes \mathfrak{S}_4$. This means that the surface~\eqref{eq1.1} is isomorphic neither to the Fermat cubic surface, nor to the surface~\eqref{eq1.2}.

By Theorem~\ref{t1},\,$(\mathrm{iv})''$, and Remark~\ref{r1} the automorphism group of the Clebsch cubic surface over a~separably closed field of characteristic different from $2$, $3$ and~$5$ is isomorphic to $\mathfrak{S}_5$, and by Lemma~\ref{l57} the automorphism group of the cubic surface~\eqref{eq1.1} over a~separably closed field is isomorphic to $\mathbb{Z}/8\mathbb{Z}$. Over a~field of characteristic~$5$ the Clebsch cubic surface is singular, while the surface~\eqref{eq1.1} is smooth. This means that the surface~\eqref{eq1.1} is not isomorphic to the Clebsch cubic surface.

The lemma is proved.
\end{proof}

\begin{proof}[of Proposition~\ref{p1}]
One can easily check assertion~(i). Assertion~(ii) follows from Lemma~\ref{l58}. Assertion~(iii) follows directly from Corollary~\ref{c6},\,(ii).

Let us prove~(iv). If $\mathbf{F}$ contains a~nontrivial cube root of unity, then the surface~\eqref{eq1.2} and the Fermat cubic surface are isomorphic by Lemma~\ref{l56}. If~$\mathbf{F}$~contains no nontrivial cube roots of unity, then the surface~\eqref{eq1.2} and the Fermat cubic surface are not isomorphic by Lemma~\ref{l55}.

We prove assertion~(v). If $\mathbf{F}$ is of characteristic~$3$, then it
follows from assertion~(i). If $\mathbf{F}$ has characteristic~$5$, then the
Clebsch cubic surface is isomorphic neither to the surface~\eqref{eq1.2}, nor
to the Fermat cubic, because in this case the \mbox{Clebsch} cubic surface is
singular, while the other surfaces are smooth. Assume that $\mathbf{F}$ is of
characteristic different from~$5$. Then by
Theorem~\ref{t1},\,$(\mathrm{iv})''$, Remark~\ref{r1} and Example~\ref{e2}
the automorphism group of the Clebsch cubic surface over
$\mathbf{F}^{\mathrm{sep}}$ is isomorphic to~$\mathfrak{S}_5$. By
Theorem~\ref{t1},\,$(\mathrm{iv})'$, Remark~\ref{r1} and Lemma~\ref{l56} the
automorphism groups of the surface~\eqref{eq1.2} and the Fermat cubic over
$\mathbf{F}^{\mathrm{sep}}$ are isomorphic to~$(\mathbb{Z}/3\mathbb{Z})^3
\rtimes \mathfrak{S}_4$. This means that over $\mathbf{F}$ the Clebsch cubic
is isomorphic neither~to the surface~\eqref{eq1.2}, nor to the Fermat cubic
surface.

Proposition~\ref{p1} is proved.
\end{proof}

\end{document}